\newcommand {\ctn}{\citet} 
\newtheorem{theorem}{Theorem}
\newtheorem{remark}[theorem]{Remark}
\newenvironment{proof}[1][Proof]{\textbf{#1.} }{\ \rule{0.5em}{0.5em}}
\numberwithin{equation}{section}
\numberwithin{algo}{section}
\numberwithin{table}{section}
\numberwithin{figure}{section}
\begin{document}


\title{\vspace{-0.8in}
{\bf Bayesian Appraisal of Random Series Convergence with Application to Climate Change}}
\author{Sucharita Roy and Sourabh Bhattacharya\thanks{
Sucharita Roy is an Assistant Professor and Head of the Department of Mathematics in St. Xavier's College, Kolkata, pursuing PhD 
in Interdisciplinary Statistical Research Unit, Indian Statistical Institute, 203, B. T. Road, Kolkata 700108.
Sourabh Bhattacharya is an Associate Professor in Interdisciplinary Statistical Research Unit, Indian Statistical
Institute, 203, B. T. Road, Kolkata 700108.
Corresponding e-mail: sourabh@isical.ac.in.}}
\date{\vspace{-0.5in}}
\maketitle%

\begin{abstract}

        Convergence analysis of infinite series constitutes a very long tradition in classical mathematics. Even then, for most infinite series, none of the existing
        methods of convergence analysis succeeds in providing conclusive answers. \ctn{Roy17} attempt to rise to this challenge by providing Bayesian characterization
        of infinite series with respect to their convergence properties and demonstrate quite successful applications in a variety of deterministic infinite series
        where the convergence properties are either known or unknown. Their most important application, namely, to the Dirichlet series characterizing the (in)famous
        Riemann Hypothesis, revealed insights that are not in support of the most celebrated conjecture for over $150$ years.

	In contrast with deterministic series considered by \ctn{Roy17}, in this article we take up random infinite series for our investigation. 
	Remarkably, our method does not require any simplifying assumption, such as independence or restrictive dependence among the random variables.
	Albeit the Bayesian characterization theory for random series is no different from that for the deterministic setup,  
	construction of effective upper bounds for partial sums, required for implementation, turns out to be a challenging undertaking in the random setup. 
	The difficulty steps in as the consequence of non-availability of the functional forms of the random summands of the series, and the problem
	persists even if the distributions of the summands are assumed to be known.

	In this article, we first construct parametric upper bound forms assuming parametric densities of the random summands. 
	But despite their mathematical validity for non-negative summands and good performance in such setups, they are not generally applicable, which leads
	us to propose a flexible bound for general setups. But even for series driven by normal distributions, the general bound exhibits   
	correct but very inefficient and less persuasive convergence analysis. 
	Moreover, application to random Dirichlet series yields wrong answers in many cases.
	Hence, we propose a general nonparametric bound structure, borrowing ideas from \ctn{Roy20}. Simulation studies demonstrate
	high accuracy and efficiency of the nonparametric bound in all the setups that we consider. 

	Finally, exploiting the property that the summands tend to zero in the case of series convergence, we consider application of our nonparametric bound driven Bayesian
	method to global climate change analysis. 
	Specifically, analyzing the global average temperature record over the years $1850-2016$ and Holocene global average temperature 
	reconstruction data $12,000$ years before present, we conclude, in spite of the current global warming situation, that global climate dynamics 
        is subject to temporary variability only, the current global warming being an instance, and long term global warming or cooling either in the past or in the future,
	are highly unlikely.
\\[2mm]
	{\it {\bf Keywords:} Bayesian characterization of infinite series; Global warming; Holocene temperature reconstruction; 
	Kolmogorov's three series theorem; Random infinite series; State-space model.} 
\end{abstract}

\section{Introduction}
\label{sec:intro}

Convergence assessment of deterministic infinite series is a part of basic mathematical analysis and is included in the curriculum of almost all schools and colleges. 
Yet, for most infinite series there still does not exist any test of convergence that can provide conclusive answers, an issue that has concerned among many, 
the first author of this article, the head of the department of Mathematics in St. Xavier's College, Kolkata.
In response to her informal question if the Bayesian paradigm is powerful enough to even attempt answering such questions of convergence, 
\ctn{Roy17} indeed came up with a novel Bayesian procedure
to address questions of series convergence. Their key idea is to embed the underlying infinite series, even if deterministic, in a random, stochastic process framework,
and then to build a recursive Bayesian algorithm for inference regarding the probability of convergence. They proved that  
the Bayesian algorithm converges to $1$ 
if and only if the underlying series converges and to $0$ if and only if the series diverges. Oscillatory series with multiple limit points, including
infinite number of limit points, are also treated under similar Bayesian recursive frameworks by the authors, with proper Bayesian characterizations of their properties.
Applications of their Bayesian method to a variety of infinite series yielded very encouraging results, and answers were obtained even where all existing methods
of convergence assessment failed.

Although convergence assessment of infinite series constitutes a part of elementary mathematical analysis, it also holds the key to the solution of the most notorious
unsolved problem of mathematics, namely, the Riemann Hypothesis. Establishment of convergence of the Dirichlet series
for the M\"{o}bius function, for the real part of a complex-valued parameter of the series exceeding $1/2$, would establish truth of Riemann Hypothesis. On the other hand,
divergence of the series for even any particular value of the real part exceeding $1/2$ would negate the famous conjecture. On careful application of their Bayesian method
to the Dirichlet series, \ctn{Roy17}, to their utter surprise, found that the truth of Riemann Hypothesis is not supported by their Bayesian procedure. 

In this article, we shall concern ourselves with random series of the form $\sum_{i=1}^{\infty}X_i$, where $X_i$ are random, not deterministic quantities as in the
examples in \ctn{Roy17}.
Now recall that the Bayesian procedure of \ctn{Roy17} treats even the deterministic elements of the series as realizations of some stochastic process. Hence, when the elements
of the infinite series are random themselves, then there is certainly no need for any new theory for studying random series convergence. But although
no new general theory is required, there are important details to pay attention to. The main issue is that, in the case of deterministic infinite series, the functional forms
of the series elements are known, which \ctn{Roy17} usefully exploited to construct bounds for the partial sums associated with the series. However, in the case of random
series elements, the functional forms are unavailable. In fact, even the distributional forms of the series elements are not available in reality, and if they are assumed
to be available for the sake of theoretical development, construction of bounds for the partial sums in general, is still highly non-trivial. 

Our main contribution in this article is to create appropriate bounds for the partial sums in the context of random infinite series. 
We begin with creation of upper bounds in parametric setups, whose mathematical validity is ensured for summands with non-negative supports. Simulation
experiments under several such setups corroborate much accuracy and efficiency of such upper bounds when employed in our Bayesian procedure.
However, since these bounds are not generally applicable, we propose a flexible parametric upper bound structure, although its mathematical validity in general situations
can not be guaranteed. Although the general bound works well in several setups with non-negatively supported summands, its performance in random series driven by 
hierarchical normal distributions has been very inefficient and less persuasive, in spite of correct indications of convergence and divergence. 
Furthermore, in the case of random Dirichlet series, the general parametric bound yields wrong answers in many cases.
Hence, borrowing
ideas from \ctn{Roy20}, we propose a nonparametric upper bound for the partial sums. 
The bound does not require any distributional assumption or non-negativity and improves itself adaptively with the iterations of the recursive Bayesian procedure. Simulation 
experiments demonstrate that not only is this bound far more accurate and efficient than the general parametric bound, but is also very much comparable in performance
with the mathematically valid parametric bounds in the relevant non-negative setups.

Now, investigation of general series convergence, either deterministic or random, may be mathematically or probabilistically extremely challenging and hence
makes for commendable undertaking, but such efforts would be more fruitful if determination of series convergence properties can be related to
solutions of scientific problems of much broader interest and importance. In this regard, the efforts of \ctn{Roy17} did not seem to go in vain, as their 
novel Bayesian procedure for general deterministic series convergence assessment led to surprisingly important insights regarding the most challenging but
influential unsolved problem of mathematics, the Riemann Hypothesis. Random infinite series seems to be more abstruse compared to deterministic ones as 
it is not immediately clear if they can be related to scientific problems of broad importance. In this article, we attempt to relate investigation of
convergence properties of random infinite series to important scientific questions on climate change. Specifically, we attempt to address if global warming 
will continue or if global temperature will stabilize in the future. We also attempt to learn if global temperature was stable in the past
or if there were instances of long periods of global warming or cooling. 
Based on records of current global temperature data and palaeoclimate reconstruction data, we infer with our Bayesian recursive procedure in conjunction with 
the nonparametric bound for the partial sums that we propose, that climate dynamics is subject to temporary variations, and long-term
global warming or cooling is unlikely in the past as well as in the future.

The rest of our article is structured as follows.
First, in Section \ref{sec:recursive1}, we provide an overview of the recursive Bayesian procedure introduced by \ctn{Roy17} for characterizing convergence properties
of general deterministic or random infinite series. Then, in Section \ref{sec:parametric}, we put in our efforts towards building parametric 
upper bounds for partial sums of random series and in Section \ref{sec:simstudy1}
assess the performance of such parametric bound structure with simulation experiments.
We propose the nonparametric bound structure in Section \ref{sec:nonpara} and evaluate its performance with simulation studies in the same section.
Using the proposed nonparametric bound structure we analyze past and future global climate change in Section \ref{sec:gw}. Finally, in Section \ref{sec:summary}
we summarize our contributions and provide relevant discussions.

\section{Overview of the recursive Bayesian procedure for infinite series}
\label{sec:recursive1}

\subsection{Stage-wise likelihoods}
\label{subsec:Bayesian_method}
Letting $\{X_i\}_{i=1}^{\infty}$ denote some stochastic process, for $j=1,2,3,\ldots$, let
\begin{equation*}
S_{j,n_j}=\sum_{i=\sum_{k=0}^{j-1}n_k+1}^{\sum_{k=0}^jn_k}X_i,
\end{equation*}
where $n_0=0$ and $n_j\geq 1$ for all $j\geq 1$.
Also let $\{c_j\}_{j=1}^{\infty}$ be a non-negative decreasing sequence and
\begin{equation*}
Y_{j,n_j}=\mathbb I_{\left\{\left|S_{j,n_j}\right|\leq c_j\right\}}.
\end{equation*}
Let, for $j\geq 1$, the probability associated with $Y_{j,n_j}$ be given by
\begin{equation*}
P\left(Y_{j,n_j}=1\right)=p_{j,n_j}.
\end{equation*}
Hence, the likelihood of $p_{j,n_j}$, given $y_{j,n_j}$, is of the form
\begin{equation}
L\left(p_{j,n_j}\right)=p^{y_{j,n_j}}_{j,n_j}\left(1-p\right)^{1-y_{j,n_j}}.
\label{eq:likelihood}
\end{equation}
In the above, $p_{j,n_j}$ can be interpreted as the probability that the
series $S_{1,\infty}=\sum_{i=1}^{\infty}X_i$ is convergent when the data observed is $S_{j,n_j}$. 

\subsection{Recursive Bayesian posteriors}
\label{subsec:recursive_posteriors}



Consider the sequences $\left\{\alpha_j\right\}_{j=1}^{\infty}$ and $\left\{\beta_j\right\}_{j=1}^{\infty}$,
where $\alpha_j=\beta_j=1/j^2$ for $j=1,2,\ldots$.
At the first stage of our recursive Bayesian algorithm, that is, when $j=1$, 
let us assume that the prior is given by
\begin{equation*}
\pi(p_{1,n_1})\equiv Beta(\alpha_1,\beta_1),
\end{equation*}
where, for $a>0$ and $b>0$, $Beta(a,b)$ denotes the Beta distribution with mean $a/(a+b)$
and variance $(ab)/\left\{(a+b)^2(a+b+1)\right\}$.
Combining this prior with the
likelihood (\ref{eq:likelihood}) (with $j=1$), we obtain the following posterior of $p_{1,n_1}$ given $y_{1,n_1}$:
\begin{equation*}
\pi(p_{1,n_1}|y_{1,n_1})\equiv Beta\left(\alpha_1+y_{1,n_1},\beta_1+1-y_{1,n_1}\right).
\end{equation*}
At the second stage (that is, for $j=2$), for the prior of $p_{2,n_2}$ we consider the posterior
of $p_{1,n_1}$ given $y_{1,n_1}$ associated with the $Beta(\alpha_1+\alpha_2,\beta_1+\beta_2)$ prior.
That is, our prior on $p_{2,n_2}$ is given by:
\begin{equation}
\pi(p_{2,n_2})\equiv Beta\left(\alpha_1+\alpha_2+y_{1,n_1},\beta_1+\beta_2+1-y_{1,n_1}\right).
\label{eq:prior_stage_2}
\end{equation}
%
The posterior of $p_{2,n_2}$ given $y_{2,n_2}$ is then obtained by combining the second stage prior
(\ref{eq:prior_stage_2}) with (\ref{eq:likelihood}) (with $j=2$). The form of the posterior
at the second stage is thus given by
\begin{equation*}
\pi(p_{2,n_2}|y_{2,n_2})\equiv Beta\left(\alpha_1+\alpha_2+y_{1,n_1}+y_{2,n_2},\beta_1+\beta_2+2-y_{1,n_1}-y_{2,n_2}\right).
\end{equation*}
Continuing this way, at the $k$-th stage, where $k>1$, we obtain the following posterior of $p_{k,n_k}$:
\begin{equation}
\pi(p_{k,n_k}|y_{k,n_k})\equiv Beta\left(\sum_{j=1}^k\alpha_j+\sum_{j=1}^ky_{j,n_j},
k+\sum_{j=1}^k\beta_j-\sum_{j=1}^ky_{j,n_j}\right).
\label{eq:posterior_stage_k}
\end{equation}
It follows from (\ref{eq:posterior_stage_k}) that
\begin{align}
E\left(p_{k,n_k}|y_{k,n_k}\right)&=\frac{\sum_{j=1}^k\alpha_j
+\sum_{j=1}^ky_{j,n_j}}{k+\sum_{j=1}^k\alpha_j+\sum_{j=1}^k\beta_j};
\label{eq:postmean_p_k}\\
Var\left(p_{k,n_k}|y_{k,n_k}\right)&=
\frac{(\sum_{j=1}^k\alpha_j+\sum_{j=1}^ky_{j,n_j})(k+\sum_{j=1}^k\beta_j-\sum_{j=1}^ky_{j,n_j})}
{(k+\sum_{j=1}^k\alpha_j+\sum_{j=1}^k\beta_j)^2(1+k+\sum_{j=1}^k\alpha_j+\sum_{j=1}^k\beta_j)}.
\label{eq:postvar_p_k}
\end{align}
Since $\sum_{j=1}^k\alpha_j=\sum_{j=1}^k\beta_j=\sum_{j=1}^k\frac{1}{j^2}$, (\ref{eq:postmean_p_k})
and (\ref{eq:postvar_p_k}) admit the following simplifications:
\begin{align}
E\left(p_{k,n_k}|y_{k,n_k}\right)&=\frac{\sum_{j=1}^k\frac{1}{j^2}+\sum_{j=1}^ky_{j,n_j}}
{k+2\sum_{j=1}^k\frac{1}{j^2}};
\notag\\
Var\left(p_{k,n_k}|y_{k,n_k}\right)&=
\frac{(\sum_{j=1}^k\frac{1}{j^2}+\sum_{j=1}^ky_{j,n_j})(k+\sum_{j=1}^k\frac{1}{j^2}-\sum_{j=1}^ky_{j,n_j})}
{(k+2\sum_{j=1}^k\frac{1}{j^2})^2(1+k+2\sum_{j=1}^k\frac{1}{j^2})}.
\notag
\end{align}

\subsection{Characterization of convergence properties of the underlying infinite series}
\label{subsec:characterization}

Note that (see, for example, \ctn{Oksendal00}) it is possible to represent any stochastic process $X=\{X_i:i\in \mathfrak I\}$, for fixed
$i$, as a random variable $\omega\mapsto X_i(\omega)$, where $\omega\in\mathfrak S$;
$\mathfrak S$ being the set of all functions from $\mathfrak I$ into $\mathbb R$. 
Also, fixing $\omega\in\mathfrak S$, the function $i\mapsto X_i(\omega);~i\in \mathfrak I$,
represents a path of $X_i;~i\in\mathfrak I$. Indeed, we can identify $\omega$ with the function
$i\mapsto X_i(\omega)$ from $\mathfrak I$ to $\mathbb R$.
%

Now observe that the sample space of $S_{1,\infty}$ is also given by $\mathfrak S$.
We also assume, for the sake of generality, that for any $\omega\in\mathfrak S\cap\mathfrak N^c$, where
$\mathfrak N~(\subset\mathfrak S)$ has zero probability measure, the non-negative monotonically 
decreasing sequence $\{c_j\}_{j=1}^{\infty}$
depends upon $\omega$, so that we shall denote the sequence by $\{c_j(\omega)\}_{j=1}^{\infty}$.
In other words, we allow $\left\{c_j(\omega)\right\}_{j=1}^{\infty}$ to depend upon the corresponding series. 

With the above notions, the following two theorems provide Bayesian characterizations of convergence and divergence, respectively, of the underlying series $S_{1,\infty}$.
\begin{theorem}[\ctn{Roy17}]
\label{theorem:convergence}
For any $\omega\in\mathfrak S\cap\mathfrak N^c$, where $\mathfrak N$ is some null set having probability measure zero,
$S_{1,\infty}(\omega)$ is convergent 
if and only if there exists a non-negative monotonically decreasing sequence $\left\{c_j(\omega)\right\}_{j=1}^{\infty}$
such that for any choice of the sequence $\{n_j\}_{j=1}^{\infty}$,
\begin{equation*}
\pi\left(\mathcal N_1|y_{k,n_k}(\omega)\right)\rightarrow 1,
\end{equation*}
as $k\rightarrow\infty$, 
where $\mathcal N_1$ is any neighborhood of 1 (one).
\end{theorem}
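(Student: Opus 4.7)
The plan is to reduce the posterior statement to a statement about the indicator labels $y_{j,n_j}(\omega)$ via the explicit Beta form of the posterior, then handle the two directions separately. First I would exploit that $\sum_{j=1}^{\infty}1/j^2=\pi^2/6$ is finite, so that the posterior mean (\ref{eq:postmean_p_k}) is asymptotic to $k^{-1}\sum_{j=1}^{k}y_{j,n_j}(\omega)$ and the posterior variance (\ref{eq:postvar_p_k}) is $O(1/k)$. A standard Chebyshev estimate then shows that $\pi(\mathcal N_1|y_{k,n_k}(\omega))\to 1$ for every neighborhood $\mathcal N_1$ of $1$ if and only if the Cesaro average $k^{-1}\sum_{j=1}^{k}y_{j,n_j}(\omega)\to 1$, i.e., $y_{j,n_j}(\omega)=1$ has asymptotic density one among the indices $j$. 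This reduction is the workhorse for both implications.

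For the sufficiency direction, assume $S_{1,\infty}(\omega)=\lim_{k}\sum_{i=1}^{k}X_i(\omega)$ exists. Define the tail oscillation
\[
T_j(\omega):=\sup_{m>n\geq j}\left|\sum_{i=n+1}^{m}X_i(\omega)\right|,
\]
which is non-increasing and, by the Cauchy criterion, tends to $0$. I would then set $c_j(\omega):=T_{j-1}(\omega)+1/j$: a strictly positive, monotonically decreasing sequence vanishing at infinity. Because each $n_k\geq 1$ forces $\sum_{k=0}^{j-1}n_k\geq j-1$, every block sum satisfies $|S_{j,n_j}(\omega)|\leq T_{j-1}(\omega)<c_j(\omega)$ irrespective of the choice of $\{n_j\}$. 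Hence $y_{j,n_j}(\omega)\equiv 1$, the Cesaro average is identically $1$, and the reduction from the first paragraph yields $\pi(\mathcal N_1|y_{k,n_k}(\omega))\to 1$.

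For the necessity direction I would argue by contrapositive. If $S_{1,\infty}(\omega)$ does not converge, the Cauchy criterion fails, so there is $\varepsilon(\omega)>0$ such that for every $N$ one can find $m'>N$ with $|\sum_{i=N+1}^{m'}X_i(\omega)|>\varepsilon/2$ (split any non-Cauchy pair via the triangle inequality). Given any candidate non-negative decreasing null sequence $\{c_j(\omega)\}$, fix $j^{\star}$ with $c_j<\varepsilon/2$ for all $j\geq j^{\star}$ and build $\{n_j\}$ recursively: at stage $j$, choose any $n_j\geq 1$ making $|S_{j,n_j}(\omega)|>\varepsilon/2$, the existence of which is guaranteed by the above applied to $N=\sum_{k=0}^{j-1}n_k$. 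Then $y_{j,n_j}(\omega)=0$ for every $j\geq j^{\star}$, the Cesaro average tends to $0$, and the hypothesized posterior concentration fails, forcing the series to converge.

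The main obstacle is the interplay between the quantifier over $\{n_j\}$ and the null set $\mathfrak N$, together with an implicit convention that must be made explicit: a bounded oscillating series such as $X_i=2(-1)^i$ is compatible with a constant $c_j\equiv 3$ keeping $y_{j,n_j}\equiv 1$, so the theorem is correct only when \emph{monotonically decreasing} is read as \emph{monotonically decreasing to zero}. I would therefore state this convention at the outset, and verify joint measurability of $T_{j-1}(\omega)$ and of the recursive construction in the necessity step (for instance by always selecting the minimal admissible $n_j$), so that a single $\mathfrak N$ of probability zero serves uniformly in $\omega$ and in $\{n_j\}$.
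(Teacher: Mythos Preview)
The paper does not actually prove this theorem: it is quoted from \ctn{Roy17} and stated without proof, so there is no ``paper's own proof'' to compare against. Your argument must therefore be judged on its own merits, and it is essentially correct and follows the natural route. The reduction via the explicit Beta posterior to the Ces\`aro statement $k^{-1}\sum_{j\le k}y_{j,n_j}(\omega)\to 1$ is exactly right (the bounded $\sum 1/j^2$ contributions wash out of (\ref{eq:postmean_p_k}) and (\ref{eq:postvar_p_k}), and Chebyshev plus boundedness of $p\in[0,1]$ gives the equivalence). The sufficiency construction $c_j(\omega)=T_{j-1}(\omega)+1/j$ is clean: $T_0(\omega)<\infty$ because convergent partial sums are bounded, $T_{j-1}$ is a countable supremum of measurable functions hence measurable, and the index bookkeeping $\sum_{k\le j-1}n_k\ge j-1$ gives $|S_{j,n_j}(\omega)|\le T_{j-1}(\omega)$ uniformly in $\{n_j\}$. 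The necessity direction is also fine; your triangle-inequality splitting of a non-Cauchy pair to force the block sum starting at the prescribed index $N=\sum_{k\le j-1}n_k$ is the right maneuver, and since the theorem is a pointwise-in-$\omega$ statement the $\{n_j\}$ you build need not be jointly measurable.

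Your closing caveat is well taken and worth emphasizing: as literally stated, ``non-negative monotonically decreasing'' without ``to zero'' makes the necessity direction false (your $X_i=2(-1)^i$ counterexample with $c_j\equiv 3$ is decisive). The present paper implicitly concedes this in Remark~\ref{remark:remark1}, where the operative condition is relaxed to ``non-negative sequence that converges to zero''; your proof is correct under that reading, and you are right to make the convention explicit.
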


\begin{theorem}[\ctn{Roy17}]
\label{theorem:divergence}
For any $\omega\in\mathfrak S\cap\mathfrak N^c$, where $\mathfrak N$ is some null set having probability measure zero,
$S_{1,\infty}(\omega)$ is divergent if and only if there exists a sequence $\{n_j(\omega)\}_{j=1}^{\infty}$ such that
\begin{equation*}
\pi\left(\mathcal N_0|y_{k,n_k(\omega)}(\omega)\right)\rightarrow 1,
\end{equation*}
as $k\rightarrow\infty$, 
where $\mathcal N_0$ is any neighborhood of 0 (zero).
\end{theorem}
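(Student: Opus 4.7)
The plan is to exploit the explicit Beta form of the posterior in (\ref{eq:posterior_stage_k}) together with a Chebyshev-type concentration argument. The simplified variance following (\ref{eq:postvar_p_k}) is easily seen to be $O(1/k)$ as $k \to \infty$, so the posterior distribution of $p_{k,n_k}$ concentrates at its mean. The condition $\pi(\mathcal{N}_0 \mid y_{k,n_k(\omega)}(\omega)) \to 1$ therefore reduces to $E(p_{k,n_k} \mid y_{k,n_k}) \to 0$; and because the $\sum_{j=1}^k 1/j^2$ contributions in (\ref{eq:postmean_p_k}) are bounded while the denominator grows linearly in $k$, this further reduces to showing $\frac{1}{k} \sum_{j=1}^k y_{j,n_j(\omega)}(\omega) \to 0$, i.e., that the proportion of blocks satisfying $|S_{j,n_j}(\omega)| \leq c_j(\omega)$ is asymptotically vanishing.

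For the only-if direction I would argue constructively. Assume $S_{1,\infty}(\omega)$ diverges; by the Cauchy criterion for series, there exists $\varepsilon_0(\omega) > 0$ such that for every $N$ one can find $M > N$ with $\bigl|\sum_{i=N+1}^{M} X_i(\omega)\bigr| > \varepsilon_0(\omega)$. Setting $N_j = \sum_{k=0}^{j-1} n_k(\omega)$, I would define $n_j(\omega)$ inductively as the smallest positive integer for which $|S_{j,n_j(\omega)}(\omega)| > c_j(\omega)$. Because $\{c_j(\omega)\}$ is monotonically decreasing, eventually $c_j(\omega) < \varepsilon_0(\omega)$, and the Cauchy failure then guarantees such an integer exists for every $j \geq J(\omega)$. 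Consequently $y_{j,n_j(\omega)}(\omega) = 0$ for all $j \geq J(\omega)$, so $\sum_{j=1}^k y_{j,n_j(\omega)}$ stays bounded by $J(\omega) - 1$ and the posterior mean tends to $0$.

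For the converse I would proceed by contraposition: if $S_{1,\infty}(\omega)$ converges, no choice of $\{n_j(\omega)\}$ can force the posterior to $0$. Convergence implies the partial sums $T_N(\omega) = \sum_{i=1}^N X_i(\omega)$ tend to a finite limit, so the block sums $S_{j,n_j} = T_{N_j + n_j} - T_{N_j}$ tend to $0$ regardless of how $n_j \geq 1$ is chosen, since $N_j \geq j - 1 \to \infty$. Paired with the sequence $\{c_j(\omega)\}$ supplied by Theorem \ref{theorem:convergence}, which dominates the tail block sums, this yields $y_{j,n_j(\omega)} = 1$ for all $j \geq J'(\omega)$, so the posterior mean tends to $1$ and cannot lie in $\mathcal{N}_0$.

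The hard part is the delicate interplay between $\{c_j(\omega)\}$ and $\{n_j(\omega)\}$. The posterior depends on both sequences, yet the theorem quantifies only over $n_j$; the admissible $c_j$ is implicit, and not every decaying $c_j$ will do, since one can construct convergent series and pathologically fast-decaying $c_j$ for which $y_{j,n_j} = 0$ always and the posterior spuriously concentrates at $0$. A clean proof must therefore pair the recursive construction of $n_j$ in the forward direction with a $c_j$ — naturally, the one from Theorem \ref{theorem:convergence} — that is not so rapidly decaying as to spoil the reverse direction, and must verify that the constructed $n_j(\omega)$ is adapted to the sigma-algebra generated by the observed sample path. This coupling, rather than any individual estimate, constitutes the principal technical difficulty.
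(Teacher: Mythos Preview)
The paper does not supply its own proof of Theorem~\ref{theorem:divergence}: the result is quoted verbatim from \ctn{Roy17} and is stated here purely as background for the recursive Bayesian procedure. Consequently there is no in-paper argument to compare your proposal against.

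That said, your sketch is structurally sound and captures the essential mechanism one would expect in the original proof. The reduction via the explicit Beta posterior to the Ces\`aro behaviour of $\frac{1}{k}\sum_{j=1}^k y_{j,n_j}$ is exactly right, and your construction of $n_j(\omega)$ from the failure of the Cauchy criterion is the natural move for the forward direction. You are also correct to flag the quantifier issue surrounding $\{c_j(\omega)\}$ as the genuine subtlety: Theorem~\ref{theorem:convergence} existentially quantifies over $c_j$ while Theorem~\ref{theorem:divergence} quantifies only over $n_j$, so the two statements must be read as a coupled pair sharing the same admissible $c_j$, and your contrapositive argument implicitly relies on this. The paper's Remark~\ref{remark:remark1} (relaxing monotonicity of $c_j$ to mere convergence to zero) is consistent with your use of $c_j(\omega) < \varepsilon_0(\omega)$ eventually. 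One minor point: your forward construction needs $c_j(\omega)\to 0$, which is precisely what Remark~\ref{remark:remark1} supplies; non-negative monotone decrease alone would not suffice.
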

\ctn{Roy17} point out that Theorem \ref{theorem:divergence} encompasses even oscillatory series. 

\begin{remark}
\label{remark:remark1}
Although $c_j(\omega)$ has so far been referred to as a non-negative monotonically decreasing sequence (see also \ctn{Roy17}), it is sufficient 
for $c_j(\omega)$ to be a non-negative sequence that converges to zero. All the results of \ctn{Roy17}, including Theorems \ref{theorem:convergence}
and \ref{theorem:divergence} continue to hold with this more flexible condition. This extra flexibility is valuable in our random series context where
$c_j(\omega)$ are non-negative and converge to zero but can not be guaranteed to be monotonically decreasing. 
\end{remark}

\section{Random infinite series and parametric upper bound for the partial sums}
\label{sec:parametric}


Let us assume that $\left\{X_{i}(\omega)\right\}_{i=1}^{\infty}$, for $\omega\in\mathfrak S\cap\mathfrak N^c$ 
is a given sequence of random variables (not necessarily independent) such that the marginal distribution of $X_{i}$ is
$f_{\theta_i}(\cdot)$, and that
we wish to learn if $S_{1,\infty}(\omega)=\sum_{i=1}^{\infty}X_{i}(\omega)$ converges for 
$\omega\in\mathfrak S\cap\mathfrak N^c$. 
In this regard, we assume that 
the form of the density $f_{\theta_i}$ is known. We shall consider both known and unknown $\theta_i$. 

In fact, for our Bayesian theory for characterizing infinite series, it is not strictly necessary to assume that the form of $f_{\theta_i}$ is known.
However, we need to be able to obtain appropriate $c_j(\omega)$ such that $|S_{j,n_j}(\omega)|\leq c_j(\omega)$ for $j\geq j_0(\omega)$ whenever $S_{1,\infty}(\omega)<\infty$.
In the case of deterministic series, the functional forms of the series elements are known. Embedding the series in question in a class of series most of whose
convergence properties are related to the values of some (set of) parameter(s) $a$, \ctn{Roy17} could obtain suitable $c_j(\omega)$ for the series of interest by exploiting
the convergence properties of the parameterized class of series. For the current random series scenario, availability of information regarding some suitable
class of series in which we can embed our given random series of interest will be useful for our purpose. 
In this regard, assuming a known form of the density $f_{\theta_i}$ will be useful for constructing
parametric upper bounds for the partial sums. However, we shall also construct a general and effective nonparametric upper bound form that does not require any such information
but improves itself adaptively with the recursive Bayesian steps.

\subsection{Construction of parametric upper bound for the partial sums}
\label{subsec:parametric_upper_bound}
It will be convenient for our purpose to build the theory with unknown $\theta_i$ and to view known $\theta_i$ situations as special cases. 
\subsubsection{Unknown $\theta_i$}
\label{subsubsec:theta_unknown}
Let us begin with the assumption that $\left\{\theta_i\right\}_{i=1}^{\infty}$ is a stochastic process (again, not necessarily independent) with marginal density
$g_{\psi_i}$ where the density form as well as $\psi_i$ will be assumed to be known in this parametric bound construction setup.

For $i\geq 1$, let us introduce spaces for convergence and divergence, which we denote by $\Psi^{(c)}_i$ and $\Psi^{(d)}_i$, respectively, such that
$\sum_{i=1}^{\infty}\varphi_i$ is convergent and divergent, respectively, for $\varphi_i\in\Psi^{(c)}_i$ and $\varphi_i\in\Psi^{(d)}_i$, for $i\geq 1$.
In the above infinite sum, we assume that $\varphi_i$ varies only with respect to $i$ and is constant with respect to all other possible parameters.

To illustrate, let for any $\epsilon>0$, $\Psi^{(c)}_i=\left\{i^{-p}:~p\in[1+\epsilon,\infty)\right\}$ and $\Psi^{(d)}_i=\left\{i^{-p}:~p\in(-\infty,1]\right\}$, or
$\Psi^{(c)}_i=\left\{q^{-i}:q\in[1+\epsilon,\infty)\right\}$ and $\Psi^{(d)}_i=\left\{q^{-i}:q\in[0,1]\right\}$. 
Thus, a typical element of $\Psi^{(c)}_i=\left\{i^{-p}:~p\in[1+\epsilon,\infty)\right\}$ is $\varphi_i=i^{-p}$, where $p\in[1+\epsilon,\infty)$. 
Hence, if $p\in[1+\epsilon,\infty)$ is held fixed, then
$\varphi_i$ changes only with respect to $i$. Hence, $\sum_{i=1}^{\infty}\varphi_i<\infty$ for any fixed $p\in[1+\epsilon,\infty)$. On the other hand,
$\sum_{i=1}^{\infty}\varphi_i=\infty$ for $\varphi_i=i^{-p}\in\Psi^{(d)}_i=\left\{i^{-p}:~p\in(-\infty,1]\right\}$, with $p$ held fixed.

However, the provision of allowing $\varphi_i$ to vary only with respect to $i\geq 1$, will be restricted to infinite sums only, not elsewhere.

To proceed, we assume that $E(|\theta_i|)=h_i(\psi_i)$, where $h_i:\Psi^{(c)}_i\cup\Psi^{(d)}_i\mapsto\mathbb R^+$ (where $\mathbb R^+=[0,\infty)$) is 
such that $\sum_{i=1}^{\infty}h_i(\varphi_i)<\infty$
for $\varphi_i\in\Psi^{(c)}_i$; $i\geq 1$ and $\sum_{i=1}^{\infty}h_i(\varphi_i)=\infty$ for $\varphi_i\in\Psi^{(d)}_i$; $i\geq 1$.



For any $\varphi_i\in \Psi^{(c)}_i\cup\Psi^{(d)}_i$, let $G_{\varphi_i}$ denote the cumulative distribution function (cdf) of $g_{\varphi_i}$.
Now let, for each $x\in\mathbb R$, $G_i(x)=\underset{\varphi_i\in\Psi^{(c)}_i}{\inf}~G_{\varphi_i}(x)$. 
Assume that $G_i(\cdot)$ is continuous for $i\geq 1$. Then it follows that 
$\underset{x\rightarrow -\infty}{\lim}~G_i(x)=0$, $\underset{x\rightarrow \infty}{\lim}~G_i(x)=1$.
Also, if $x_1<x_2$, 
$G_i(x_1)\leq G_{\psi_i}(x_1)\leq G_{\psi_i}(x_2)$ for all $\psi_i\in\Psi^{(c)}_i$, so that 
$G_i(x_1)\leq G_i(x_2)$, satisfying the monotonicity property. Hence, $G_i(\cdot)$ is a continuous distribution function
for $i\geq 1$. Let $g_i$ denote the corresponding density function.

Let $\tilde\theta_i\sim g_i$. Then $\sum_{i=1}^{\infty}E\left(\left|\tilde\theta_i\right|\right)<\infty$. By Theorem 1 of \ctn{Kawata72} (see also \ctn{Pakes04}) it follows
that the series $\sum_{i=1}^{\infty}\tilde\theta_i$ is absolutely convergent almost surely, irrespective of any dependence structure among the $\tilde\theta_i$'s.

Hence, it follows that if $G_i(\cdot)$ is continuous for $i\geq 1$, then it is a distribution function satisfying
$G_i(x)\leq G_{\psi_i}(x)$ for all $x$ and $\psi_i\in \Psi^{(c)}_i$.
Consequently, for any {\it fixed} random number $U_i$, where $U_i\sim U(0,1)$, the uniform distribution on $(0,1)$ (this means
that we first draw $U_i\sim U(0,1)$ and then fix this $U_i$ to invert the
distribution functions $G_{\psi_i}$ and $G_i$, as below),
it holds that {\it for all} $\psi_i\in\Psi^{(c)}_i$,
\begin{equation}
G^{-}_{\psi_i}(U_i)\leq G^{-}_i(U_i),
\label{eq:sand2}
\end{equation}
where, for any distribution function $G$, $G^{-}(x)=\inf\{y:G(y)\geq x\}$, is the inverse
of $G$.

The inversions in (\ref{eq:sand2}) are nothing but simulations from the distributions 
corresponding to $G_{\psi_i}$ and $G_i$, respectively.
We thus set $\theta_{\psi_i}=G^{-}_{\psi_i}(U_i)$ and $\tilde \theta_i=G^{-}_i(U_i)$. 

Since inequality (\ref{eq:sand2}) holds for all $\psi_i\in\Psi^{(c)}_i$, this implies that for 
{\it fixed} $U_i$, whatever value of $\theta_{\psi_i}$ is simulated
using the relation $\theta_{\psi_i}=G^{-}_{\psi_i}(U_i)$, whatever may be the values of $\psi_i\in\Psi^{(c)}_i$,
it must always hold that 
\begin{equation}
\theta_{\psi_i}\leq \tilde \theta_i. 
\label{eq:sand3}
\end{equation}
Now suppose that $X_i$ are non-negative and admits the form $X_i=F^{-}_{\theta_i}(U_i)$, where $F_{\theta_i}$ is the distribution function of $X_i$ conditional on $\theta_i$, 
and assume that (\ref{eq:sand3}) ensures the inequality $X_{\theta_{\psi_i}}=F^{-}_{\theta_{\psi_i}}(U_i)\leq F^{-}_{\tilde\theta_i}(U_i)=X_{\tilde\theta_i}$.
Then, setting $X_{\theta_{\psi_i}}=X_i$ so that $F^{-}_{\theta_{\psi_i}}(U_i)=X_i$, would enable us to obtain $U_i$ in terms of $X_i$ and
$\theta_{\psi_i}$, for given $\theta_{\psi_i}$. This $U_i$ will
then be used in $F^{-}_{\tilde\theta_i}(U_i)$ to form $X_{\tilde\theta_i}=F^{-}_{\tilde\theta_i}(U_i)$, for given $\tilde\theta_i$. 
The partial sums associated with $\{X_{\tilde\theta_i}\}_{i=1}^{\infty}$ will then
constitute valid upper bounds for the partial sums corresponding to the underlying random series summands $\{X_{i}\}_{i=1}^{\infty}$.

Note that the above assumption of non-negative support of $X_i$ is crucial, since for general supports, upper bounds for the partial sums can not ensure
that the absolute values of the partial sums are bounded above by the absolute values of the corresponding upper bounds.

All the above results and discussions continue to hold if $X_i$ are discrete random variables with finite support. The proof that $G_i$ are valid distribution
functions in such cases is the same as that presented in Section S-1 of \ctn{Sabya12}.
Indeed, the principle of constructing upper bounds in the method described so far has some parallel in \ctn{Sabya12}, although in a very different, perfect sampling
context. 

\subsubsection{Known $\theta_i$}
\label{subsubsec:theta_known}
Now, if $\theta_i$ are known, then we can apply the same procedure to $f_{\theta_i}$ instead of $g_{\psi_i}$.
In that case, letting $F_{\theta_i}$ denote the distribution function associated with $f_{\theta_i}$ and 
$F_i(x)=\underset{\varphi_i\in\Psi^{(c)}_i}{\inf}~F_{\varphi_i}(x)$ for $x\in\mathbb R$, we shall then have
\begin{equation}
X_i=F^{-}_{\theta_i}(U_i)\leq F^{-}_i(U_i)=\tilde X_i, 
\label{eq:theta_known}
\end{equation}
which ensures $S_{j,n_j}\leq \tilde S_{j,n_j}$, where $\tilde S_{j,n_j}$ are the partial sums
associated with $\{\tilde X_i\}_{i=1}^{\infty}$. This would enable us to set $c_j=\tilde S_{j,n_j}$ as the upper bound for the partial sums
of $\{X_i\}_{i=1}^{\infty}$.
For known $\theta_i$, given $X_i$, $U_i$ is available from the first equality of (\ref{eq:theta_known}), which can be used in the second equality 
of (\ref{eq:theta_known}) to form $\tilde X_i$. 

\subsection{Upper bound for partial sums for hierarchical scale families on non-negative supports}
\label{sec:upper_bound}
To see the utility of (\ref{eq:sand3}), 
let us assume that the distribution of $X_i$ given $\theta_i$ is a scale family on $[0,\infty)$, that is, 
\begin{equation}
f_{\theta_i}(x_i)=\frac{1}{\theta_i}f\left(\frac{x_i}{\theta_i}\right)\mathbb I_{\left\{x_i>0\right\}},
\label{eq:scale1}
\end{equation}
where $\theta_i>0$, and $f(\cdot)$ is a density function supported on $[0,\infty)$. Let us assume that $\theta_i$ are random and have densities $g_{\psi_i}$
with the same details as in Section \ref{subsubsec:theta_unknown}. Since $\theta_i$ are also random variables, the model pertains to a hierarchical
scale family.

The distribution function corresponding to (\ref{eq:scale1}) is of the form $F\left(\frac{x_i}{\theta_i}\right)$, where $F$ is the cdf corresponding to the
density function $f$. Hence, $X_i=\theta_iF^{-}(U_i)$. Let $X_{\theta_{\psi_i}}=\theta_{\psi_i}F^{-}(U_i)$ and 
$X_{\tilde\theta_i}=\tilde\theta_iF^{-}(U_i)$. 
Here $U_i$ are $iid$ $U(0,1)$ random variables assumed to be independent of the uniform random variables used to draw $\theta_i$ and $\theta_{\psi_i}$.
Since $F^{-}(U_i)>0$, 
(\ref{eq:sand3}) ensures 
\begin{equation}
X_{\theta_{\psi_i}}\leq X_{\tilde\theta_i}.
\label{eq:sand4}
\end{equation}
It follows from (\ref{eq:sand4}) that 
\begin{equation}
S^{\theta_{\psi}}_{j,n_j}\leq S^{\tilde\theta}_{j,n_j}, 
\label{eq:S1}
\end{equation}
where $S^{\theta_{\psi}}_{j,n_j}$ and $S^{\tilde\theta}_{j,n_j}$ are the partial sums associated with 
the series $\left\{X_{\theta_{\psi_i}}\right\}_{i=1}^{\infty}$ and $\left\{X_{\tilde\theta_i}\right\}_{i=1}^{\infty}$, respectively.
The relation (\ref{eq:S1}) enables us to set $c_j=S^{\tilde\theta}_{j,n_j}$. 
Note that since $X_{\theta_{\psi_i}}$ and $\theta_{\psi_i}$ are known in the relation $X_{\theta_{\psi_i}}=\theta_{\psi_i}F^{-}(U_i)$, 
$U_i$ can be obtained from this equality, and can be used to form $X_{\tilde\theta_i}=\tilde\theta_iF^{-}(U_i)$.
In fact, for given $X_i$ and $\theta_{\psi_i}$, we set $X_{\theta_{\psi_i}}=\theta_{\psi_i}F^{-}(U_i)=X_i$, 
and solve for $U_i$ from the last equality, which we then use for construct
$X_{\tilde\theta_i}=\tilde\theta_iF^{-}(U_i)$.


\subsubsection{Illustration with hierarchical exponential distribution}
\label{subsec:exponential}

Let $f_{\theta_i}(x)=\frac{1}{\theta_i}\exp\left(-\frac{x}{\theta_i}\right)$; $x>0$, $\theta_i>0$. Also, let
$g_{\psi_i}(\theta)=\frac{1}{\psi_i}\exp\left(-\frac{\theta}{\psi_i}\right)$; $\theta>0$, $\psi_i>0$.
Here $G_{\psi_i}(\theta)=1-\exp\left(-\frac{\theta}{\psi_i}\right)$. Let $r_i(\epsilon)=\min\left\{i^{(1+\epsilon)},(1+\epsilon)^i\right\}$. Then
$G_{\tilde\theta_i}(\theta)=1-\exp\left(-\theta r_i(\epsilon)\right)$.

The upper bounds for the partial sums in this case can be constructed in the following manner. 
Note that here $X_{\theta_{\psi_i}}=\theta_{\psi_i}F^{-}(U_i)=-\theta_{\psi_i}\log U_i$
and $X_{\tilde\theta_i}=\tilde\theta_iF^{-}(U_i)=-\tilde\theta_i\log U_i$, where, for $i\geq 1$, $U_i\stackrel{iid}{\sim}U(0,1)$.
Also, $\theta_{\psi_i}=-\psi_i\log U^*_i$ and $\tilde\theta_i=-r^{-1}_i(\epsilon)\log U^*_i$, where $U^*_i\stackrel{iid}{\sim}U(0,1)$ and are independent of $U_i$,
for $i\geq 1$. For theoretically sound bound construction in practice, we shall first simulate $\theta_{\psi_i}$ and $\tilde\theta_i$ using the same $U^*_i$. 
Then, we shall obtain $U_i$ from the equality $X_{\theta_{\psi_i}}=-\theta_{\psi_i}\log U_i=X_i$, which we shall use to 
construct $X_{\tilde\theta_i}=-\tilde\theta_i\log U_i$. These, in turn, lead to (\ref{eq:sand4}) and (\ref{eq:S1}).

To obtain the relevant result regarding upper bounds for the partial sums we begin with the following theorem.
\begin{theorem}
\label{lemma:lemma_exp}
Let $\theta_i$ be independent. Then $\sum_{i=1}^{\infty}\theta_i<\infty$ almost surely if and only if $\sum_{i=1}^{\infty}\psi_i<\infty$.
\end{theorem}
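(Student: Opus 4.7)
The plan is to prove the two directions separately. The easy direction, $\sum_{i=1}^{\infty}\psi_i<\infty \Rightarrow \sum_{i=1}^{\infty}\theta_i<\infty$ almost surely, follows immediately from Tonelli's theorem: since $E(\theta_i)=\psi_i$, we have $E\left(\sum_{i=1}^{\infty}\theta_i\right)=\sum_{i=1}^{\infty}\psi_i<\infty$, so $\sum_{i=1}^{\infty}\theta_i<\infty$ almost surely. This step does not even exploit independence, and can be viewed as a direct application of Theorem~1 of \ctn{Kawata72} invoked earlier in the paper.

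For the converse I would work with the Laplace transform of the partial sums. A direct integration gives $E[\exp(-\theta_i)]=(1+\psi_i)^{-1}$, and by independence
\begin{equation*}
E\left[\exp\left(-\sum_{i=1}^{N}\theta_i\right)\right]=\prod_{i=1}^{N}(1+\psi_i)^{-1}.
\end{equation*}
The random variables $\exp(-\sum_{i=1}^{N}\theta_i)$ are bounded in $[0,1]$ and monotonically decrease to $\exp(-\sum_{i=1}^{\infty}\theta_i)$ as $N\to\infty$, so bounded convergence yields
\begin{equation*}
E\left[\exp\left(-\sum_{i=1}^{\infty}\theta_i\right)\right]=\prod_{i=1}^{\infty}(1+\psi_i)^{-1}.
\end{equation*}

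Next I would verify that this product vanishes precisely when $\sum_{i=1}^{\infty}\psi_i=\infty$. Equivalently, $\sum_{i=1}^{\infty}\log(1+\psi_i)=\infty$ iff $\sum_{i=1}^{\infty}\psi_i=\infty$, which is handled by a case split: if $\psi_i>1$ for infinitely many $i$, both series diverge trivially since their general terms do not tend to zero on a subsequence; otherwise $\psi_i\leq 1$ eventually, and the elementary bound $\psi_i/2\leq\log(1+\psi_i)\leq\psi_i$ on $[0,1]$ shows the two series share convergence behaviour. Hence if $\sum_{i=1}^{\infty}\psi_i=\infty$, then $E\left[\exp(-\sum_{i=1}^{\infty}\theta_i)\right]=0$, and since the integrand is non-negative, $\exp(-\sum_{i=1}^{\infty}\theta_i)=0$ almost surely, i.e., $\sum_{i=1}^{\infty}\theta_i=\infty$ almost surely.

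The main technical care lies in two modest places: justifying the interchange of limit and expectation (handled by bounded convergence in view of $0\leq\exp(-\sum_{i=1}^{N}\theta_i)\leq 1$), and the elementary comparison between $\sum\log(1+\psi_i)$ and $\sum\psi_i$ via the case split above. No deep probabilistic tools such as Kolmogorov's zero-one law or the three-series theorem are required, although either would provide an alternative route using the identity $E(\theta_i\wedge 1)=\psi_i(1-e^{-1/\psi_i})$, which obeys the same two-sided comparison with $\min(\psi_i,1)$.
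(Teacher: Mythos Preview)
Your proof is correct and takes a genuinely different route from the paper's. The paper handles both directions via Kolmogorov's three-series theorem: the forward implication is dispatched in one line, while for the converse the paper computes $E\left(\theta_i\mathbb I_{\{\theta_i<R\}}\right)$ and $P(\theta_i>R)$ explicitly and then argues by a case analysis on the specific parametric forms $\psi_i=i^{-p}$ and $\psi_i=q^{-i}$ that the paper actually uses, showing in each regime that one of the three series fails. Your Laplace-transform argument is both more elementary and more general: the identity $E\bigl[\exp(-\sum_i\theta_i)\bigr]=\prod_i(1+\psi_i)^{-1}$ together with the standard comparison between $\sum\psi_i$ and $\sum\log(1+\psi_i)$ proves the converse for \emph{arbitrary} non-negative sequences $(\psi_i)$, not just the power and geometric families treated in the paper. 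The trade-off is that the paper's case-by-case computation makes the three-series mechanism transparent and ties directly into the bound-construction framework of Section~\ref{sec:parametric}, whereas your argument is self-contained but does not display which of the three conditions breaks down.
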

\begin{proof}
By Kolmogorov's three series theorem (see, for example, \ctn{Resnick14}), it is easy to see that 
$\sum_{i=1}^{\infty}\psi_i<\infty$ implies $\sum_{i=1}^{\infty}\theta_i<\infty$ almost surely. 
We now show that for any $R>0$, $\sum_{i=1}^{\infty}E\left(\theta_i\mathbb I_{\left\{\theta_i<R\right\}}\right)=\infty$ if $\sum_{i=1}^{\infty}\psi_i=\infty$.
This would then ensure, by Kolmogorov's three series theorem, that $\sum_{i=1}^{\infty}\theta_i=\infty$ almost surely.

Note that 
\begin{equation}
E\left(\theta_i\mathbb I_{\left\{\theta_i<R\right\}}\right)=
	\psi_i\times\left[1-\exp\left(-\frac{R}{\psi_i}\right)\left(1+\frac{R}{\psi_i}\right)\right].
\label{eq:exp1}
\end{equation}
If $\psi_i\in\Psi^{(d)}_i=\left\{i^{-p}:p\in(-\infty,1]\right\}$, then $\psi_i=i^{-p}$ for some $p\in(-\infty,1]$. 
Suppose first that $p\in(0,1]$. 
In that case,
\begin{equation}
1-\exp\left(-\frac{R}{\psi_i}\right)\left(1+\frac{R}{\psi_i}\right)\rightarrow 1,~\mbox{as}~i\rightarrow\infty.
\label{eq:exp2}
\end{equation}
It follows from (\ref{eq:exp2}) that for any $\varepsilon>0$, there exist $i_0\geq 1$ such that for $i\geq i_0$, the right hand side of (\ref{eq:exp1}) exceeds
$\psi_i(1-\varepsilon)$. Since $\sum_{i=i_0}^{\infty}\psi_i(1-\varepsilon)=\infty$ for $\psi_i=i^{-p}$ where $p\in(0,1]$, it follows that
%
\begin{equation*}
\sum_{i=1}^{\infty}E\left(\theta_i\mathbb I_{\left\{\theta_i<R\right\}}\right)=\infty~\mbox{for}~\psi_i=i^{-p},~\mbox{with}~ p\in(0,1],~\mbox{for any}~R>0.
\end{equation*}
By Kolmogorov's three series theorem it then follows that $\sum_{i=1}^{\infty}\theta_i=\infty$, almost surely.

Now let us consider the case where $\psi_i=i^{-p}$, with $p\leq 0$. If $p=0$, then $\theta_i$ are $iid$, so that trivially, $\sum_{i=1}^{\infty}\theta_i=\infty$,
almost surely.
So, let $p<0$. Direct calculation shows that 
\begin{equation*}
P\left(\theta_i>R\right)=\exp\left(-\frac{R}{\psi_i}\right)=\exp\left(-Ri^p\right)\rightarrow 1,~\mbox{as}~i\rightarrow\infty.
\end{equation*}
Hence, $\sum_{i=1}^{\infty}P\left(\theta_i>R\right)=\infty$, for any $R>0$, so that by Kolmogorov's three series theorem, $\sum_{i=1}^{\infty}\theta_i=\infty$,
almost surely.

Finally, consider the case $\psi_i=q^{-i}$, $q\in[0,1]$. If $q=1$, then $\theta_i$ are $iid$, so that $\sum_{i=1}^{\infty}\theta_i=\infty$, almost surely. 
So, let $q\in[0,1)$. Then	
\begin{equation*}
P\left(\theta_i>R\right)=\exp\left(-Rq^i\right)\rightarrow 1,~\mbox{as}~i\rightarrow\infty,
\end{equation*}
which leads to $\sum_{i=1}^{\infty}\theta_i=\infty$, almost surely.
\end{proof}

Theorem \ref{lemma:lemma_exp} shows that in the case of independence, $\sum_{i=1}^{\infty}\theta_i<\infty$ if and only if $\psi_i\in\Psi^{(c)}_i$, for $i\geq 1$.
In the case of dependence, it can only be guaranteed that $\sum_{i=1}^{\infty}\theta_i<\infty$ if $\psi_i\in\Psi^{(c)}_i$, for $i\geq 1$. It can not be asserted
that $\sum_{i=1}^{\infty}\theta_i=\infty$ if $\psi_i\in\Psi^{(d)}_i$, for $i\geq 1$. 
The implication is that, if $X_i$ are also conditionally independent given $\theta_i$, $S^{\tilde\theta}_{j,n_j}$ of the form (\ref{eq:S1})
corresponds in the hierarchical exponential setup to the maximal convergent series closest to divergence
in the case of independence, but this need not be the case when $\theta_i$ and/or $X_i$ given $\theta_i$ are dependent.
This leads to the following theorem as a consequence of Theorem \ref{lemma:lemma_exp}.

\begin{theorem}
\label{theorem:theorem_exp}
For $i\geq 1$, let $\tilde\theta_i\sim G_{\tilde\theta_i}$, and $X_i\sim f_{\tilde\theta_i}(x_i)=\frac{1}{\tilde\theta_i}\exp\left(-\frac{x_i}{\tilde\theta_i}\right)$. 
Then the partial sums $\tilde S_{j,n_j}$ of the form (\ref{eq:S1}) in the  hierarchical exponential setup 
correspond to the maximal convergent series $\sum_{i=1}^{\infty}X_i$ that is the closest
to divergence, provided $\theta_i$ are independent and conditionally on $\theta_i$, $X_i$ are also independent.
\end{theorem}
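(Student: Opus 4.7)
The plan is to invoke Theorem \ref{lemma:lemma_exp} in a two-layer cascade, first at the hierarchical layer where the means $\tilde\theta_i$ are exponential with deterministic scale parameters $\tilde\psi_i=1/r_i(\epsilon)$, and then conditionally at the $X_i$ layer where the scales are the realized $\tilde\theta_i$. The forward (``if'') direction of Theorem \ref{lemma:lemma_exp} will supply almost-sure convergence of $\sum_i\tilde\theta_i$ from summability of $\tilde\psi_i$, and the same result applied conditionally will promote this to almost-sure convergence of $\sum_i X_i$. The ``closest to divergence'' claim will be handled by showing that $\tilde\psi_i$ is the pointwise supremum over $\Psi^{(c)}_i$, and that the construction collapses to the divergent class exactly at $\epsilon=0$, where the ``only if'' direction of Theorem \ref{lemma:lemma_exp} applies.

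Executing the plan, I would first verify that $\sum_i\tilde\psi_i<\infty$: since $\tilde\psi_i=\max\{i^{-(1+\epsilon)},(1+\epsilon)^{-i}\}$ and the maximum is dominated by the sum, we have $\sum_i\tilde\psi_i\le\sum_i i^{-(1+\epsilon)}+\sum_i(1+\epsilon)^{-i}<\infty$ for every $\epsilon>0$. Since the $\tilde\theta_i$ are independent exponentials with means $\tilde\psi_i$, Theorem \ref{lemma:lemma_exp} applied to $\{\tilde\theta_i\}$ gives $\sum_i\tilde\theta_i<\infty$ almost surely. Next, conditioning on a realization of $(\tilde\theta_i)_{i\ge 1}$ from the full-measure event $\{\sum_i\tilde\theta_i<\infty\}$, the hypothesis of conditional independence makes $X_i$ independent exponentials with deterministic (conditionally on $\tilde\theta$) means $\tilde\theta_i$; the ``if'' direction of Theorem \ref{lemma:lemma_exp} conditionally then yields $\sum_i X_i<\infty$ with conditional probability one, and the tower property promotes this to an unconditional almost-sure statement, establishing that $\sum_i X_i$ is a bona fide convergent series under the independence assumptions.

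For the maximality and closeness-to-divergence statement, I would first note that in either of the two illustrative families defining $\Psi^{(c)}_i$, the pointwise supremum $\sup_{\psi_i\in\Psi^{(c)}_i}\psi_i$ is realized at the boundary exponent and equals $\tilde\psi_i$. Because the exponential family is stochastically monotone in its mean, $\tilde\theta_i$ stochastically dominates every $\theta_{\psi_i}$ with $\psi_i\in\Psi^{(c)}_i$, and the representation $X_{\tilde\theta_i}=-\tilde\theta_i\log U_i$ inherits this dominance from (\ref{eq:sand4}); consequently the partial sums $\tilde S_{j,n_j}$ are maximal among convergent configurations in the independent case. That the resulting envelope is the one \emph{closest} to divergence follows from the boundary behaviour: at $\epsilon=0$ one has $r_i(0)=\min\{i,1\}=1$, so $\tilde\psi_i=1\in\Psi^{(d)}_i$, and the ``only if'' direction of Theorem \ref{lemma:lemma_exp} then forces $\sum_i\tilde\theta_i=\infty$ almost surely, so that any attempt to strictly enlarge the envelope past the $\epsilon>0$ construction destroys convergence. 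The main obstacle I anticipate is precisely phrasing ``maximal closest to divergence'' so as to respect the openness of $\Psi^{(c)}_i$ (the supremum in $p$ or $q$ is not attained within $\Psi^{(c)}_i$): I would resolve this by interpreting the claim as ``for each $\epsilon>0$ the envelope gives convergence, while the limiting $\epsilon\to 0^+$ envelope gives divergence,'' which is the natural probabilistic analogue of the boundary-of-convergence statement used in the deterministic setting of \ctn{Roy17}.
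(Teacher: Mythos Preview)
Your proposal is correct and follows essentially the same approach as the paper: the paper states Theorem~\ref{theorem:theorem_exp} simply ``as a consequence of Theorem~\ref{lemma:lemma_exp},'' with the preceding paragraph noting that under independence the if-and-only-if of Theorem~\ref{lemma:lemma_exp} places $\tilde\theta_i$ at the boundary between $\Psi^{(c)}_i$ and $\Psi^{(d)}_i$, so that $S^{\tilde\theta}_{j,n_j}$ is maximal among convergent configurations and closest to divergence. Your two-layer cascade (first $\tilde\psi_i\mapsto\tilde\theta_i$, then conditionally $\tilde\theta_i\mapsto X_i$), your explicit verification that $\sum_i 1/r_i(\epsilon)<\infty$, and your $\epsilon\to 0^+$ boundary argument are exactly the details the paper leaves implicit; nothing in your route differs in substance from the paper's intended reasoning.
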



\subsection{Construction of bounds for the partial sums in the general case}
\label{sec:bound_general}
In the general situation where either $X_i$ given $\theta_i$ and $\theta_i$ are not independent and/or $X_i$ 
is supported on the real line, it is not possible to mathematically establish that $S^{\tilde\theta}_{j,n_j}$ corresponds to the maximal convergent series
closest to divergence.

In the general case we propose to construct bounds with arbitrary sequence of $U_i$'s, in the following way. First note that if $\sum_{i=1}^{\infty}X_i<\infty$,
then, letting $S_{j,n_j}$ denote the partial sum associated with the above series, $\left|S_{j,n_j}\right|\rightarrow 0$ as $j\rightarrow\infty$, irrespective of the
choice of the $U_i$'s. 
Theoretically, we need not have $\left|S_{j,n_j}\right|\leq \left|S^{\tilde\theta}_{j,n_j}\right|$ even in the case of convergence, but we can expect that
\begin{equation}
\left|S_{j,n_j}\right|\leq \left|S^{\tilde\theta}_{j,n_j}\right|+\frac{a}{j} 
\label{eq:S2}
\end{equation}
holds in the case of convergence, where $a~(>0)$ is some suitable constant. The idea is to slightly inflate $\left|S^{\tilde\theta}_{j,n_j}\right|$ so that 
(\ref{eq:S2}) holds. 
We propose (\ref{eq:S2}) as an upper bound for the partial sums in the general setup.

\subsubsection{Illustration with normal distribution}
\label{subsec:example_normal}
Assume that $X_i\sim N\left(\mu_i,\sigma^2_i\right)$, independently for $i\geq 1$. Assume also that for $i\geq 1$, independently, $\mu_i\sim N\left(0,\phi^2_i\right)$
and $\sigma^2_i\sim\mathcal E(\vartheta_i)$, that is, the exponential distribution with mean $\vartheta_i$. Let $\phi^2_i\in\Psi^{(c)}_i\cup \Psi^{(d)}_i$ and 
$\vartheta_i\in \Psi^{(c)}_i\cup \Psi^{(d)}_i$. 

It is well-known (see, for example, Exercise 7.7.14 of \ctn{Resnick14}) that $\sum_{i=1}^{\infty}X_i<\infty$ almost surely if and only 
if $\sum_{i=1}^{\infty}\mu_i<\infty$ and $\sum_{i=1}^{\infty}\sigma^2_i<\infty$ almost surely. This result, along with its two different proofs can be found
in page 319 of \ctn{Driver10}. 
Here, letting $\Psi^{(c)}_i=\left\{i^{-p}:p\in[1+\epsilon,M_1]\right\}$ or $\Psi^{(c)}_i=\left\{q^{-i}:q\in[1+\epsilon,M_2]\right\}$, 
where $M_1>1+\epsilon$, $M_2>1+\epsilon$, and $\tilde r_i=\max\{i^{M_1},M^i_2\}$, we have
\begin{equation}
G_{\tilde\mu_i}(\mu)=\left\{\begin{array}{c}\Phi\left(\mu \sqrt{r_i(\epsilon)}\right)~\mbox{if}~\mu\geq 0;\\ 
1-\Phi\left(-\mu\sqrt{\tilde r_i}\right)~\mbox{if}~\mu<0,\end{array}\right.
\label{eq:G_mu}
\end{equation}
and
\begin{equation}
G_{\tilde\sigma^2_i}(\sigma^2)=1-\exp\left(-\sigma^2r_i(\epsilon)\right),
\label{eq:G_sigmasq}
\end{equation}
where $r_i(\epsilon)=\min\left\{i^{(1+\epsilon)},(1+\epsilon)^i\right\}$.
In this case, due to independence, (\ref{eq:G_mu}) and (\ref{eq:G_sigmasq}) do correspond to maximal convergent series for $\sum_{i=1}^{\infty}\mu_i$ and
$\sum_{i=1}^{\infty}\sigma^2_i$, and it holds that $\mu_i\leq\tilde\mu_i$ and $\sigma^2_i\leq\tilde\sigma^2_i$, but since $X_i$ is supported on the entire
real line, these do not guarantee that even $X_i\leq X_{\tilde\theta_i}$ holds, where $\tilde\theta_i=(\tilde\mu_i,\tilde\sigma^2_i)$. 
For further clarity, note that $X_i=\mu_i+\sigma_iZ_i$, where $Z_i\stackrel{iid}{\sim}N(0,1)$, for $i\geq 1$. Even though it is possible to theoretically ensure 
$\mu_i\leq\tilde\mu_i$ and $\sigma^2_i\leq\tilde\sigma^2_i$, $Z_i$ takes values on the entire real line, and hence $X_i\leq X_{\tilde\theta_i}$ can not be
guaranteed.
Moreover, it is not possible to simulate from $G_{\tilde\mu_i}$ by inverting the distribution function.
However, we can still expect (\ref{eq:S2}) to hold, for appropriate choice of $a~(>0)$.

An important point to observe is that as $i\rightarrow\infty$, $1-\Phi\left(-\mu\sqrt{\tilde r_i}\right)\rightarrow 0$, so that under (\ref{eq:G_mu})
the distribution of $\tilde\mu_i$ supports only non-negative values, as $i\rightarrow\infty$. This results in too large an upper bound, which makes it hard to detect
divergences. Replacing this distribution of $\tilde\mu_i$ with $\tilde\mu_i\sim N\left(0,\sigma^2_{\tilde \mu_i}\right)$, with
$\sigma^2_{\tilde \mu_i}=1/r_i(\epsilon)$, resulted in more useful bounds for the partial sums in our simulation examples.

Note that in the case of independence, study of convergence of $S_{1,\infty}(\omega)$ for only one $\omega\in\mathfrak S$ is necessary,
since $S_{1,\infty}(\omega)$ either converges for almost all $\omega\in\mathfrak S$ or diverges 
for almost all $\omega\in\mathfrak S$.
The rest of the theory remains the same as that of \ctn{Roy17}.

\section{Simulation experiments with parametric upper bound}
\label{sec:simstudy1}

\subsection{Example 1: Hierarchical exponential distribution}
\label{subsec:bayesian_exp}
We first consider the setup $X_i\sim\mathcal E(\theta_i)$ and $\theta_i\sim\mathcal E(\psi_i)$; $i\geq 1$. Thus $X_i$ has a two-stage hierarchical exponential distribution.
Following the bound construction method detailed in Section \ref{subsec:exponential}, setting $\epsilon=0.001$
we considered the upper bound given by $c_j=S^{\tilde\theta}_{j,n_j}$, where $n_j=1000$, for $j=1,\ldots,K$, with $K=2000$. 

We implement our recursive Bayesian procedure on an ordinary dual core laptop, splitting the sum of $1000$ terms at each step of $2000$ stages into the two processors
using the Message Passing Interface (MPI) protocol in our C programming environment.
In our implementation, the Bayesian recursive algorithm takes less than a second to yield result.

The results of our convergence analyses of this setup are depicted in Figure \ref{fig:example_exp}, which shows that
the convergence behaviour of the random series are always correctly determined by our recursive Bayesian procedure with the aforementioned upper bound.
That the method performs so well in spite of such small sample size, seems to very encouraging.


\begin{figure}
\centering
\subfigure [Divergence.]{ \label{fig:exp1}
\includegraphics[width=6cm,height=5cm]{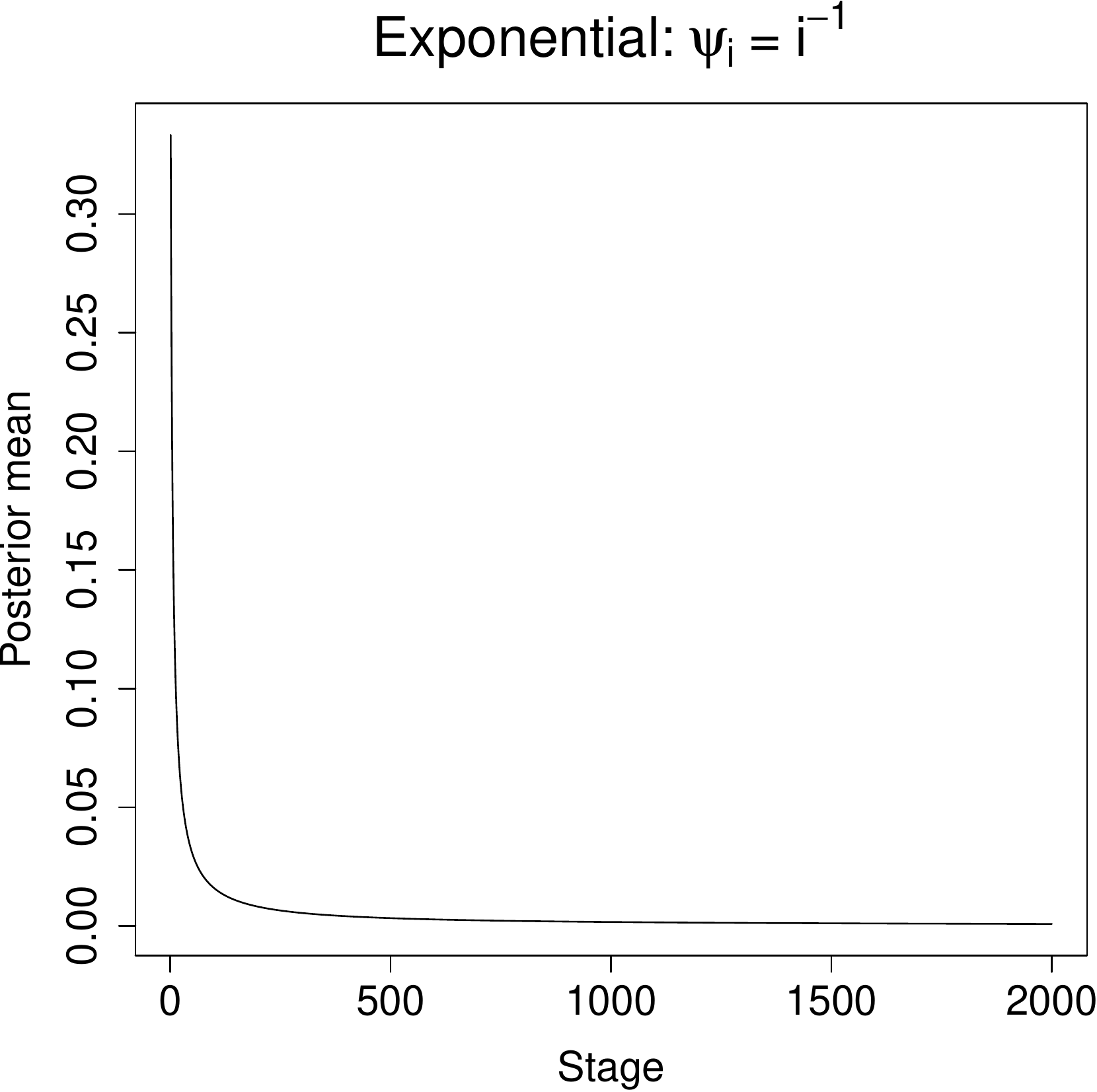}}
\hspace{2mm}
\subfigure [Convergence.]{ \label{fig:exp2}
\includegraphics[width=6cm,height=5cm]{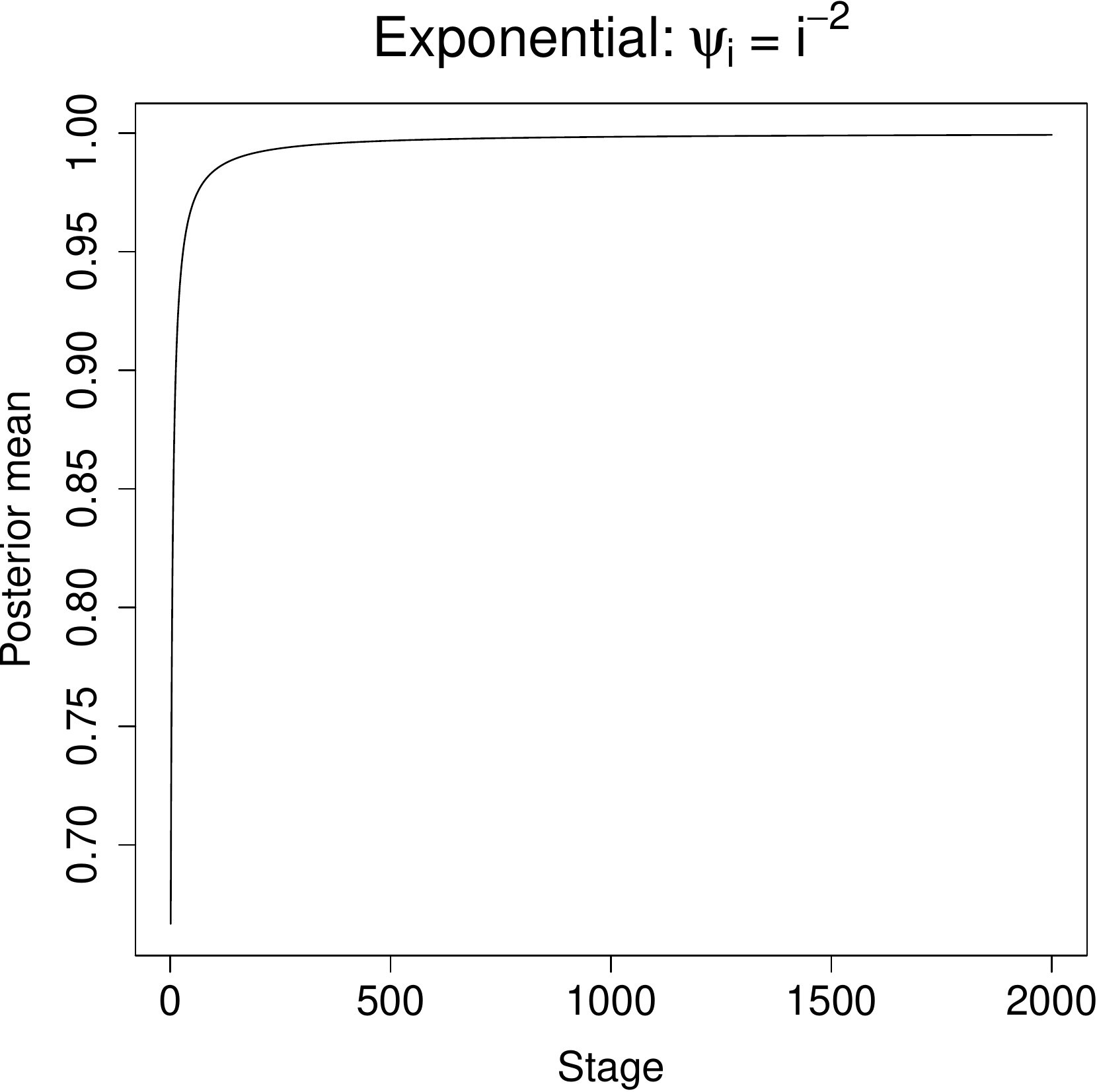}}\\
\subfigure [Convergence.]{ \label{fig:exp3}
\includegraphics[width=6cm,height=5cm]{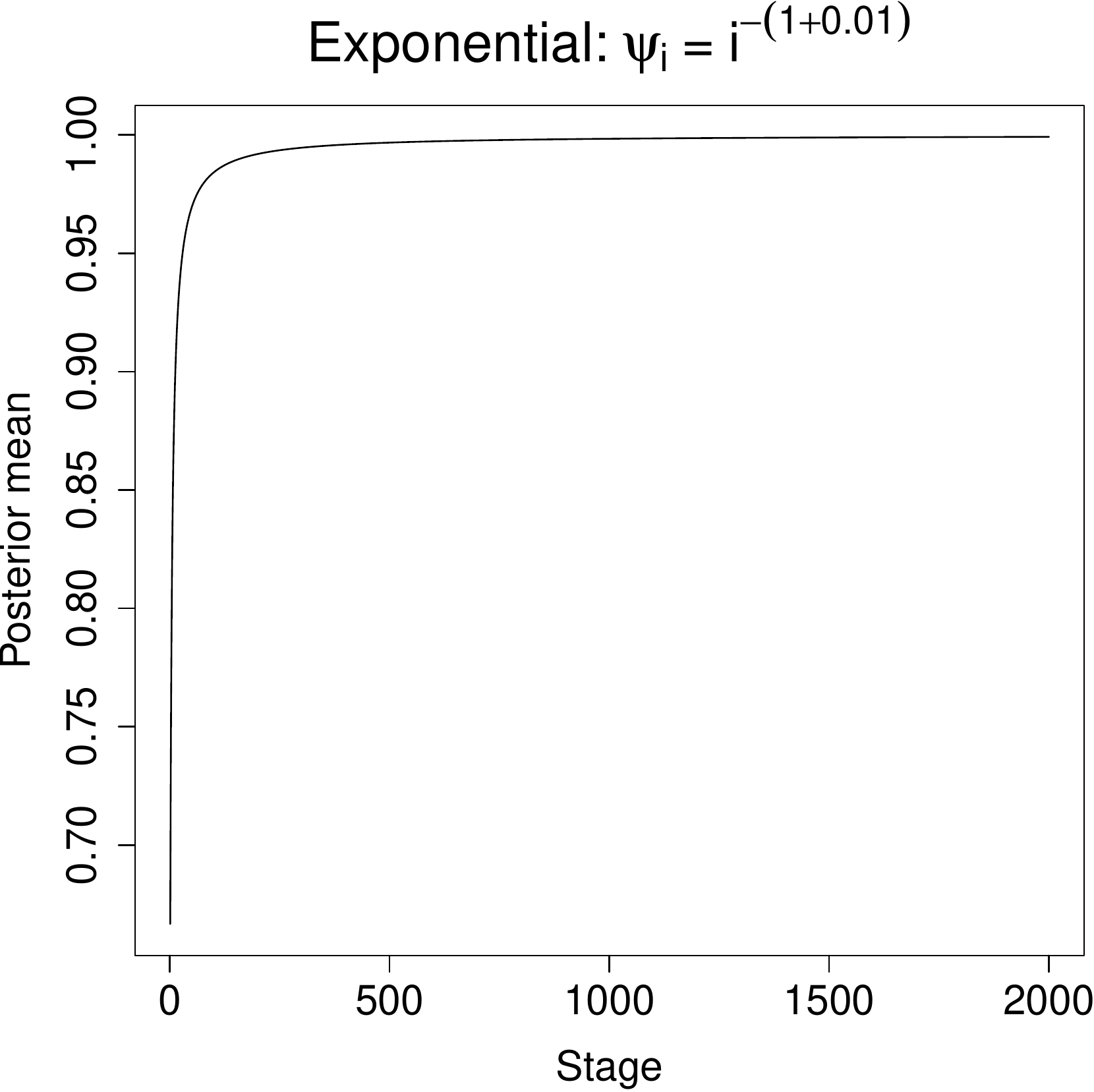}}
\hspace{2mm}
\subfigure [Convergence.]{ \label{fig:exp4}
\includegraphics[width=6cm,height=5cm]{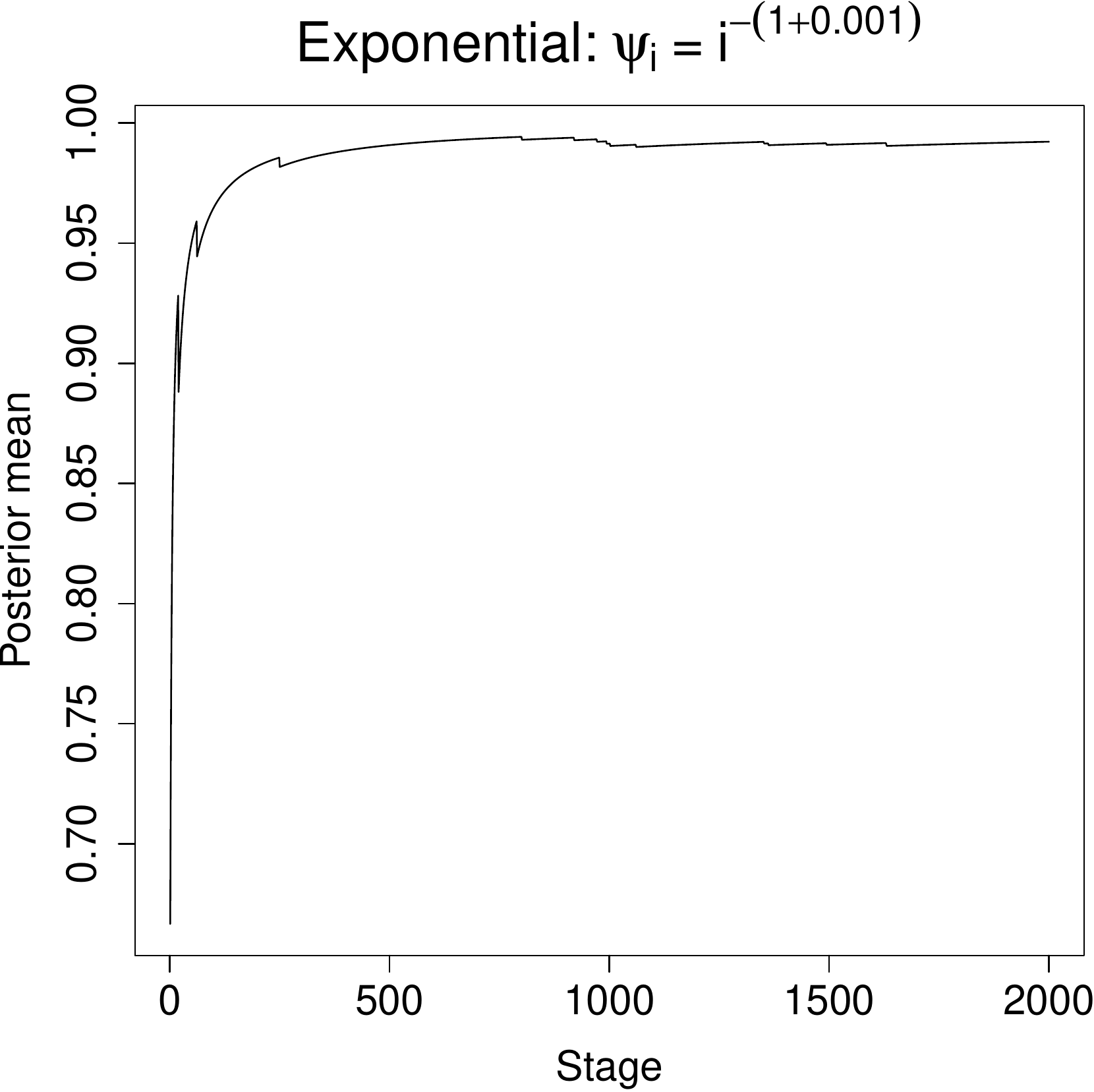}}\\
\subfigure [Convergence.]{ \label{fig:exp5}
\includegraphics[width=6cm,height=5cm]{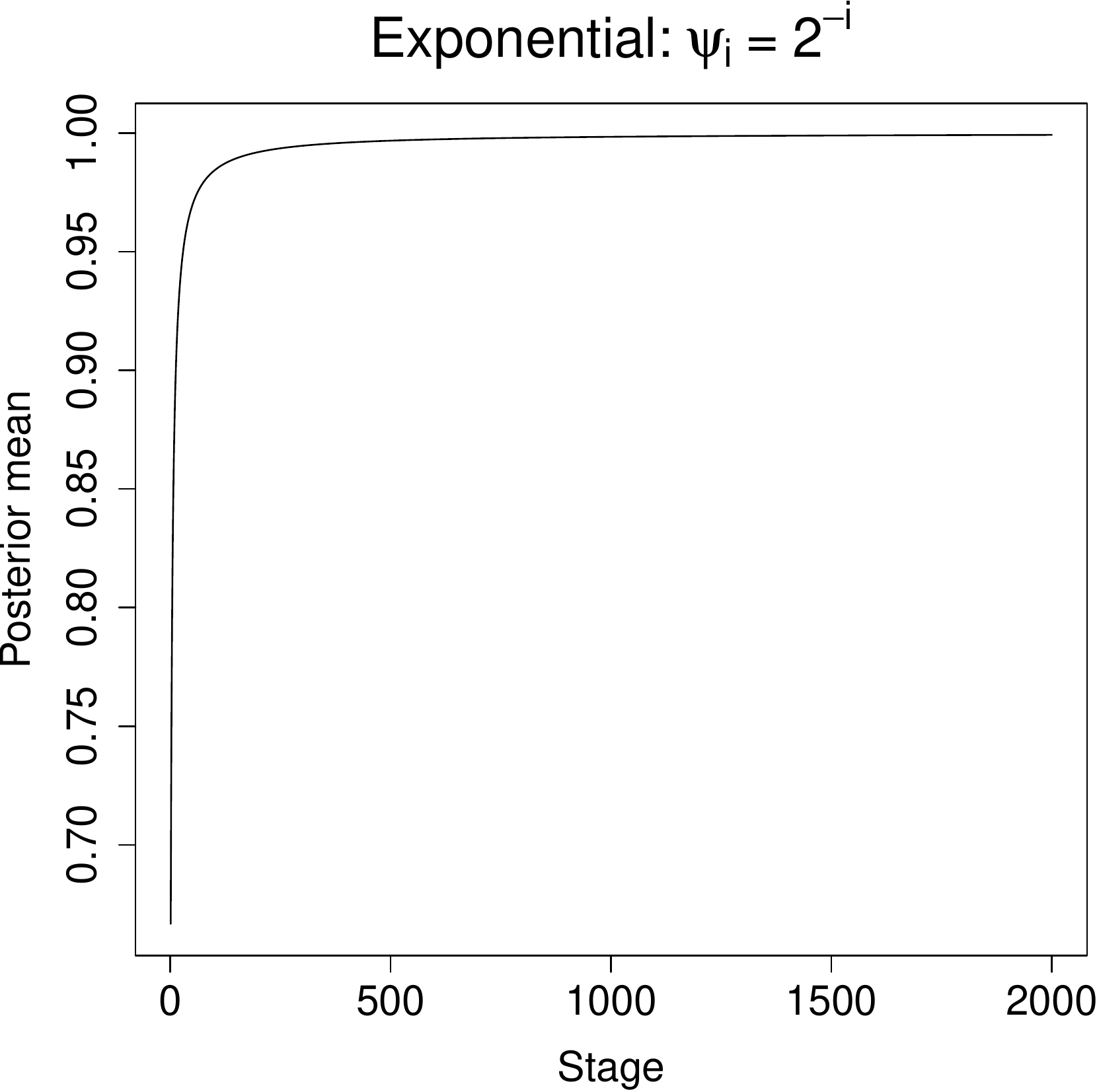}}
\hspace{2mm}
\subfigure [Convergence.]{ \label{fig:exp6}
\includegraphics[width=6cm,height=5cm]{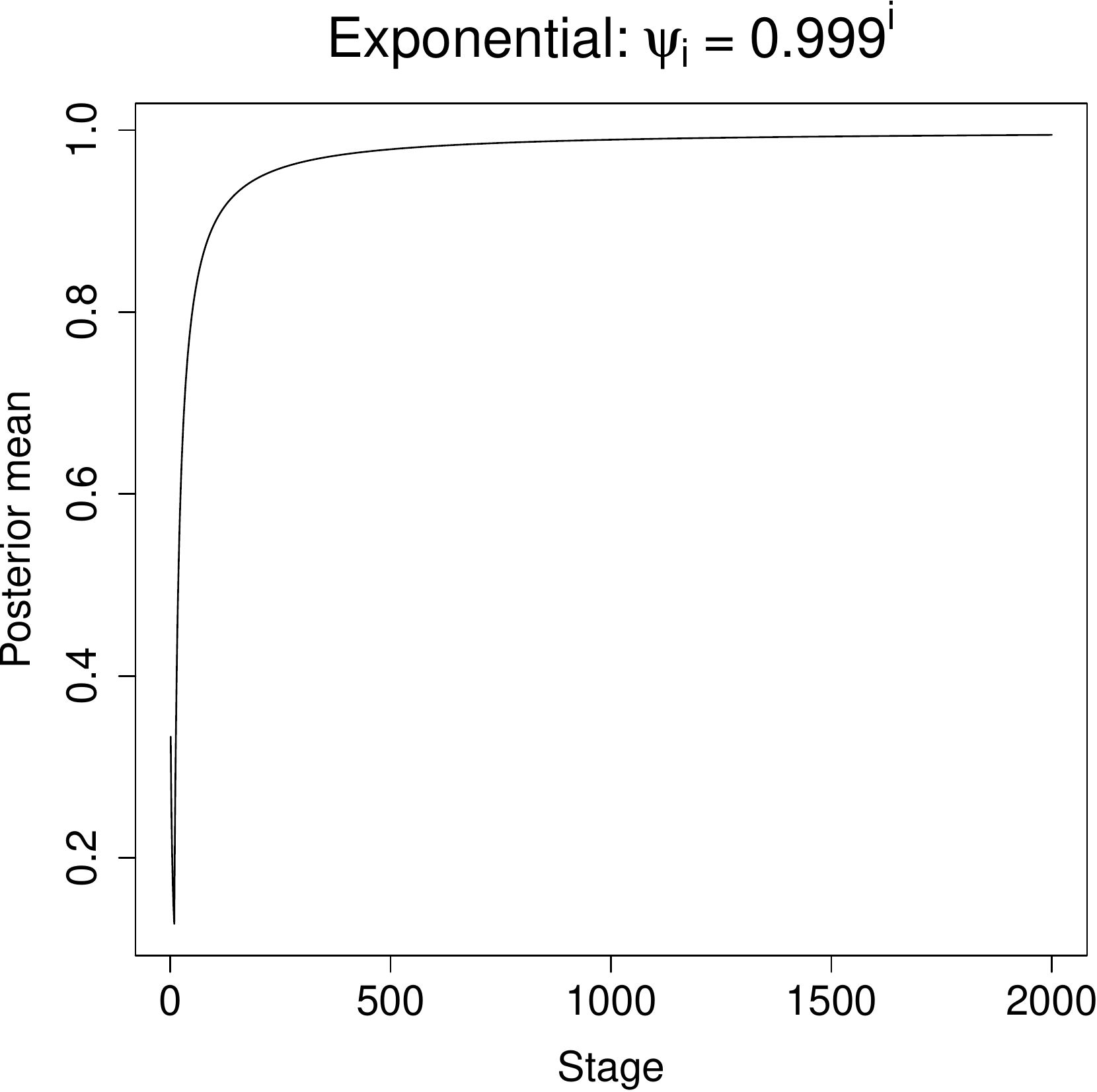}}\\
\subfigure [Divergence.]{ \label{fig:exp7}
\includegraphics[width=6cm,height=5cm]{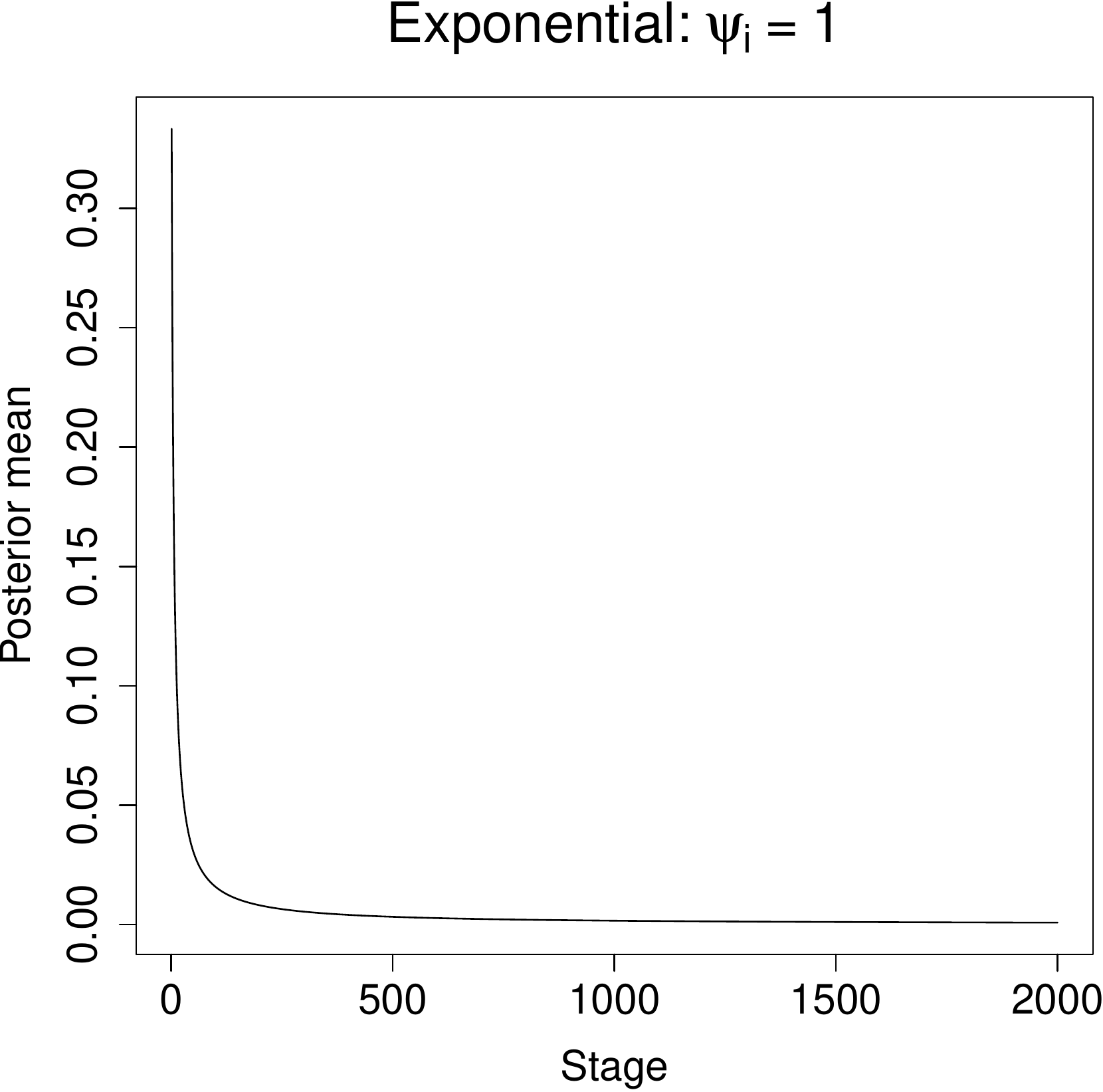}}
\caption{Example 1: Convergence and divergence for exponential series.}
\label{fig:example_exp}
\end{figure}

\subsection{Example 2: Hierarchical normal distribution}
\label{subsec:bayesian_normal}
Now let $X_i\sim N\left(\mu_i,\sigma^2_i\right)$, $\mu_i\sim N\left(0,\phi^2_i\right)$
and $\sigma^2_i\sim\mathcal E(\vartheta_i)$; $i\geq 1$. This specifies a two-stage hierarchical normal distribution for $X_i$.
%
For this setup, our results of convergence analyses are provided in Figure \ref{fig:example_normal}. 
Following the later discussion in Section \ref{subsec:example_normal} we
construct $S^{\tilde\theta}_{j,n_j}$ using $\tilde\mu_i\sim N\left(0,\sigma^2_{\tilde \mu_i}\right)$, with
$\sigma^2_{\tilde \mu_i}=1/r_i(\epsilon)$.
Consequently, setting $\epsilon=0.001$, we consider the upper bound
given by $c_j=\left|S^{\tilde\theta}_{j,n_j}\right|+\frac{0.1}{j}$, with $n_j=10^6$; $j=1,\ldots,K$, with
$K=10^6$. This many times longer run compared to the exponential simulation study setup detailed in Section \ref{subsec:bayesian_exp} 
is required since mathematically valid parametric upper bound for the partial sums
does not seem to be available in this case of normality. Indeed, as we shall see, even such enormously long runs turn out to be less than adequate in most cases.

Recall that in the case of exponential distribution, $n_j=1000$ for $j=1,\ldots,K$, with $K=2000$. Thanks to such small sample, it has been possible to obtain
the results in less than a second, even on an ordinary dual core laptop. However, in the current normality scenario, such pleasant computational perspective
is unimaginable.
Fortunately, we have access to 
a parallel computing architecture associated with a VMWare consisting of $100$ 64-bit cores, running at 2.80 GHz speed, and having 1 TB memory. 
Implementation of our parallelized C codes on the available $100$ cores takes about $52$ minutes. 

The convergence behaviour of the random series are correctly determined, but panels (f) and (g)
of Figures \ref{fig:example_normal} indicate very slow divergence. Indeed, these figures depict the posterior means in the last $5\times 10^5$ 
iterations of the total $K=10^6$ iterations.
We found that slow divergence is generally the case when one of $\sum_{i=1}^{\infty}\mu_i$ or $\sum_{i=1}^{\infty}\sigma^2_i$ is a divergent series of the form
$\sum_{i=1}^{\infty}i^{-p}$, with $1-\zeta\leq p\leq 1$, where $\zeta~(>0)$ is small.
\begin{figure}
\centering
\subfigure [Divergence.]{ \label{fig:normal1}
\includegraphics[width=6cm,height=5cm]{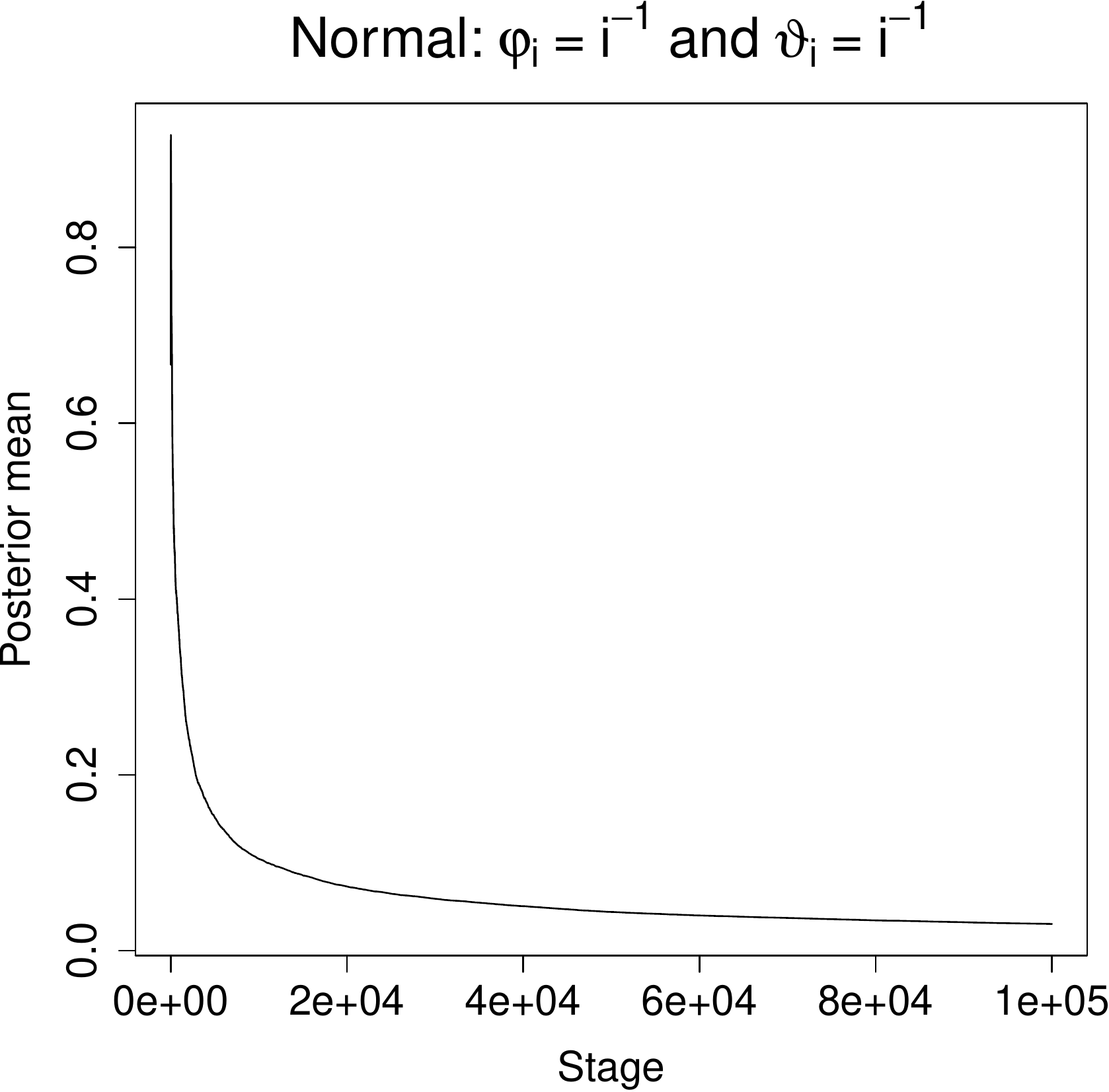}}
\hspace{2mm}
\subfigure [Convergence.]{ \label{fig:normal2}
\includegraphics[width=6cm,height=5cm]{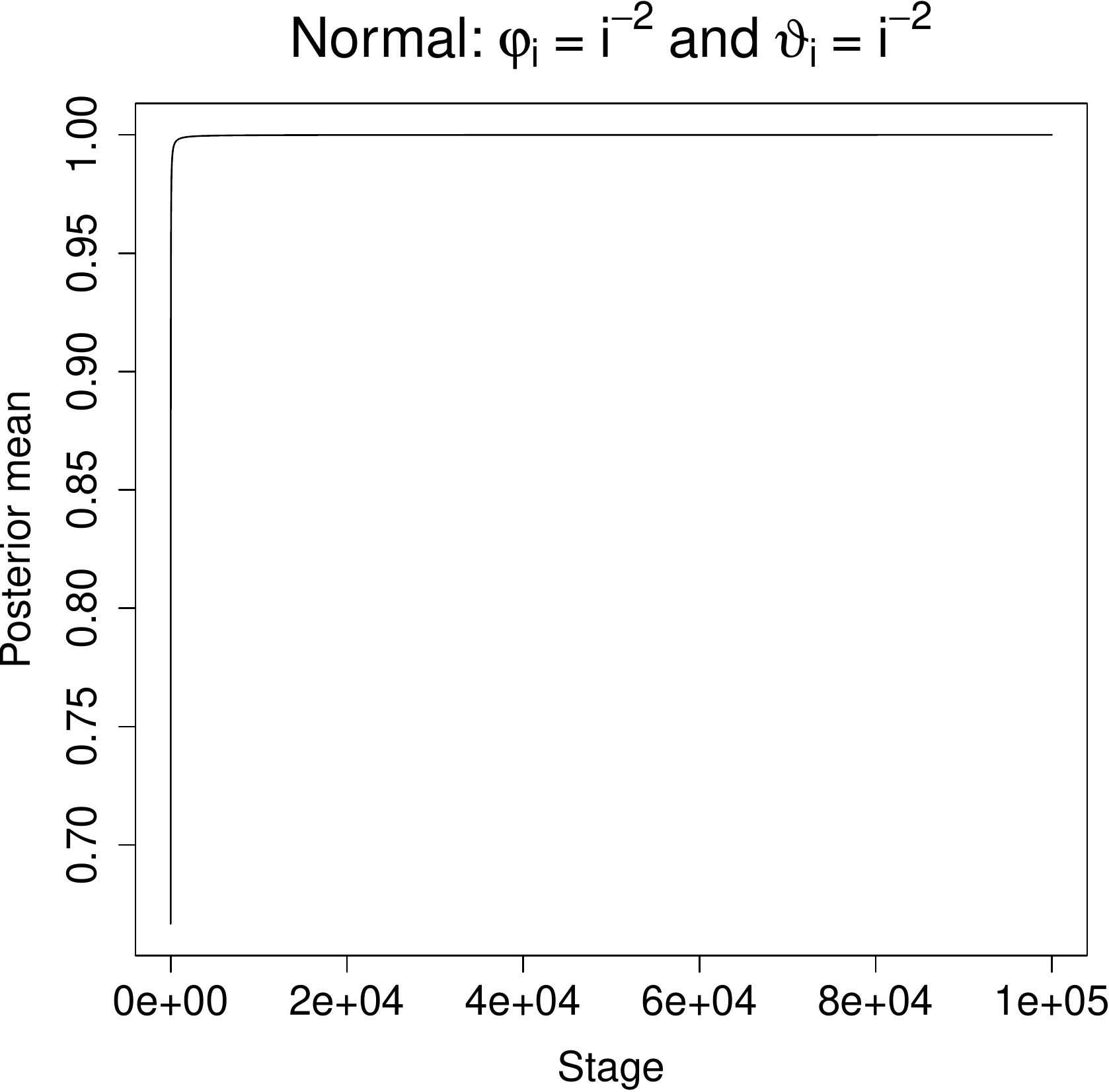}}\\
\subfigure [Convergence.]{ \label{fig:normal3}
\includegraphics[width=6cm,height=5cm]{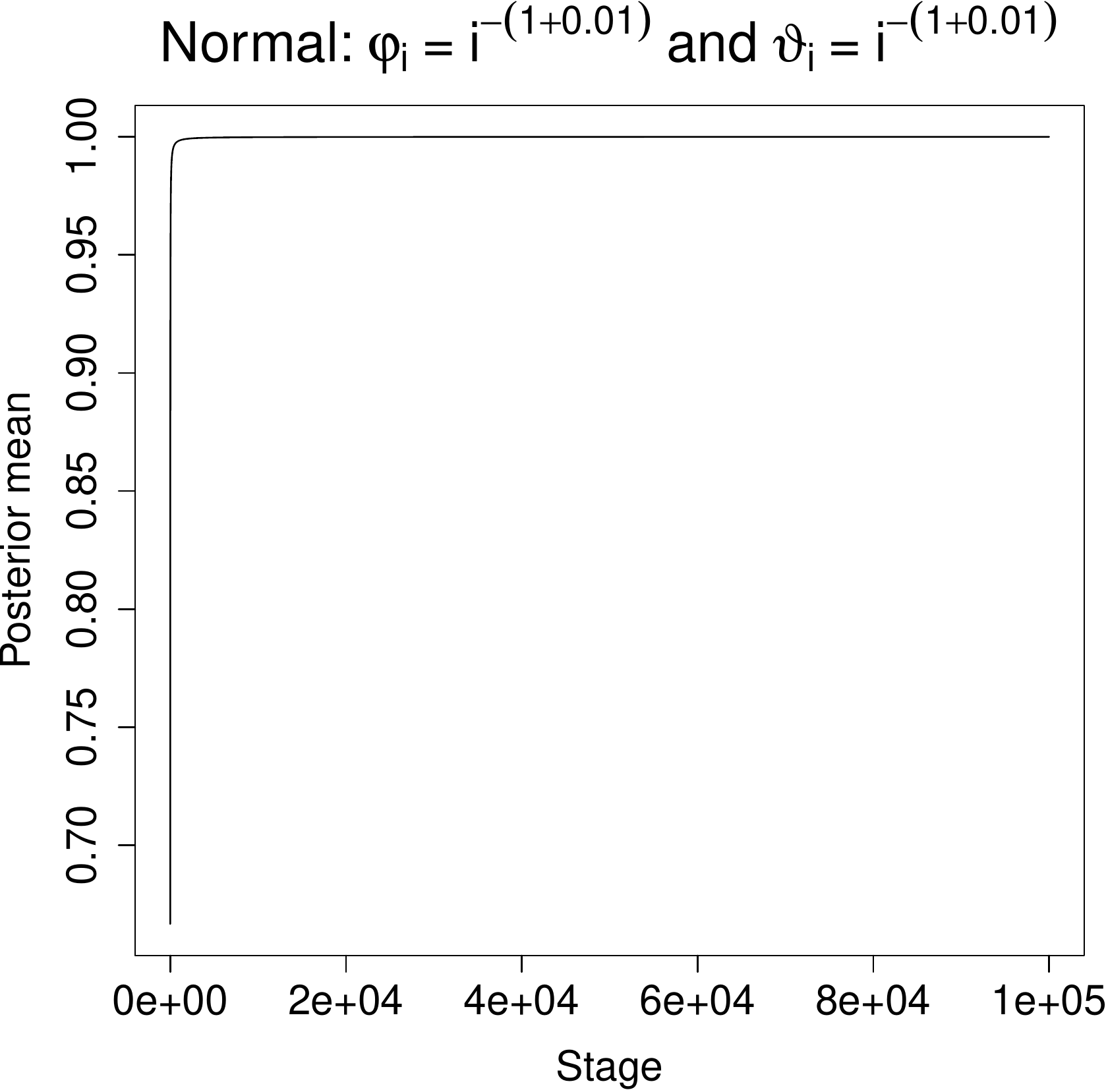}}
\hspace{2mm}
\subfigure [Divergence.]{ \label{fig:normal4}
\includegraphics[width=6cm,height=5cm]{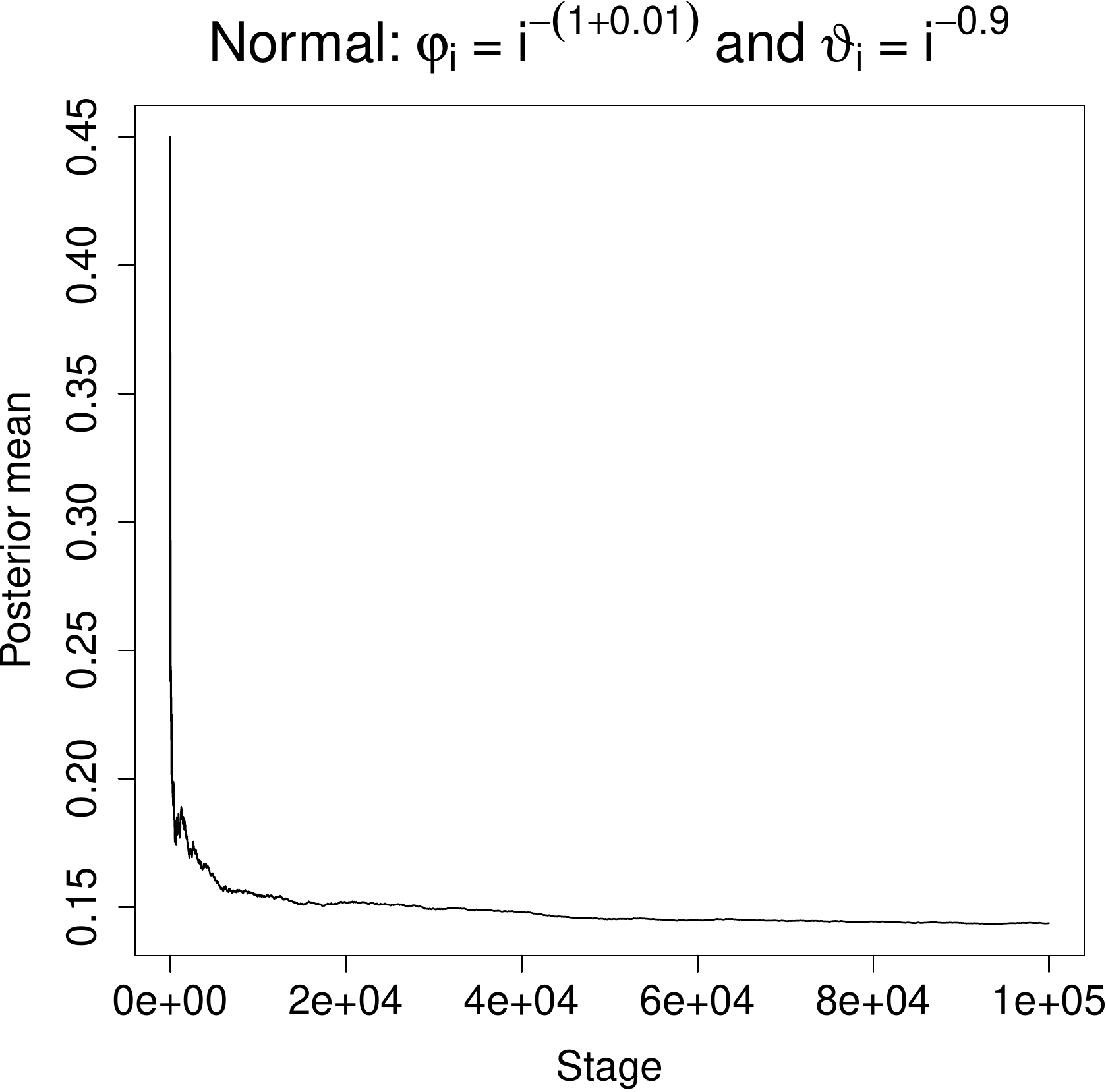}}\\
\subfigure [Divergence.]{ \label{fig:normal5}
\includegraphics[width=6cm,height=5cm]{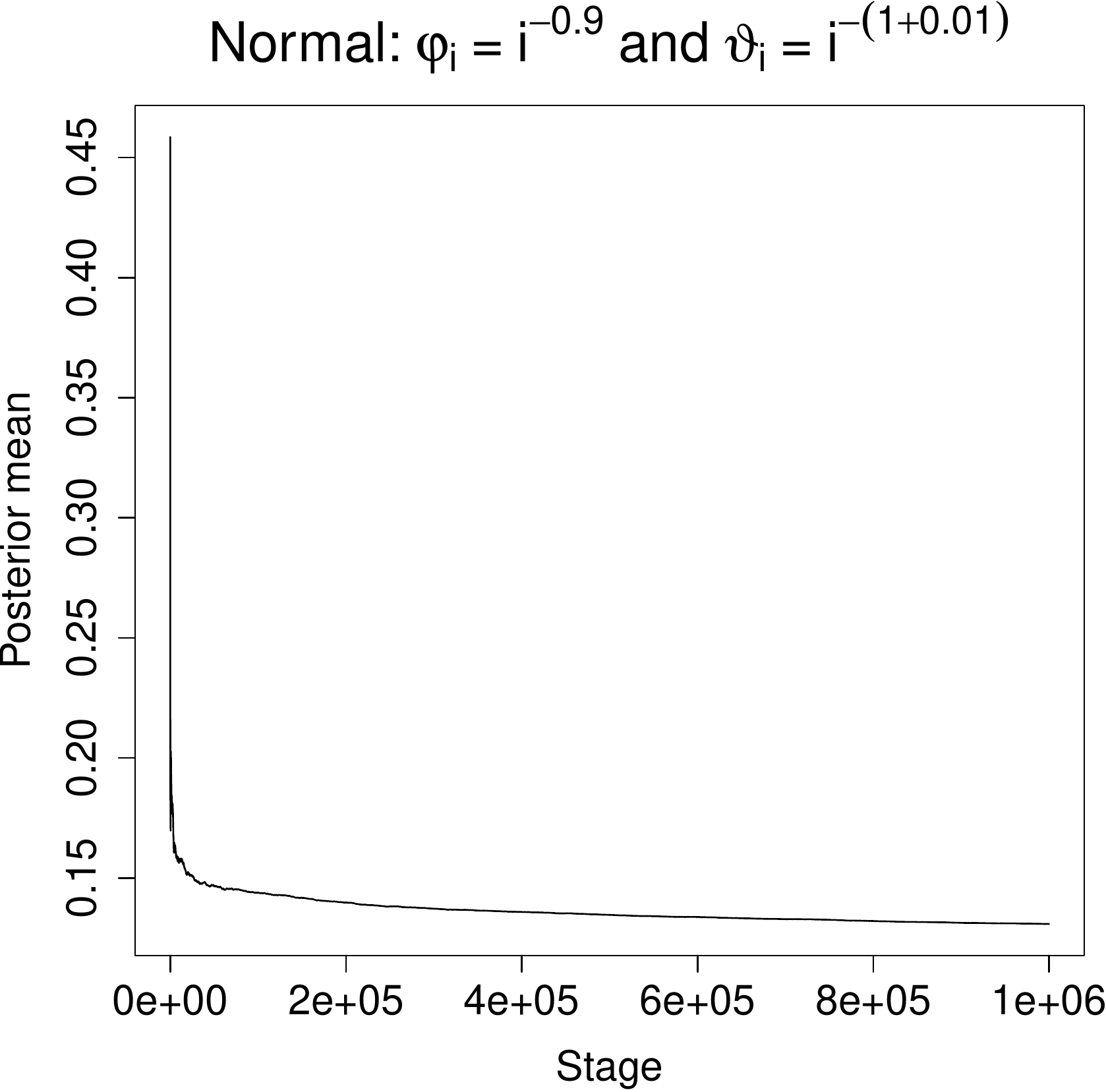}}
\hspace{2mm}
\subfigure [Divergence.]{ \label{fig:normal6}
\includegraphics[width=6cm,height=5cm]{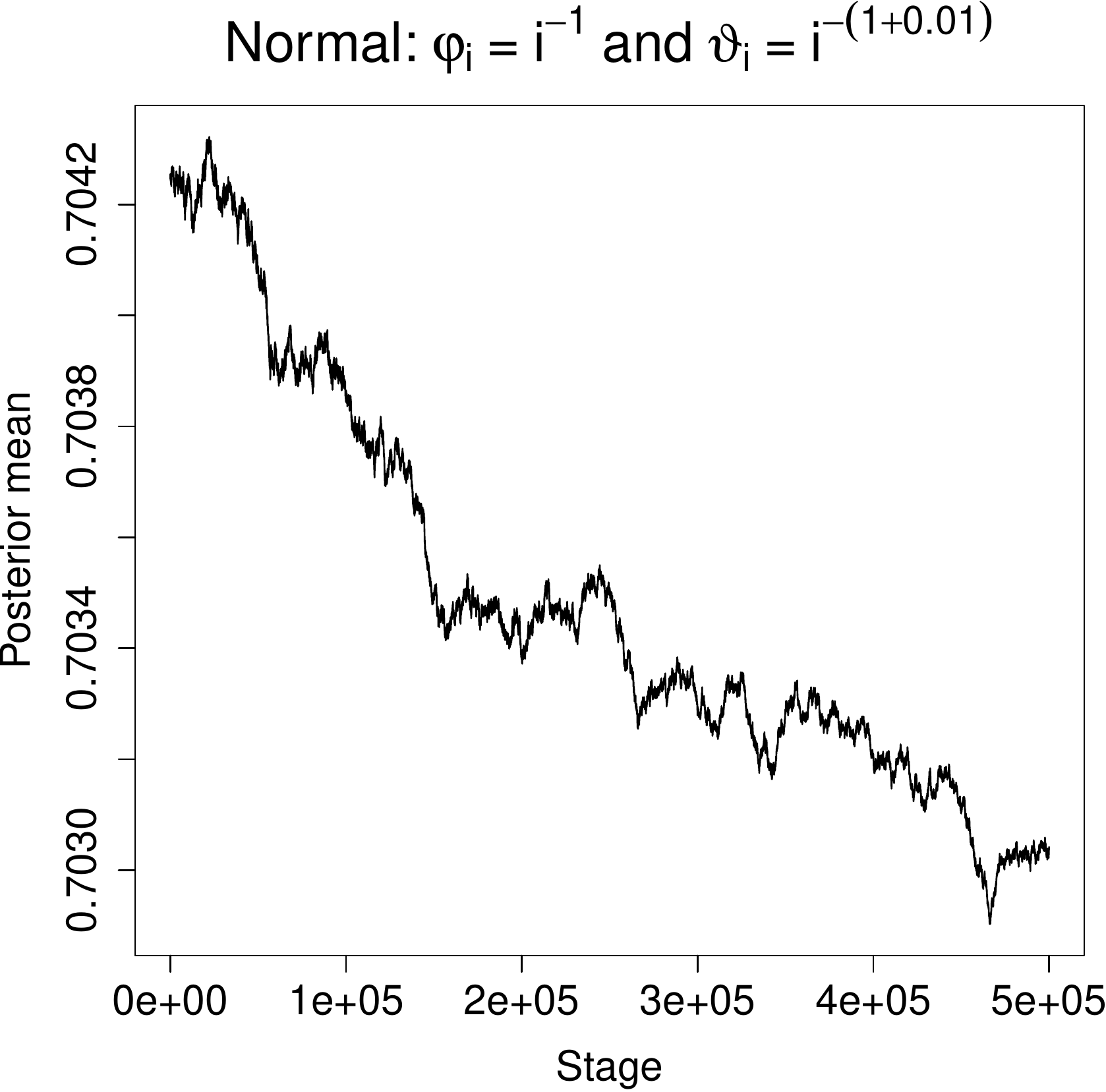}}\\
\subfigure [Divergence.]{ \label{fig:normal7}
\includegraphics[width=6cm,height=5cm]{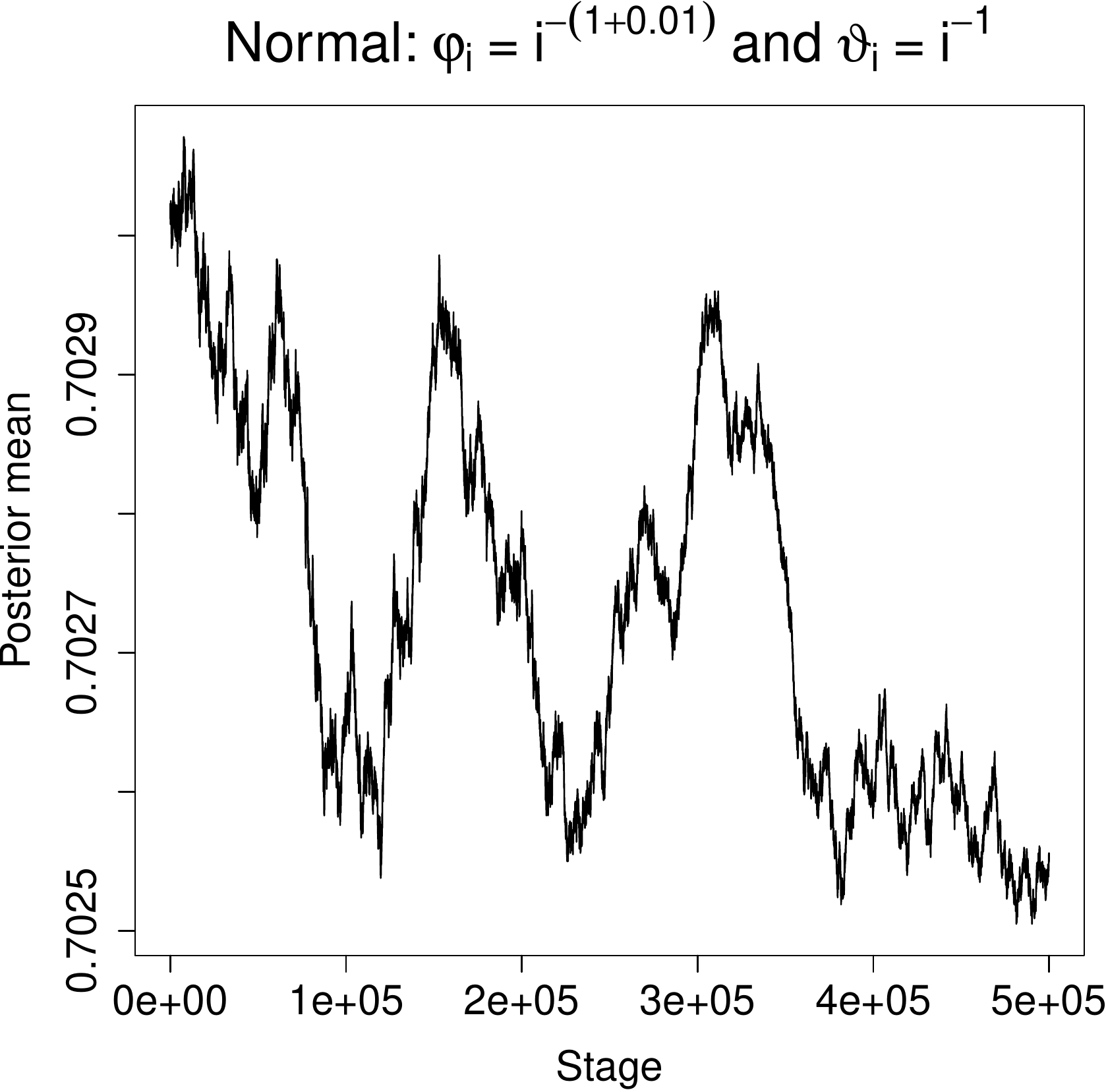}}
\caption{Example 2: Convergence and divergence for normal series.}
\label{fig:example_normal}
\end{figure}

\subsection{Example 3: Dependent hierarchical normal distribution}
\label{subsec:bayesian_normal_dependent}

So far we have considered examples of random series where the terms are independent. The actual convergence properties of these random series are
known by Kolmogorov's three series theorem, and knowledge of the convergence properties helped validate our Bayesian idea in these cases.

Since theoretically our Bayesian method characterizes all random series irrespective of their dependence structure, we now turn to empirical validation of our Bayesian method 
even in dependent situations. Note that Kolmogorov's three series theorem no longer holds for dependent situations, and 
we need to create examples where the actual convergence properties are known, in spite of dependence.

A simple example is as follows. We consider $[X_i|\xi]\sim N\left(\mu_i,\xi\sigma^2_i\right)$, independently, for $i\geq 1$, where $\xi\sim U(0,1)$. 
Thus, $X_i$ are conditionally independent given $\xi$, but unconditionally, they are dependent. As in the case
of the independent normal example, we assume that $\mu_i\sim N\left(0,\phi^2_i\right)$ and $\sigma^2_i\sim\mathcal E(\vartheta_i)$. 
Hence, we now deal with a dependent, hierarchical
normal setup for the $X_i$.
Since given $\xi$, Kolmogorov's three series theorem is applicable and the series is either convergent or divergent almost surely, integrating over the finite random
variable $\xi$ does not alter the convergence properties, in spite of dependence.
To see this, note that if almost surely $\sum_{i=1}^{\infty}X_i<\infty$ given $\xi$, then letting $P$ stand for the probability of events corresponding to $X_i$ 
as well as the probability measure associated with $\xi$, the following hold:
\begin{align}
	P\left(\sum_{i=1}^{\infty}X_i<\infty\right)&=\int P\left(\sum_{i=1}^{\infty}X_i<\infty\bigg |\xi\right)dP(\xi)\notag\\
	&=\int 1\times dP(\xi)\notag\\
	&=1.\notag
\end{align}
Similarly, if $\sum_{i=1}^{\infty}X_i=\infty$ almost surely, given $\xi$, then
\begin{align}
	P\left(\sum_{i=1}^{\infty}X_i=\infty\right)&=\int P\left(\sum_{i=1}^{\infty}X_i=\infty\bigg |\xi\right)dP(\xi)\notag\\
	&=\int 1\times dP(\xi)\notag\\
	&=1.\notag
\end{align}

Setting $\epsilon=0.001$, as in the independent normal case
we considered the upper bound $c_j=\left|S^{\tilde\theta}_{j,n_j}\right|+\frac{0.1}{j}$, with $n_j=10^6$ for $j=1,\ldots,K$, where $K=10^6$. 
VMWare implementation of our parallel codes again takes about $52$ minutes with $100$ cores.
Convergence analyses for our dependent normal distribution are provided in Figure \ref{fig:example_normal_dependent}. 
Again, convergence behaviour of the random series are correctly determined, but as is evident from the figures, the rates of convergence and divergence turned
out to be very slow in general. All these figures depict the posterior means in the last $5\times 10^5$ 
iterations of a total $10^6$ iterations.
\begin{figure}
\centering
\subfigure [Divergence.]{ \label{fig:normal_dep1}
\includegraphics[width=6cm,height=5cm]{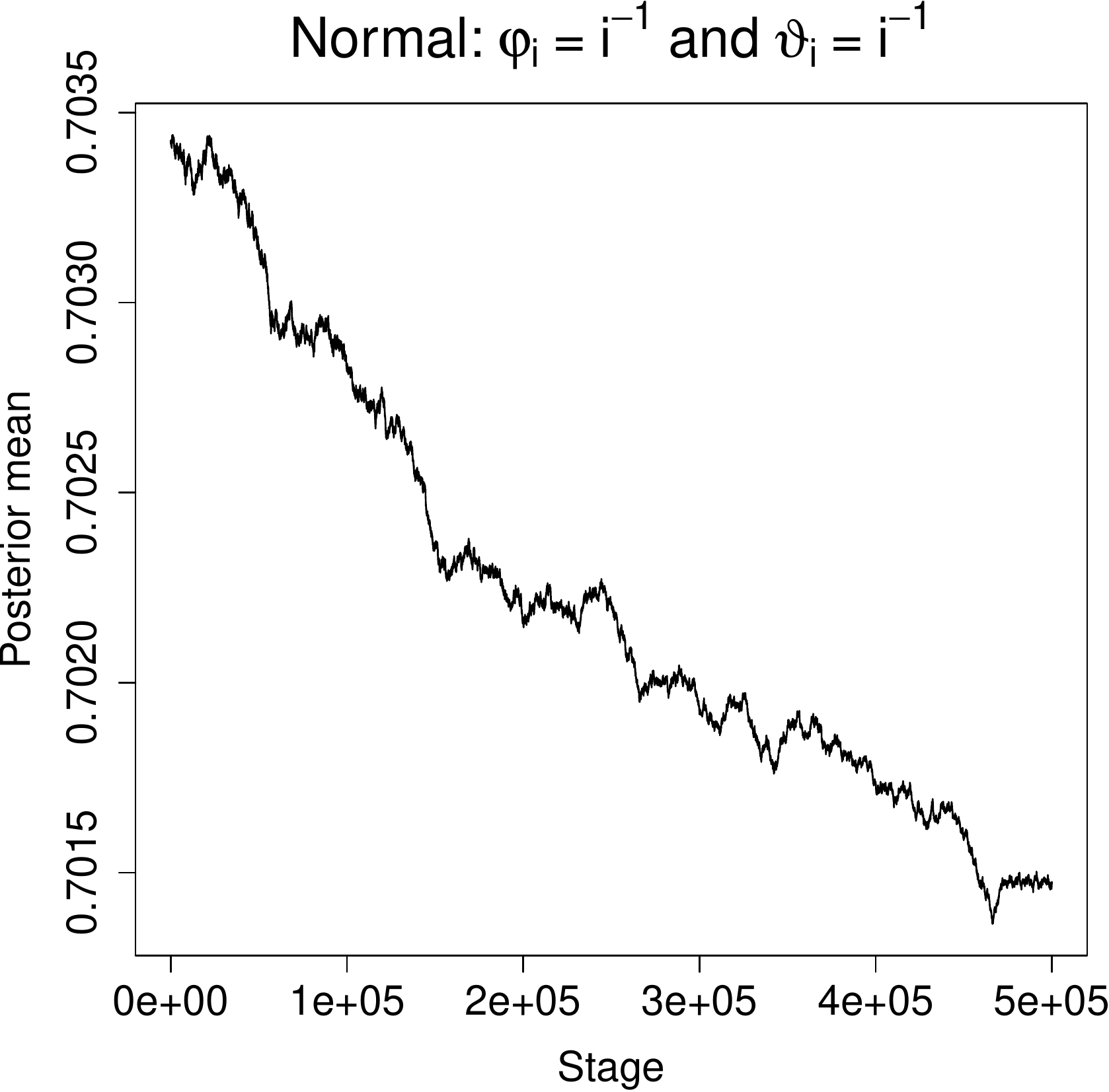}}
\hspace{2mm}
\subfigure [Convergence.]{ \label{fig:normal_dep2}
\includegraphics[width=6cm,height=5cm]{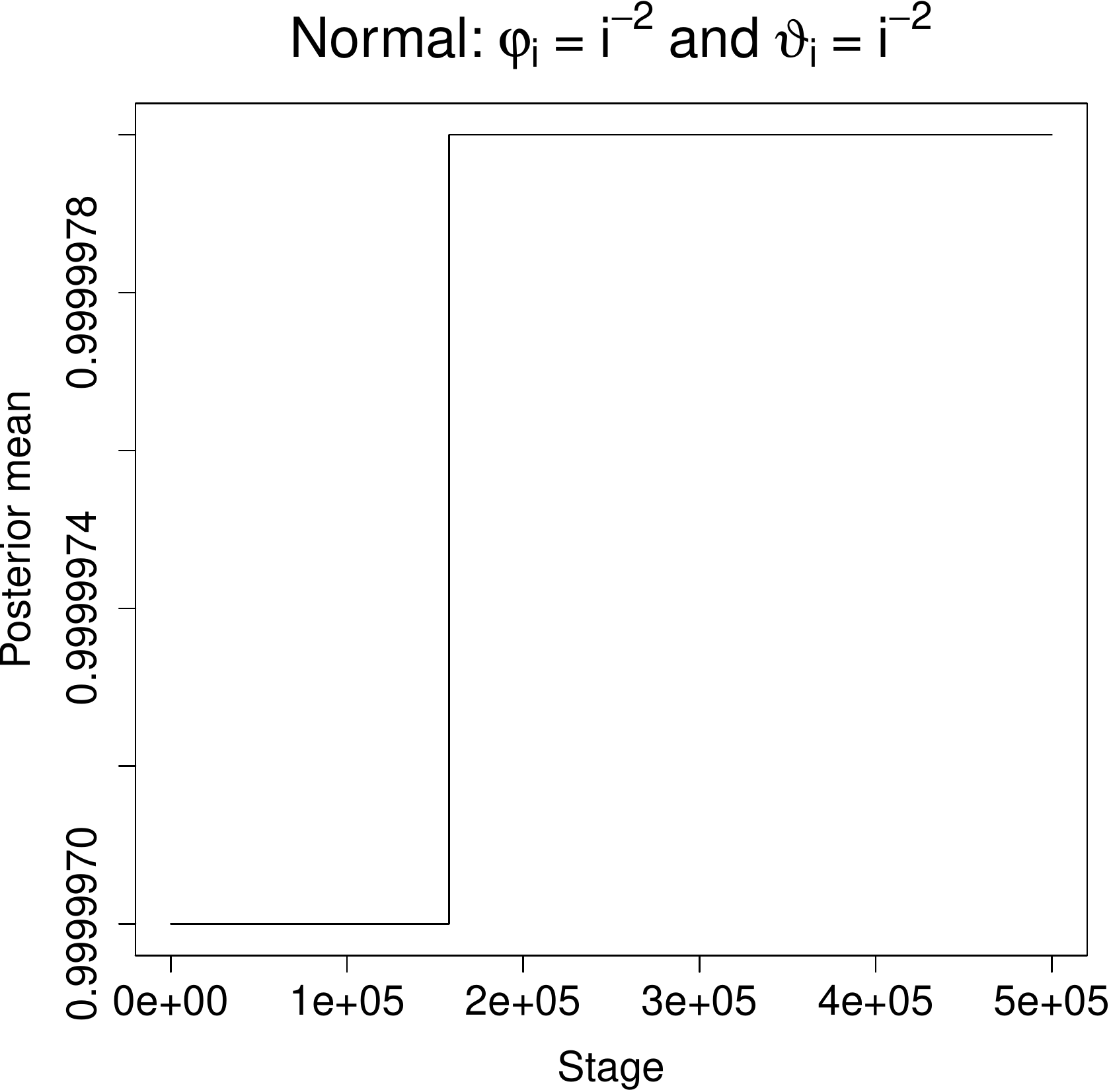}}\\
\subfigure [Convergence.]{ \label{fig:normal_dep3}
\includegraphics[width=6cm,height=5cm]{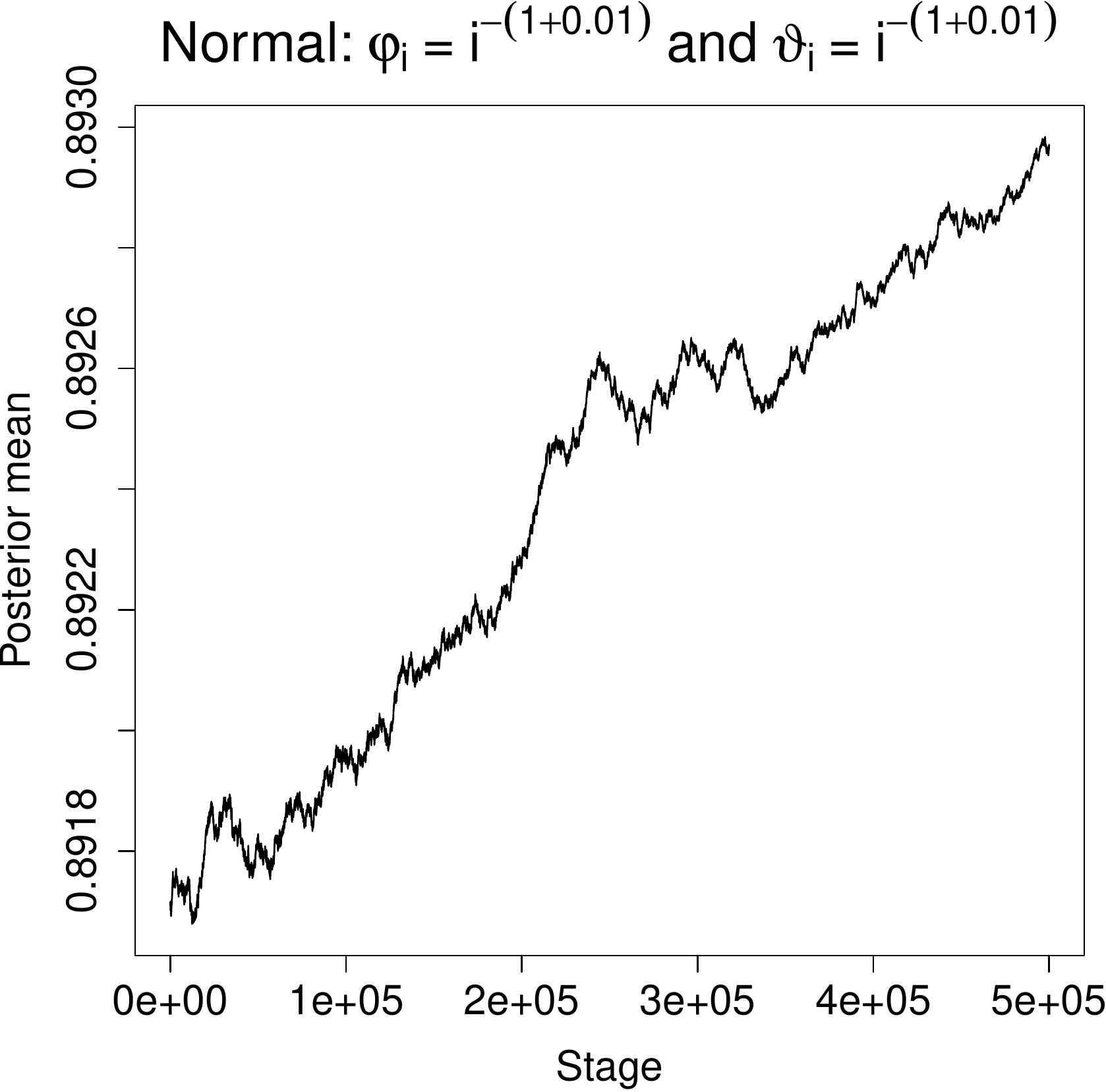}}
\hspace{2mm}
\subfigure [Divergence.]{ \label{fig:normal_dep4}
\includegraphics[width=6cm,height=5cm]{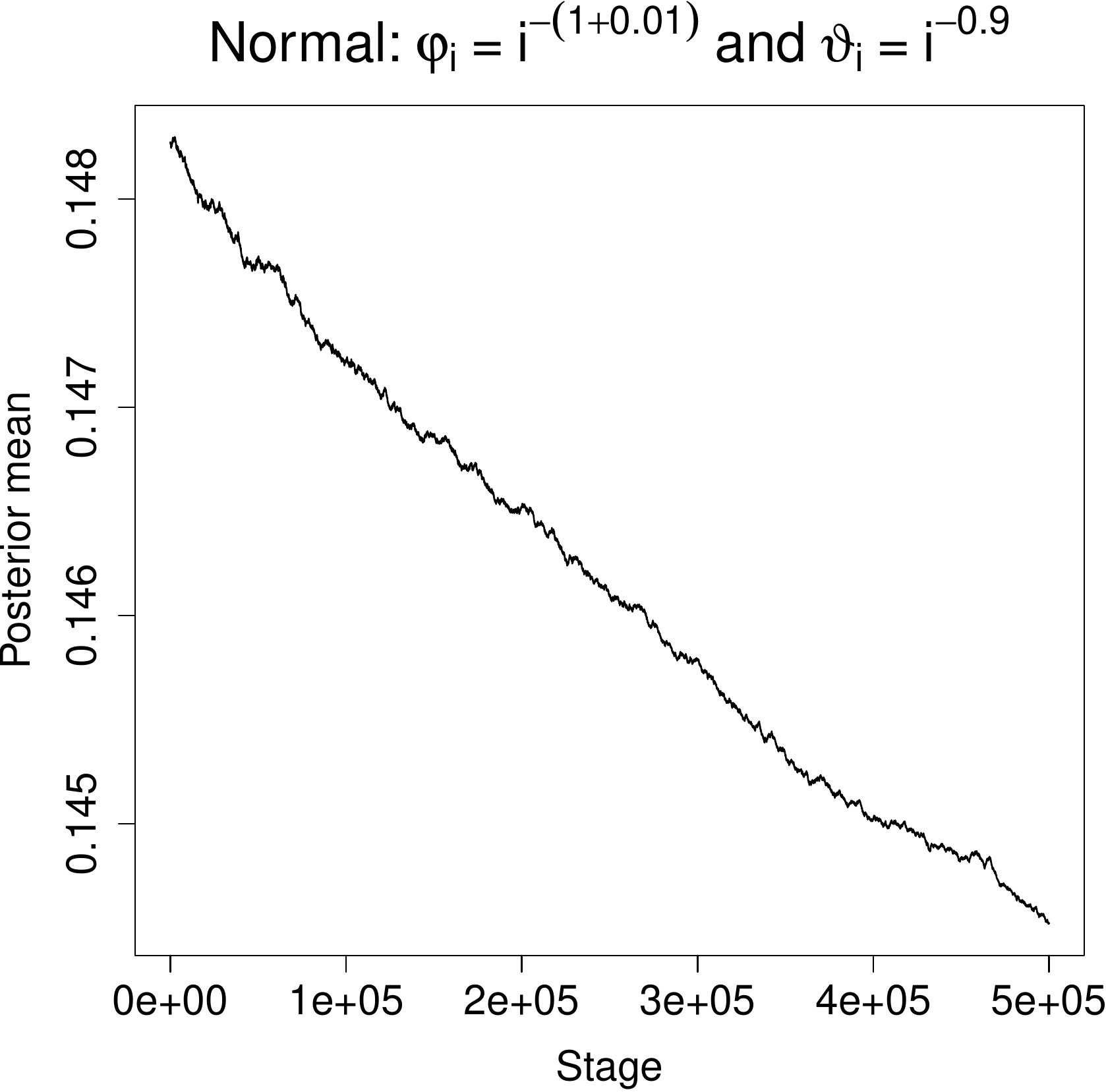}}\\
\subfigure [Divergence.]{ \label{fig:normal_dep5}
\includegraphics[width=6cm,height=5cm]{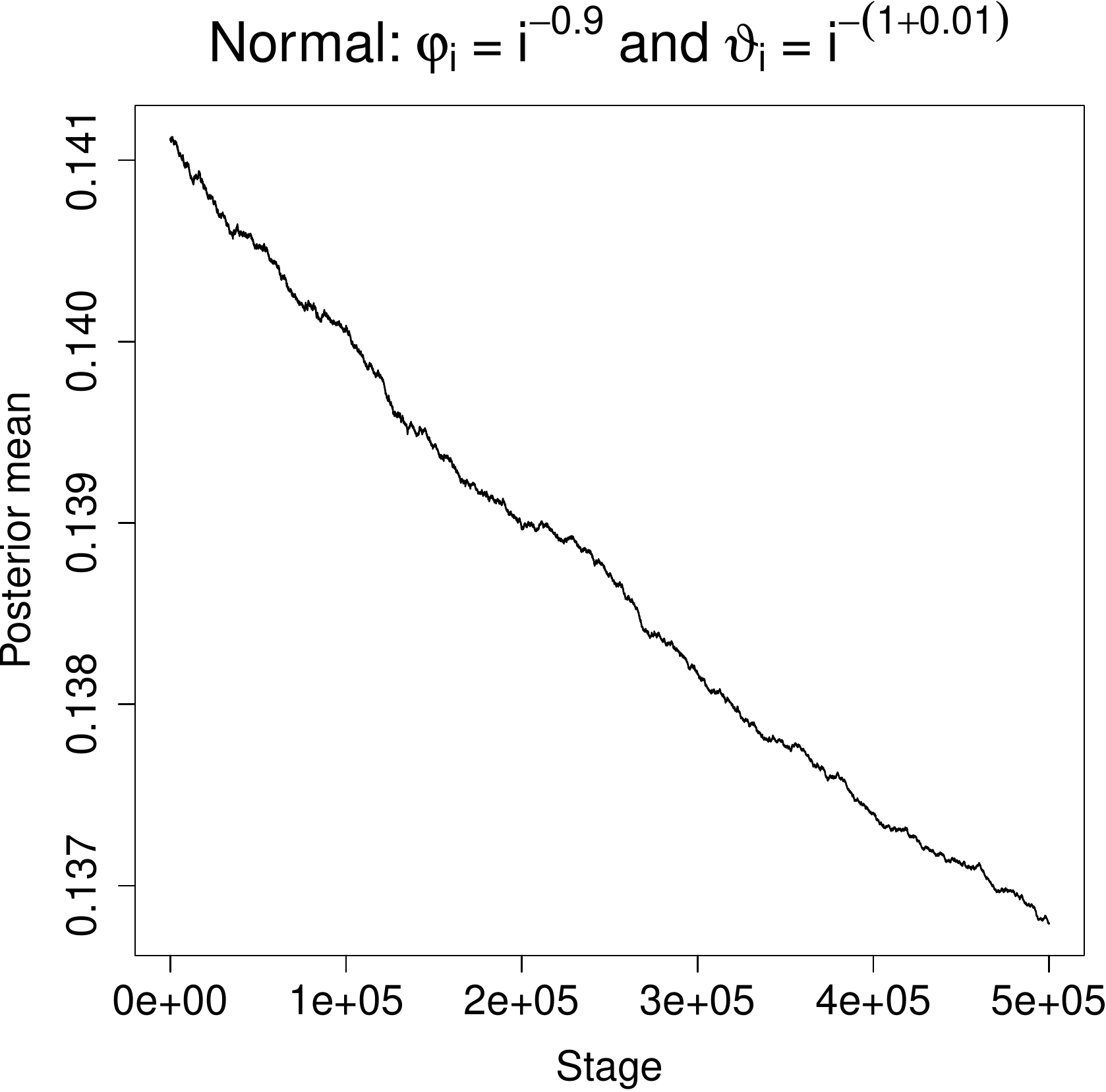}}
\hspace{2mm}
\subfigure [Divergence.]{ \label{fig:normal_dep6}
\includegraphics[width=6cm,height=5cm]{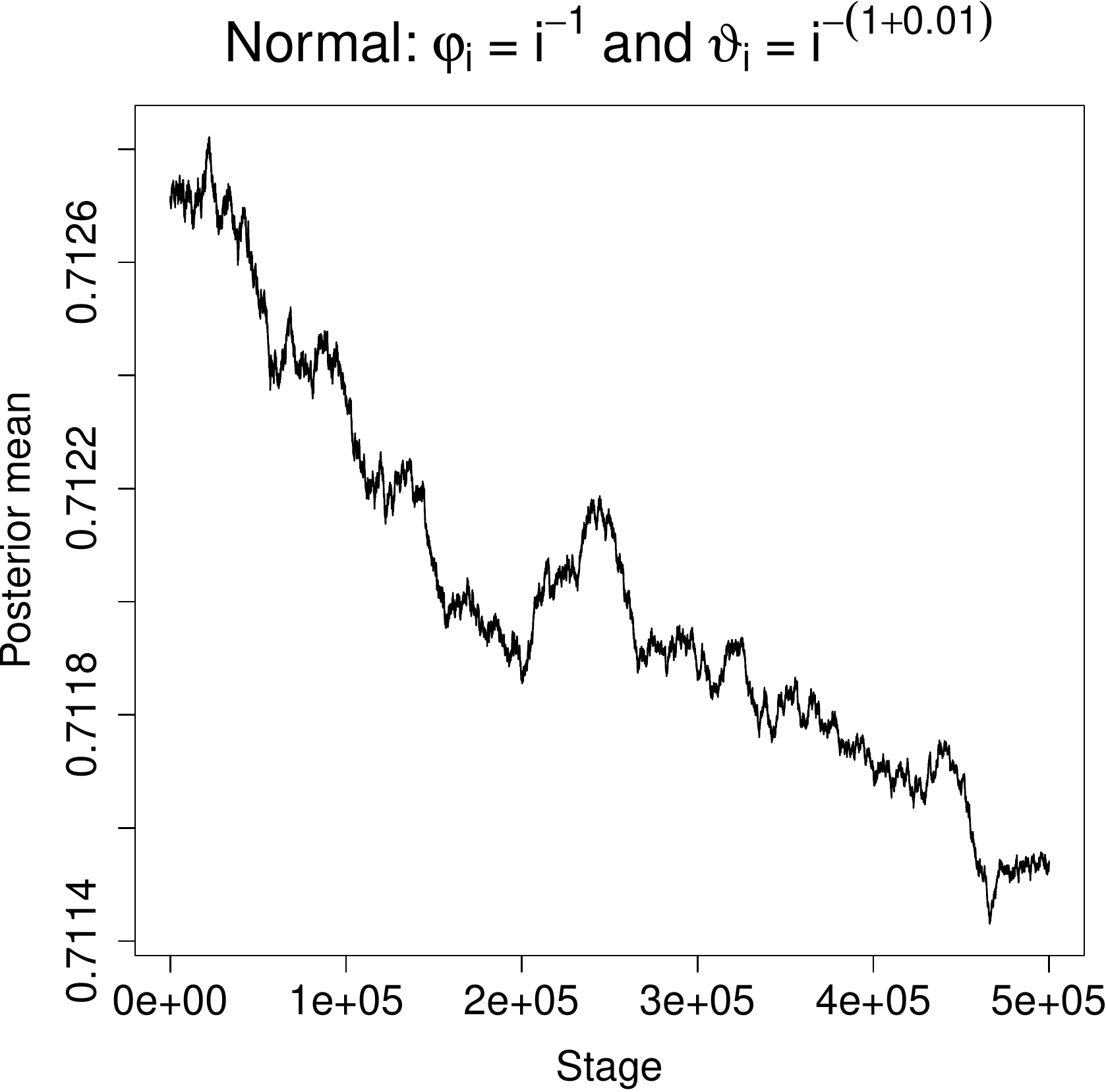}}\\
\subfigure [Divergence.]{ \label{fig:normal_dep7}
\includegraphics[width=6cm,height=5cm]{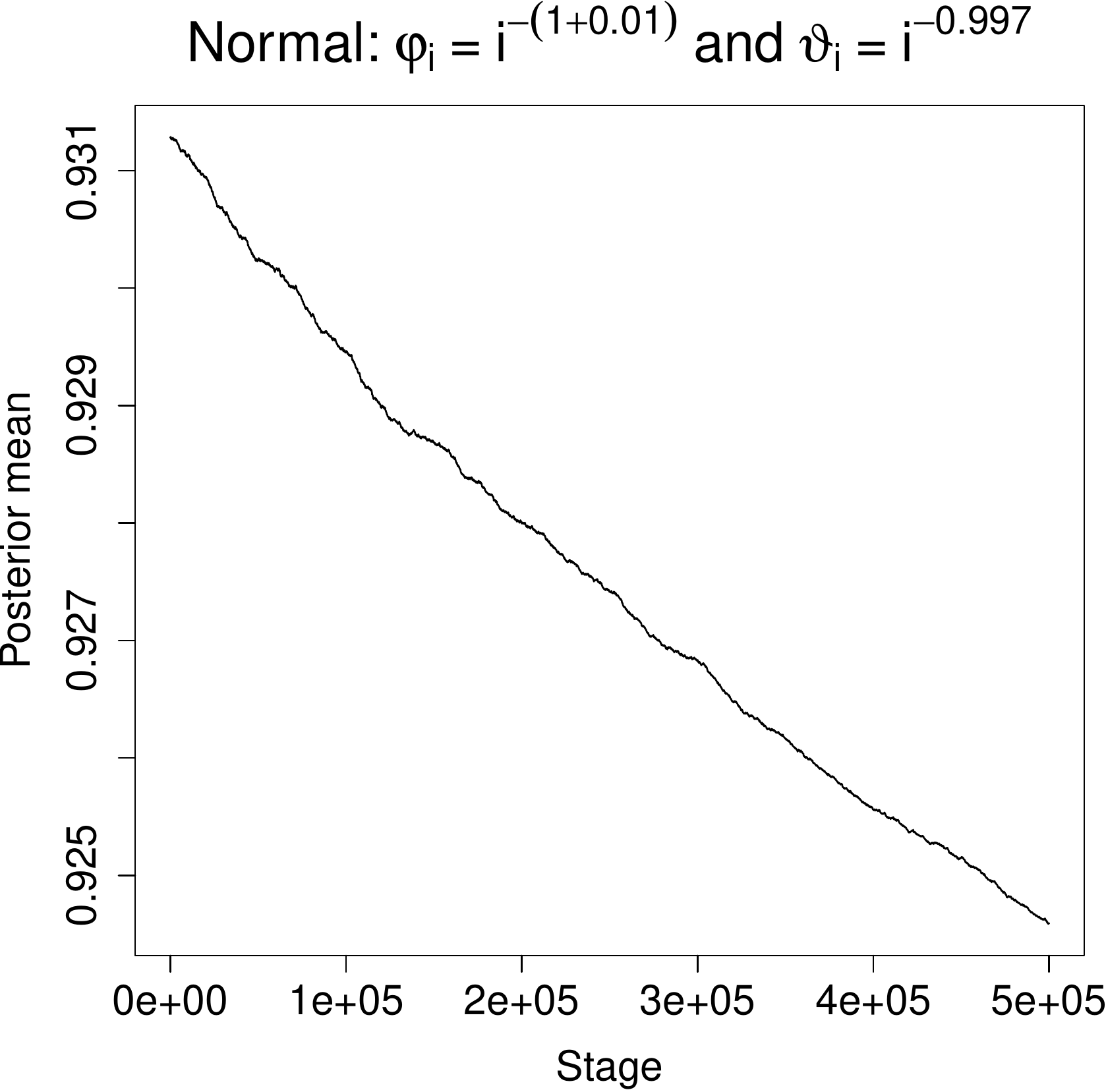}}
\caption{Example 3: Convergence and divergence for dependent normal series.}
\label{fig:example_normal_dependent}
\end{figure}

\subsection{Example 4: Dependent state-space random series}
\label{subsec:bayesian_ss}
We now consider the following random series:
\begin{equation}
\sum_{i=1}^{\infty}X_i\theta_i,
\label{eq:ss1}
\end{equation}
where for $i\geq 1$, $\theta_i\sim \mathcal E(\psi_i)$ independently, 
and $X_i$ admits the following state-space representation: 
\begin{align}
	X_i&=\alpha+\beta Z_i+\epsilon_i;\label{eq:ss2}\\
	Z_i&=\rho Z_{i-1}+\eta_i,\label{eq:ss3}
\end{align}
where $Z_0, \alpha,\beta,\rho\stackrel{iid}{\sim} U(a,b)$, $a=\varepsilon$, $b=\varepsilon+1$, with $\varepsilon>0$, 
and $\epsilon_i,\eta_i\stackrel{iid}{\sim}N(0,1)\mathbb I_{[a,b]}$,
that is the standard normal distribution truncated on $[a,b]$. It follows from the above representation that $X_i$ are dependent, positive, and bounded random variables.
Thus, the terms $X_i\theta_i$ in (\ref{eq:ss1}) are also dependent, positive, but unbounded random variables. Since $X_i$ are both upper and lower bounded, 
the convergence properties of (\ref{eq:ss1}) are dictated by the $\theta_i$'s. 

In our simulation experiment, we generate $\theta_i$ and
$X_i$ following the above model specifications, setting $\varepsilon=0.001$. 
Thus, data $Y_i=X_i\theta_i$, for $i\geq 1$, are available for convergence analysis of (\ref{eq:ss1}). 

Since the exponential distribution dominates the convergence properties in this case, mathematically valid bound construction for the partial sums is possible in this case.
Here we provide the details of our bound construction procedure. 
We first generate 
$X^*_i$ following (\ref{eq:ss2}) and (\ref{eq:ss3}) and 
set $Y_i=X^*_i\theta_i$, with $\theta_i=-\psi_i\log U_i$. Combining these yields $\log U_i=-Y_i/(\psi_iX^*_i)$. We then set 
$\tilde Y_i=X^*_i\tilde\theta_i$, where $\tilde\theta_i=-r^{-1}_i(\epsilon)\log U_i=(r^{-1}_i(\epsilon)Y_i)/(\psi_iX^*_i)$; as before, set set $\epsilon=0.001$. 
Letting $S^{\tilde\theta}_{j,n_j}$ be 
the partial sums associated with $\left\{\tilde Y_i\right\}_{i=1}^{\infty}$, we set $c_j=S^{\tilde\theta}_{j,n_j}$ as the upper bound for the partial sums
associated with $\left\{Y_i\right\}_{i=1}^{\infty}$.


In this setup, as in Section \ref{subsec:bayesian_exp} for the hierarchical exponential series, 
we set $n_j=1000$ for $j=1,\ldots,K$, where $K=2000$. 
As before, with such small sample size, parallel implementation of this setup on our dual-core laptop takes less than a second to yield the results.

Figure \ref{fig:example_ss} shows that the convergence behaviour of the random series are correctly and convincingly 
determined in all the cases despite the small sample sizes. 
\begin{figure}
\centering
\subfigure [Divergence.]{ \label{fig:ss1}
\includegraphics[width=6cm,height=5cm]{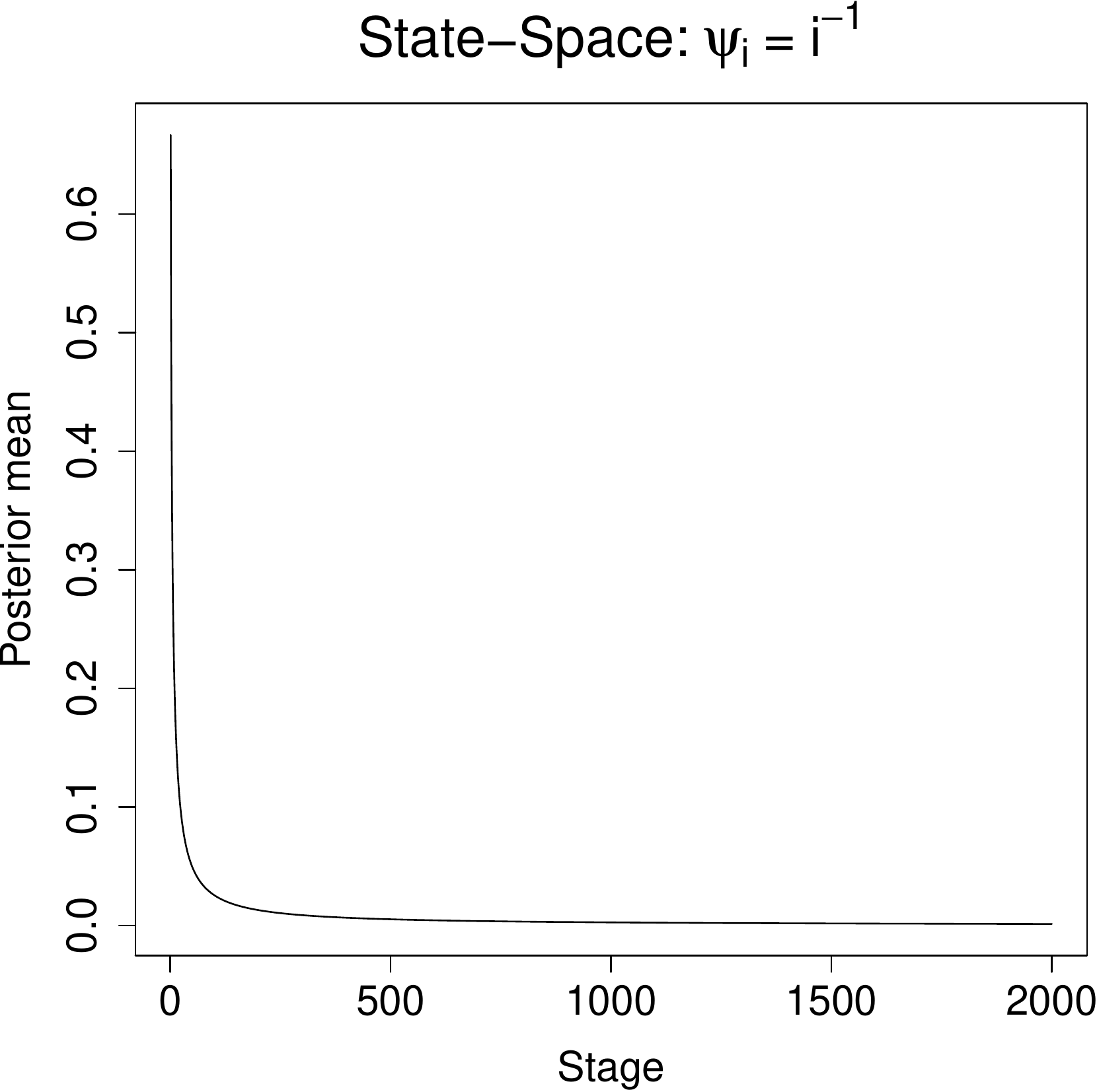}}
\hspace{2mm}
\subfigure [Convergence.]{ \label{fig:ss2}
\includegraphics[width=6cm,height=5cm]{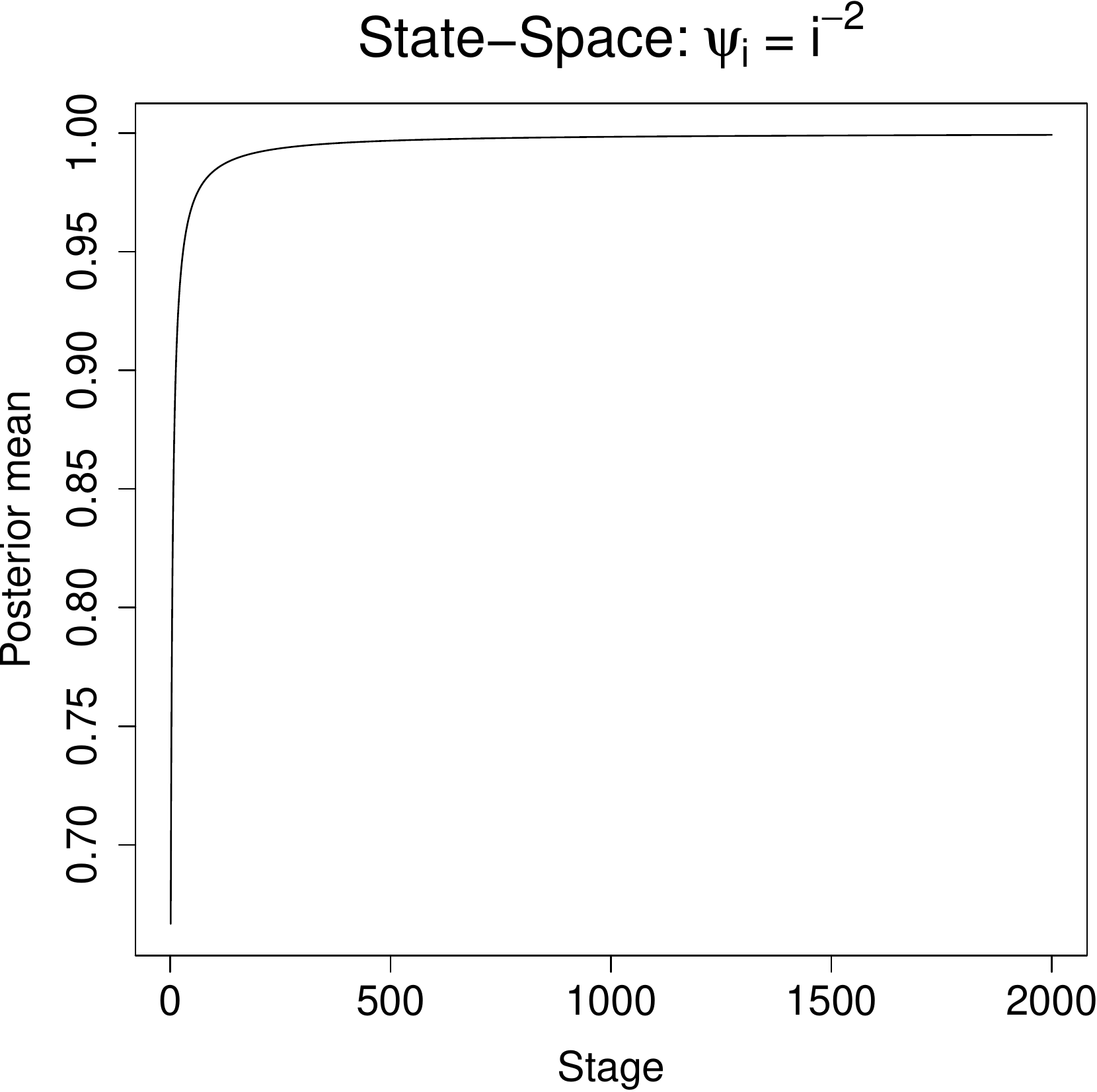}}\\
\subfigure [Convergence.]{ \label{fig:ss3}
\includegraphics[width=6cm,height=5cm]{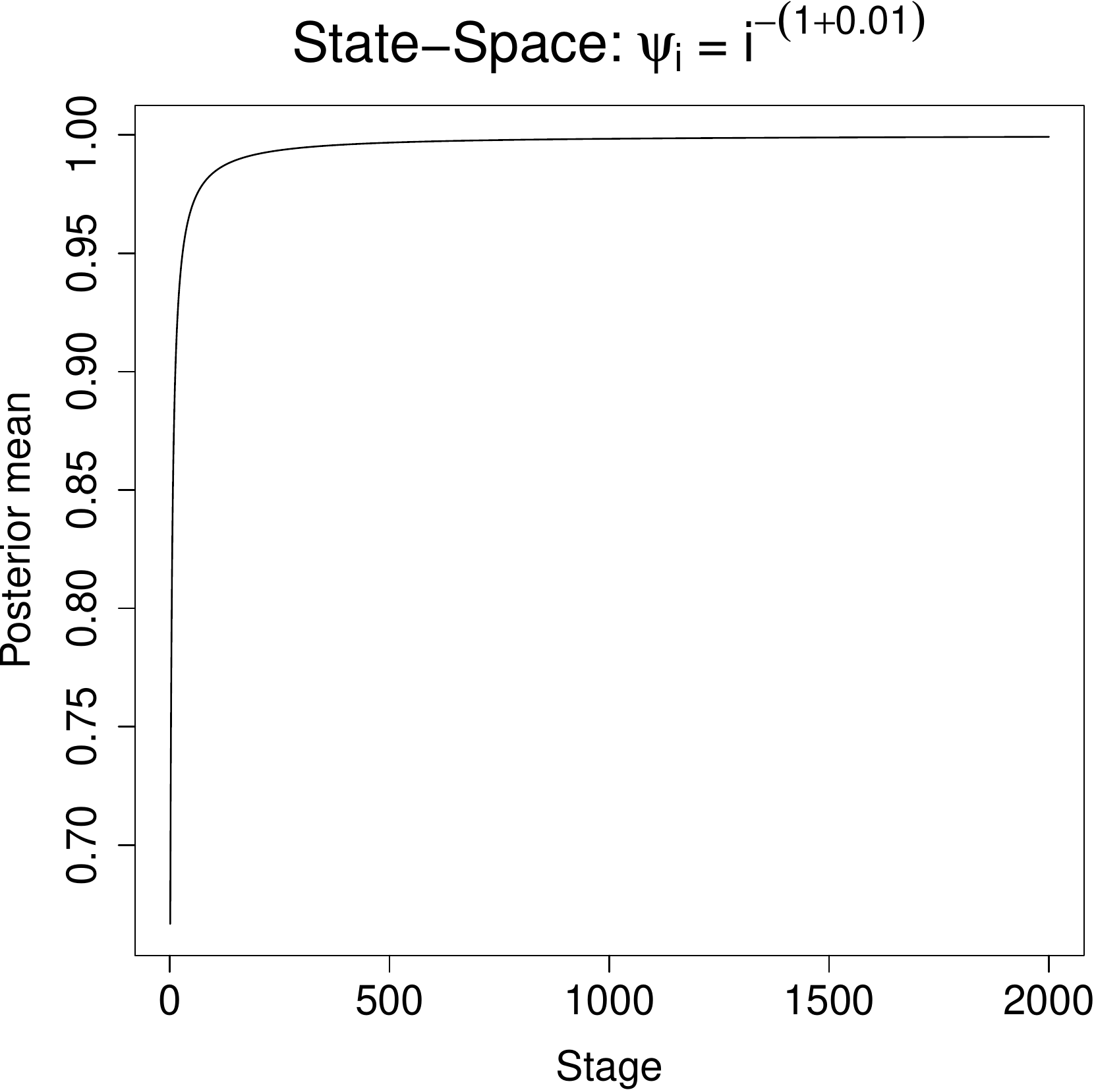}}
\hspace{2mm}
\subfigure [Convergence.]{ \label{fig:ss4}
\includegraphics[width=6cm,height=5cm]{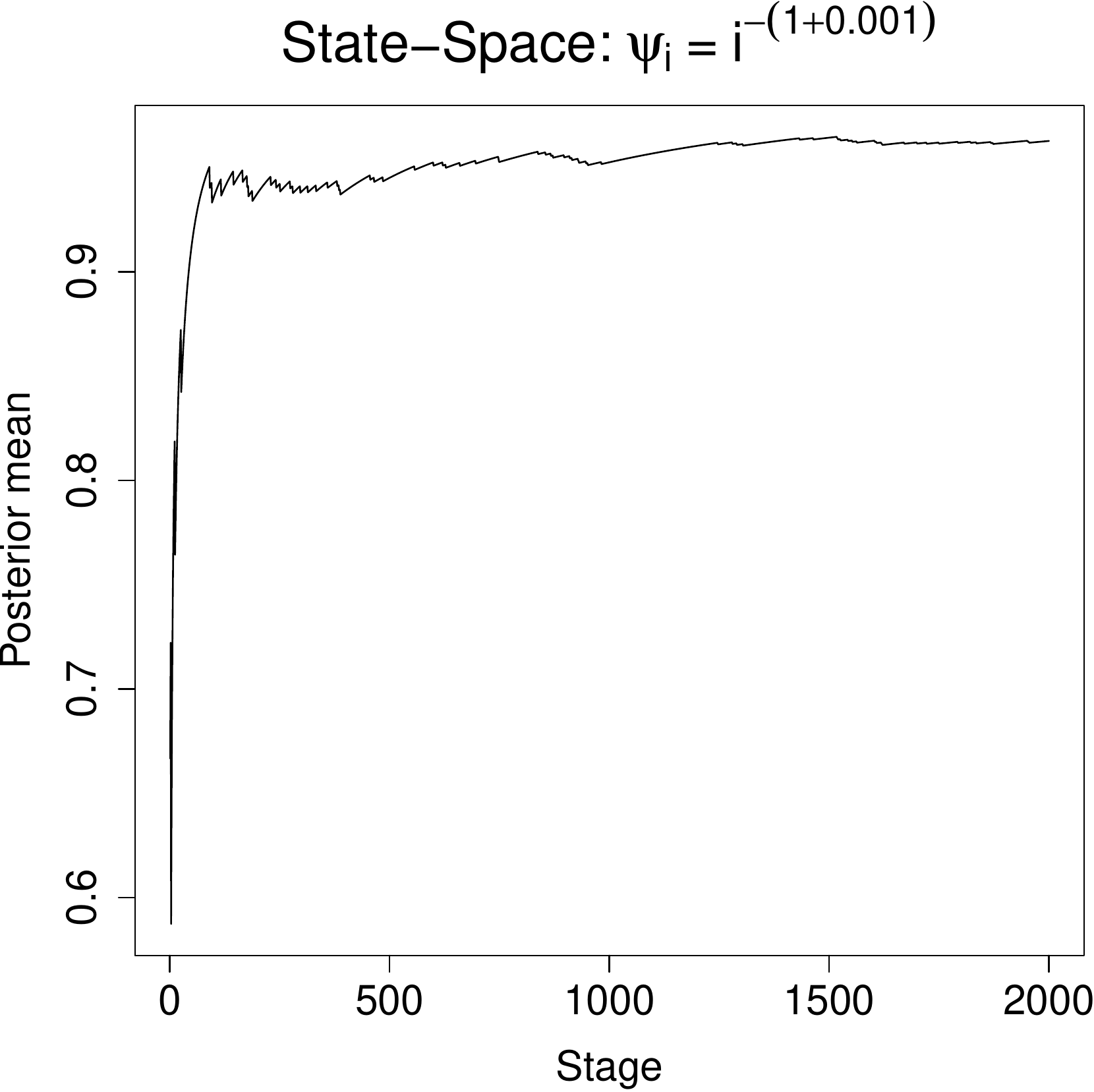}}\\
\subfigure [Convergence.]{ \label{fig:ss5}
\includegraphics[width=6cm,height=5cm]{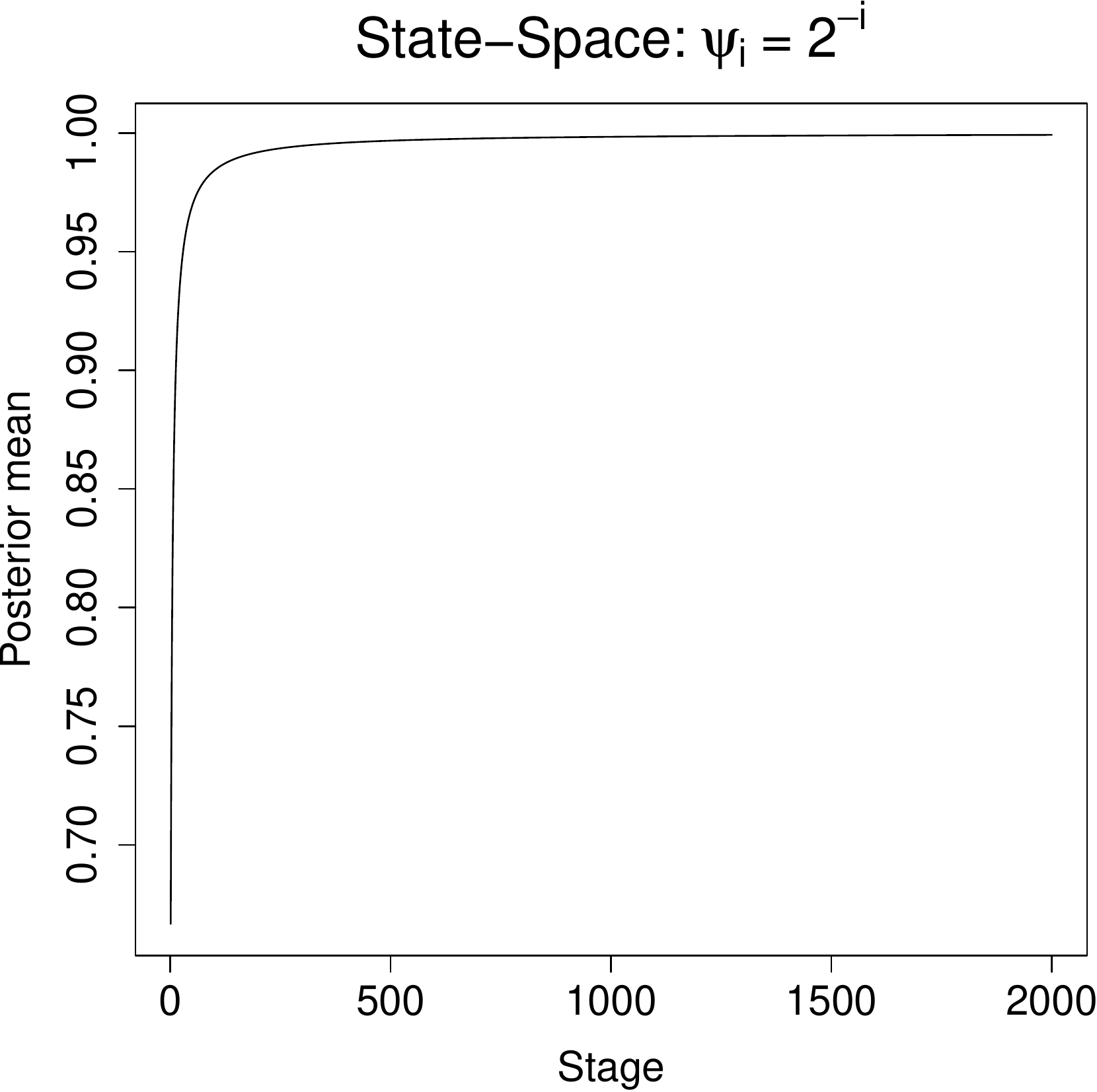}}
\hspace{2mm}
\subfigure [Convergence.]{ \label{fig:ss6}
\includegraphics[width=6cm,height=5cm]{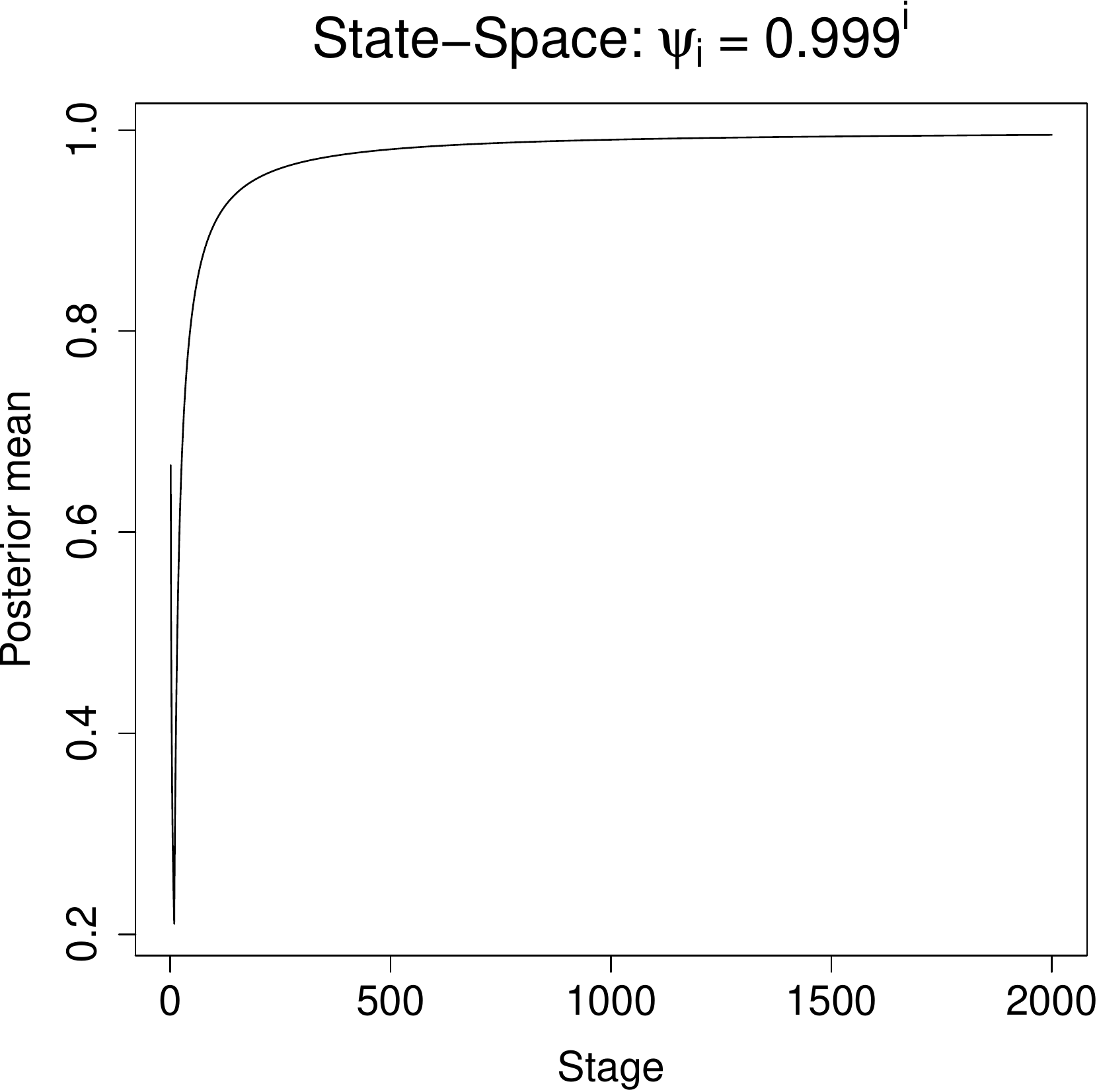}}\\
\subfigure [Divergence.]{ \label{fig:ss7}
\includegraphics[width=6cm,height=5cm]{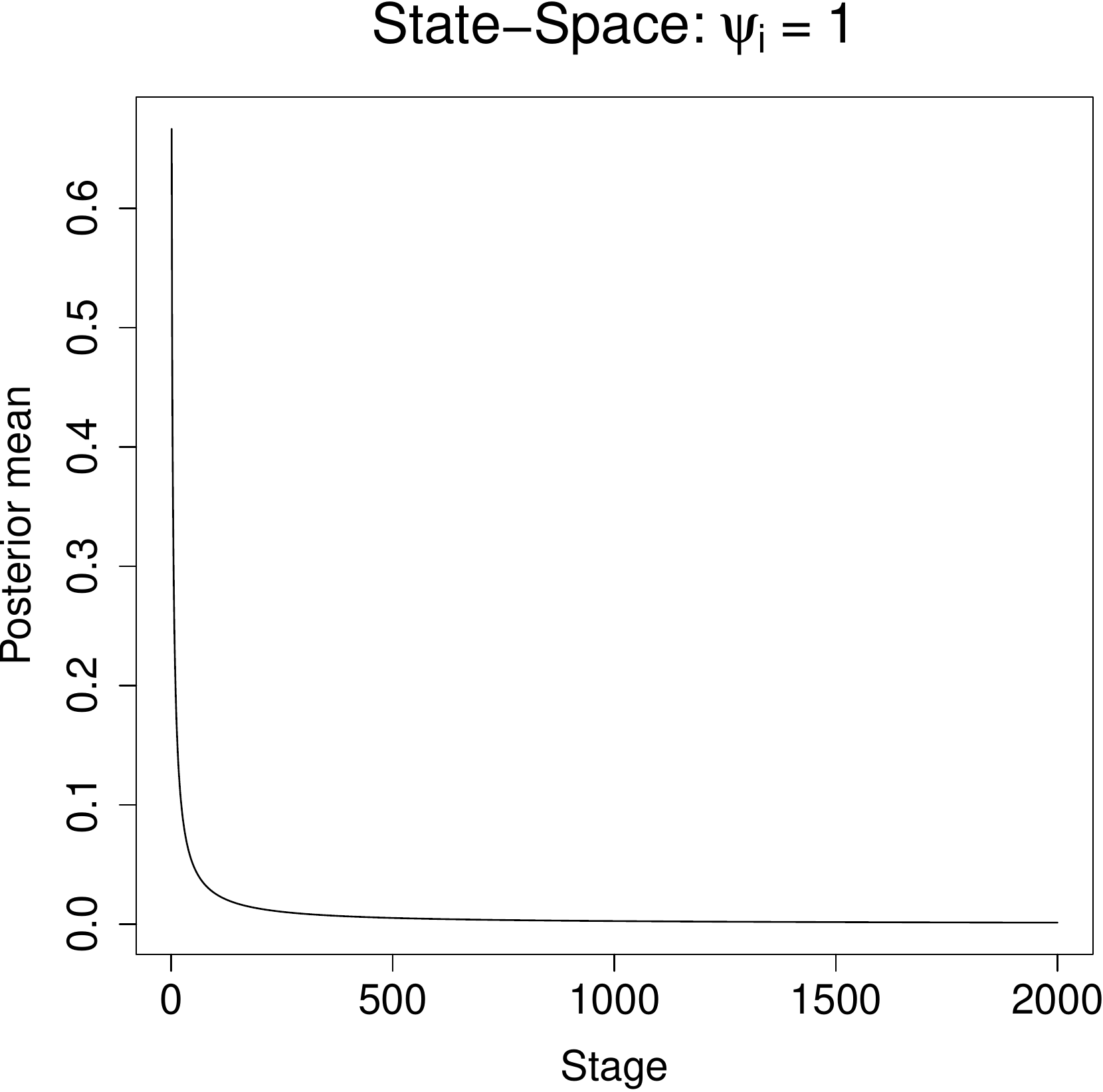}}
\caption{Example 4: Convergence and divergence for state-space series.}
\label{fig:example_ss}
\end{figure}

\subsection{Example 5: Dependent state-space random series with hierarchical exponential distribution}
\label{subsec:bayesian_ss2}

In the state-space setup of Section \ref{subsec:bayesian_ss} we considered $\theta_i\sim \mathcal E(\psi_i)$. Now we add an extra hierarchy to the
exponential distribution by specifying, as in Section \ref{subsec:bayesian_exp}, 
that $\theta_i\sim \mathcal E(\vartheta_i)$ and $\vartheta_i\sim\mathcal E(\psi_i)$. Thus, this state-space model is dominated by the hierarchical exponential distribution.

As before, let $Y_i=X_i\theta_i$ be available. In our simulation experiment, we generate $\theta_i$ and
$X_i$ following the hierarchical exponential driven state-space model specifications, setting $\varepsilon=0.001$. 

To obtain the bound $c_j$ for the partial sums, we employ the following strategy.
We first generate $X^*_i$ following (\ref{eq:ss2}) and (\ref{eq:ss3}) and 
set $Y_i=X^*_i\theta_i$, with $\theta_i=-\vartheta_i\log U_i$. Combining these yields $\log U_i=-Y_i/(\vartheta_iX^*_i)$, where 
$\vartheta_i=-\psi_i\log U^*_i$. Here $U_i$ and $U^*_i$ are mutually independent $iid$ $U(0,1)$ random variables for $i\geq 1$.
We then set 
$\tilde Y_i=X^*_i\tilde\theta_i$, where $\tilde\theta_i=-\tilde\vartheta_i\log U_i$, and
$\tilde\vartheta_i=-r^{-1}_i(\epsilon)\log U^*_i$; as before, we set $\epsilon=0.001$. 
Combining, we obtain $\tilde Y_i=Y_i\left(\tilde\vartheta_i/\vartheta_i\right)$.
Letting $S^{\tilde\theta}_{j,n_j}$ be 
the partial sums associated with $\left\{\tilde Y_i\right\}_{i=1}^{\infty}$, we set $c_j=S^{\tilde\theta}_{j,n_j}$ as the upper bounds for the partial sums
associated with $\left\{Y_i\right\}_{i=1}^{\infty}$.

As before, we set $n_j=1000$, for $j=1,\ldots,K$, where $K=2000$, and our parallel computing procedure implemented in our laptop takes less than a second to complete
each exercise.

Figure \ref{fig:example_ssp2} shows that in all the cases, our Bayesian procedure correctly detects convergence and divergence of the underlying series, even with
such small sample size.

\begin{figure}
\centering
\subfigure [Divergence.]{ \label{fig:ssp1}
\includegraphics[width=6cm,height=5cm]{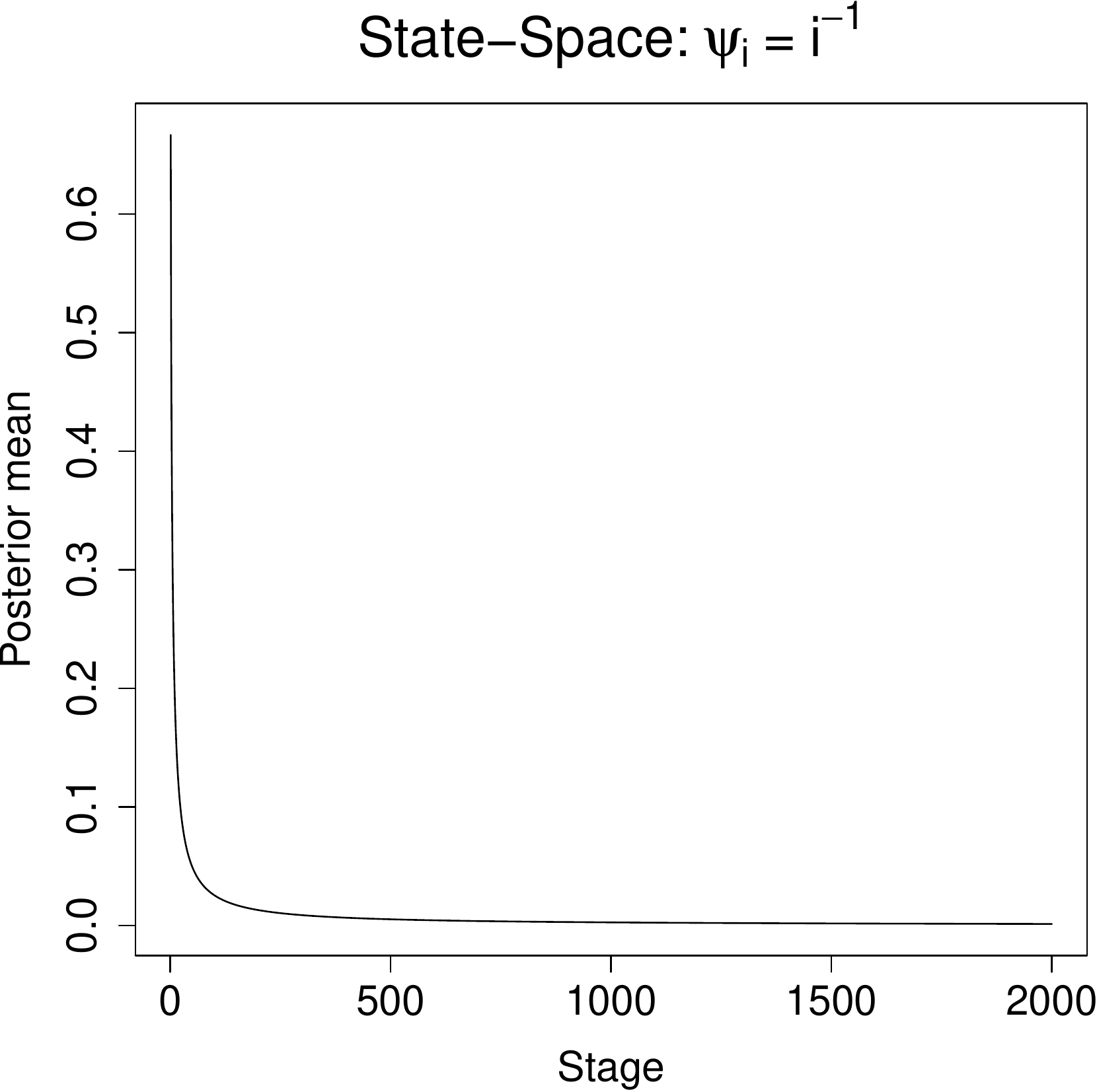}}
\hspace{2mm}
\subfigure [Convergence.]{ \label{fig:ssp2}
\includegraphics[width=6cm,height=5cm]{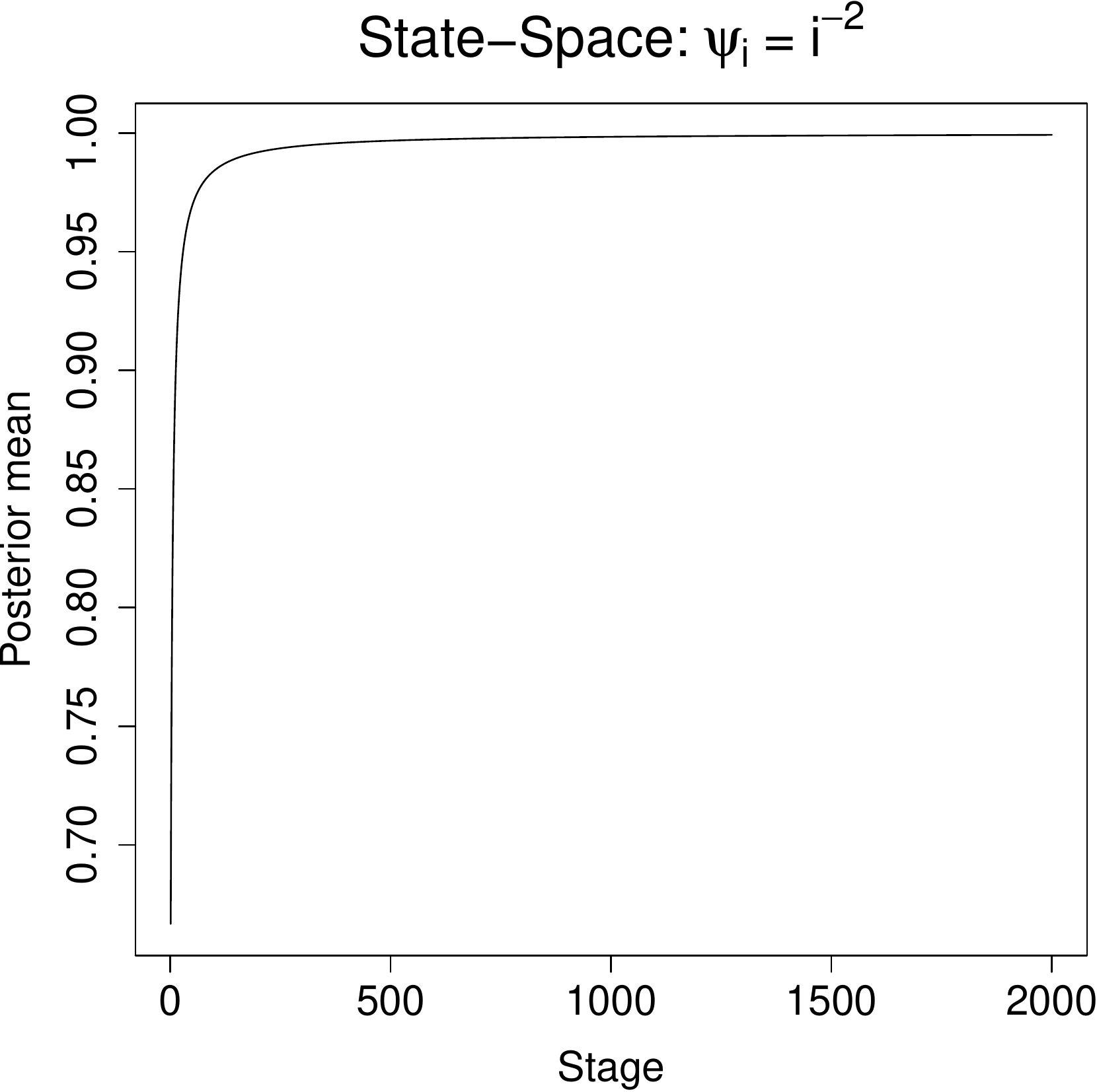}}\\
\subfigure [Convergence.]{ \label{fig:ssp3}
\includegraphics[width=6cm,height=5cm]{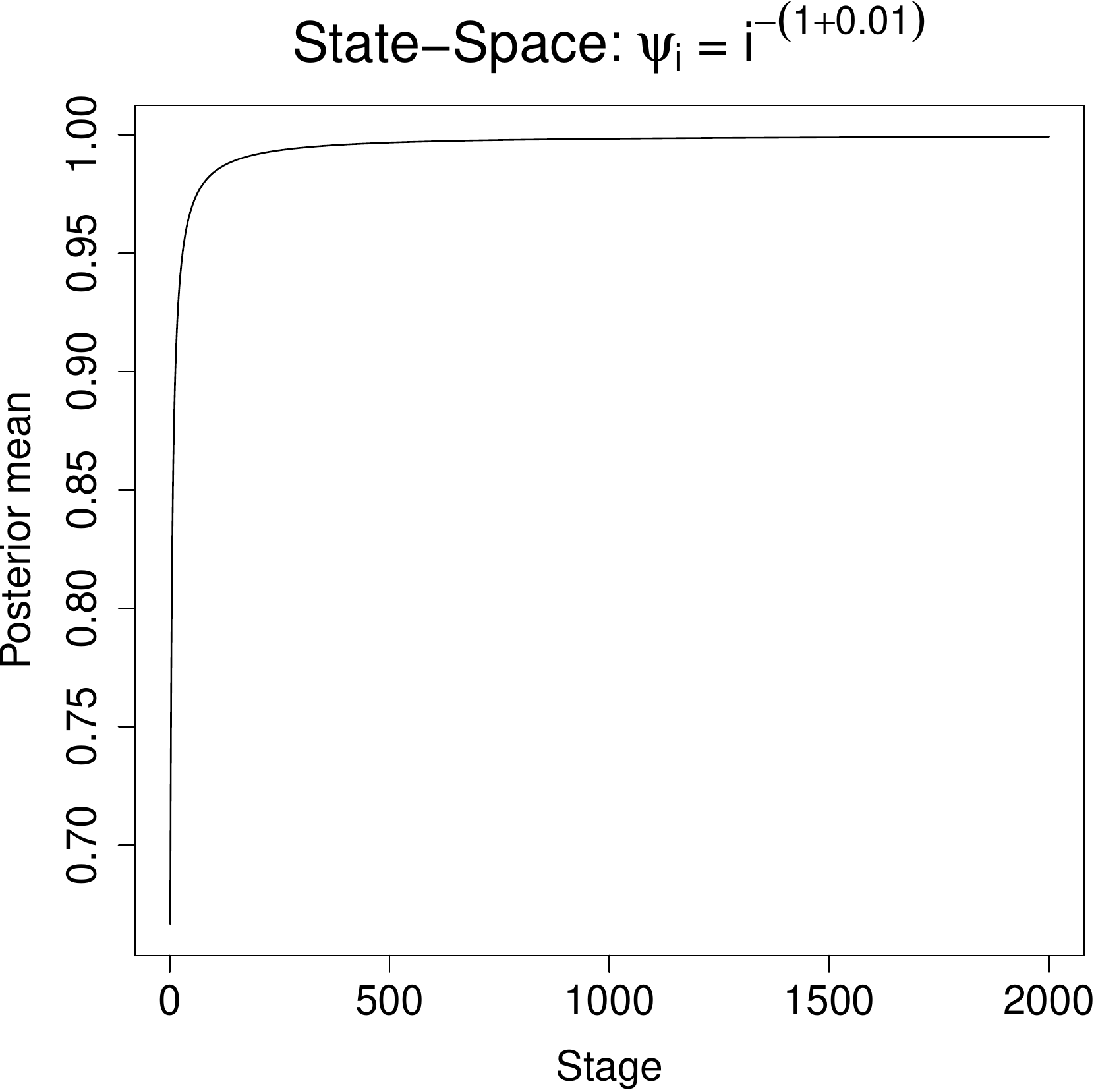}}
\hspace{2mm}
\subfigure [Convergence.]{ \label{fig:ssp4}
\includegraphics[width=6cm,height=5cm]{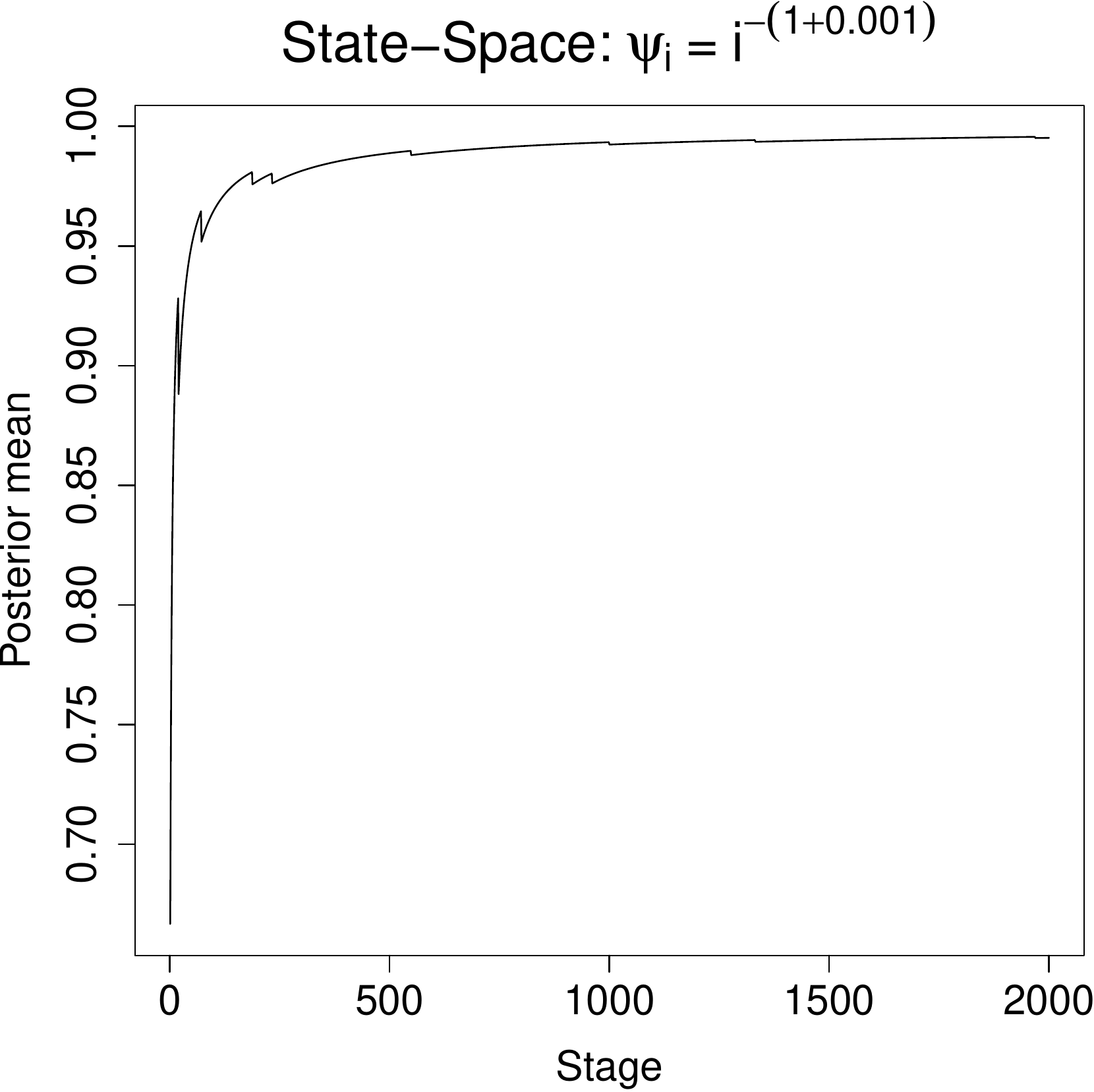}}\\
\subfigure [Convergence.]{ \label{fig:ssp5}
\includegraphics[width=6cm,height=5cm]{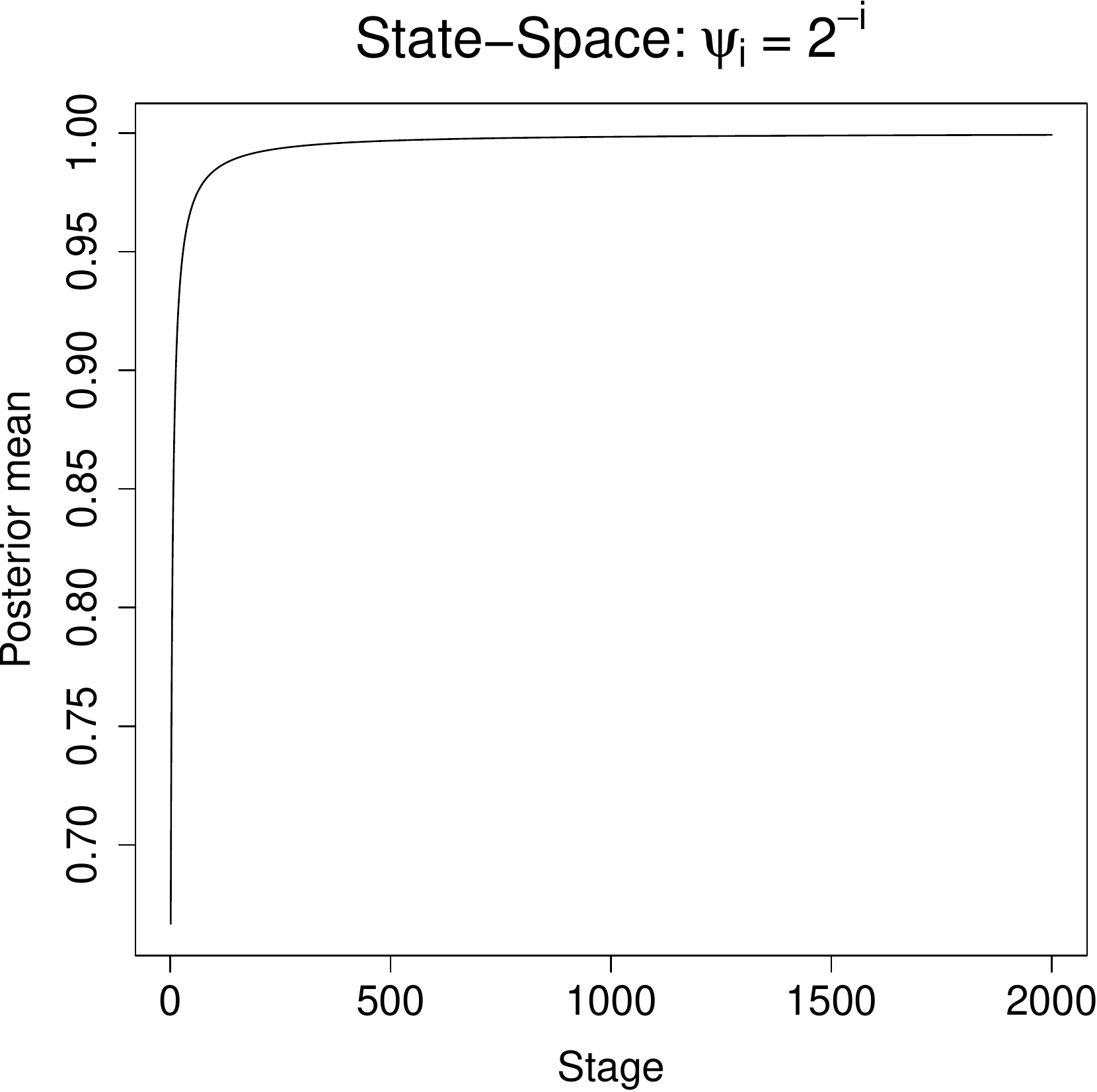}}
\hspace{2mm}
\subfigure [Convergence.]{ \label{fig:ssp6}
\includegraphics[width=6cm,height=5cm]{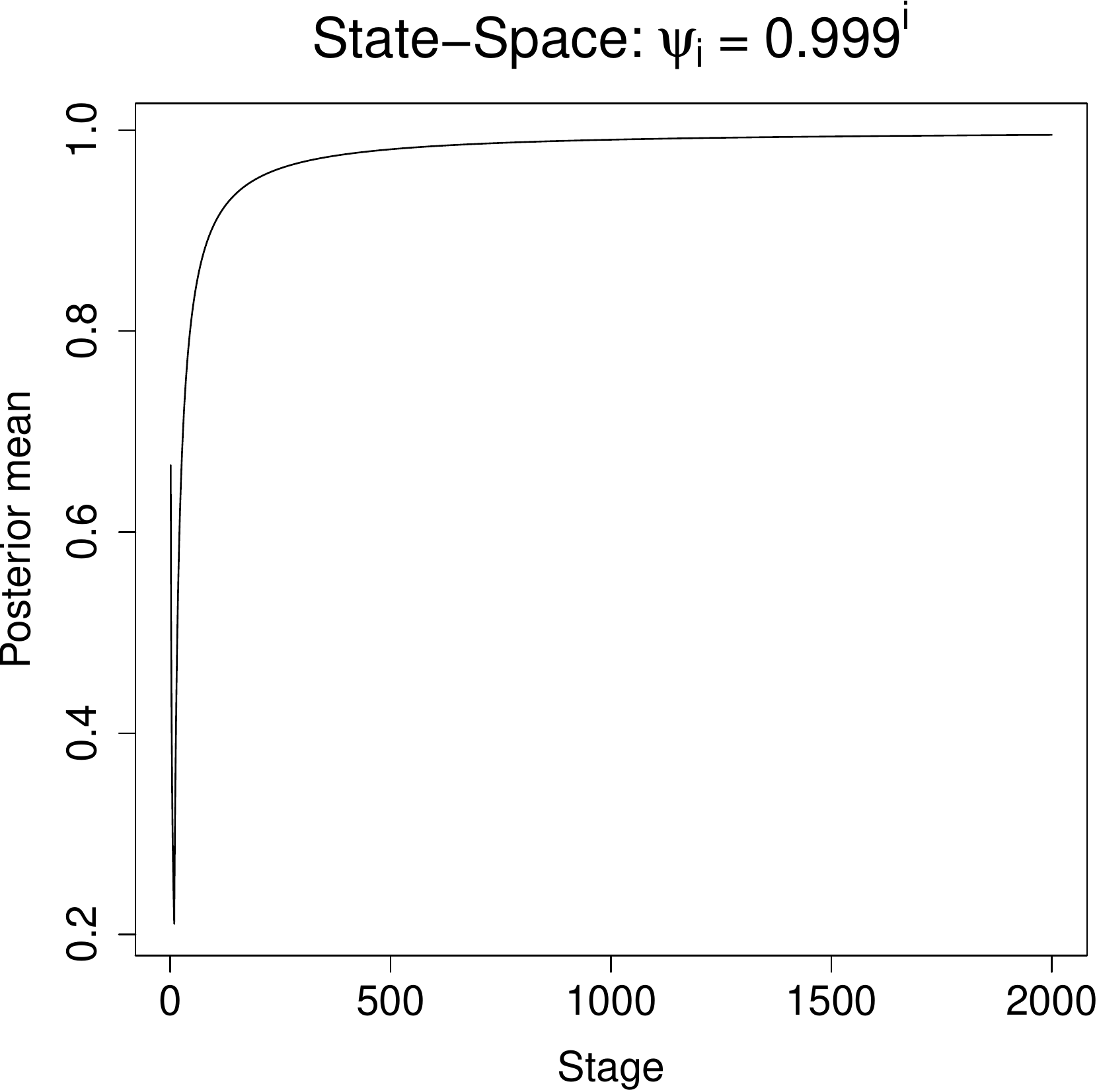}}\\
\subfigure [Divergence.]{ \label{fig:ssp7}
\includegraphics[width=6cm,height=5cm]{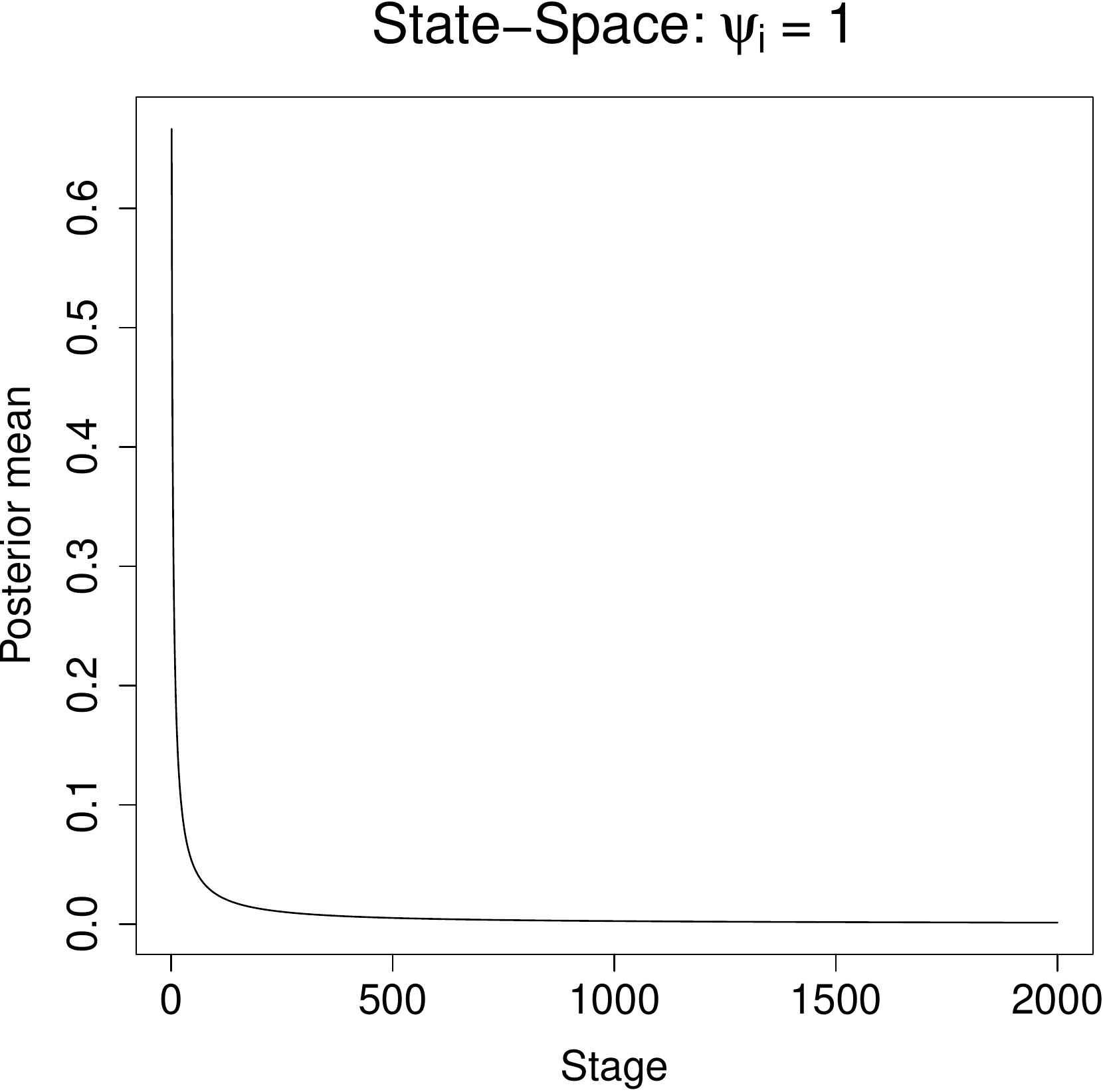}}
\caption{Example 5: Convergence and divergence for state-space series with hierarchical exponential distribution.}
\label{fig:example_ssp2}
\end{figure}

\subsection{Example 6: Random Dirichlet series}
\label{subsec:bayesian_ds}
Consider the random Dirichlet series (RDS) given by
\begin{equation}
	\sum_{i=1}^{\infty}\frac{X_i}{i^p},
	\label{eq:ds}
\end{equation}
where $X_i$ are $iid$ random variables taking values $-1$ and $1$ with probabilities $1/2$, and $p$ is a real number. Since $|X_i|=1$ almost surely, it follows that
for any $R>0$, there exists $i_0$, such that for $i\geq i_0$, $\frac{X_i}{i^p}<R$, provided $p>0$. Hence, for $p>0$, 
$\mathbb I_{\left\{\frac{|X_i|}{i^p}<R\right\}}=1$ almost surely, for $i\geq i_0$.
With this, it follows by a simple application of Kolmogorov's three series theorem that the random series converges almost surely for $p>1/2$ and diverges almost surely
for $0<p\leq 1/2$. If $p=0$, then the summands of (\ref{eq:ds}) are $iid$ and hence (\ref{eq:ds}) diverges. Now, if $p\in(-\infty,0)$, then for any $R>0$,
there exists $i_0\geq 1$ such that $P\left(\frac{\left|X_i\right|}{i^p}>R\right)=1$, for $i\geq i_0$. Hence, 
$\sum_{i=1}^{\infty}P\left(\frac{\left|X_i\right|}{i^p}>R\right)=\infty$, for any $R>0$. Consequently, by Kolmogorov's three series theorem, (\ref{eq:ds}) diverges
for $p\in(-\infty,0)$.
Combining the above arguments it follows that (\ref{eq:ds}) converges almost surely for $p>1/2$ and diverges almost surely for $p\leq 1/2$.

Since $X_i$ takes both positive and negative values with positive probabilities, application of the mathematically valid parametric upper bound is infeasible. 
Hence, we consider application of (\ref{eq:S2})
where $\tilde\theta$ in $S^{\tilde\theta}_{j,n_j}$ corresponds to $p=1+\epsilon$ in this case. Here we set $\epsilon=0.001$ as before.
We experimented with various choices of the tuning parameter $a$ on the right hand side of (\ref{eq:S2}) and all of them yielded the same inference. Hence, we report
our results with respect to $a=1$.

Figure \ref{fig:ds_para} shows the results of our Bayesian application to this problem for various values of $p$, for
$n_j=1000$; $j=1,\ldots,K$, with $K=2000$. Note that for $p=0.501$ (panel (e) of Figure \ref{fig:ds_para}), 
we obtain the wrong result of divergence, whereas convergence is the correct result. 
This is a subtle situation as it may be difficult to distinguish divergence for $p=0.5$ and convergence for $p=0.501$, but wrong results are obtained   
in many cases for $p\in (0.5,0.79)$. Thus, effectiveness of the general upper bound (\ref{eq:S2}) is again challenged in this example.
\begin{figure}
\centering
\subfigure [Divergence.]{ \label{fig:ds_para1}
\includegraphics[width=6cm,height=5cm]{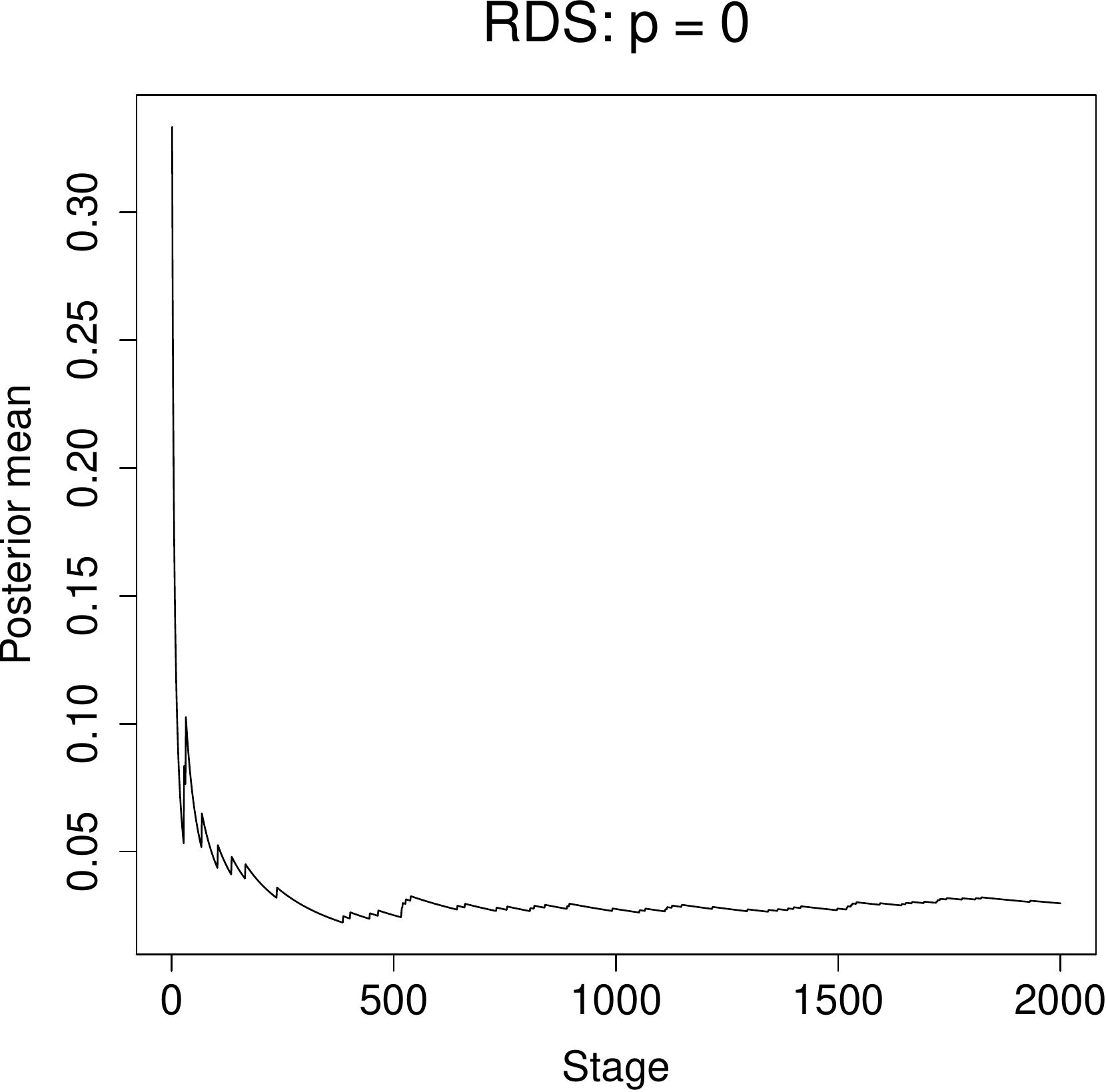}}
\hspace{2mm}
\subfigure [Divergence.]{ \label{fig:ds_para2}
\includegraphics[width=6cm,height=5cm]{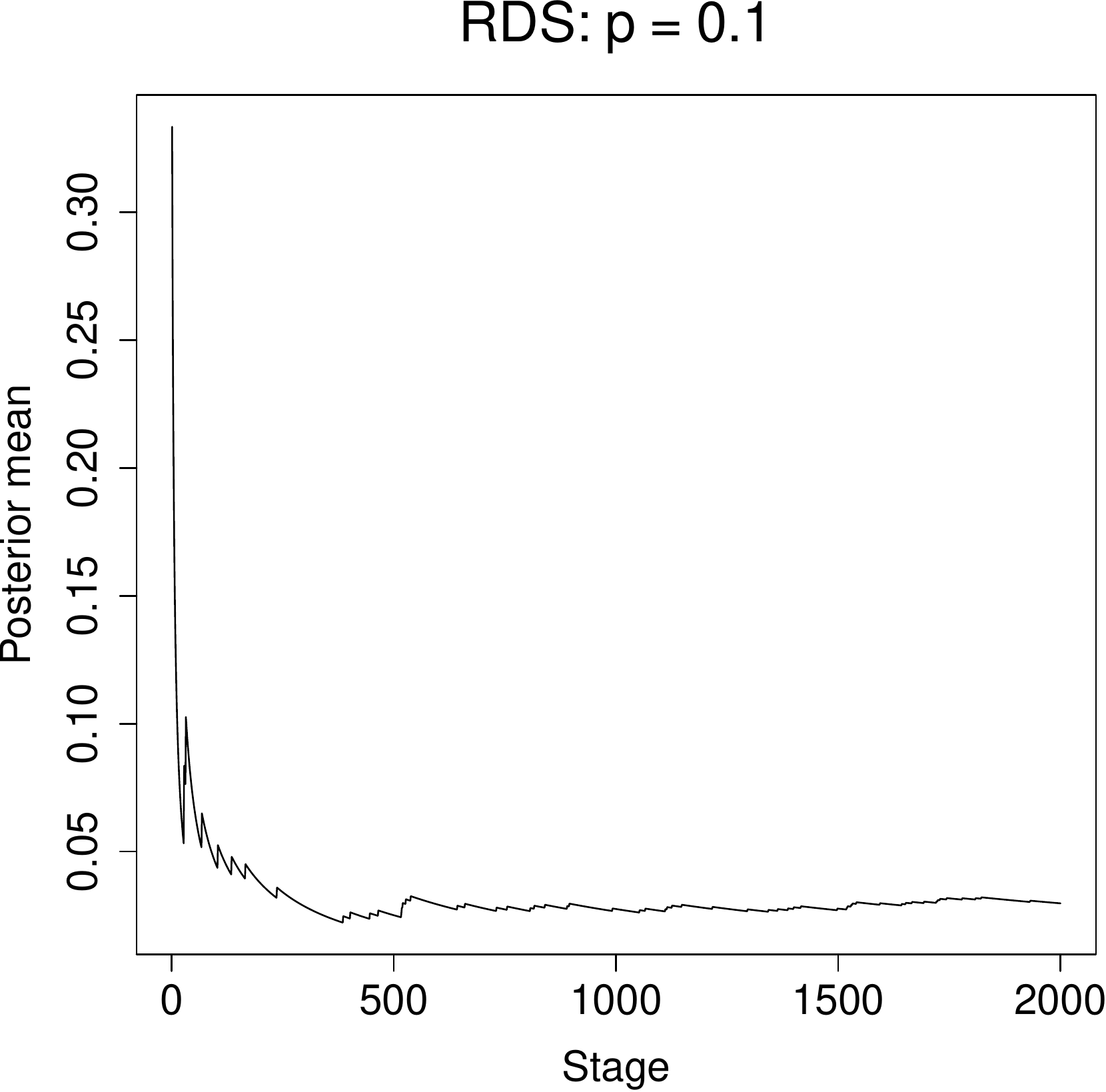}}\\
\subfigure [Divergence.]{ \label{fig:ds_para3}
\includegraphics[width=6cm,height=5cm]{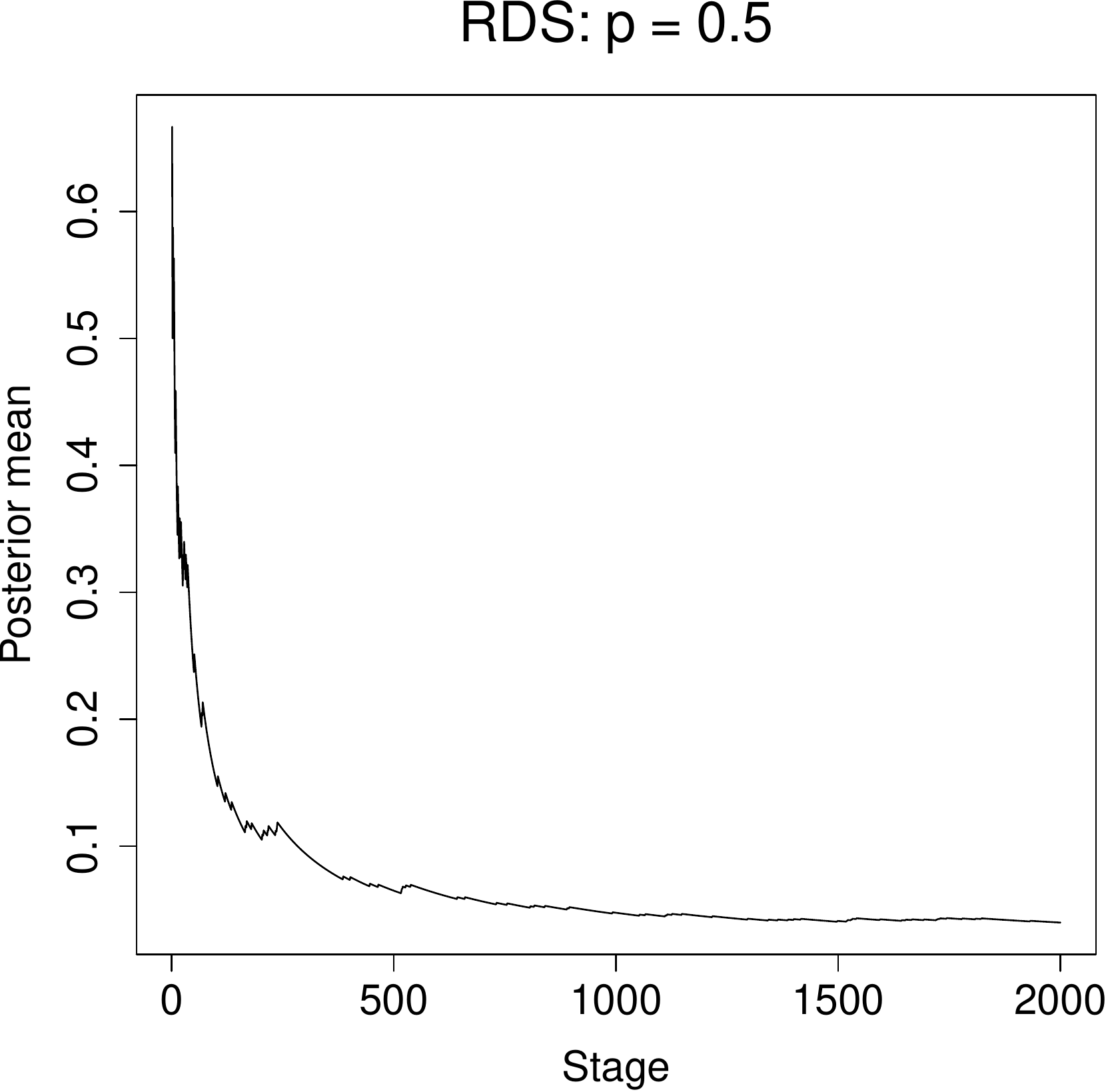}}
\hspace{2mm}
\subfigure [Divergence.]{ \label{fig:ds_para4}
\includegraphics[width=6cm,height=5cm]{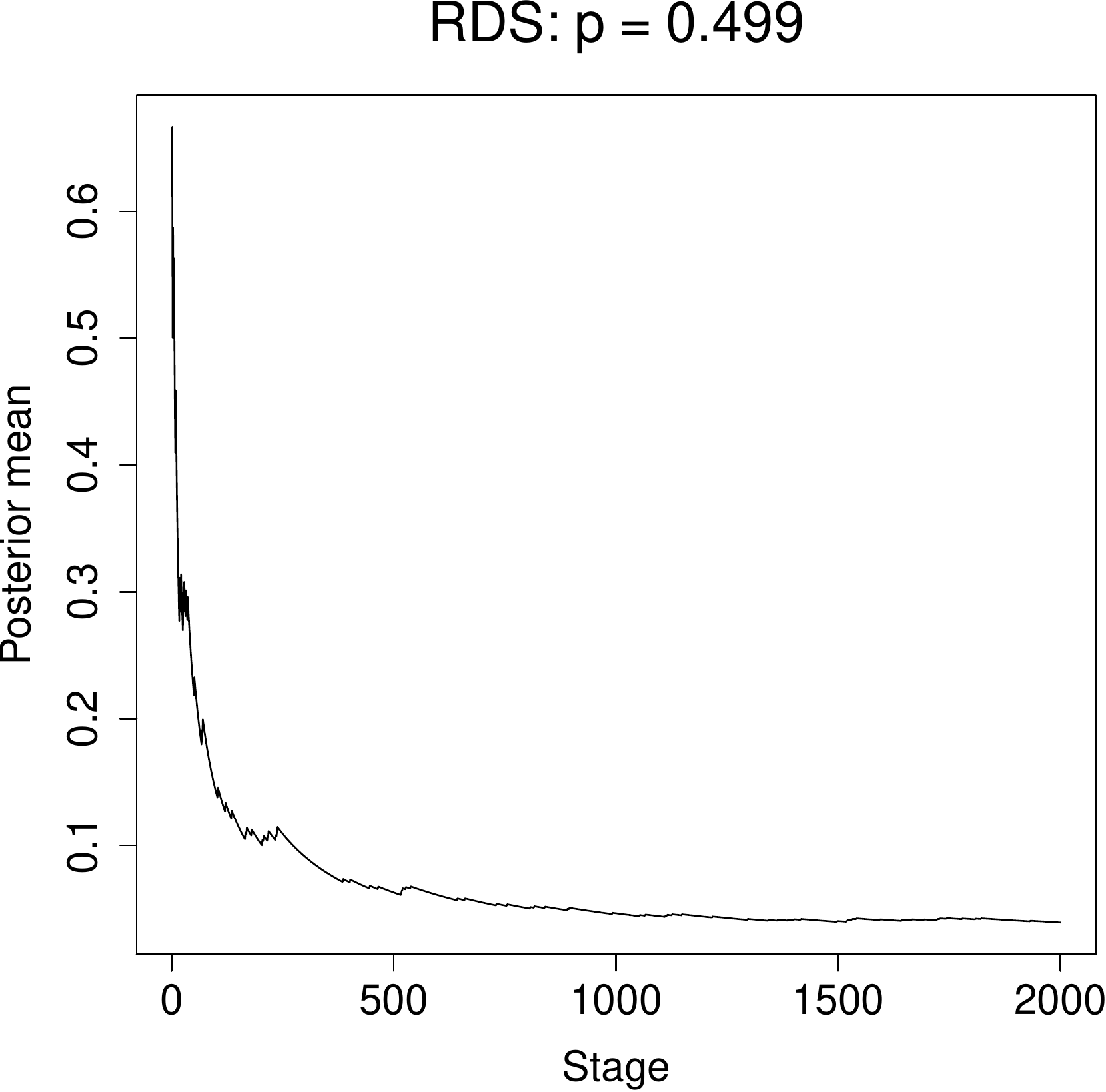}}\\
\subfigure [Convergence.]{ \label{fig:ds_para5}
\includegraphics[width=6cm,height=5cm]{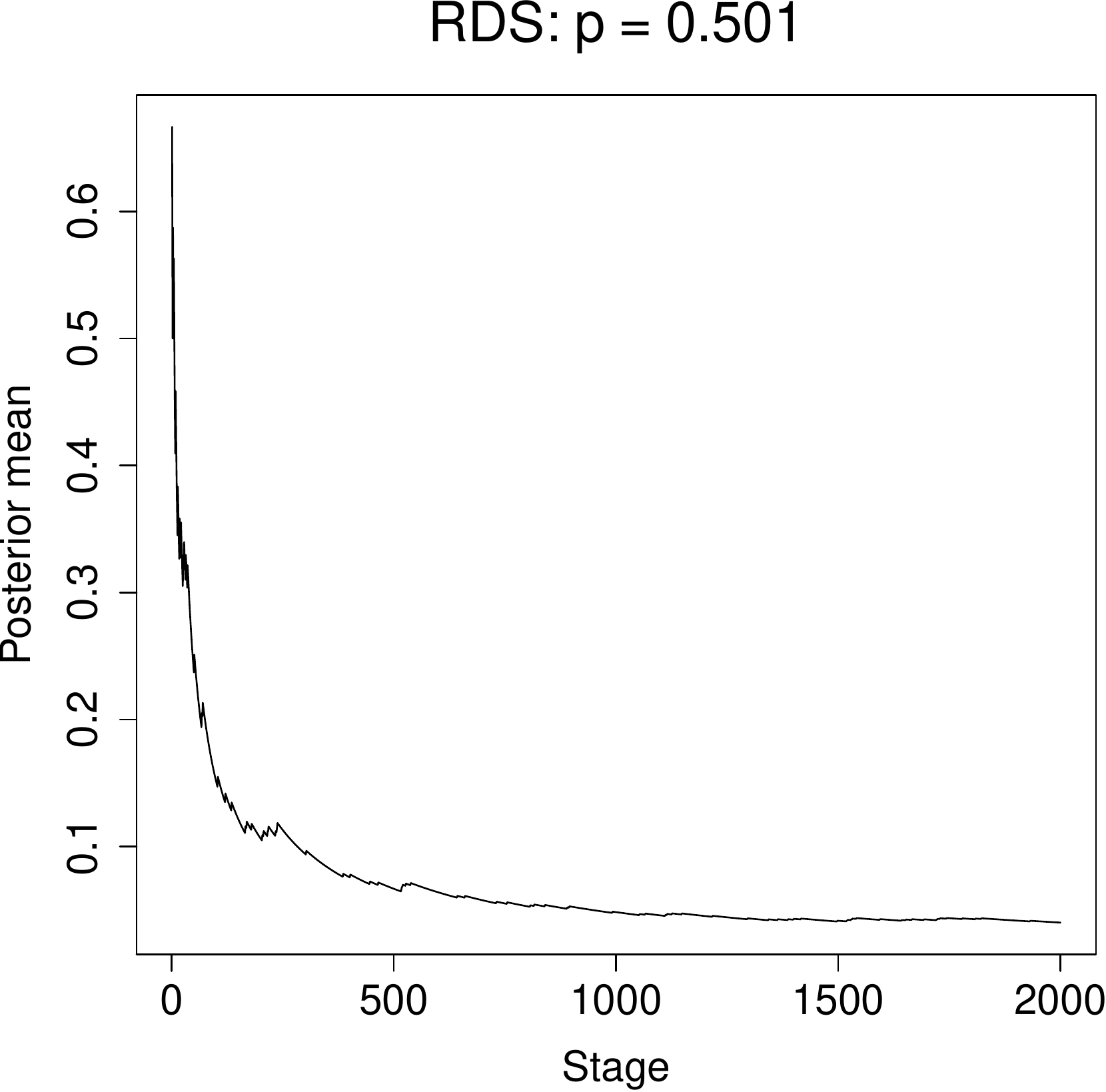}}
\hspace{2mm}
\subfigure [Convergence.]{ \label{fig:ds_para6}
\includegraphics[width=6cm,height=5cm]{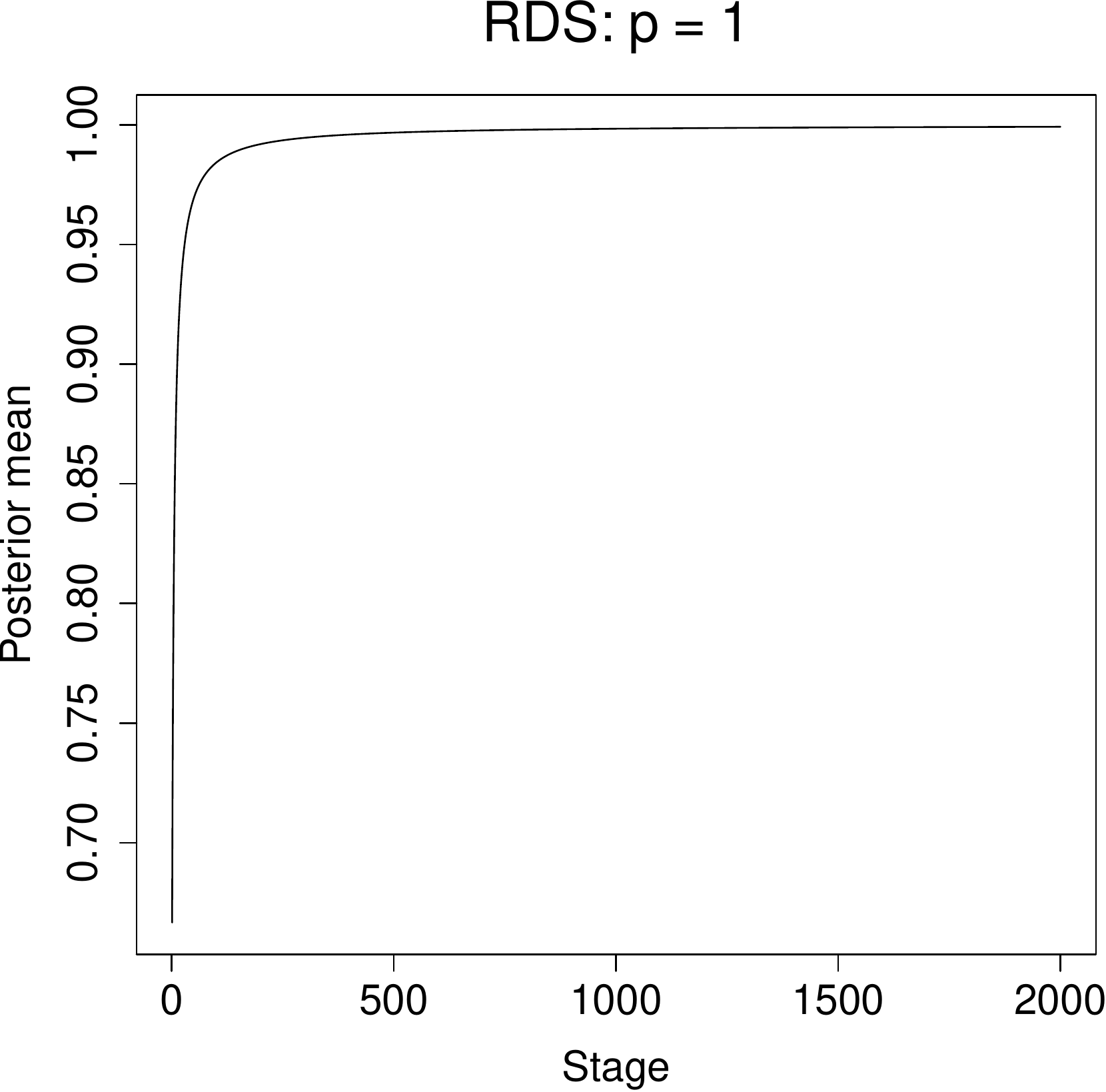}}\\
\subfigure [Divergence.]{ \label{fig:ds_para7}
\includegraphics[width=6cm,height=5cm]{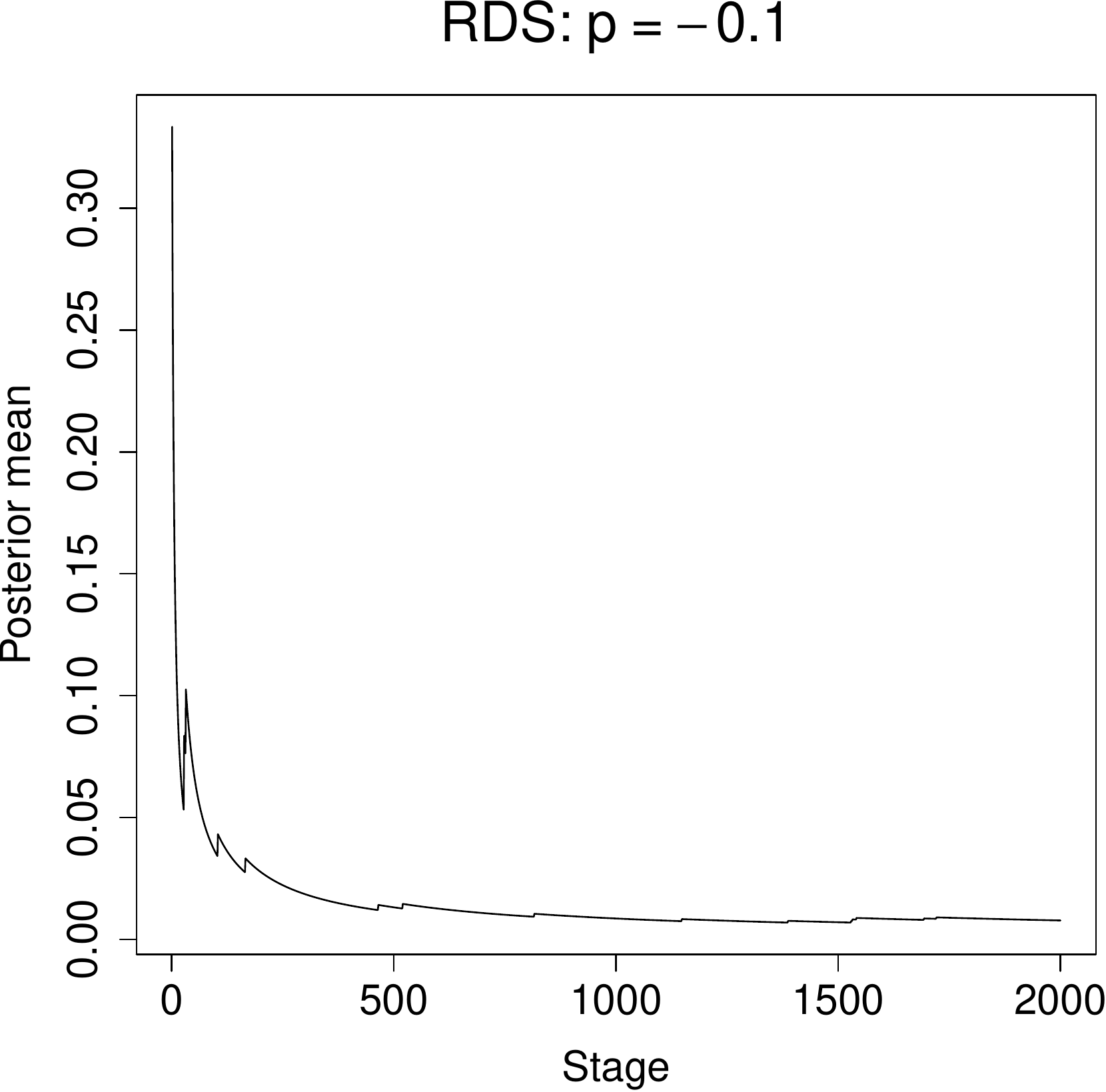}}
\caption{Example 6: Convergence and divergence for RDS.}
\label{fig:ds_para}
\end{figure}

\section{Nonparametric bounds for the partial sums and simulation experiments}
\label{sec:nonpara}

The parametric upper bounds for the partial sums are quite restrictive in the sense of requiring non-negative supports. The general upper bound 
(\ref{eq:S2}) is not theoretically sound and although it works well for exponential series and state-space series driven by exponential distributions (results
not shown for the sake of brevity), we have shown that its performance for series driven by normal
distributions is far from satisfactory, as very large number of iterations, with very large number of summands for the partial sums are required. Even then,
the independent and dependent normal setups do not exhibit convergence of our Bayesian procedure adequately close to $1$ and $0$ for convergent and divergent random series,
in many cases. Also in the RDS setup, incorrect results are obtained in a lot of cases with (\ref{eq:S2}). 
Thus, the general bound is not expected to work well for distributions supported on the real line.
Moreover, the bound construction methods require specific knowledge of the form of the underlying distribution $f_{\theta_i}$ of the $i$-th element 
$X_i$ of the random series. In reality, such information can not be expected to be available. 

Hence, effective bounds, which are independent of supports of the summands and the underlying distributional assumptions, are desirable. 
To this end, we propose the nonparametric
bounds introduced by \ctn{Roy20} in the context of Bayesian characterization of stochastic process properties. Although the context is different, the key Bayesian idea
employed by \ctn{Roy20} is the same as ours. Since their bounds turned out to be very effective in most of their varied examples, we expect ours to be no different.

Specifically, we set
\begin{equation}
c_j=\hat C_j/\log(j+1),
\label{eq:ar1_bound3}
\end{equation}	
where $\hat C_1$ is a chosen constant, and 
for $j>1$, $\hat C_j=\hat C_{j-1}+0.05$ if $y_{j-1}=1$ and $\hat C_j=\hat C_{j-1}-0.05$ if $y_{j-1}=0$. 

Thus, we favour convergence at the next, $(j+1)$-th stage, if at the current stage convergence is supported ($y_j=1$), and favour divergence otherwise. 
The $\log(j+1)$ scale ensures that the rate of convergence of $c_j$ to zero as $j\rightarrow\infty$, is neither too fast, nor too slow.

The choice of the initial value $\hat C_1$ is an important issue and if chosen without utmost care, can yield wrong results regarding series convergence properties.
The choice is also expected to to be problem specific in general. However, in our examples involving normal and exponential based models, 
we find $\hat C_1=0.71$ and $0.725$, respectively, to be quite appropriate.
This is somewhat in keeping with \ctn{Roy20} who found $\hat C_1=1$ or values close to $1$ to be adequate in most cases, in spite of their wide variety of examples.
In the case of RDS we exploit the corresponding deterministic Dirichlet series to obtain an appropriate value of $\hat C_1$.

\subsection{Simulation experiments with the nonparametric bound form}
\label{subsec:simexp_nonpara}

We now conduct simulation experiments with this new, nonparametric bound form (\ref{eq:ar1_bound3}) applied to the setups considered in Section \ref{sec:simstudy1}. 
For all the cases, we now consider $n_j=1000$ for $j=1,\ldots,K$, with $K=2000$. Thus, even for the series driven by normal and dependent normal distributions 
we now consider situations where the number of summands in each partial sum,
as well as the number of stages (iterations) for our Bayesian procedure are significantly smaller compared to those in Sections \ref{subsec:bayesian_normal}
and \ref{subsec:bayesian_normal_dependent}. Needless to mention,
the time taken for the implementations of the Bayesian procedure with the nonparametric bound are less than a second. 
As we shall see, in almost all the cases, the bound form (\ref{eq:ar1_bound3}) yields the correct answer, even for the normal driven series,
in spite of many times smaller sample size as used in Sections \ref{subsec:bayesian_normal} and \ref{subsec:bayesian_normal_dependent}. 
Importantly, in all the cases, the Bayesian method gets sufficiently close to $1$ and $0$
for convergent and divergent series, respectively. Recall that this was not the case for independent and dependent normal setups, even with extremely large sample sizes,
and incorrect results were obtained for the RDS.
Thus, the bound (\ref{eq:ar1_bound3}), in spite of having a nonparametric form, turns out to be far more effective and efficient than the previous general 
parametric bound (\ref{eq:S2}). However, for the hierarchical exponential setup and the state-space hierarchical exponential setup, 
the nonparametric bound performs slightly worse in a very subtle situation compared to the mathematically valid parametric bound.
On the other hand, the nonparametric bound slightly outperforms the mathematically sound parametric counterpart in a subtle situation of 
the state-space non-hierarchical exponential setup.
Thus, the nonparametric bound seems to be very much comparable with the valid parametric bound when the latter is available, and emphatically 
outperforms the general parametric bound (\ref{eq:S2}).

\subsubsection{Example 1 revisited: Hierarchical exponential distribution}
\label{subsubsec:bayesian_exp}

As in Section \ref{sec:simstudy1}, we first consider the setup $X_i\sim\mathcal E(\theta_i)$ and $\theta_i\sim\mathcal E(\psi_i)$; $i\geq 1$.
Here experimentation reveals that $\hat C_1=0.725$ is an appropriate choice that can detect most convergent and divergent series driven by 
exponential distributions of the above form.

Figure \ref{fig:example_exp2} displays the results of our Bayesian analyses of different exponential series of the above form. 
Not only does the Bayesian procedure with the nonparametric bound captures the correct result even for such small sample sizes, it does so quite convincingly, as
the method gets adequately close to $1$ and $0$ for convergent and divergent series, respectively. However, it is important to mention that for $\psi_i=i^{-p}$,
for $p\in(0.95,1]$, our method with the nonparametric bound failed to yield correct results. Thus, a little subtlety seems to have been sacrificed due to the small 
sample size. Indeed, increasing $n_j$ led to increasing shrinkage of the offending interval $(0.95,1]$ towards $1$.

\begin{figure}
\centering
\subfigure [Divergence.]{ \label{fig:exp1_2}
\includegraphics[width=6cm,height=5cm]{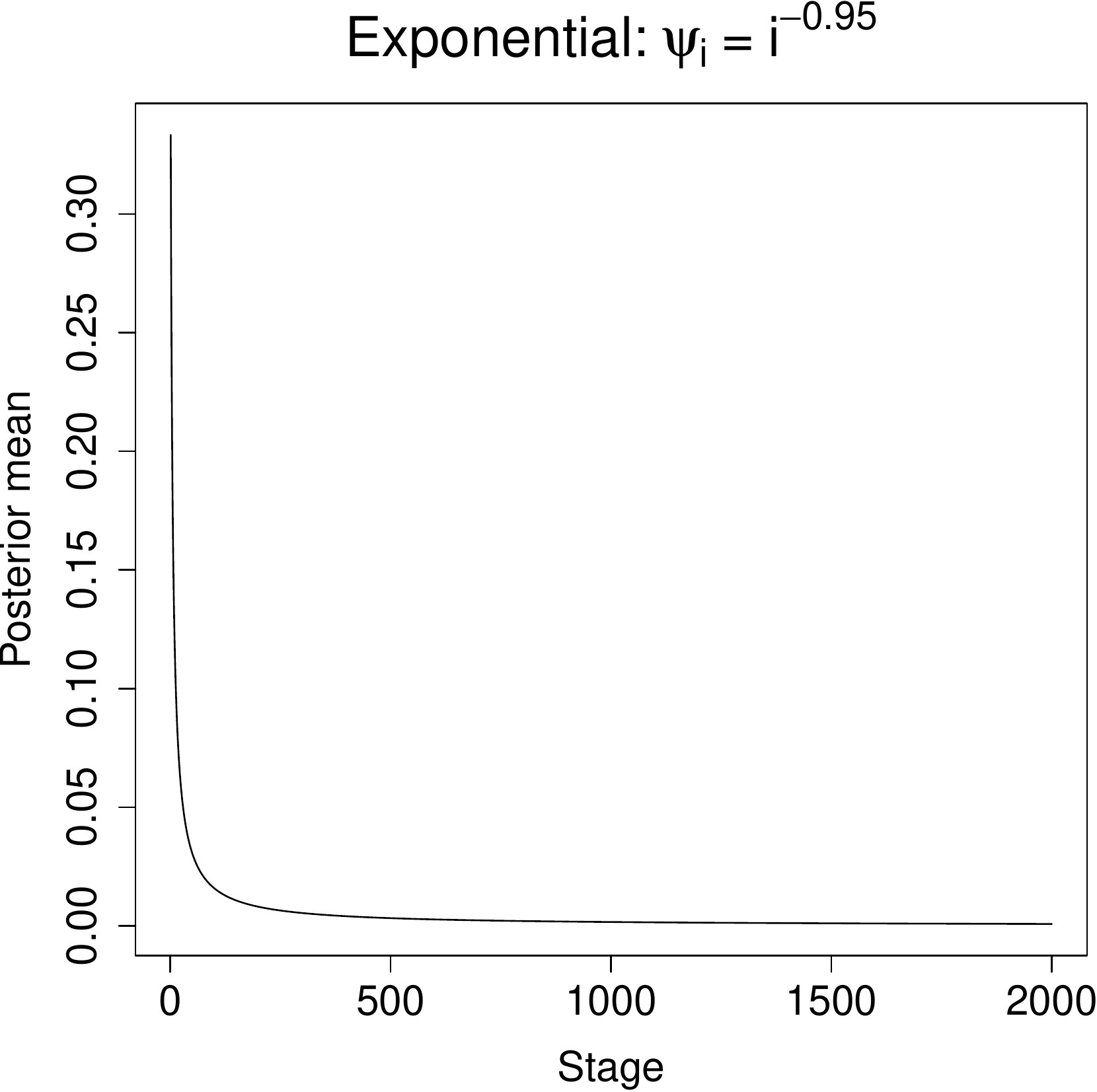}}
\hspace{2mm}
\subfigure [Convergence.]{ \label{fig:exp2_2}
\includegraphics[width=6cm,height=5cm]{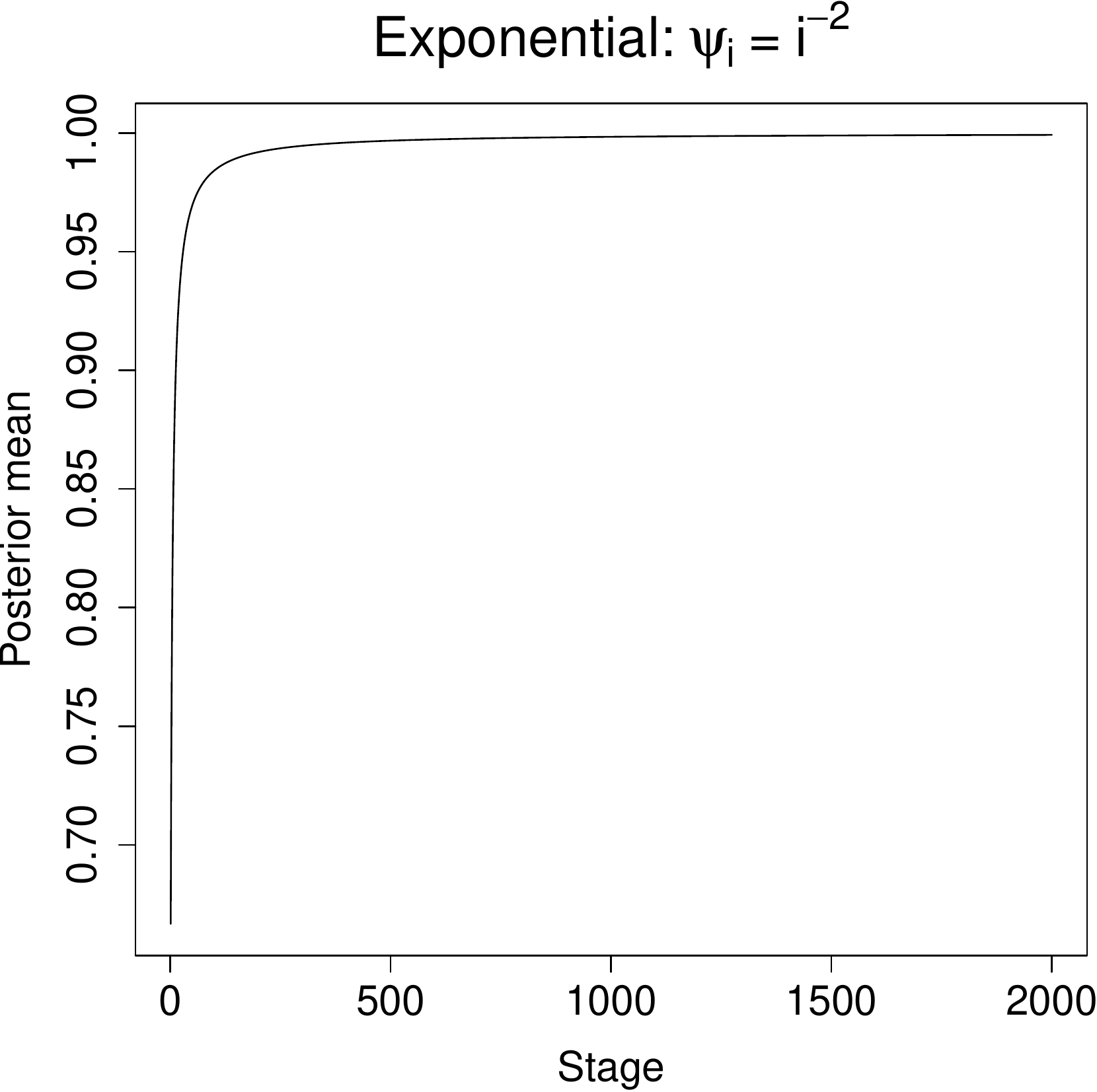}}\\
\subfigure [Convergence.]{ \label{fig:exp3_2}
\includegraphics[width=6cm,height=5cm]{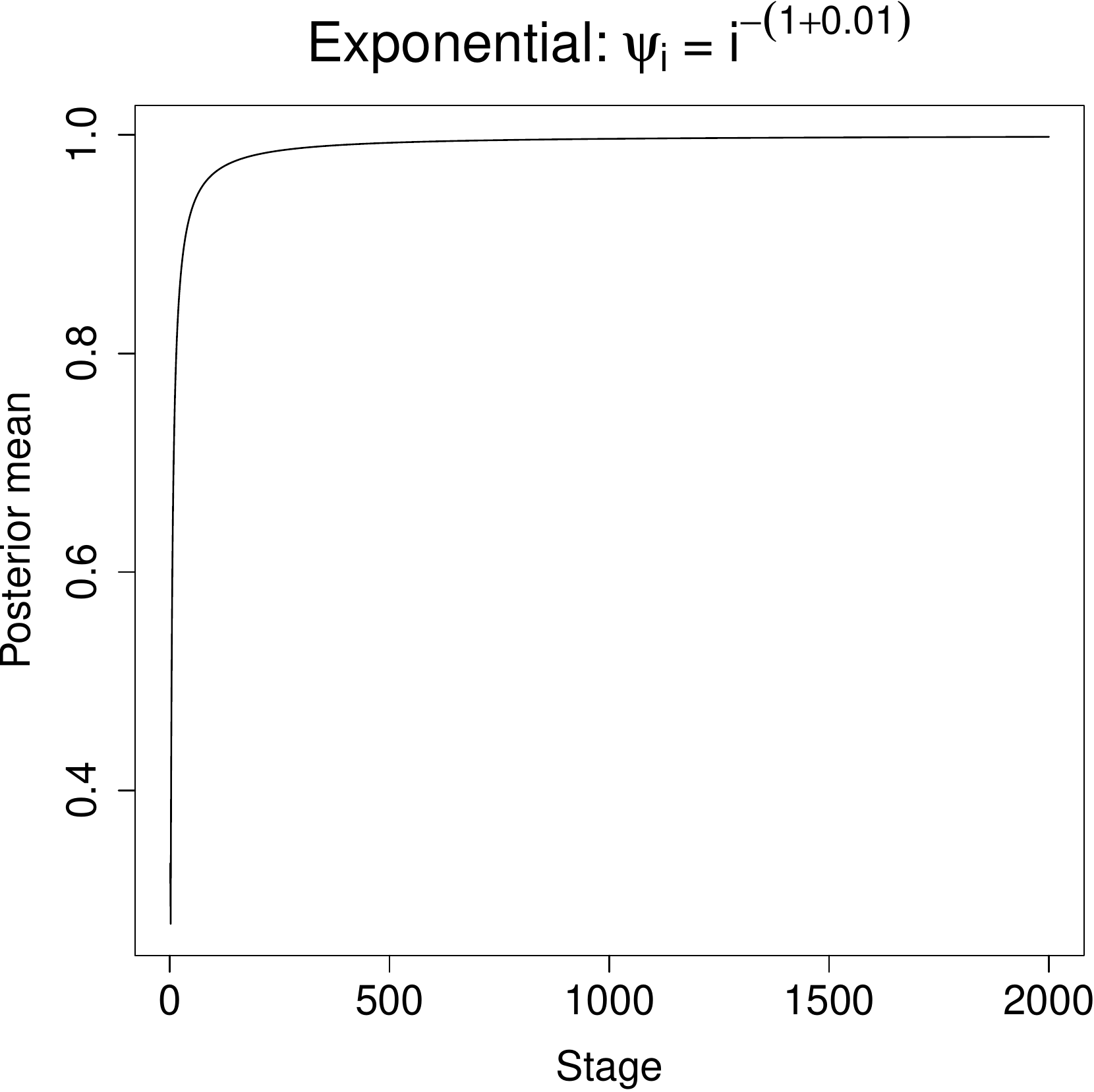}}
\hspace{2mm}
\subfigure [Convergence.]{ \label{fig:exp4_2}
\includegraphics[width=6cm,height=5cm]{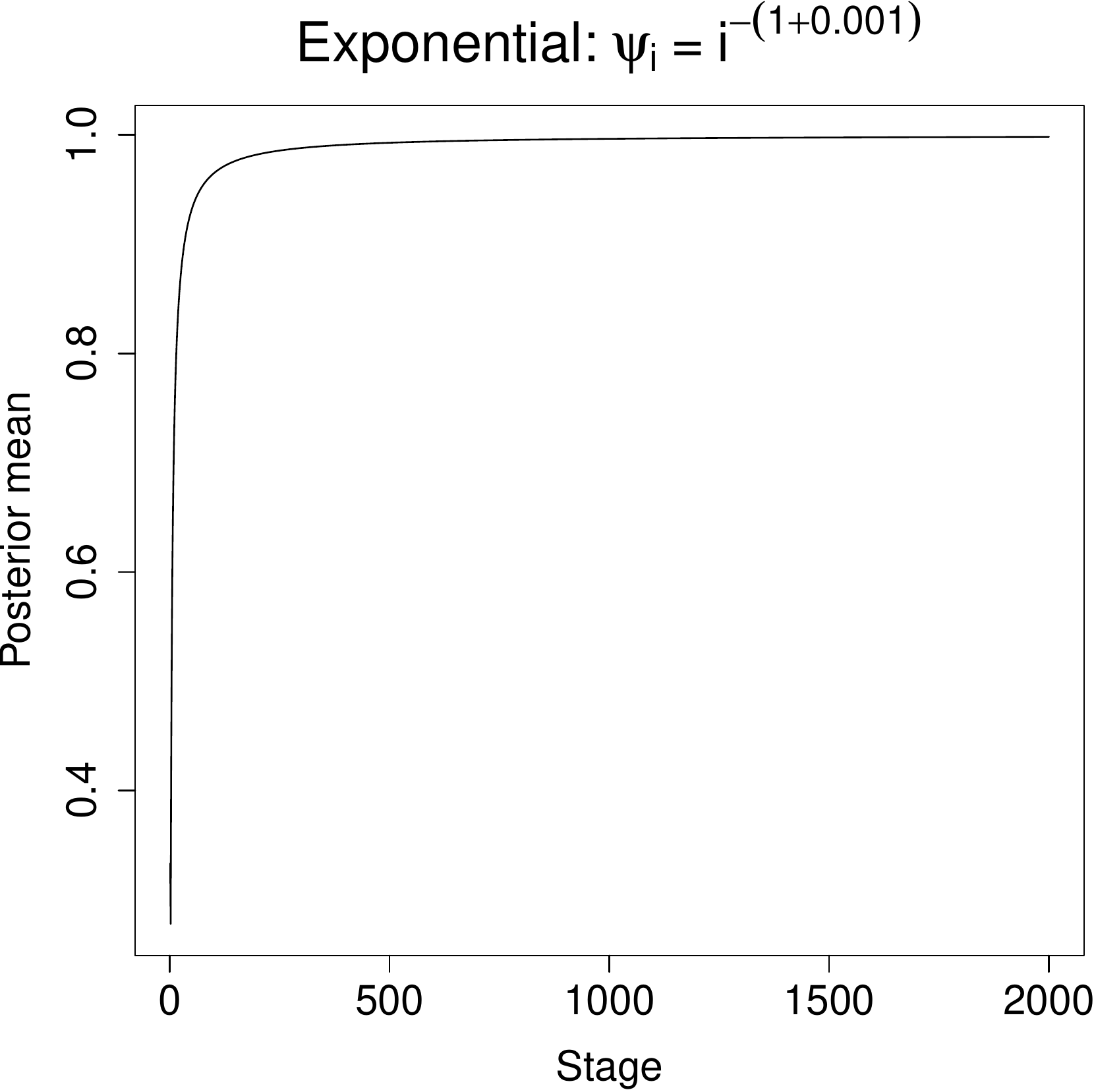}}\\
\subfigure [Convergence.]{ \label{fig:exp5_2}
\includegraphics[width=6cm,height=5cm]{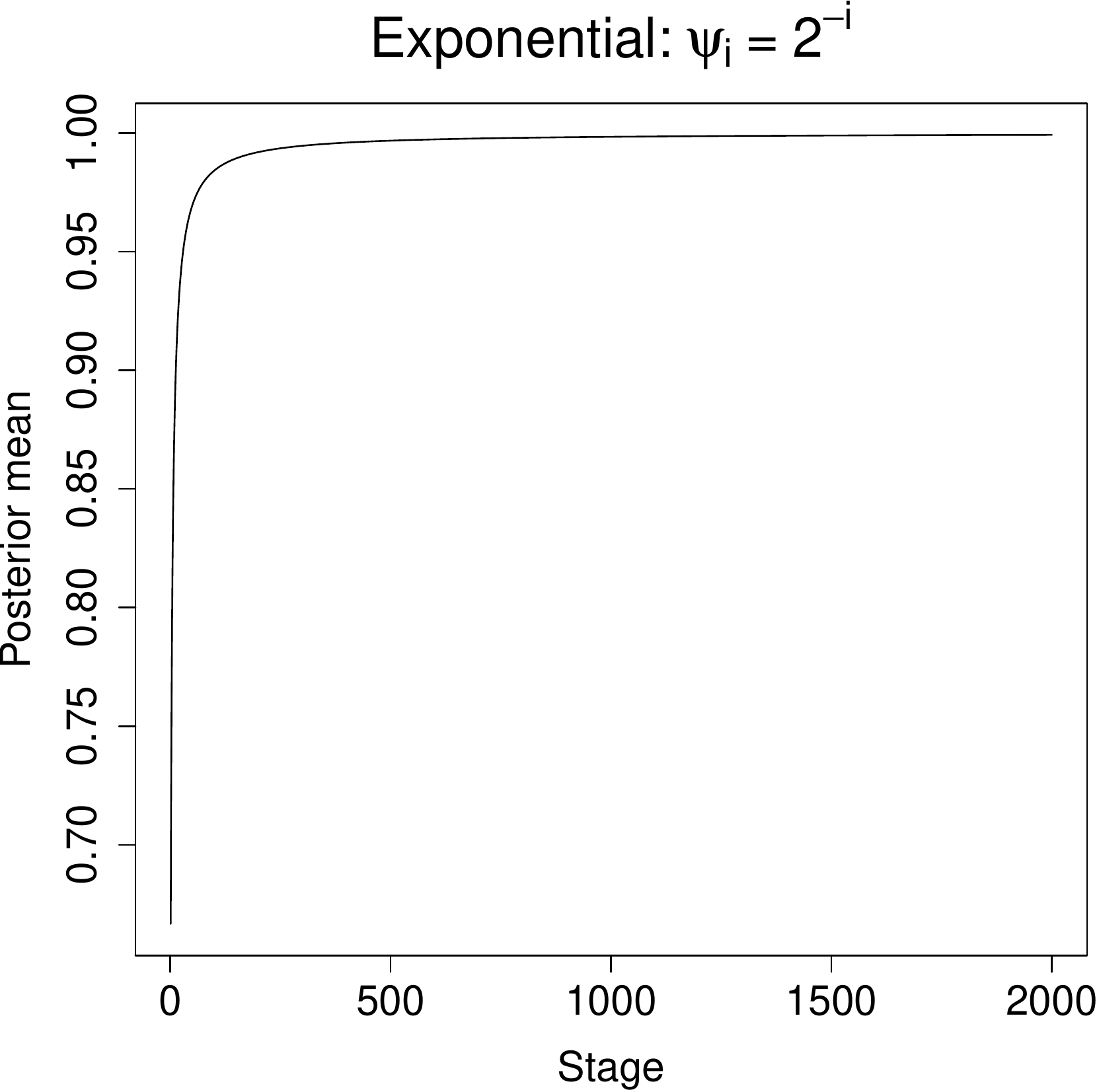}}
\hspace{2mm}
\subfigure [Convergence.]{ \label{fig:exp6_2}
\includegraphics[width=6cm,height=5cm]{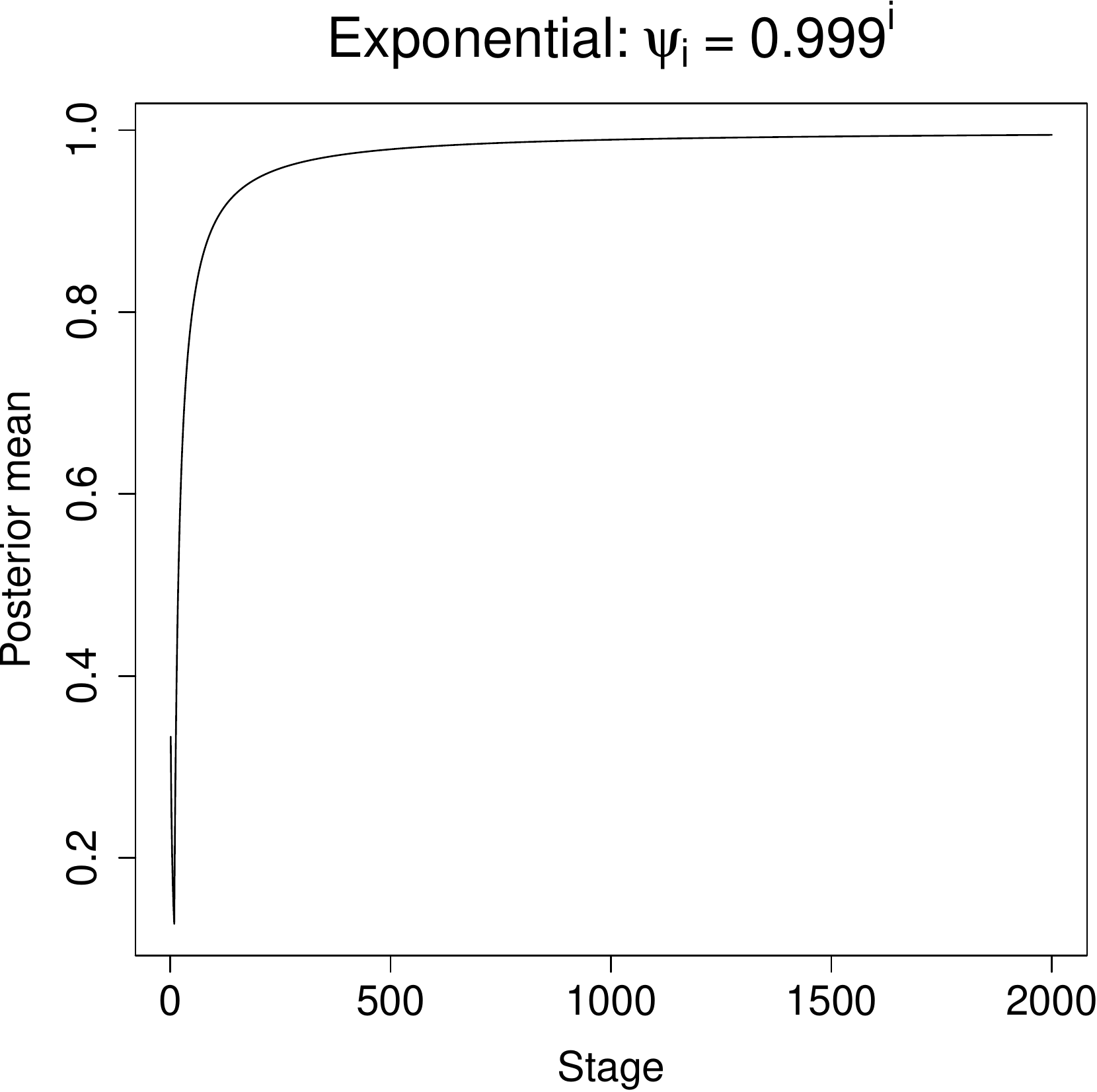}}\\
\subfigure [Divergence.]{ \label{fig:exp7_2}
\includegraphics[width=6cm,height=5cm]{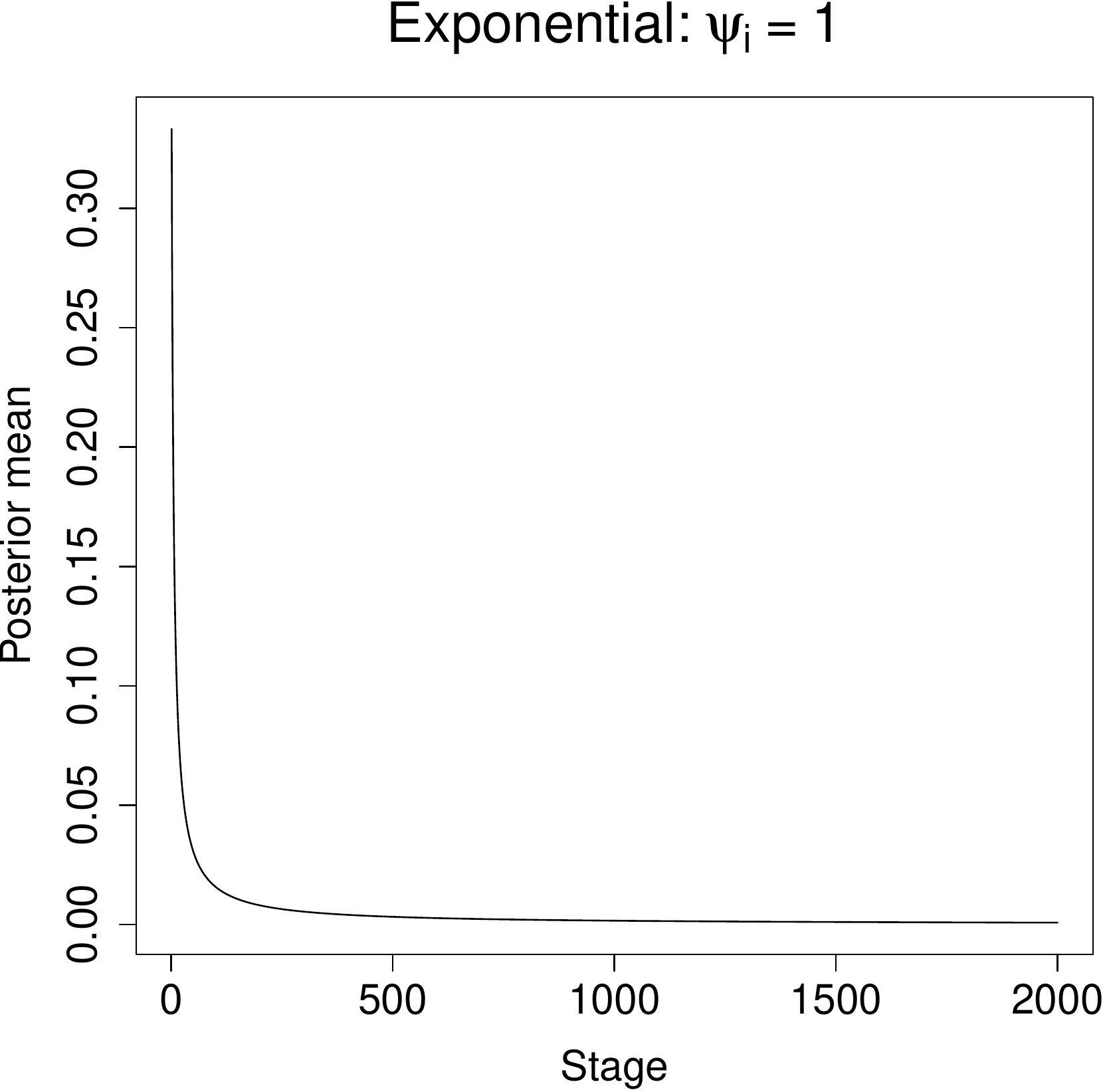}}
\caption{Example 1 revisited: Convergence and divergence for exponential series with nonparametric bound.}
\label{fig:example_exp2}
\end{figure}

\subsubsection{Example 2 revisited: Hierarchical normal distribution}
\label{subsubsec:bayesian_normal}
As in Section \ref{subsec:bayesian_normal}, we now let $X_i\sim N\left(\mu_i,\sigma^2_i\right)$, $\mu_i\sim N\left(0,\phi^2_i\right)$
and $\sigma^2_i\sim\mathcal E(\vartheta_i)$; $i\geq 1$. 
Here $\hat C_1=0.71$ turned out to be appropriate. Notice its close similarity with $\hat C_1=0.725$ for the exponential bound.

Figure \ref{fig:example_normal2} shows our results in this setup. In all the cases, correct results are convincingly obtained, even with such a small sample size.
The results are convincing in the sense that the underlying Bayesian procedure gets sufficiently close to $1$ and $0$ for all the convergent and divergent series,
respectively. Thus, compared to Figure \ref{fig:example_normal} corresponding to the parametric bound, we have a huge gain in efficiency and effectiveness.
However, it must be mentioned that for such small sample size, our method failed in the cases where $\phi_i=\vartheta_i=i^{-(1+a)}$, for $a\in (0.0,0.04)$. 
\begin{figure}
\centering
\subfigure [Divergence.]{ \label{fig:normal1_2}
\includegraphics[width=6cm,height=5cm]{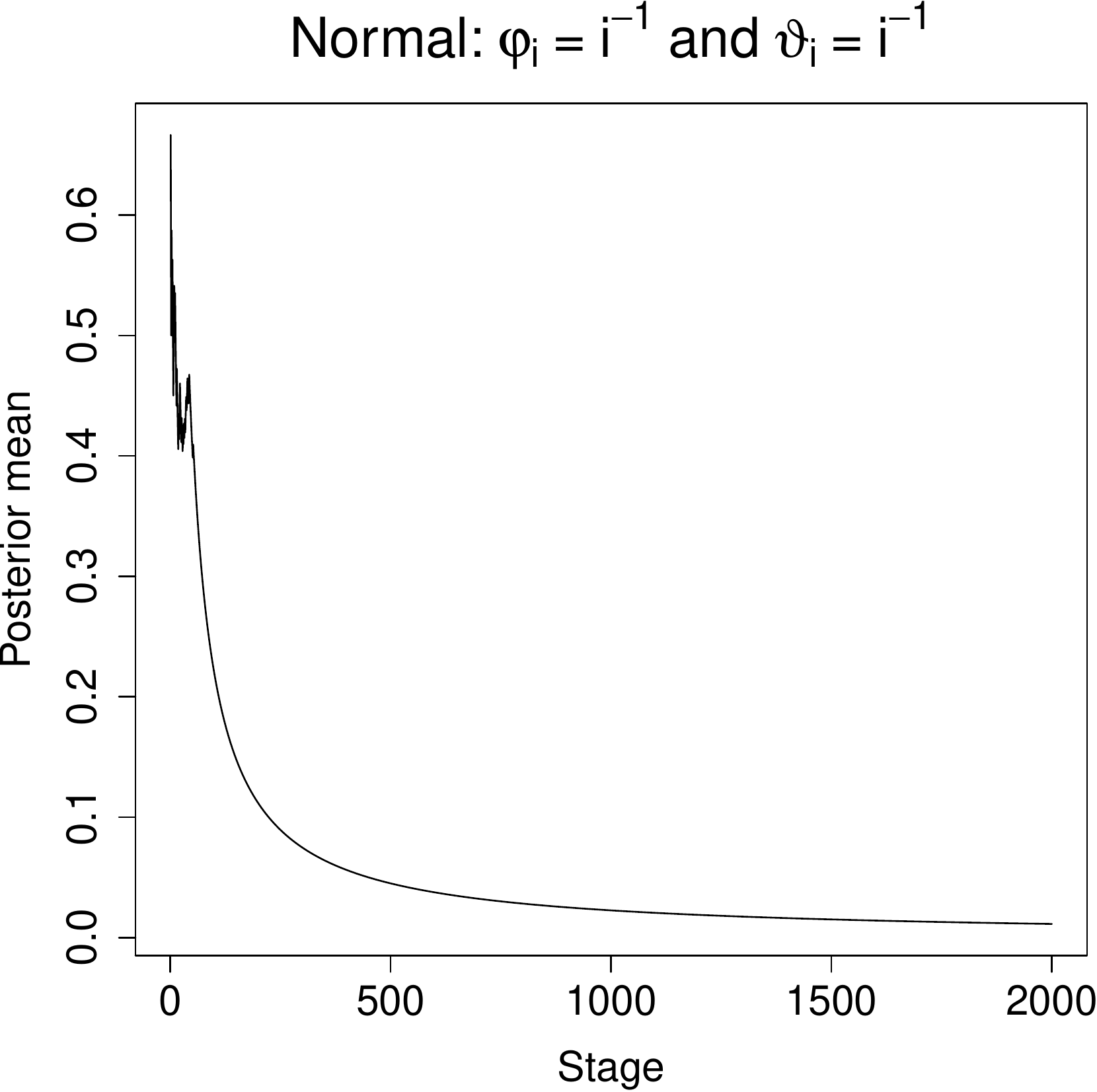}}
\hspace{2mm}
\subfigure [Convergence.]{ \label{fig:normal2_2}
\includegraphics[width=6cm,height=5cm]{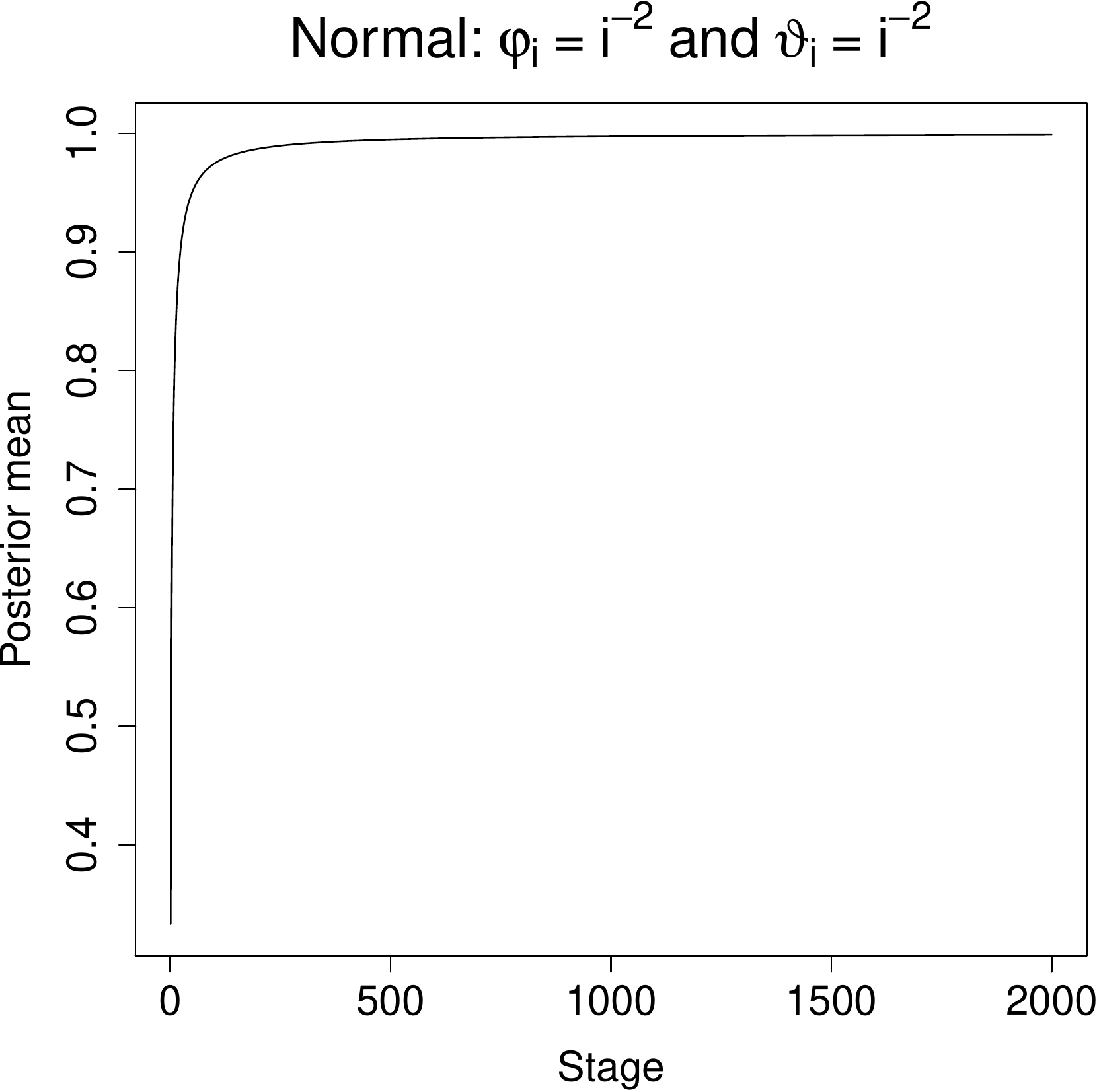}}\\
\subfigure [Convergence.]{ \label{fig:normal3_2}
\includegraphics[width=6cm,height=5cm]{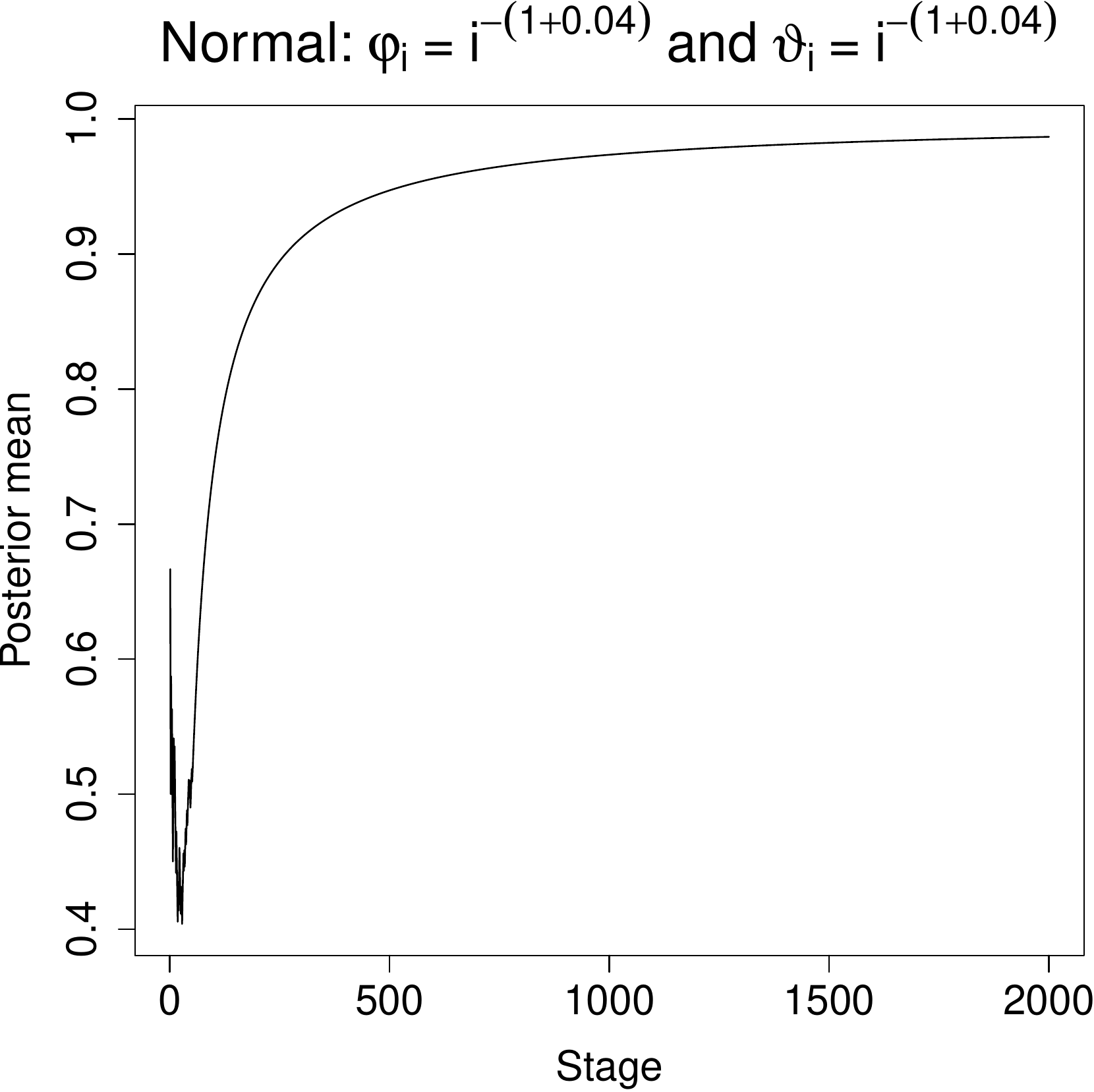}}
\hspace{2mm}
\subfigure [Divergence.]{ \label{fig:normal4_2}
\includegraphics[width=6cm,height=5cm]{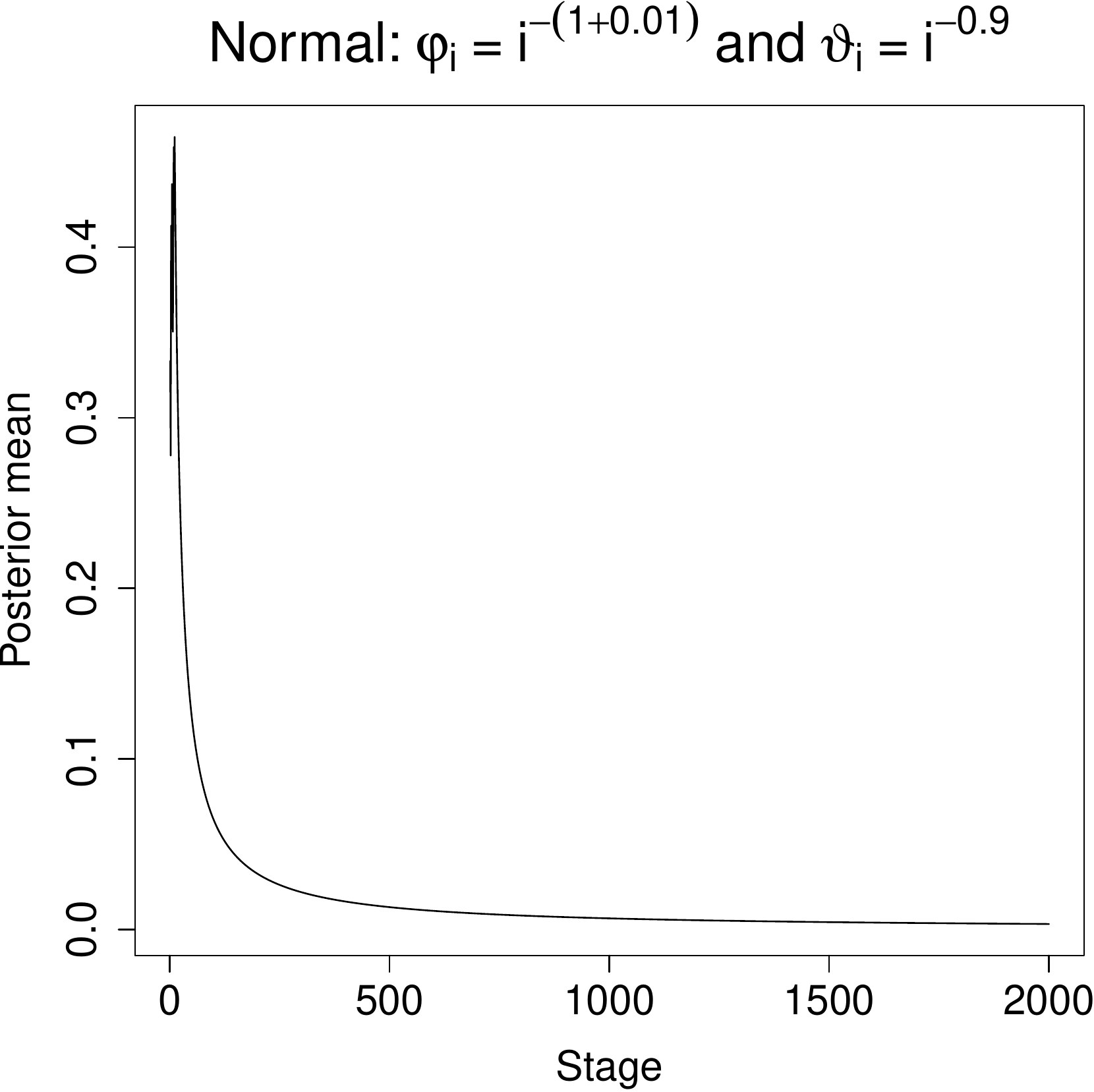}}\\
\subfigure [Divergence.]{ \label{fig:normal5_2}
\includegraphics[width=6cm,height=5cm]{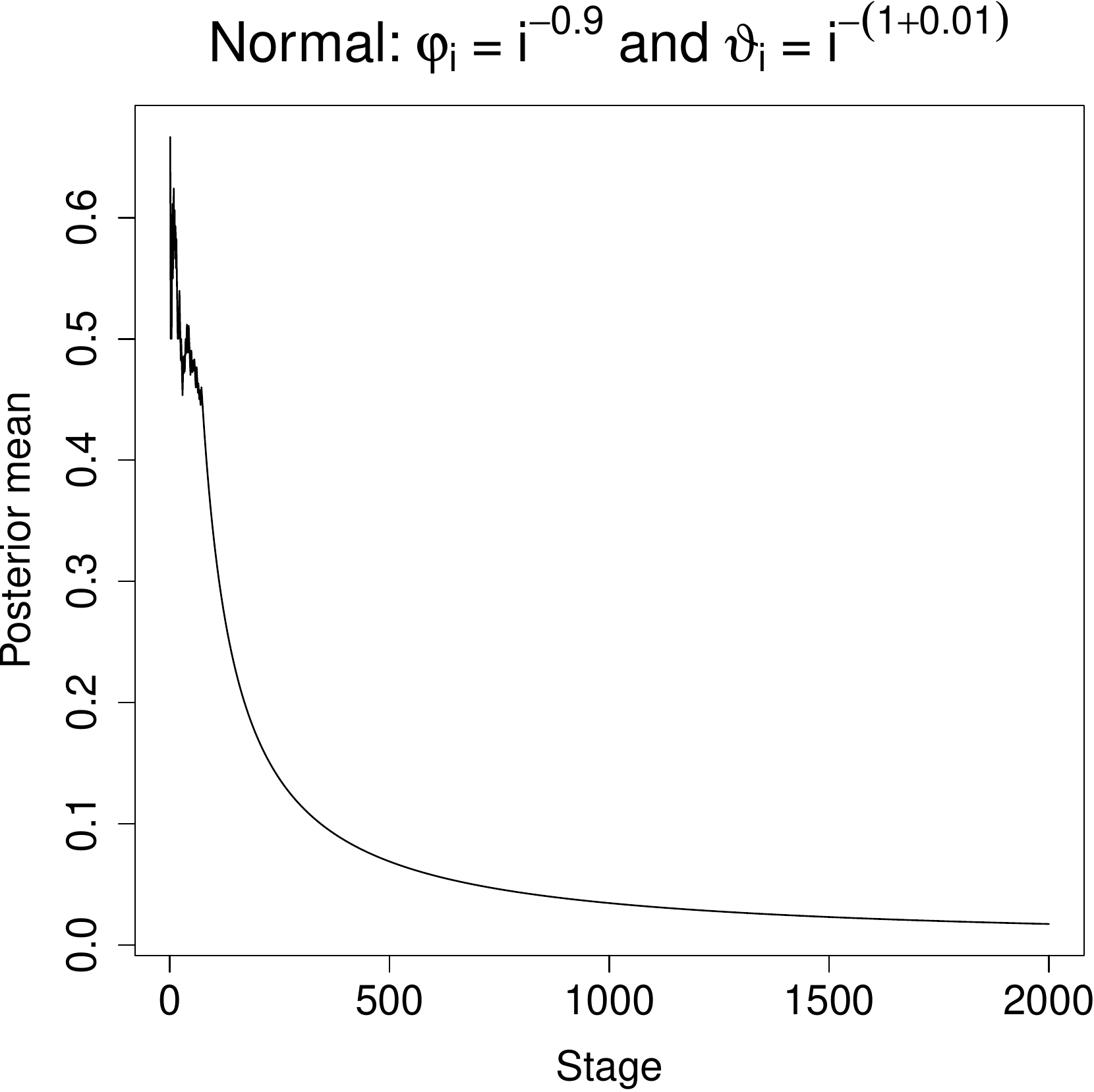}}
\hspace{2mm}
\subfigure [Divergence.]{ \label{fig:normal6_2}
\includegraphics[width=6cm,height=5cm]{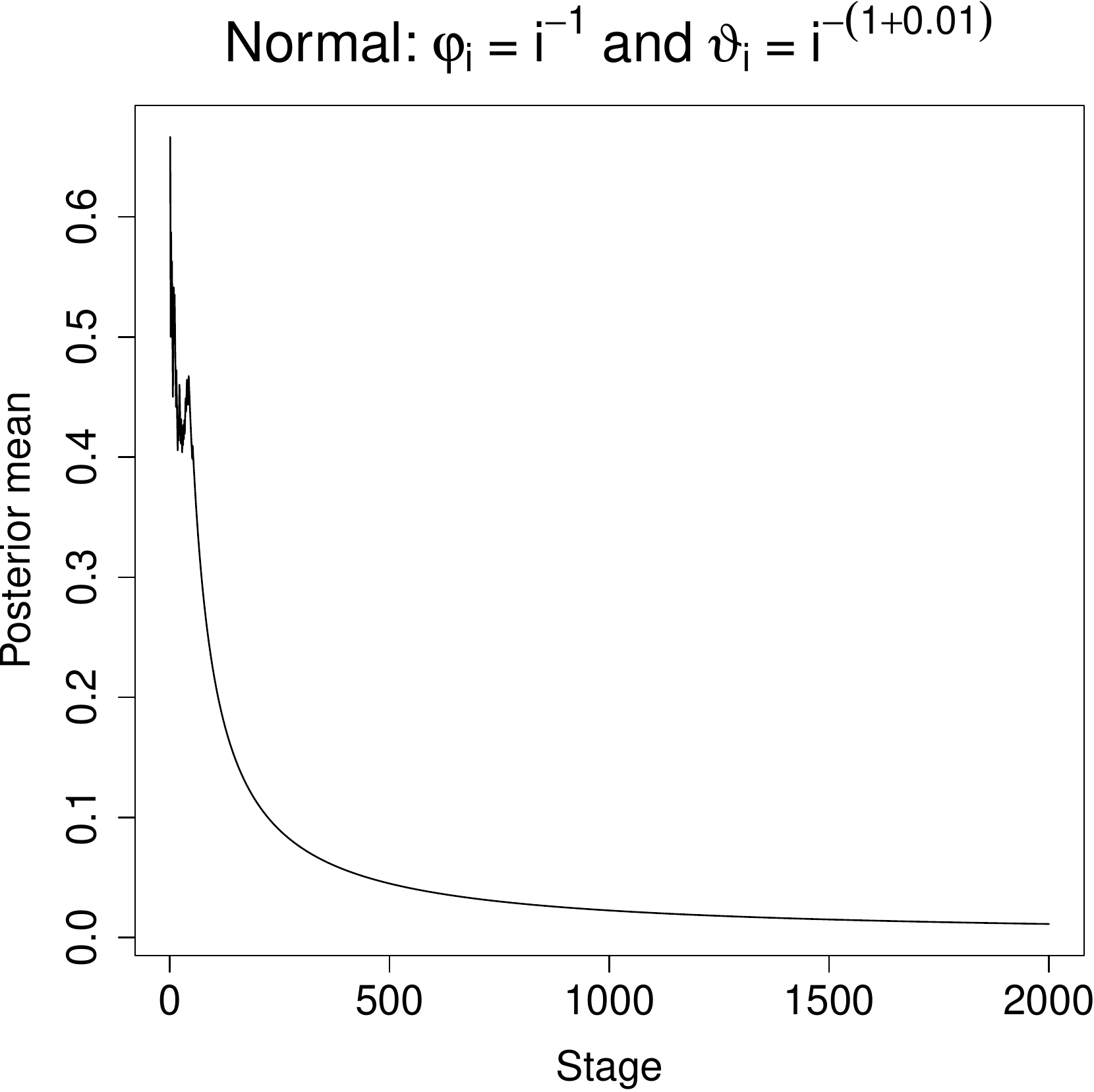}}\\
\subfigure [Divergence.]{ \label{fig:normal7_2}
\includegraphics[width=6cm,height=5cm]{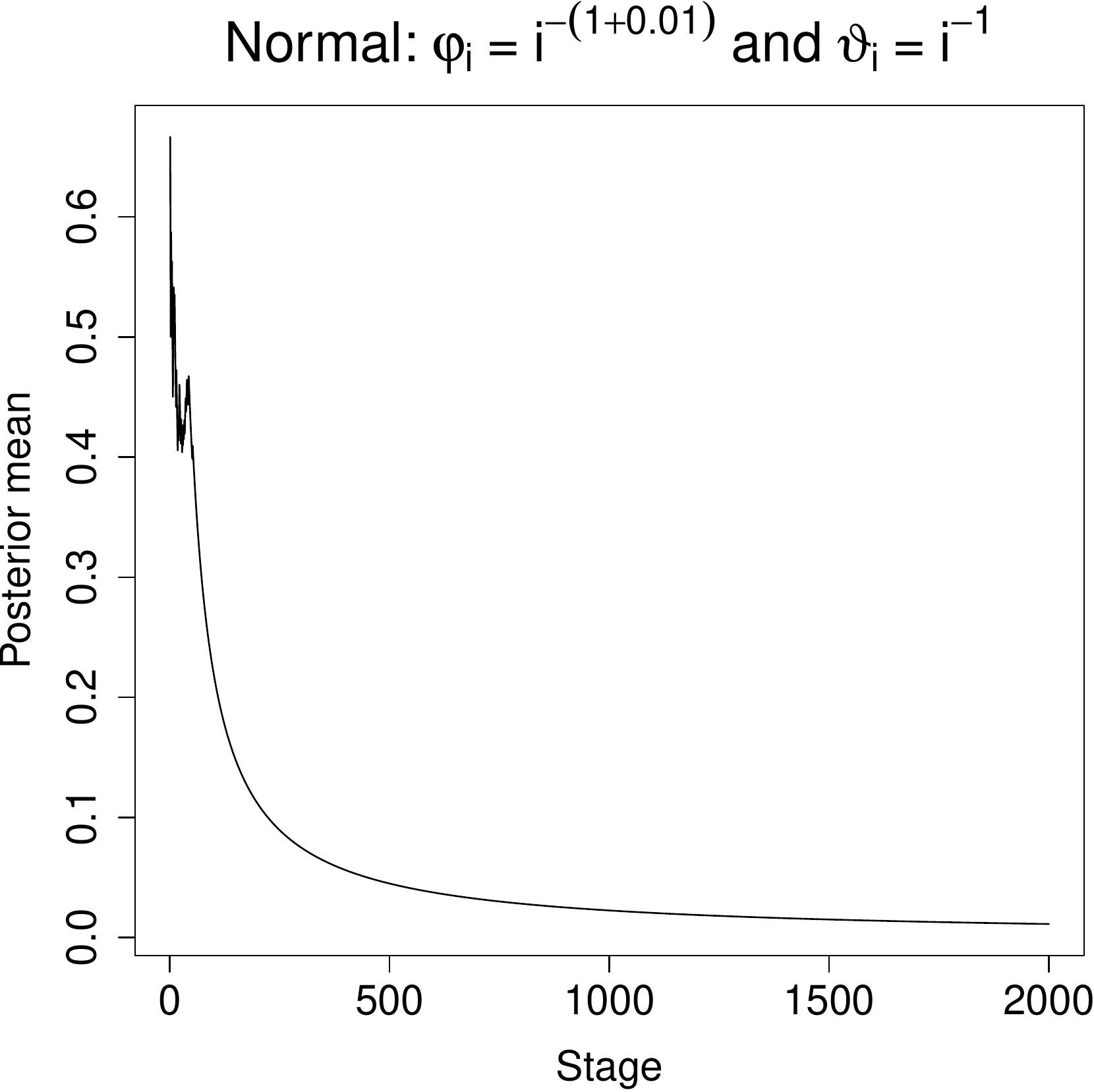}}
\caption{Example 2 revisited: Convergence and divergence for normal series with nonparametric bound.}
\label{fig:example_normal2}
\end{figure}

\subsubsection{Example 3 revisited: Dependent hierarchical normal distribution}
\label{subsubsec:bayesian_normal_dependent}
As in Section \ref{subsec:bayesian_normal_dependent} we again consider $[X_i|\xi]\sim N\left(\mu_i,\xi\sigma^2_i\right)$, independently, for $i\geq 1$, where $\xi\sim U(0,1)$,
$\mu_i\sim N\left(0,\phi^2_i\right)$ and $\sigma^2_i\sim\mathcal E(\vartheta_i)$, 
but now with the parametric bound for the partial sums replaced with the nonparametric form (\ref{eq:ar1_bound3}), with $\hat C_1=0.71$, the same initial constant
used for the nonparametric bound for the normal setup in Section \ref{subsubsec:bayesian_normal}. Figure \ref{fig:example_normal_dependent2} shows the relevant results
in this setup. The results are similar to the independent normal setup with nonparametric bound, and are very significant improvements 
to the results provided by the parametric bound displayed in Figure \ref{fig:example_normal_dependent}. Indeed, Figure \ref{fig:example_normal_dependent} 
shows that none of the convergence and divergence results for the parametric bound is convincing, even for such huge samples, and even after such long run-times.
In sharp contrast, the nonparametric bound results depicted by Figure \ref{fig:example_normal_dependent2} are highly persuasive, even with such small samples, requiring
run-times of less than a second on our ordinary dual core laptop.
\begin{figure}
\centering
\subfigure [Divergence.]{ \label{fig:normal_dep1_2}
\includegraphics[width=6cm,height=5cm]{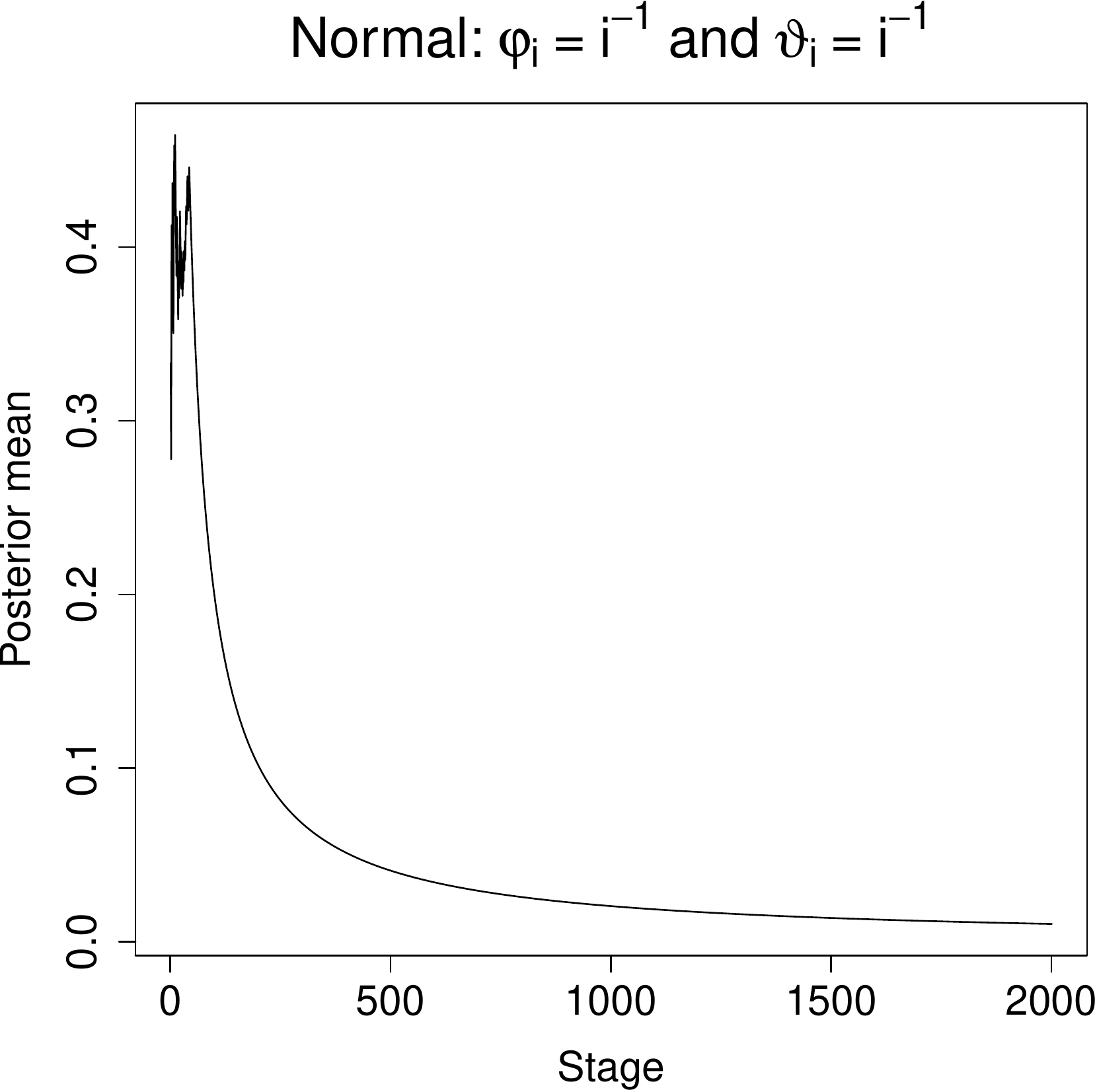}}
\hspace{2mm}
\subfigure [Convergence.]{ \label{fig:normal_dep2_2}
\includegraphics[width=6cm,height=5cm]{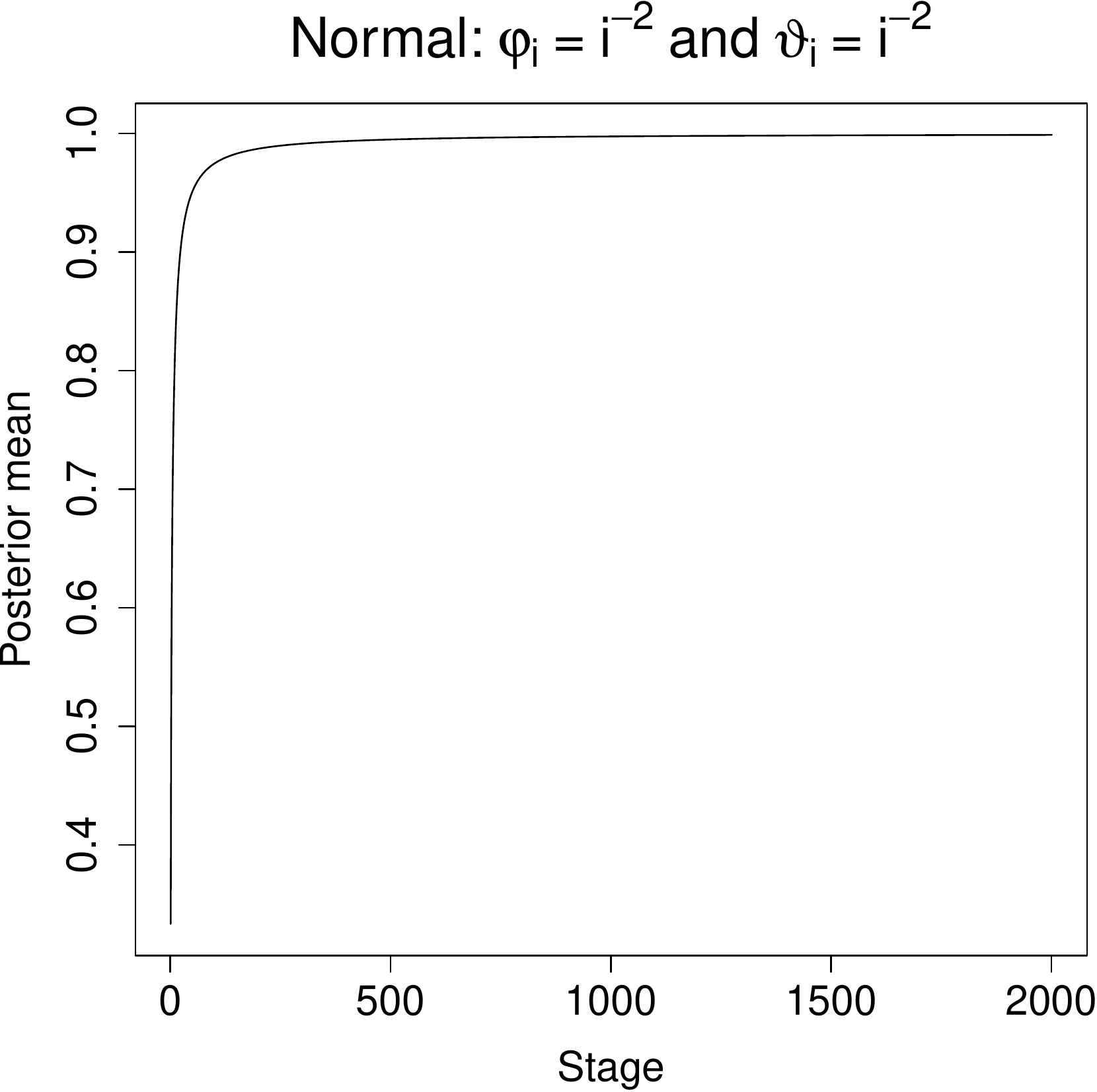}}\\
\subfigure [Convergence.]{ \label{fig:normal_dep3_2}
\includegraphics[width=6cm,height=5cm]{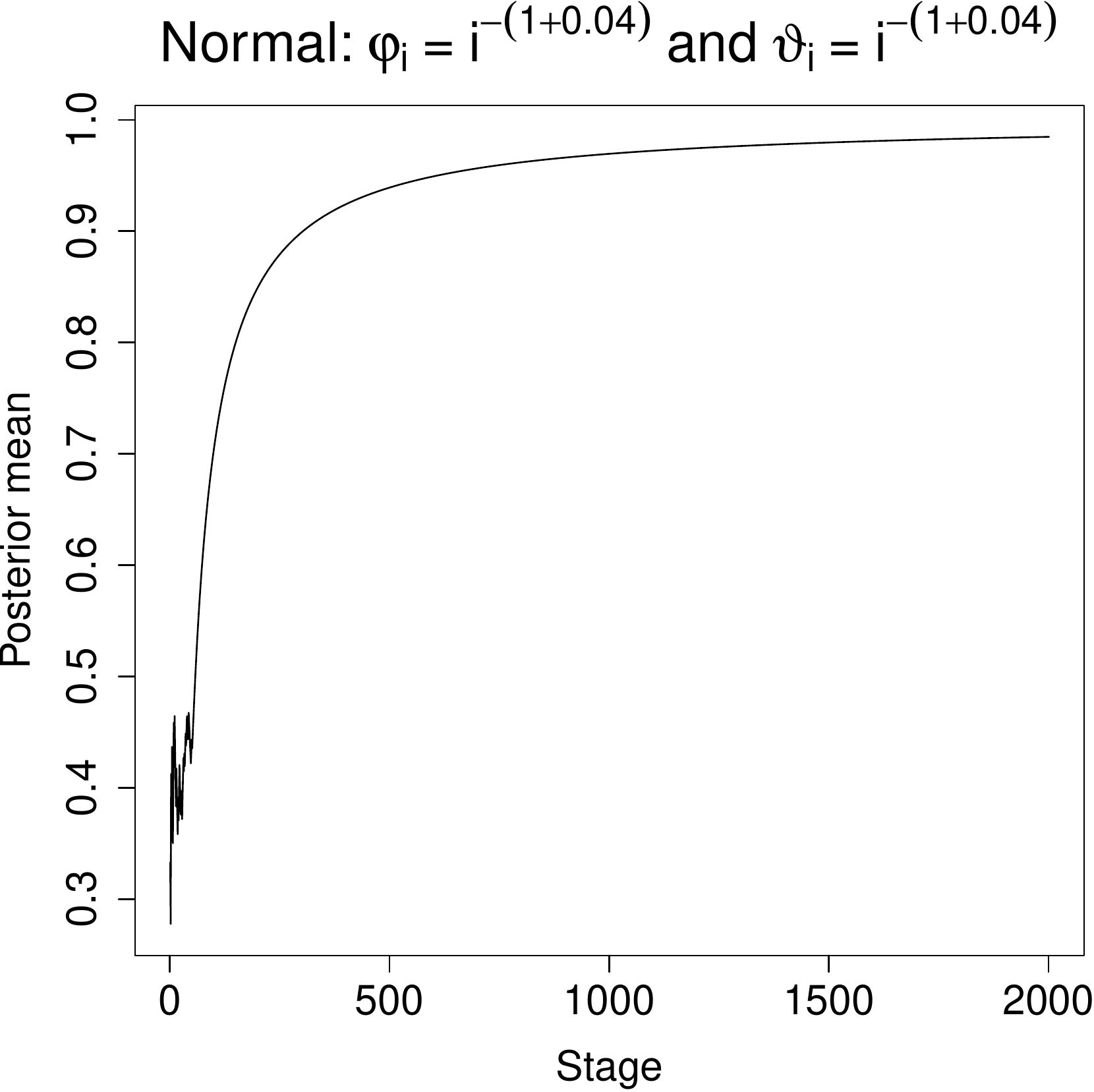}}
\hspace{2mm}
\subfigure [Divergence.]{ \label{fig:normal_dep4_2}
\includegraphics[width=6cm,height=5cm]{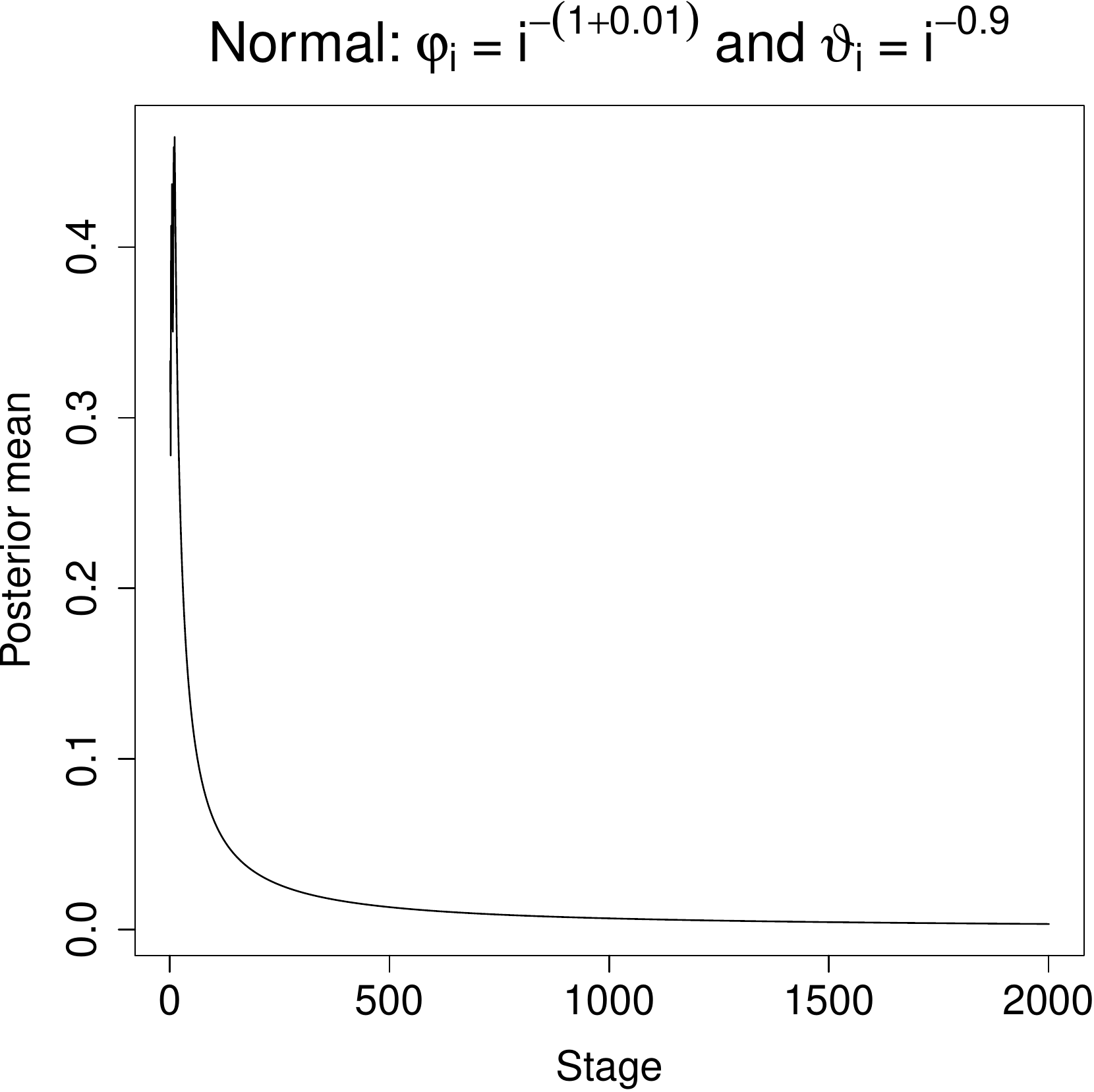}}\\
\subfigure [Divergence.]{ \label{fig:normal_dep5_2}
\includegraphics[width=6cm,height=5cm]{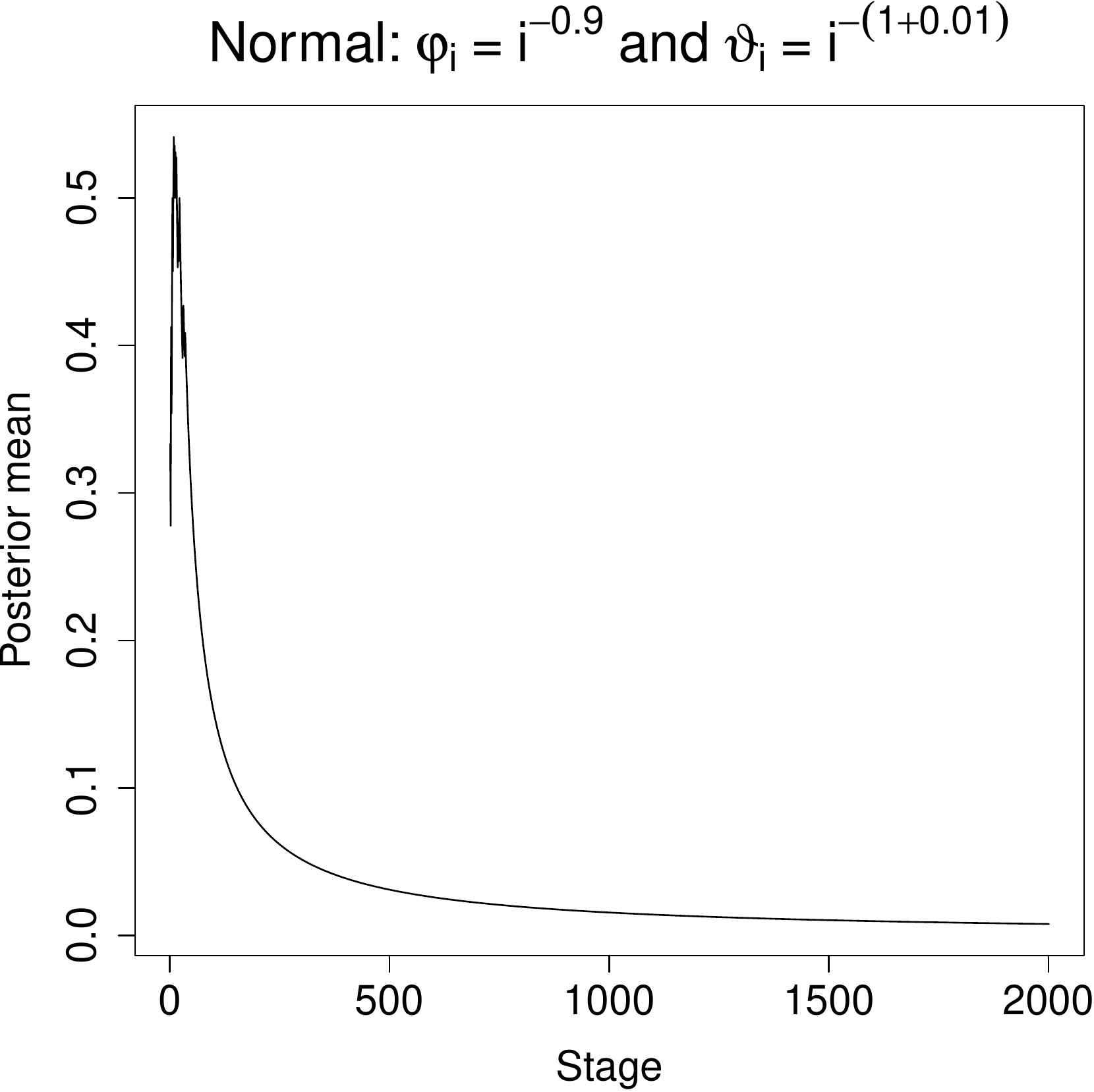}}
\hspace{2mm}
\subfigure [Divergence.]{ \label{fig:normal_dep6_2}
\includegraphics[width=6cm,height=5cm]{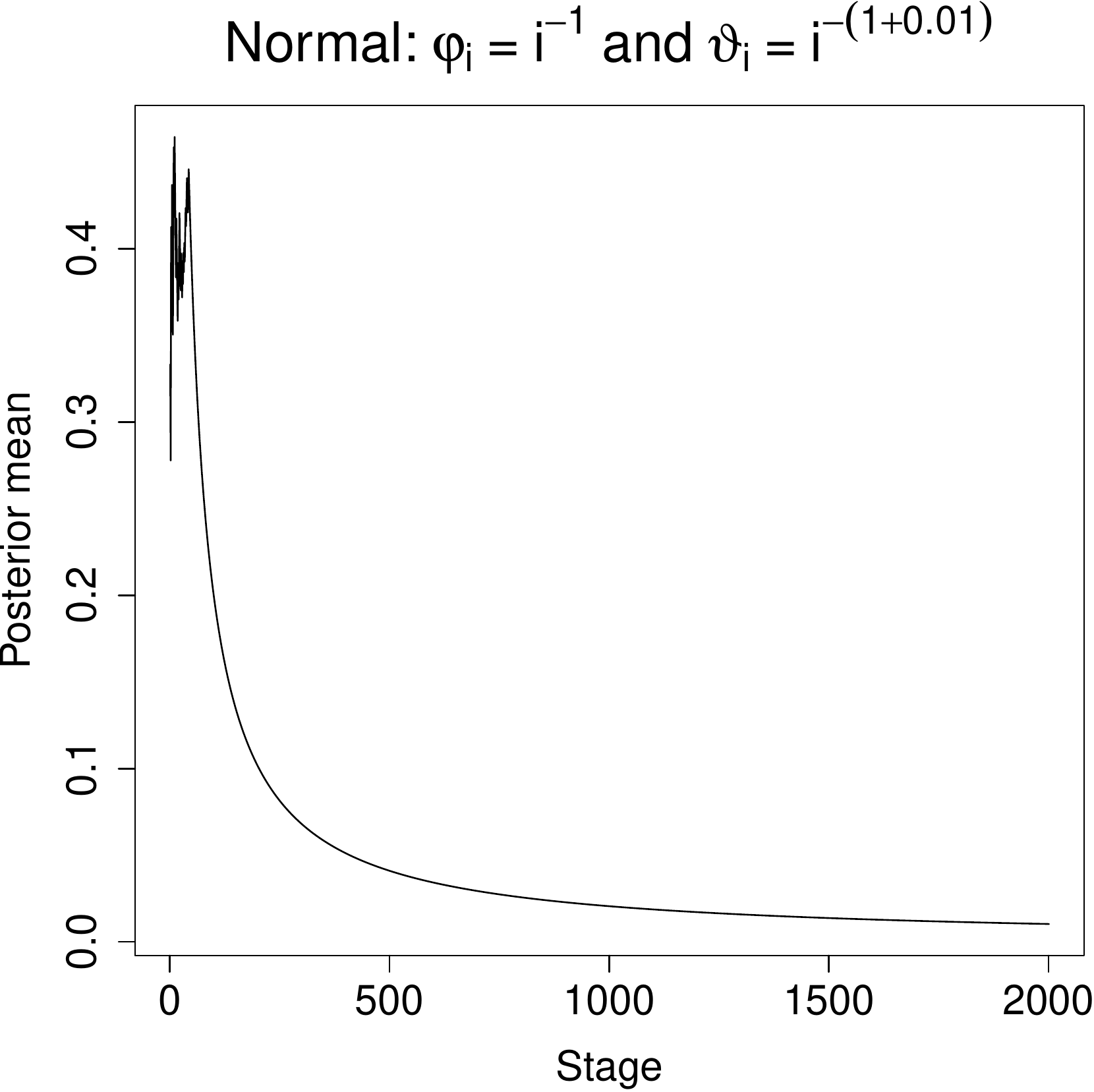}}\\
\subfigure [Divergence.]{ \label{fig:normal_dep7_2}
\includegraphics[width=6cm,height=5cm]{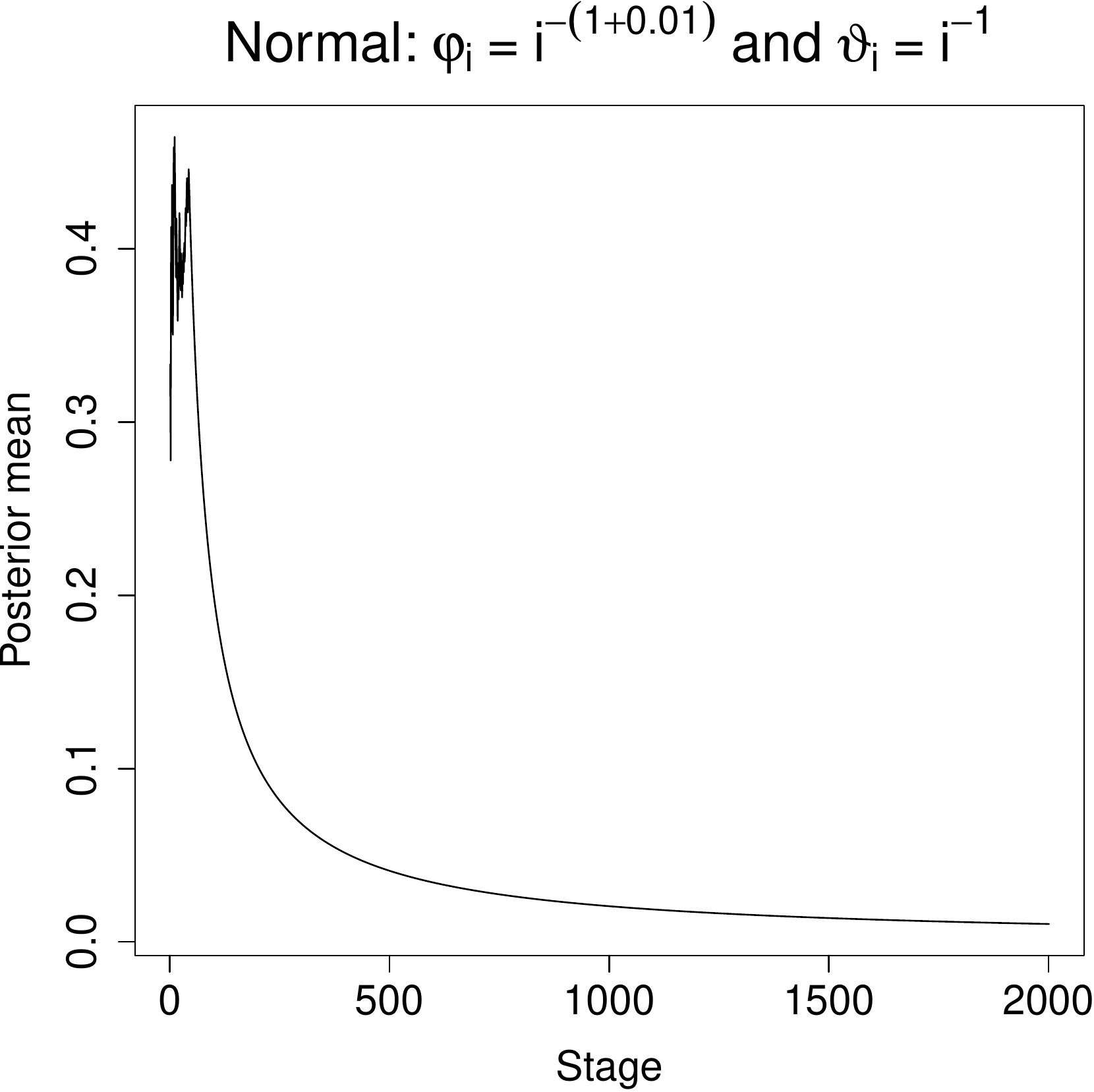}}
\caption{Example 3 revisited: Convergence and divergence for dependent normal series with nonparametric bound.}
\label{fig:example_normal_dependent2}
\end{figure}

\subsubsection{Example 4 revisited: Dependent state-space random series}
\label{subsubsec:bayesian_ss}
Following Section \ref{subsec:bayesian_ss} we consider random series of the form $\sum_{i=1}^{\infty}X_i\theta_i$ where 
for $i\geq 1$, $\theta_i\sim \mathcal E(\psi_i)$ independently, 
and $X_i$ has the state-space representation given by (\ref{eq:ss2}) and (\ref{eq:ss3}).
The rest of the model details remain the same as in Section \ref{subsec:bayesian_ss}.

Application of our new nonparametric bound to the partial sums, with $\hat C_1=0.725$, which is the same as that of the exponential series with the nonparametric bound,
we obtain correct results in all the cases, as displayed by Figure \ref{fig:example_ss3}.
In fact, the nonparametric bound not only matches the performance of the parametric bound method 
detailed in Section \ref{subsec:bayesian_ss}, it seems to outperform the latter for $\psi=i^{-(1+0.001)}$ in terms of faster convergence. 
\begin{figure}
\centering
\subfigure [Divergence.]{ \label{fig:ss1_3}
\includegraphics[width=6cm,height=5cm]{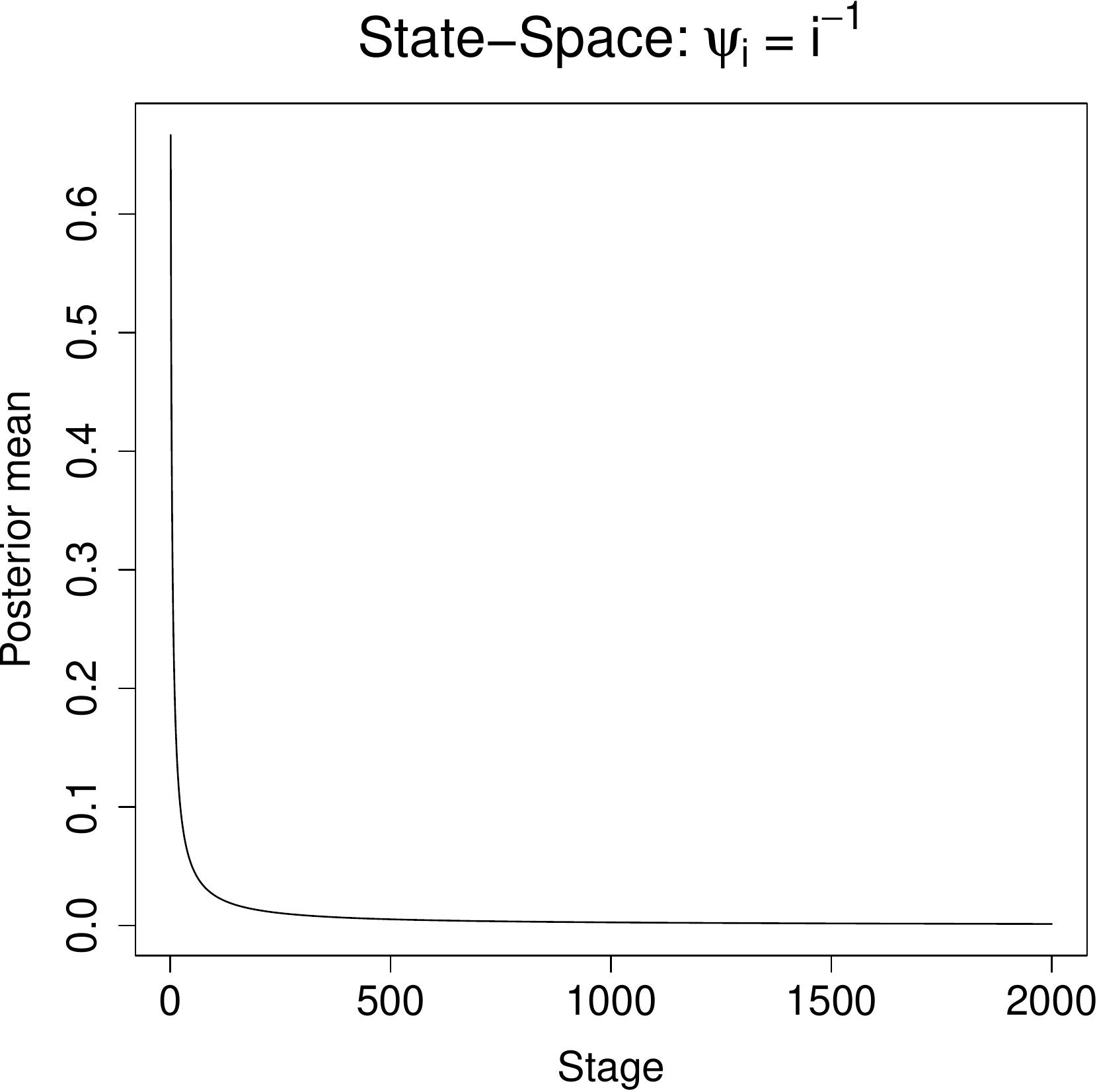}}
\hspace{2mm}
\subfigure [Convergence.]{ \label{fig:ss2_3}
\includegraphics[width=6cm,height=5cm]{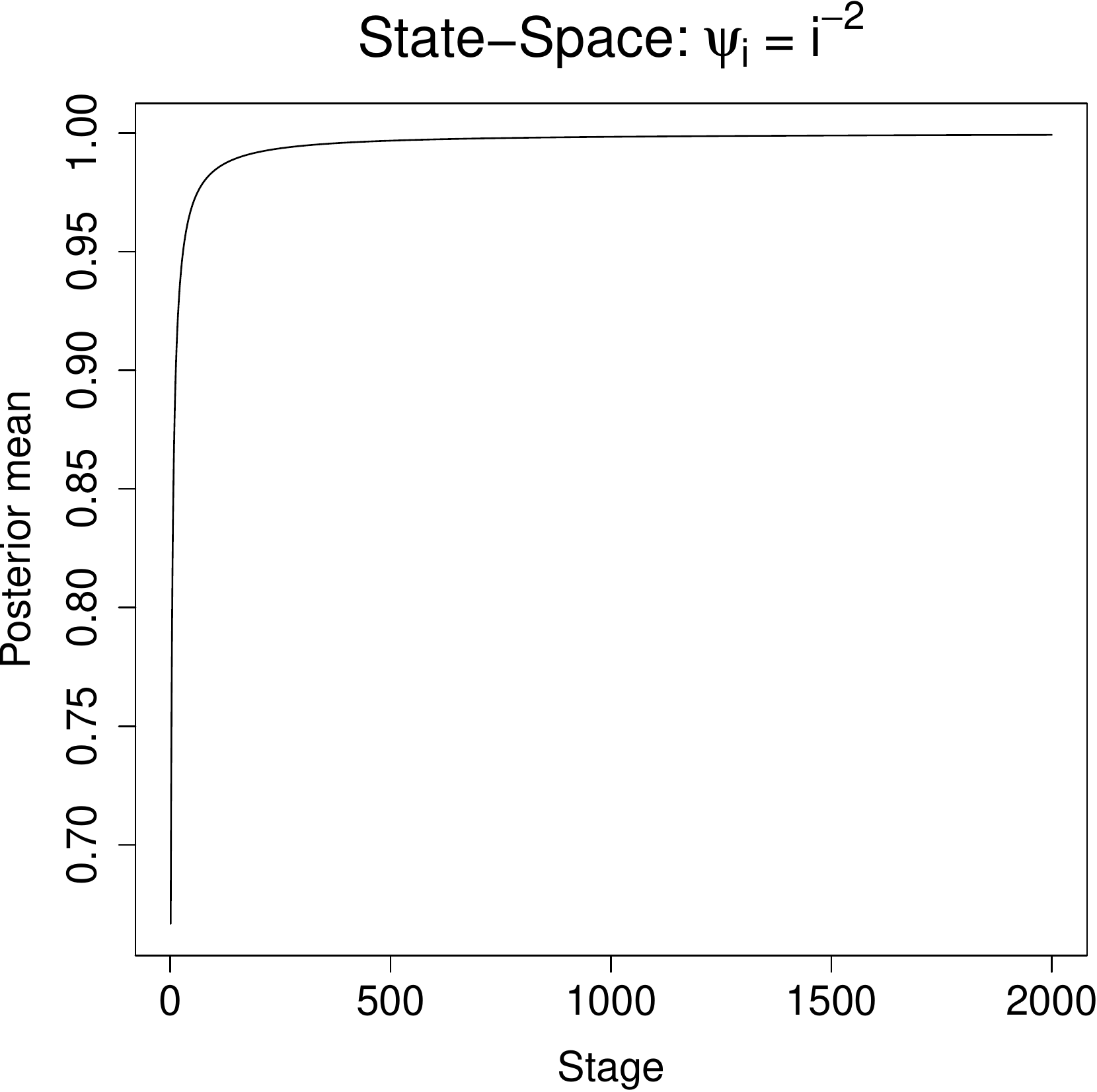}}\\
\subfigure [Convergence.]{ \label{fig:ss3_3}
\includegraphics[width=6cm,height=5cm]{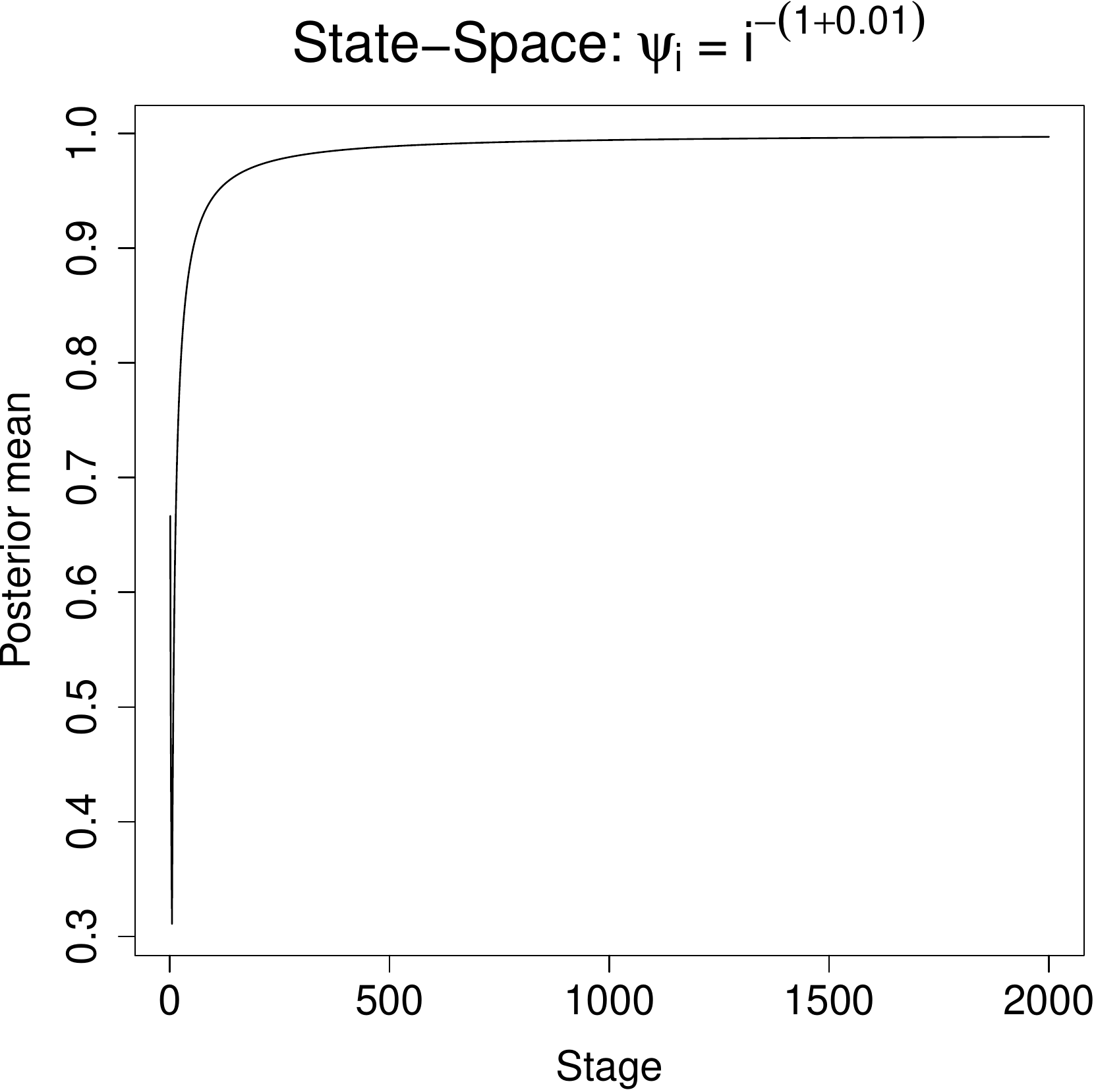}}
\hspace{2mm}
\subfigure [Convergence.]{ \label{fig:ss4_3}
\includegraphics[width=6cm,height=5cm]{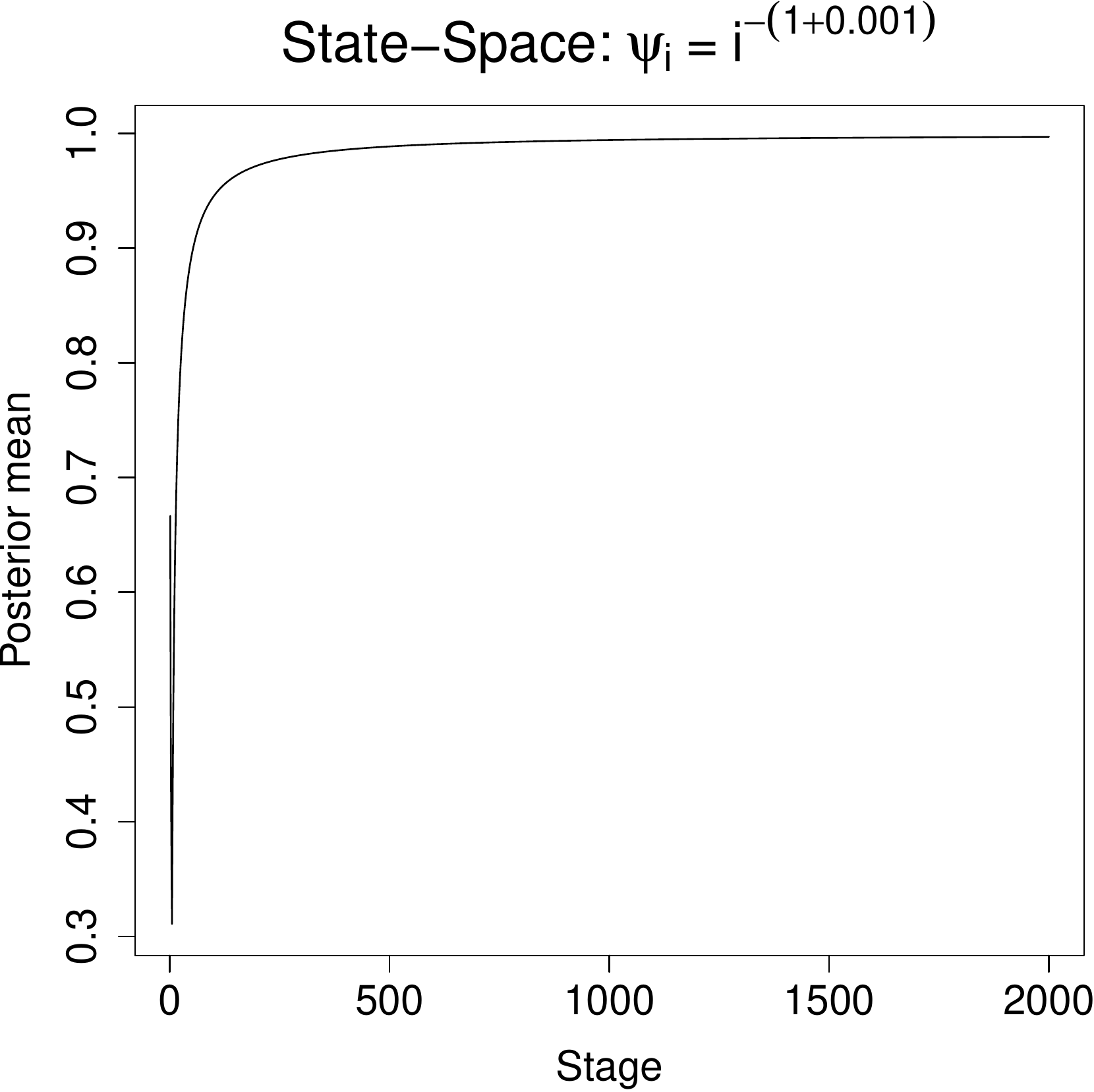}}\\
\subfigure [Convergence.]{ \label{fig:ss5_3}
\includegraphics[width=6cm,height=5cm]{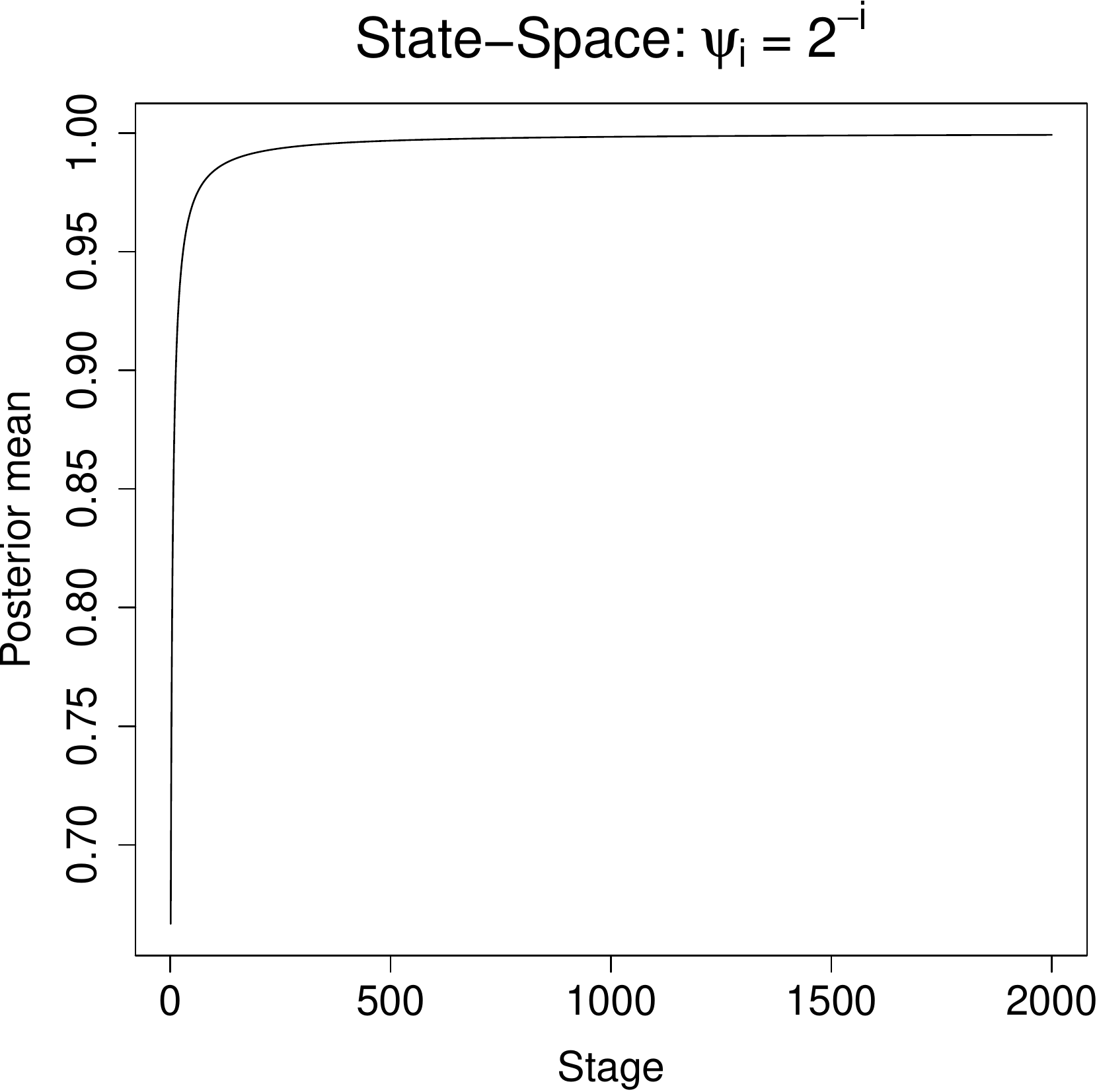}}
\hspace{2mm}
\subfigure [Convergence.]{ \label{fig:ss6_3}
\includegraphics[width=6cm,height=5cm]{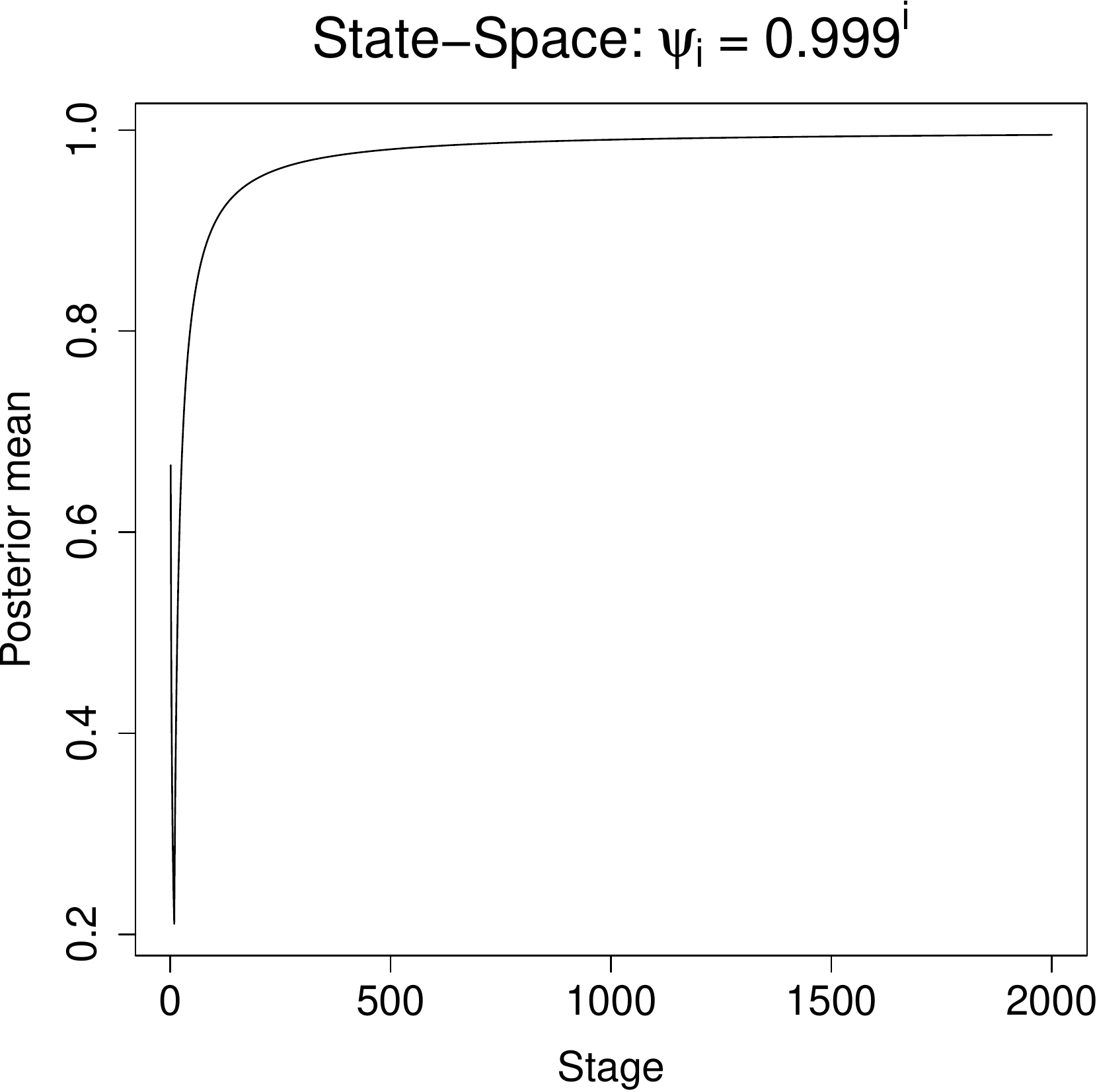}}\\
\subfigure [Divergence.]{ \label{fig:ss7_3}
\includegraphics[width=6cm,height=5cm]{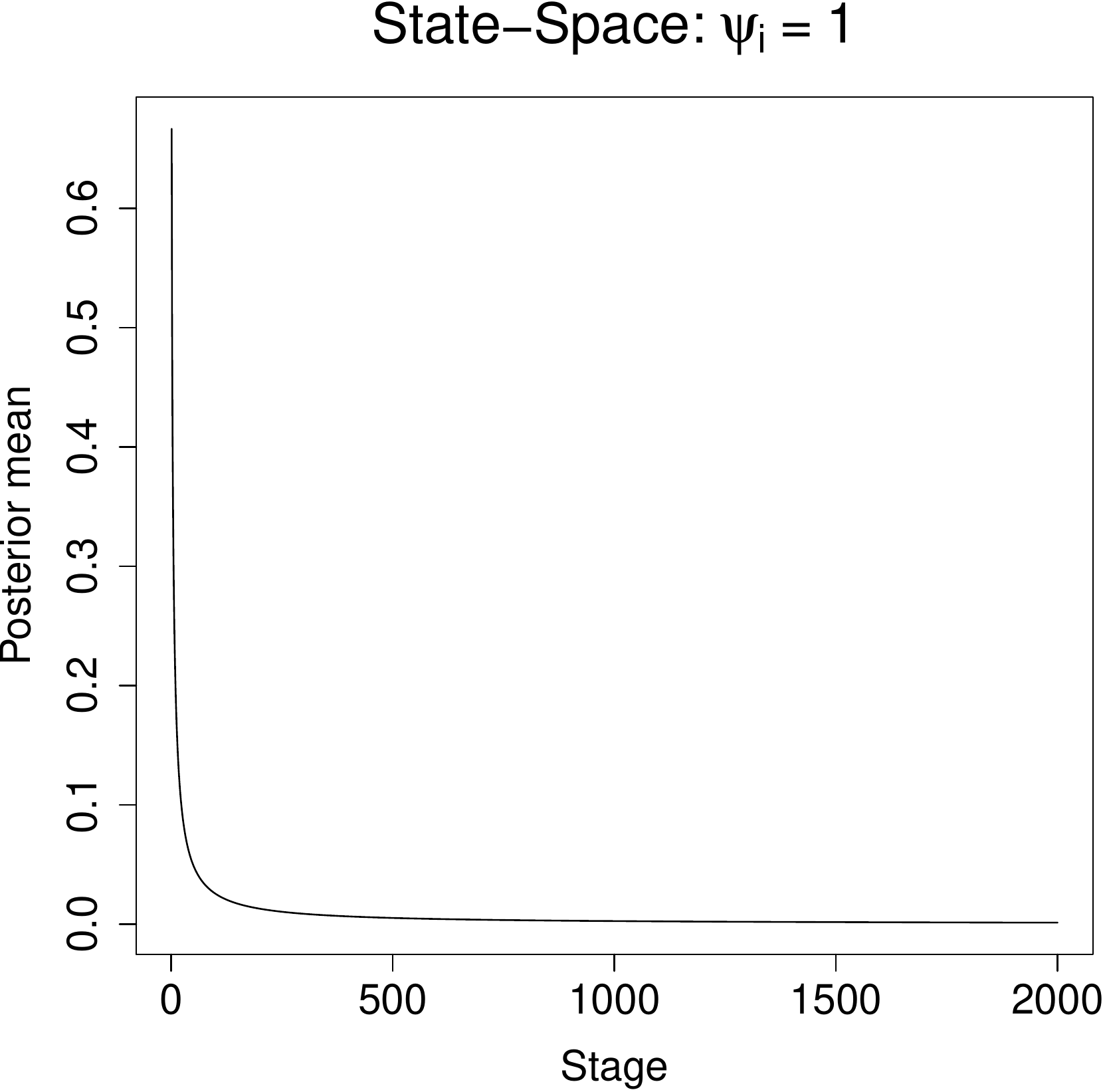}}
\caption{Example 4 revisited: Convergence and divergence for state-space series with nonparametric bound.}
\label{fig:example_ss3}
\end{figure}

\subsubsection{Example 5 revisited: Dependent state-space random series with hierarchical exponential distribution}
\label{subsubsec:bayesian_ss2}

In the state-space model with hierarchical exponential distribution considered in Section \ref{subsec:bayesian_ss2}, we now
apply the nonparametric bound with $\hat C_1=0.725$ to address convergence properties of $\sum_{i=1}^{\infty}X_i\theta_i$ using our Bayesian
methodology. The results 
displayed in Figure \ref{fig:example_ss4} again shows very accurate detection of convergence properties of the underlying infinite series
even with small samples sizes. However, it is to be noted that because of the hierarchy in the exponential distribution, a little subtlety has been sacrificed by our
method as it is unable to correctly diagnose divergence for $\psi=i^{-p}$ when $p\in(0.997,1]$. 
\begin{figure}
\centering
\subfigure [Divergence.]{ \label{fig:ss1_4}
\includegraphics[width=6cm,height=5cm]{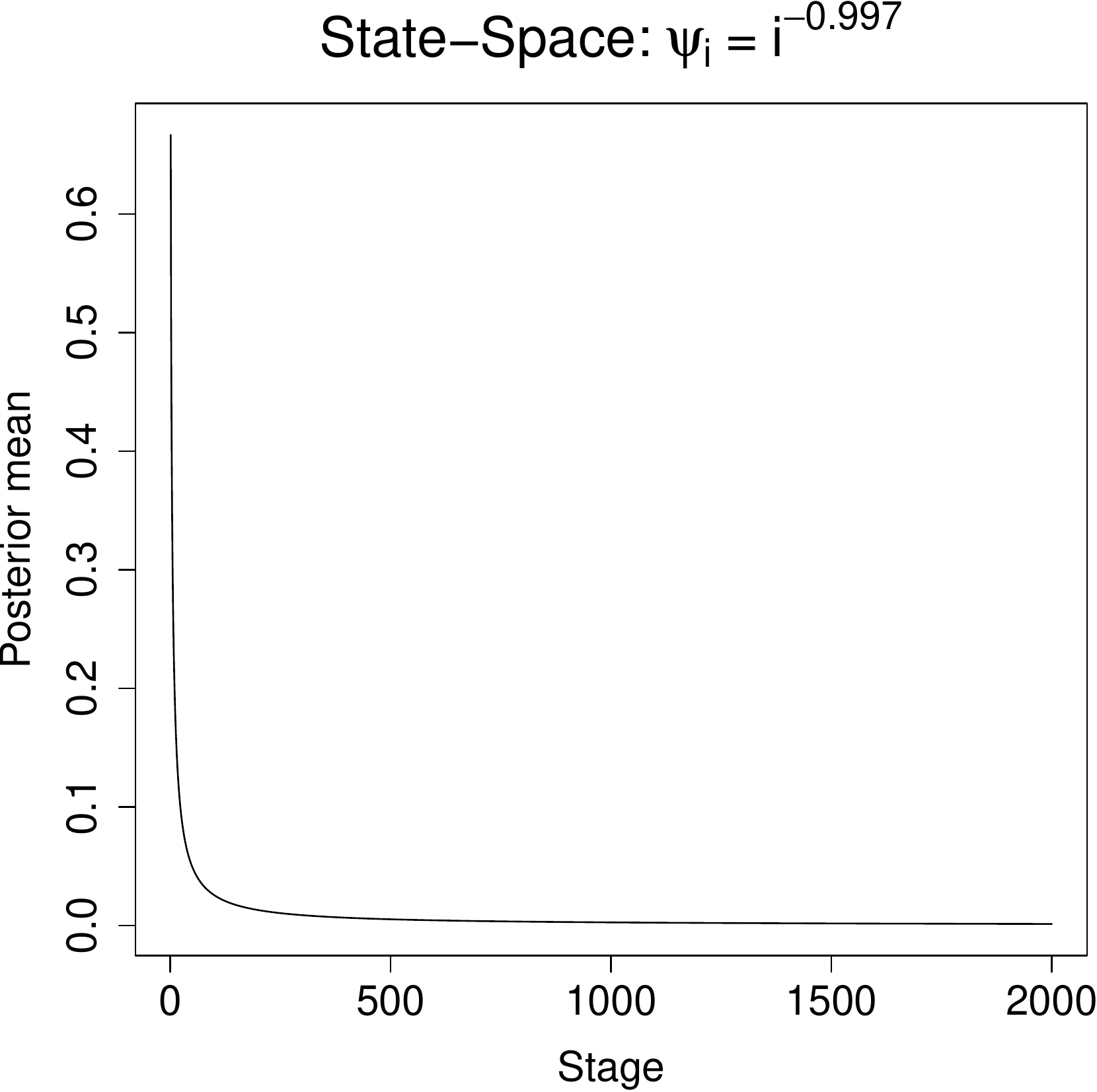}}
\hspace{2mm}
\subfigure [Convergence.]{ \label{fig:ss2_4}
\includegraphics[width=6cm,height=5cm]{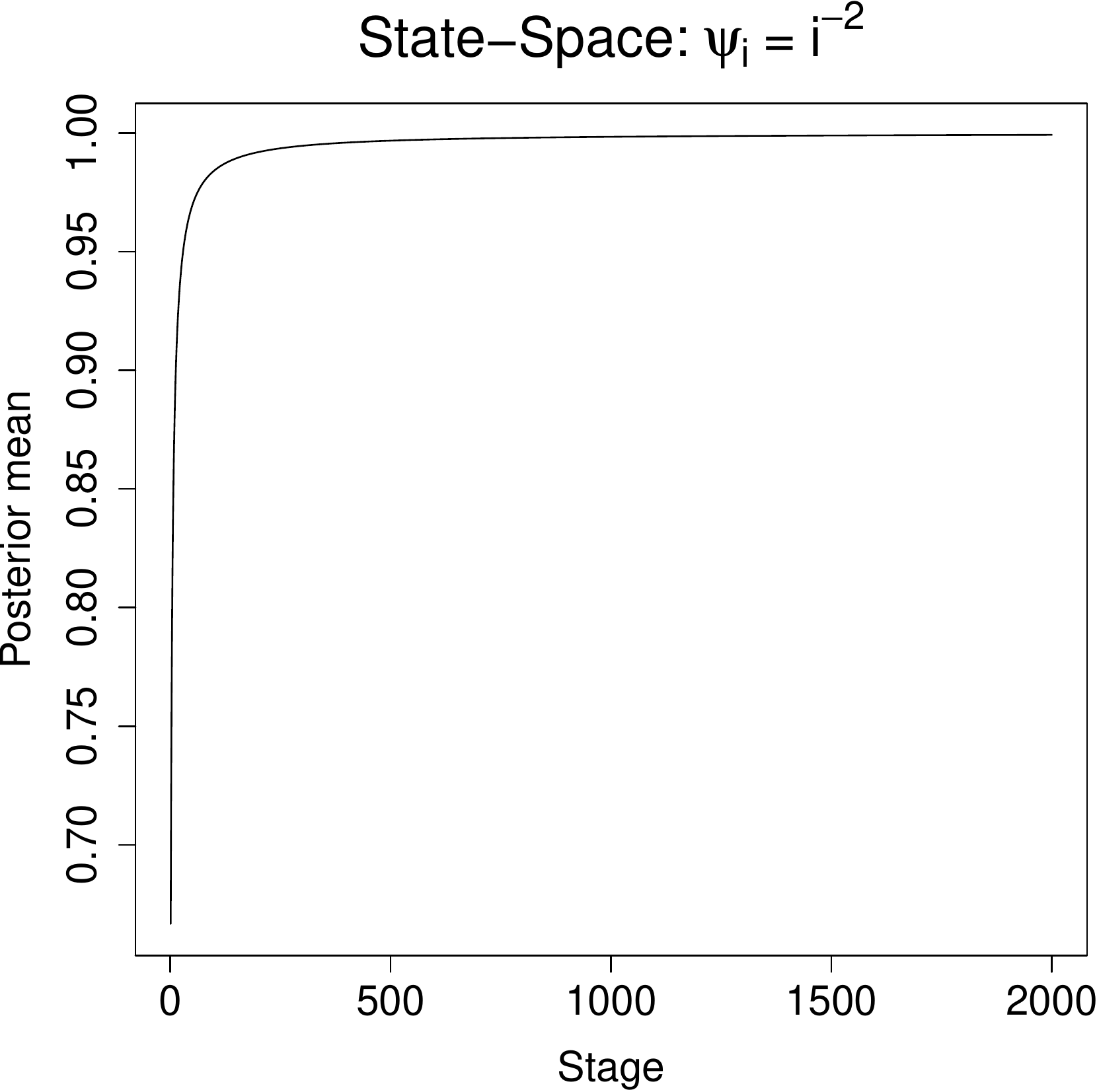}}\\
\subfigure [Convergence.]{ \label{fig:ss3_4}
\includegraphics[width=6cm,height=5cm]{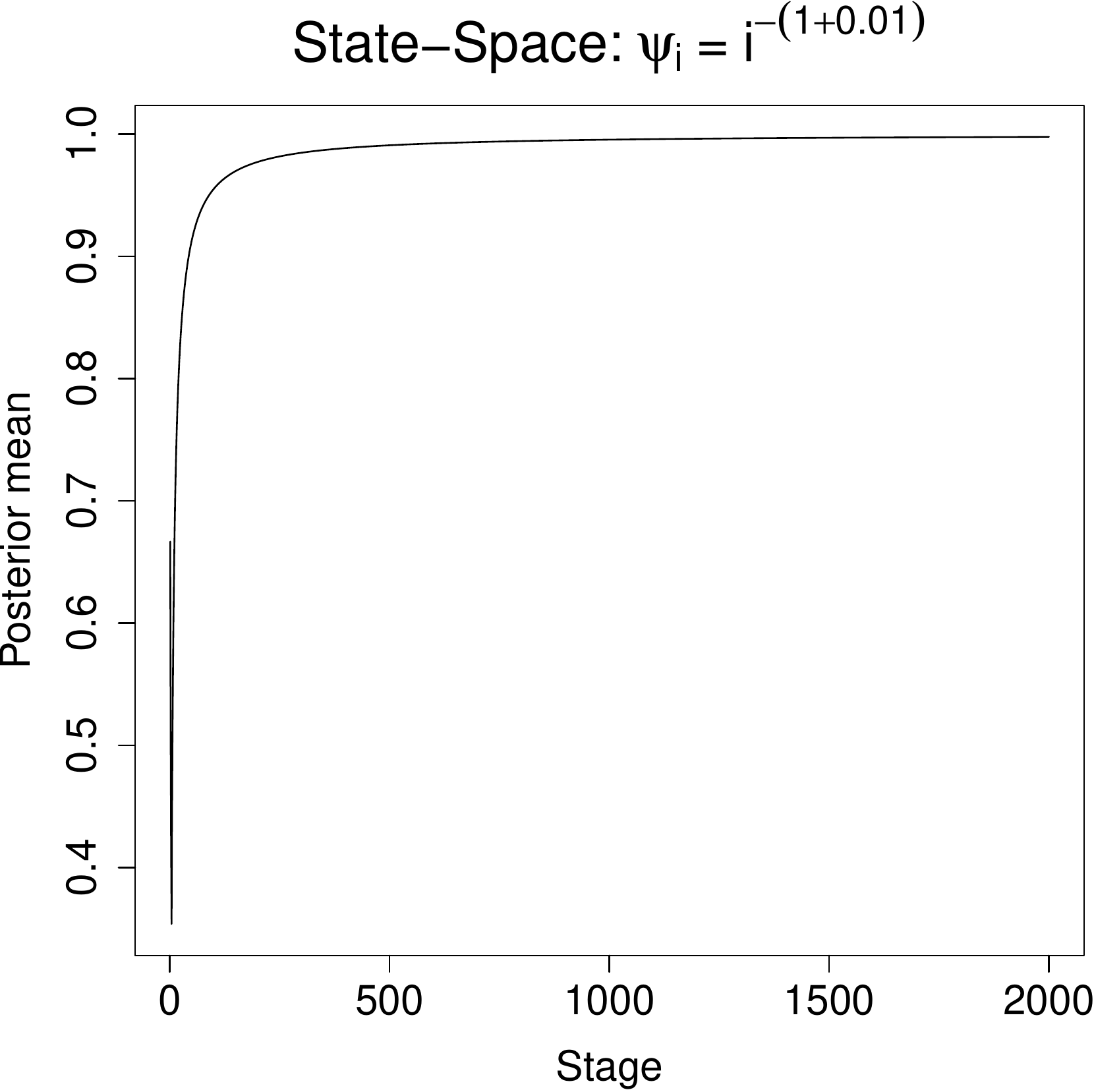}}
\hspace{2mm}
\subfigure [Convergence.]{ \label{fig:ss4_4}
\includegraphics[width=6cm,height=5cm]{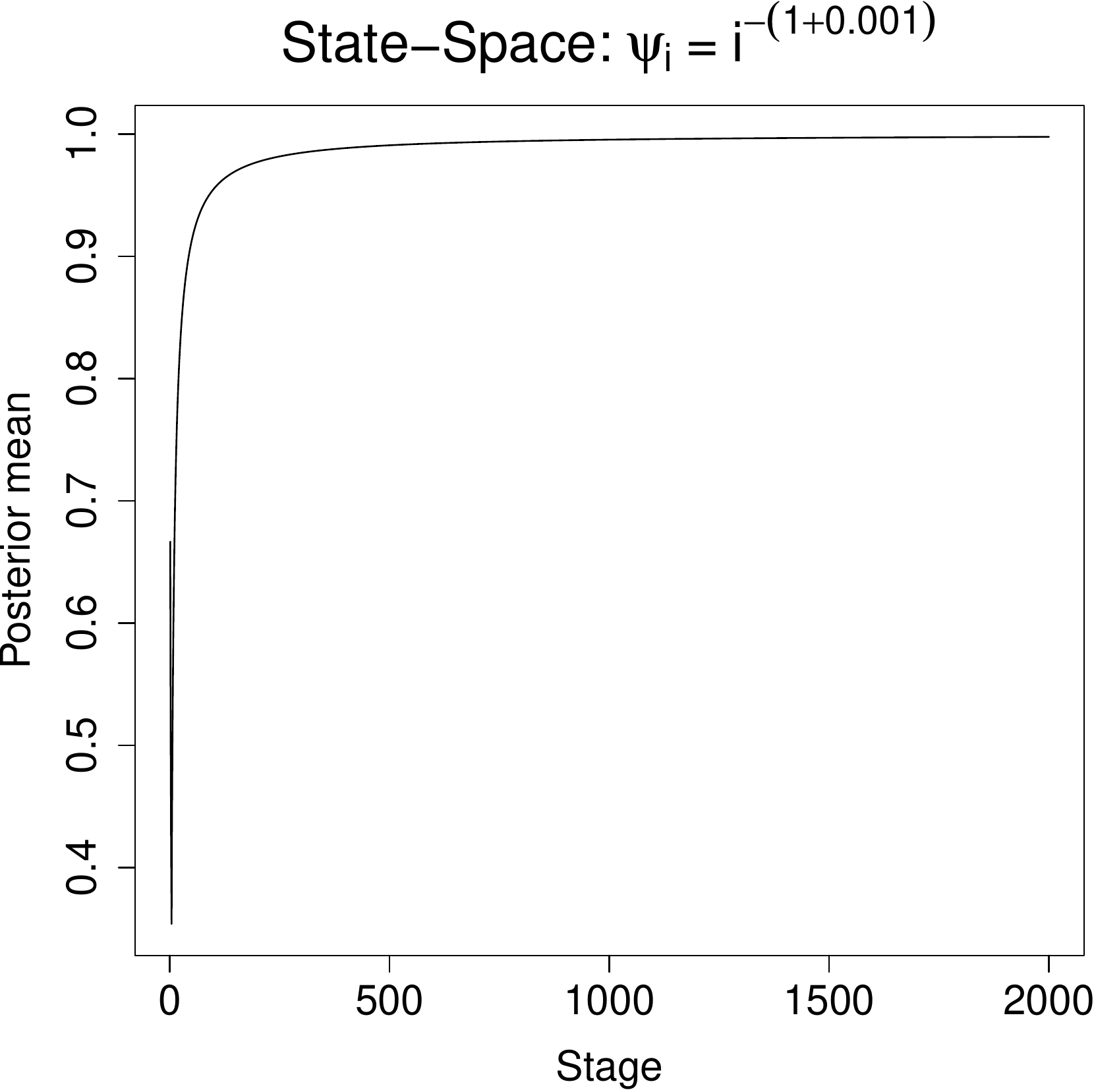}}\\
\subfigure [Convergence.]{ \label{fig:ss5_4}
\includegraphics[width=6cm,height=5cm]{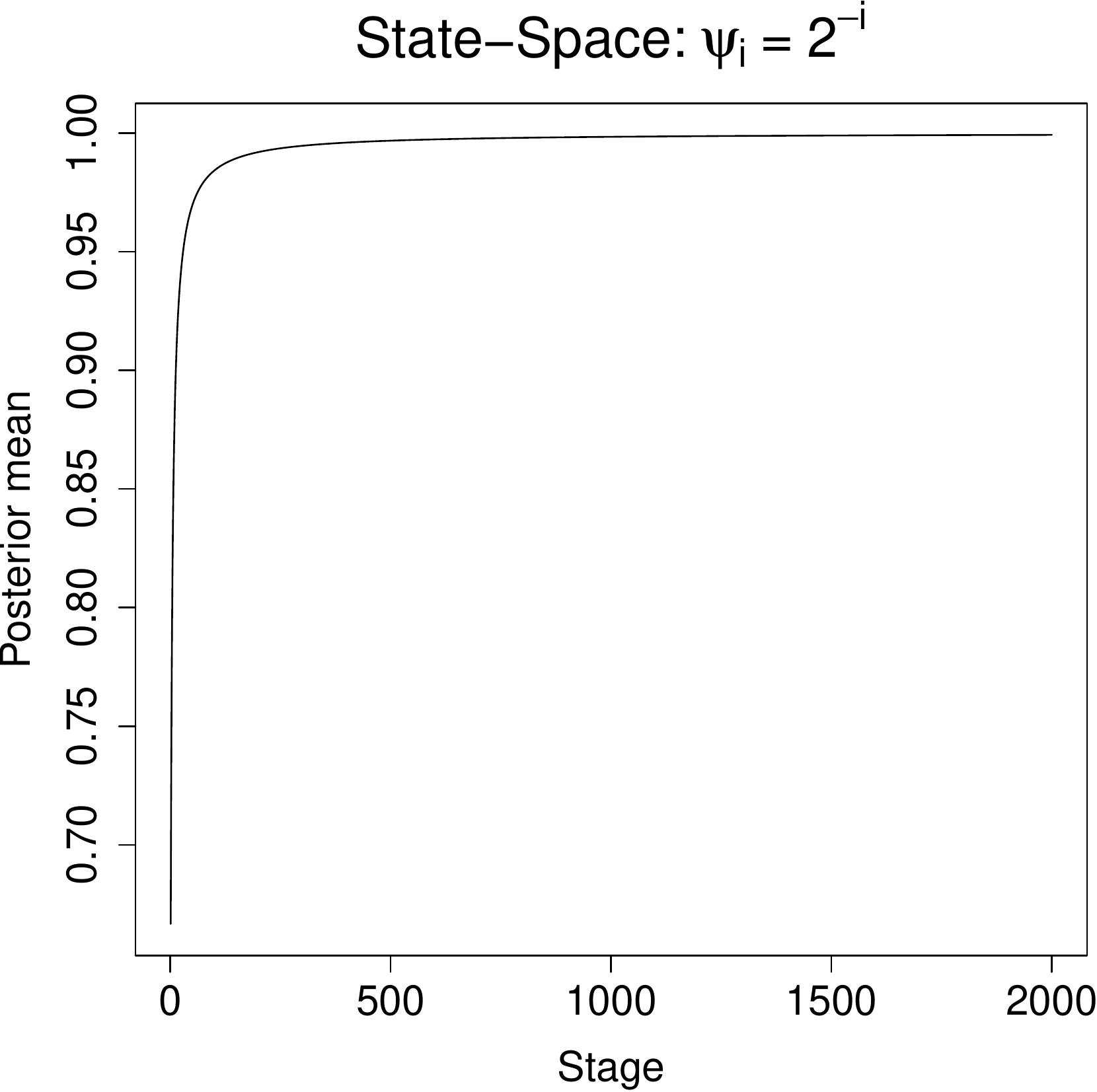}}
\hspace{2mm}
\subfigure [Convergence.]{ \label{fig:ss6_4}
\includegraphics[width=6cm,height=5cm]{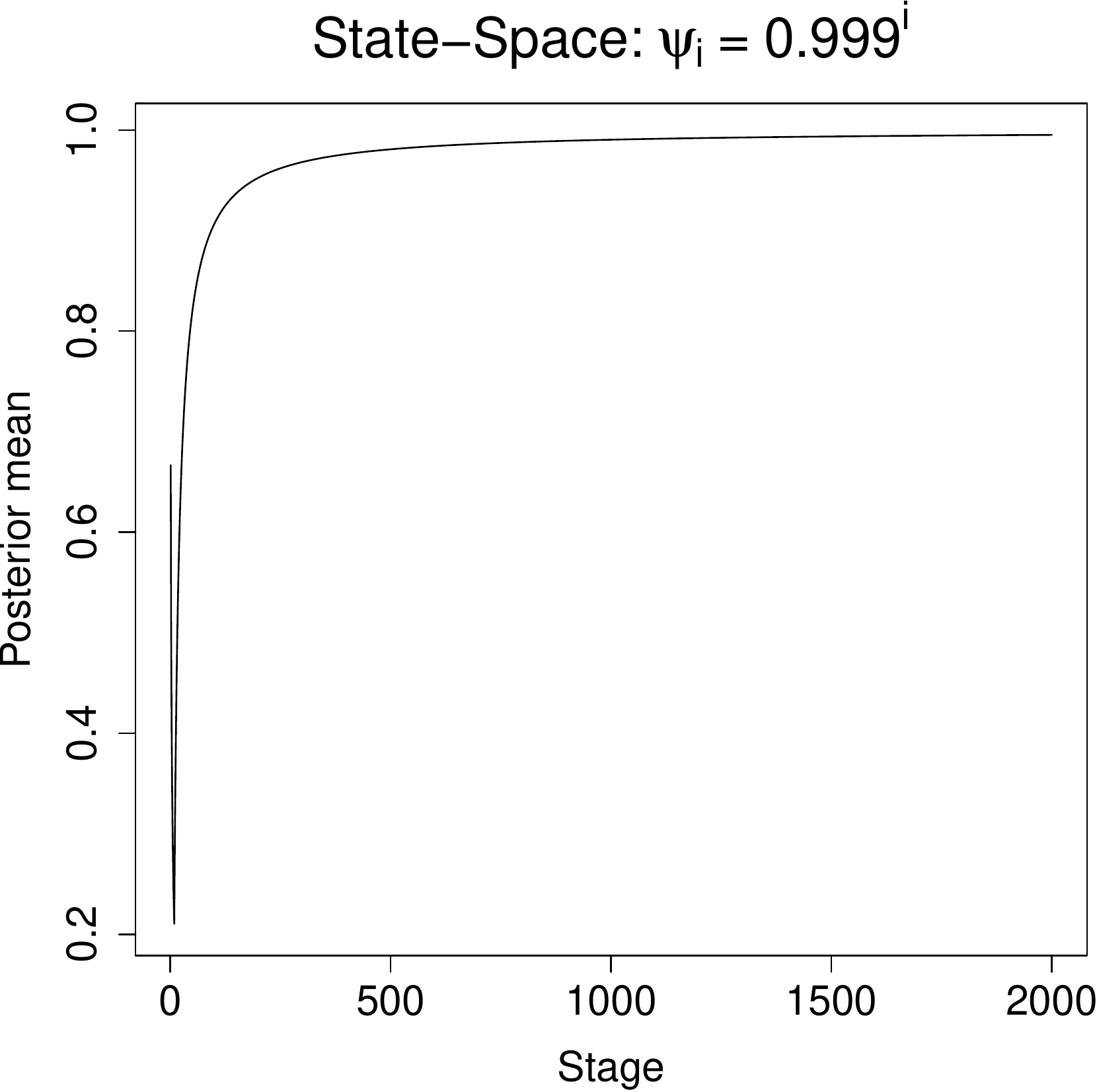}}\\
\subfigure [Divergence.]{ \label{fig:ss7_4}
\includegraphics[width=6cm,height=5cm]{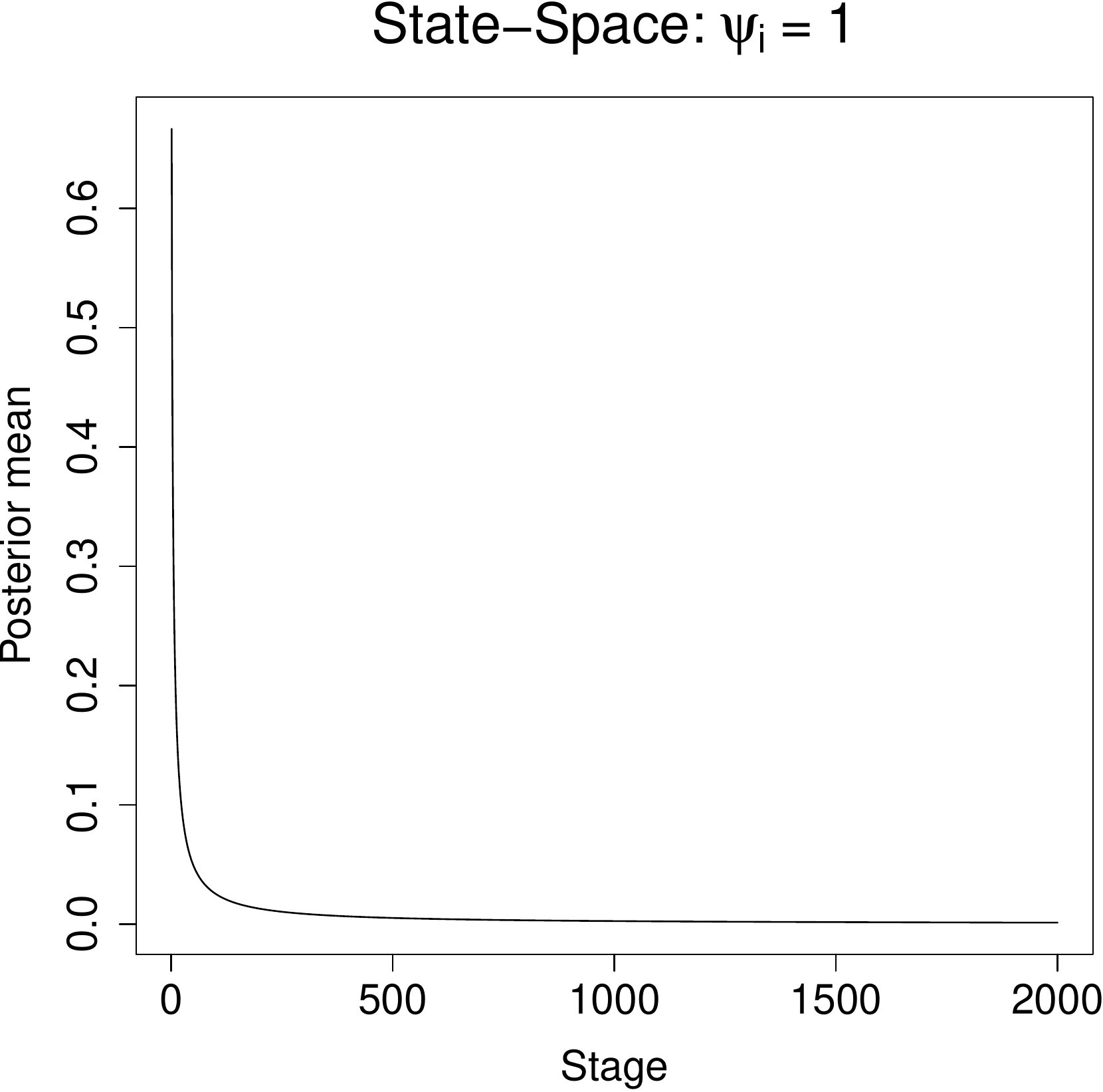}}
\caption{Example 5 revisited: Convergence and divergence for state-space series with hierarchical exponential distribution.}
\label{fig:example_ss4}
\end{figure}

\subsubsection{Example 6 revisited: Random Dirichlet series}
\label{subsubsec:bayesian_ds}
Again consider the RDS given by (\ref{eq:ds}).
Recall that this problem does not admit any theoretically valid upper bound since the summands take both positive and negative values with positive probabilities. 
Application of the general parametric upper bound (\ref{eq:S2}) to this problem in Section \ref{subsec:bayesian_ds} have led to wrong results in many cases of this problem.
Hence, we now employ our nonparametric bound to analyse convergence for the RDS.

As shown by Figure \ref{fig:ds}, application of our nonparametric bound to this problem for various values of $p$ 
revealed correct convergence analysis by our Bayesian method in all the cases.
To choose $\hat C_1$ appropriately in this problem, we first considered the deterministic series $\sum_{i=1}^{\infty}i^{-2p}$, whose convergence properties are known. 
For this series we selected
that value of $\hat C_1$ which led to correct convergence diagnosis of our Bayesian procedure with the nonparametric bound, for all (in practice, most) 
values of $p$. This led to $\hat C_1=0.44$, and this value turned out to be an excellent choice even for the RDS given by (\ref{eq:ds}). 

In other words, the nonparametric bound in this problem soundly beats the parametric bound.
\begin{figure}
\centering
\subfigure [Divergence.]{ \label{fig:ds1}
\includegraphics[width=6cm,height=5cm]{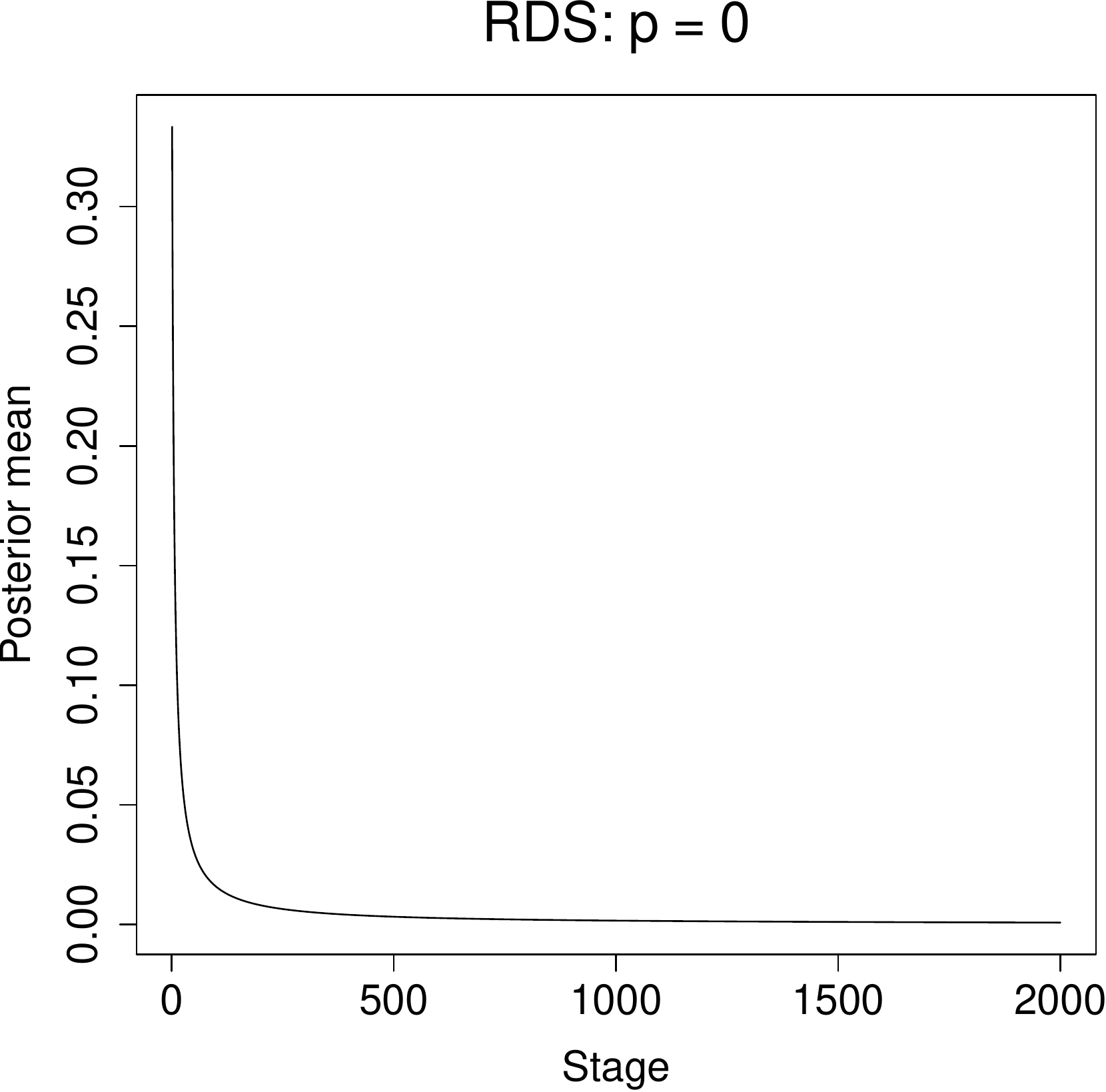}}
\hspace{2mm}
\subfigure [Divergence.]{ \label{fig:ds2}
\includegraphics[width=6cm,height=5cm]{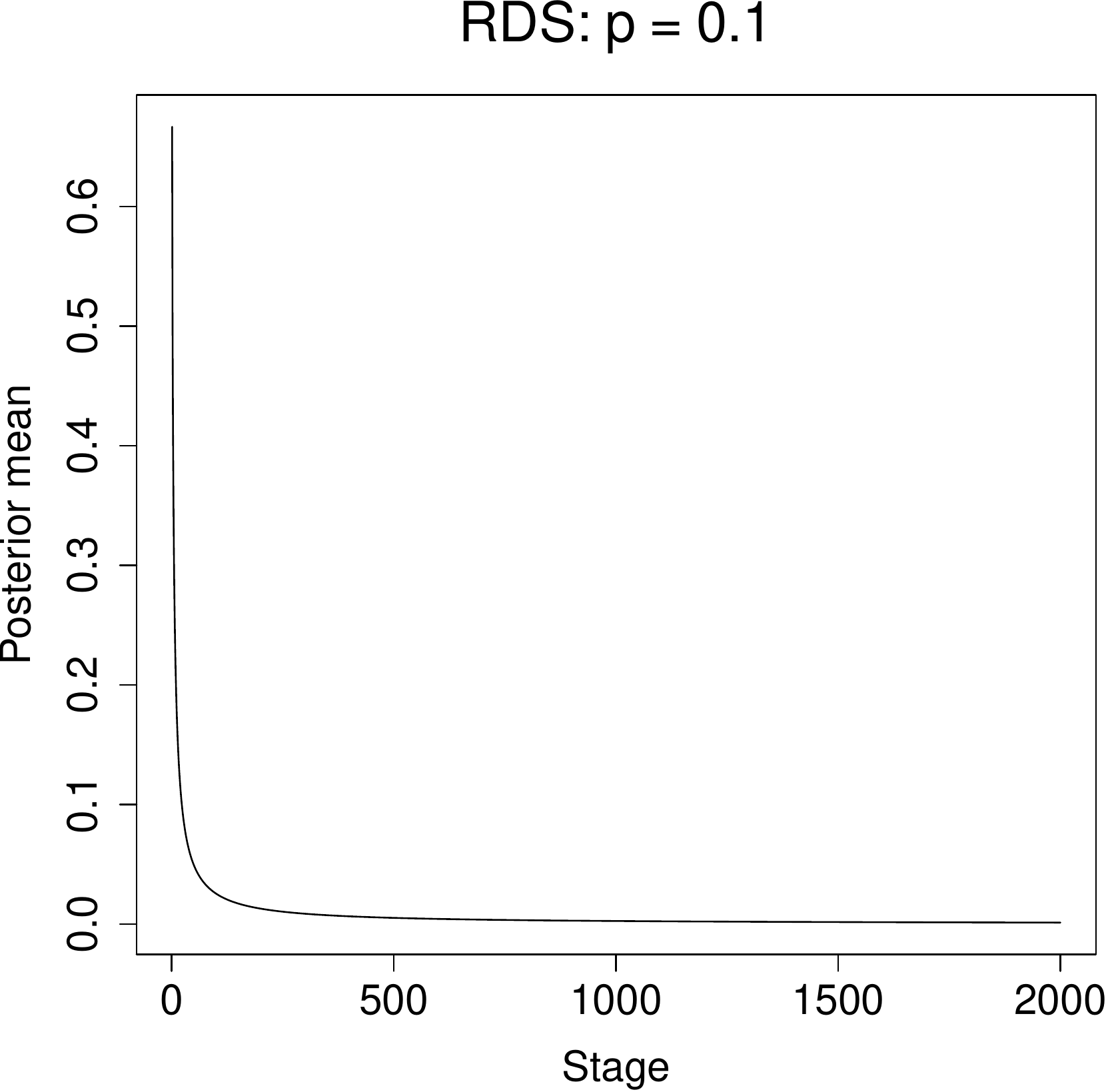}}\\
\subfigure [Divergence.]{ \label{fig:ds3}
\includegraphics[width=6cm,height=5cm]{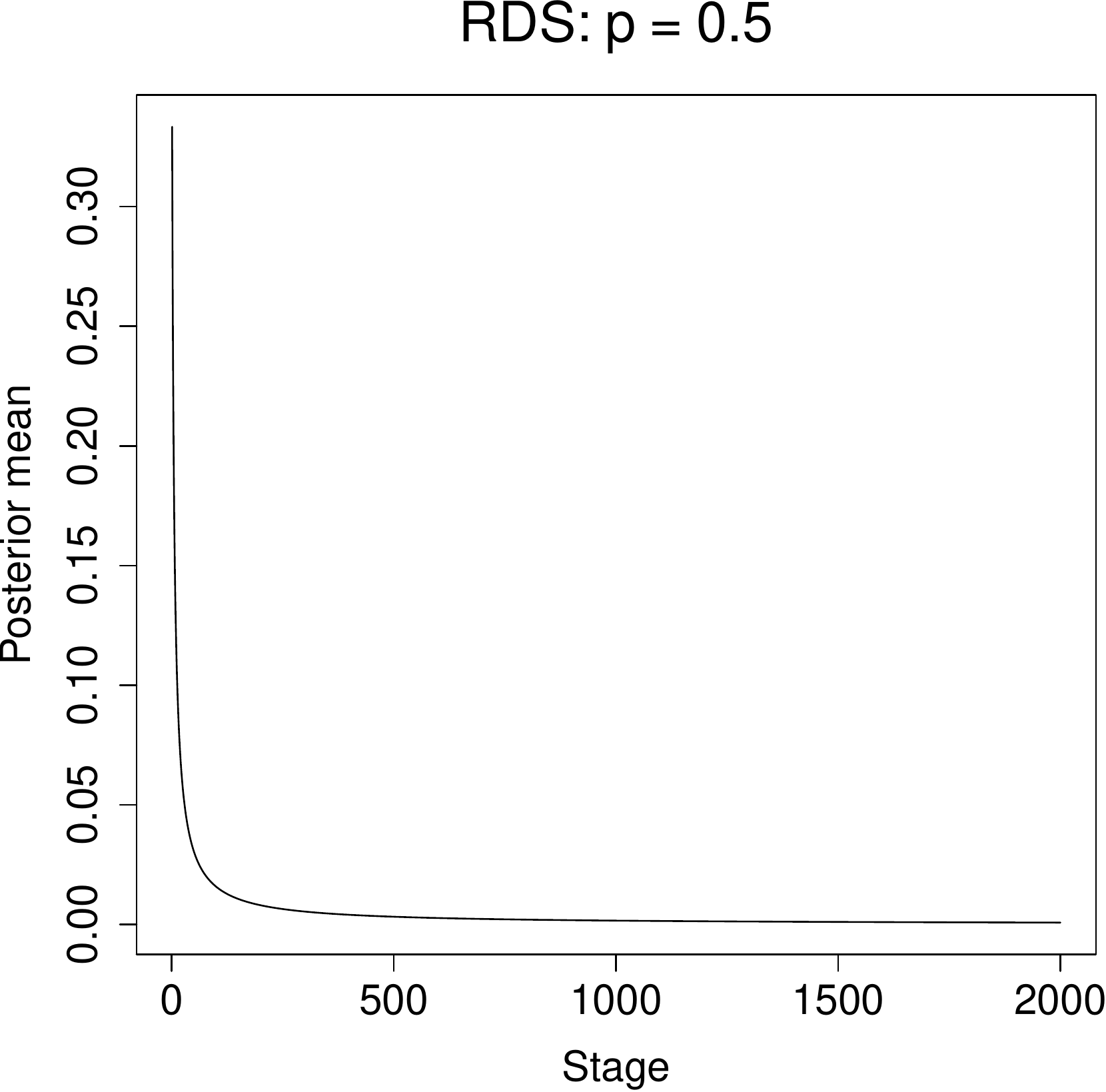}}
\hspace{2mm}
\subfigure [Divergence.]{ \label{fig:ds4}
\includegraphics[width=6cm,height=5cm]{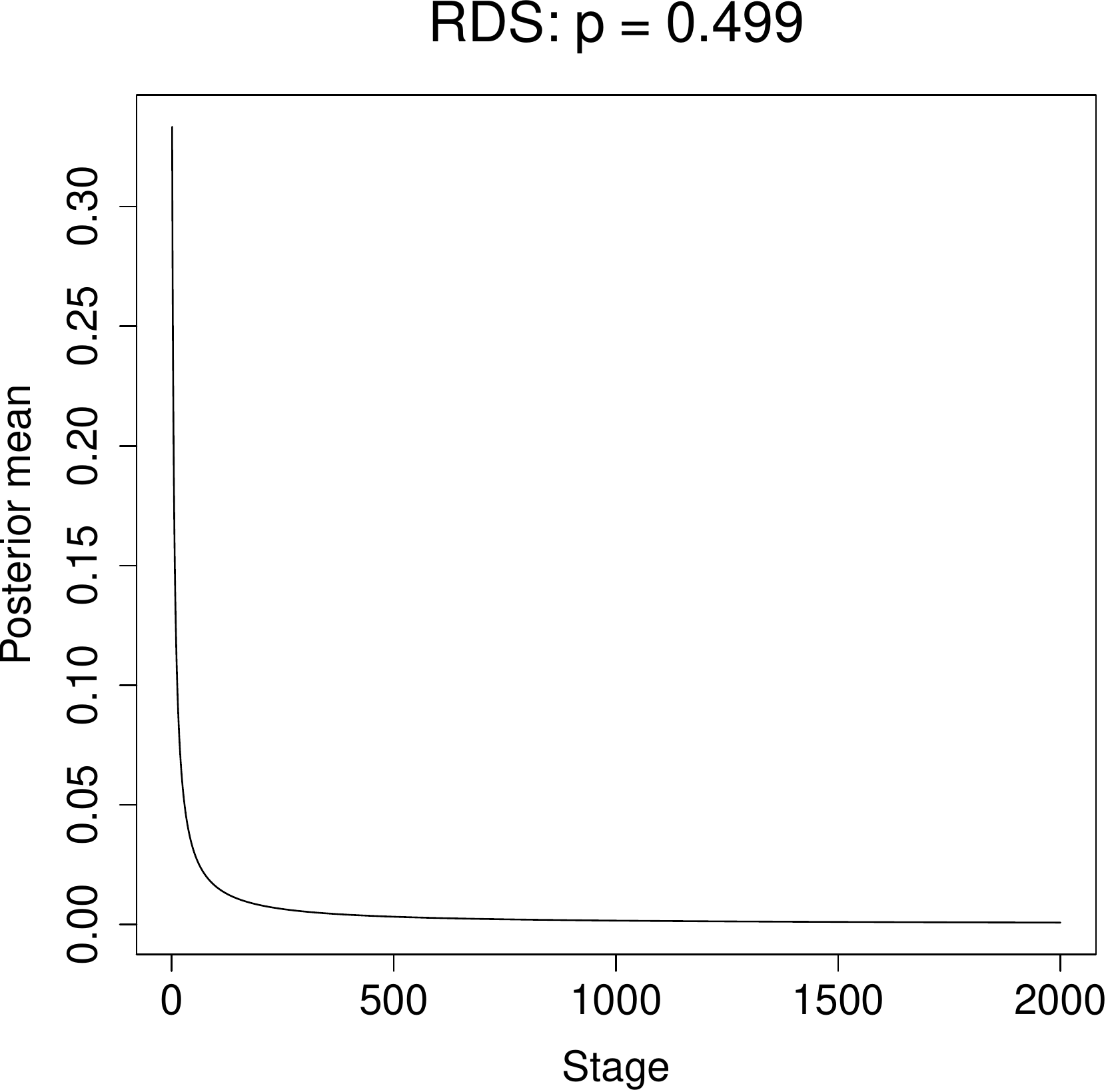}}\\
\subfigure [Convergence.]{ \label{fig:ds5}
\includegraphics[width=6cm,height=5cm]{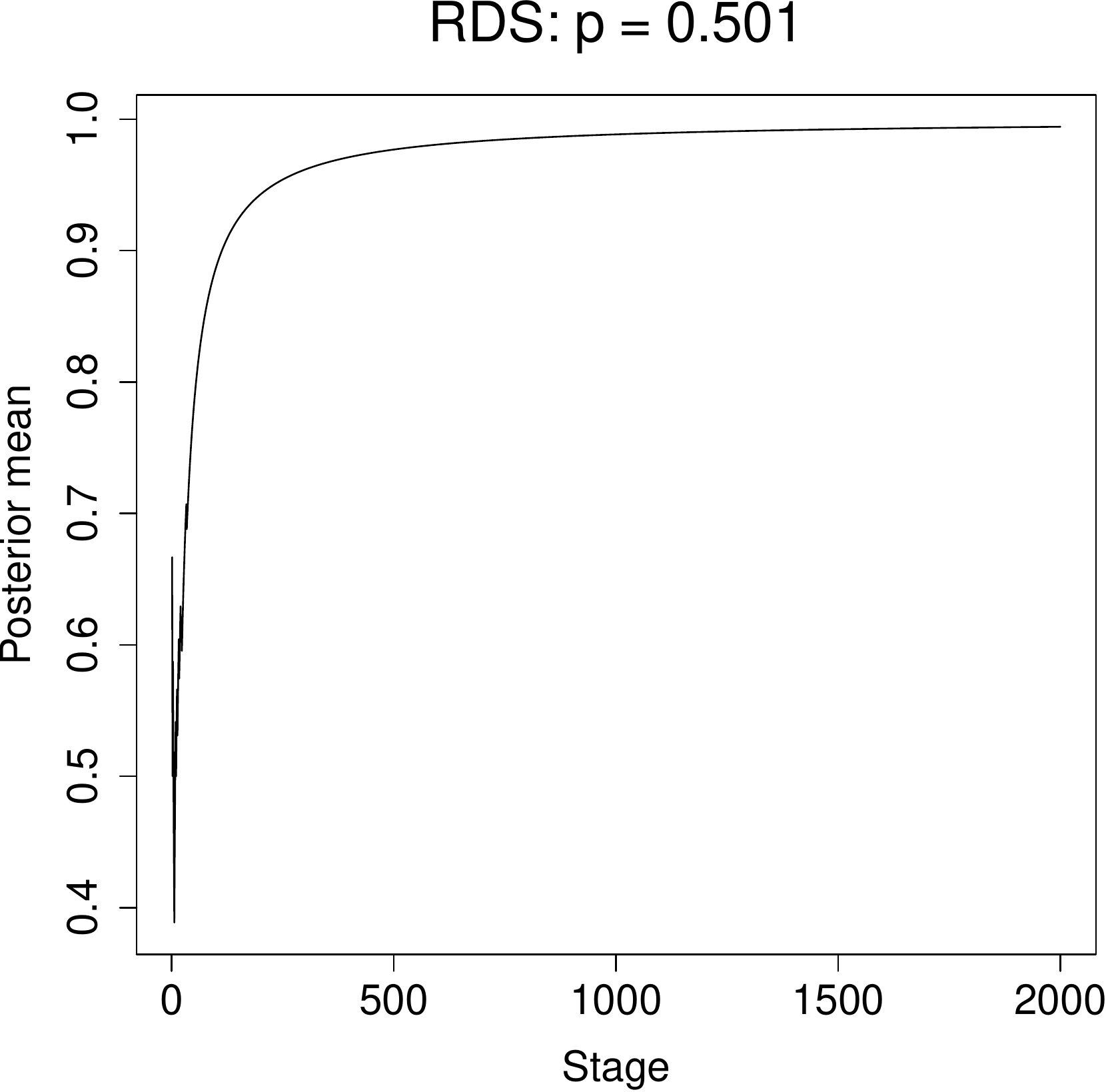}}
\hspace{2mm}
\subfigure [Convergence.]{ \label{fig:ds6}
\includegraphics[width=6cm,height=5cm]{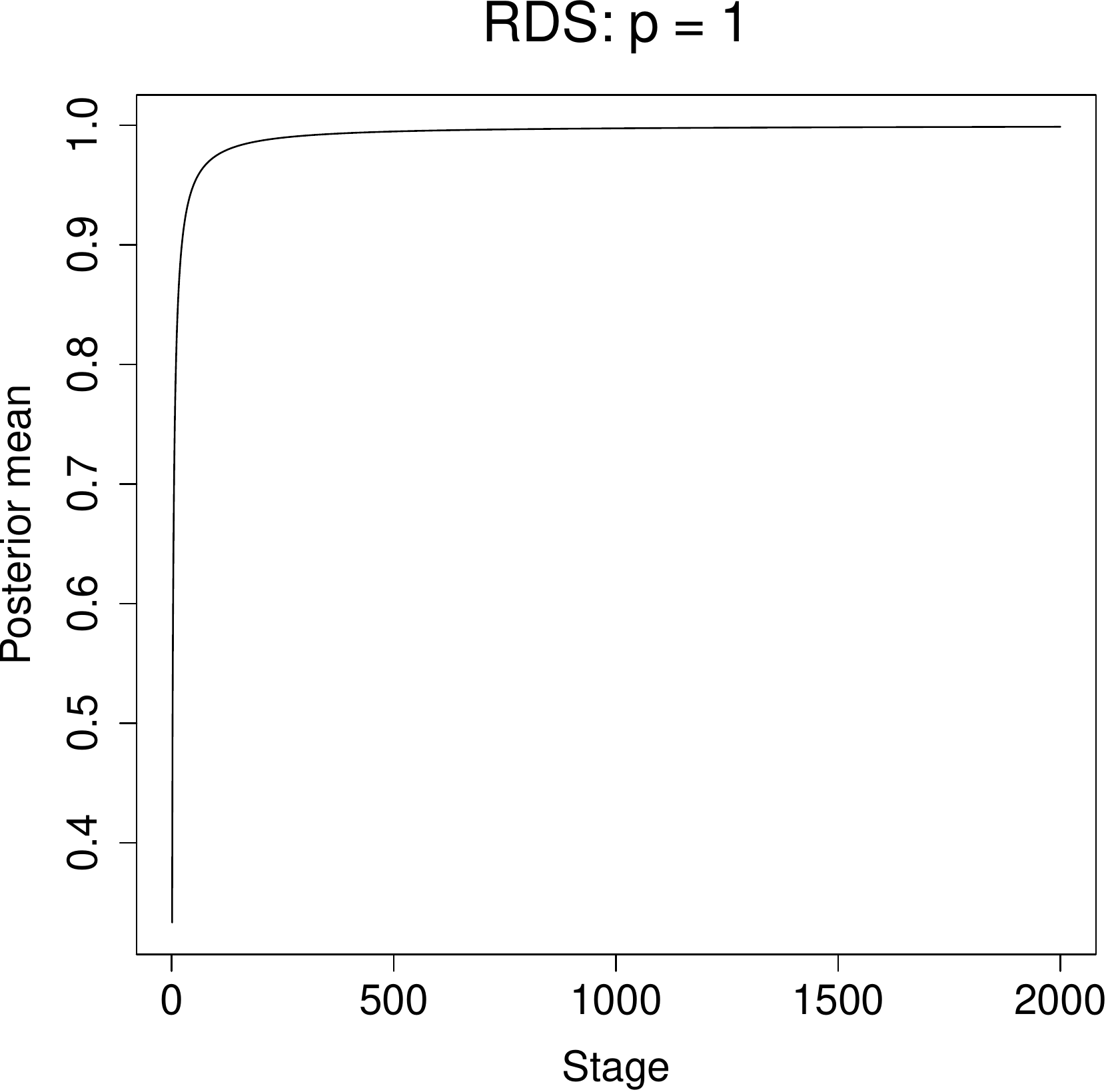}}\\
\subfigure [Divergence.]{ \label{fig:ds7}
\includegraphics[width=6cm,height=5cm]{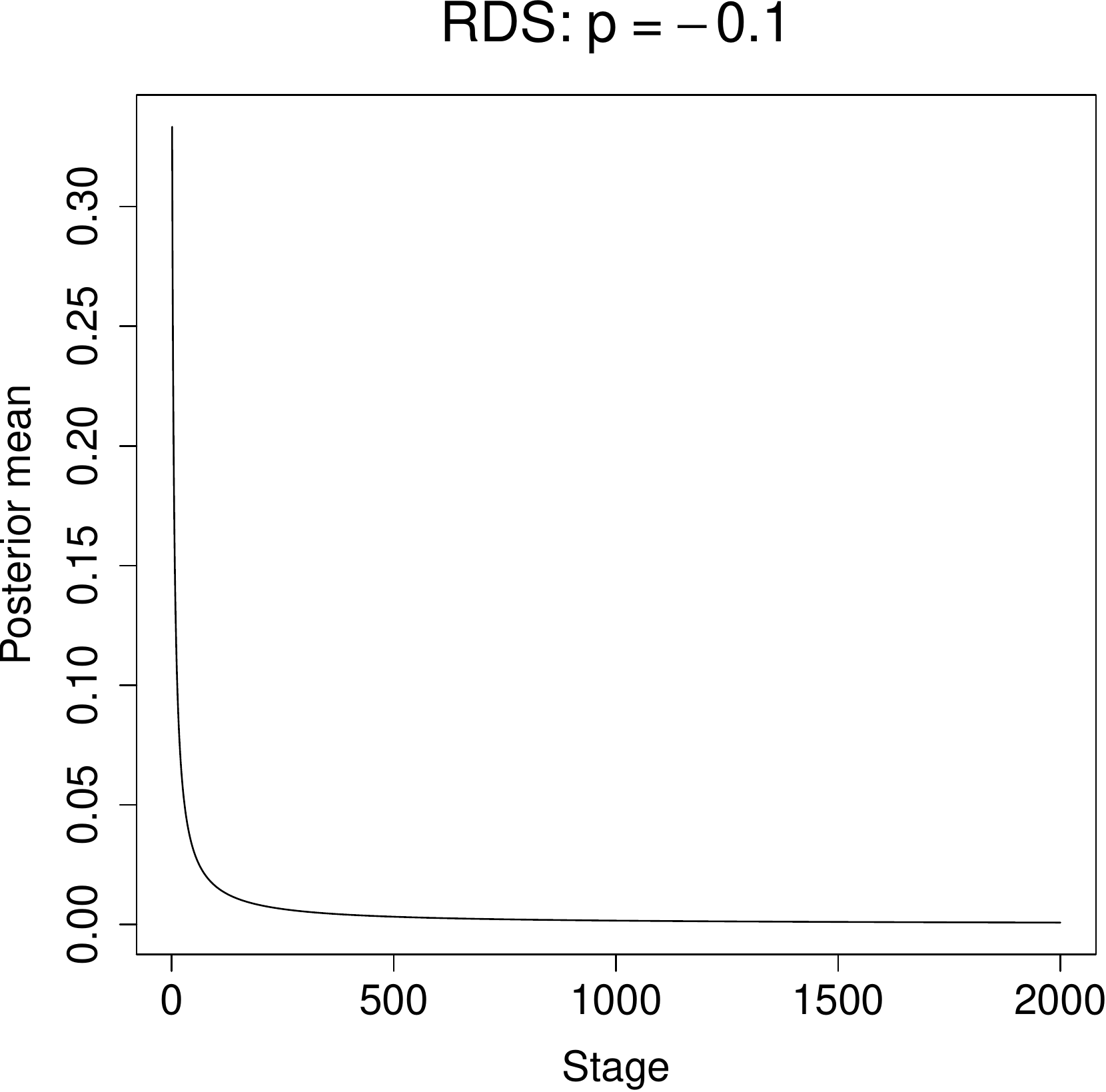}}
\caption{Example 6 revisited: Convergence and divergence for RDS.}
\label{fig:ds}
\end{figure}

\section{Application of random series convergence diagnostics to global climate change}
\label{sec:gw}

\subsection{Future global warming investigation}
\label{subsec:gw}
Global climate change, or gradual increase of the earth's average surface temperature, 
is arguably the most important issue plaguing the environmental scientists all over the world. 
Overwhelmingly strong evidence from various data sources have led the U.S. Global Change Research Program, the
National Academy of Sciences, and the Intergovernmental Panel on Climate Change (IPCC) to
declare that global warming in the recent decades is unquestionable.

Such a concern is supported by the HadCRUT4 observed near surface average global monthly temperature dataset during the years
$1850$ -- $2020$, available from the IPCC website; see \url{https://www.metoffice.gov.uk/
hadobs/hadcrut4/data/current/download.html}. But since the year $2020$ is still ongoing, data points for the last few years seem somewhat doubtful to us, and hence
we consider the monthly dataset in the range $1850 - 2016$ (see also \ctn{Chatterjee20} who analyzed the annual dataset). 
This dataset is only a record of temperature anomalies in degree celsius relative to the years $1961 - 1990$, while we prefer the actual temperatures. 
As in \ctn{Chatterjee20}, we convert this anomaly data to (approximate) actual temperature data by adding $14\degree$C to the anomalies, where
$14\degree$C is the most widely quoted value for the global average temperature for the $1961 - 1990$ period (see \ctn{Jones99} for the detailed development).
The IPCC website also provides $100$ replications of the monthly HadCRUT4 data. Since these replications have very little variation we amalgamate these with the best estimate
of the monthly global average temperature time series, to obtain a temperature time series for the $1850 - 2016$ period consisting of $167\times12\times100$ observations.
A plot of the data is provided in Figure \ref{fig:current}.
\begin{figure}
\centering
\includegraphics[width=10cm,height=8cm]{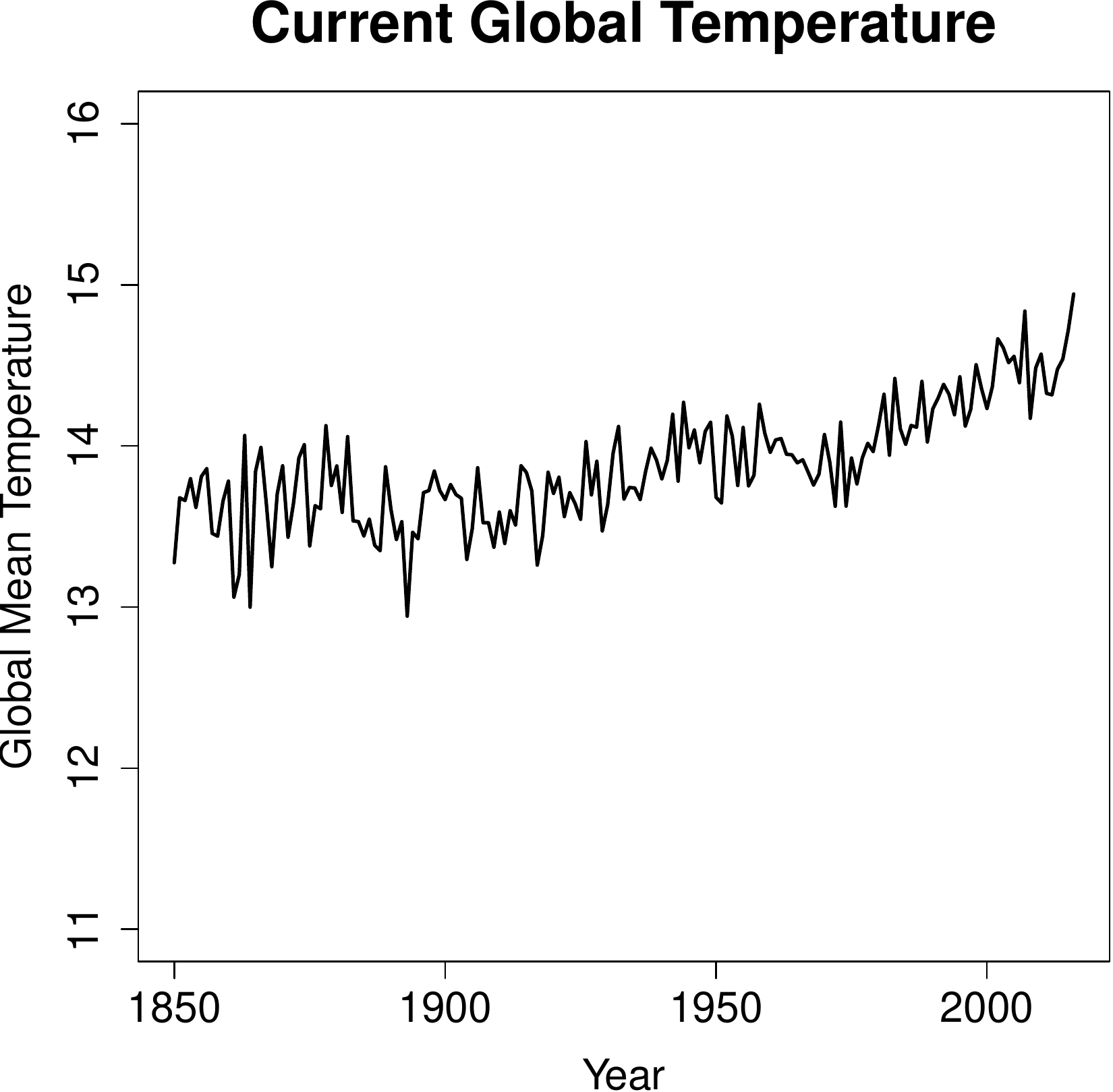}
\caption{Current, HadCRUT4 global mean temperature data.}
\label{fig:current}
\end{figure}

The dataset displayed in Figure \ref{fig:current} is not inconsistent with the IPCC records that compared to the pre-industrial baseline
$1850 - 1900$, the $2009 - 2015$ time period was warmer by about $0.87\degree$C, and that each decade is
getting warmer by about $0.2\degree$C. Such an alarming rate of increase is (arguably) unprecedented, 
and continuation of such global warming may threaten life on earth in the future.

Thus, it is important to investigate if global warming will continue even in the future or if the temperature can be expected to ``stabilize" in the near future around some 
value that does not threaten our existence on earth. Letting $X_t$ denote global monthly average temperature at time point $t$, and $\theta_0$ denote the temperature
around which $X_t$ is expected to concentrate for sufficiently large $t$, one may investigate convergence of the series $\sum_{t=1}^{\infty}Y_{\theta_0,t}$, 
where $Y_{\theta_0,t}=X_t-\theta_0$, or any other bijective transformation of $X_t$.
Convergence of the series would imply that $X_t\rightarrow\theta_0$, as $t\rightarrow\infty$. In contrast, if the series diverges, then either global warming will continue
or even if $X_t\rightarrow\theta_0$, as $t\rightarrow\infty$, the convergence would be much slower compared to the series convergence situation. 
Hence, in the case of divergence, stability can not be achieved in the near future. 

Now, mean global temperature can not be assumed to be an unbounded quantity: even though Figure \ref{fig:current} shows a clearly increasing trend in the recent decades,
it ceratainly must have an upper bound (say, $U$), and a lower bound (say, $L$) is even more obvious. 
Hence, if $\sum_{t=1}^{\infty}Y_{\theta_0,t}=\infty$ for all $\theta_0\in[L,U]$ then $X_t$ will not stabilize at any reasonable temperature value in the near future.
This would also imply that global average temperature will randomly oscillate around various temperature values in the near future, 
ranging from hot to cold, and neither global warming
or global cooling can dominate the climate dynamics in the near future.

For the HadCRUT4 data shown in Figure \ref{fig:current}, we set $L=11\degree$C and $U=16\degree$C, and consider the transformation 
$Y_{\theta_0,t}=\log(\log(X_t))-\log(\log(\theta_0))$. Hence, for all $\theta_0\in[L,U]$, $Y_{\theta_0,t}\in(-1,1)$. 
To implement our Bayesian procedure for random series convergence detection, we first note that there exists no standard model to represent the highly complex
global climate dynamics. Thus the nonparametric method of bounding the partial sums using (\ref{eq:ar1_bound3}) is the only option. For $\theta_0$, we divide
the interval $[11,16]$ into equidistant points with common gap $0.1$ between any two consecutive points. Then, for each $\theta_0$ in this grid of points, we
apply our Bayesian procedure with $n_j=1200$ for $j=1,\ldots,K=167$. In each case we obtain $\sum_{t=1}^{\infty}Y_{\theta_0,t}=\infty$, for $\hat C_1\in (0,10)$.
Setting $n_j$ and $K$ to different values did not change the inference in any of the instances. Following the discussion in the previous paragraph, this helps us strongly
conclude that in the near future the earth will not experience either global warming or global cooling. 
This conclusion is broadly consistent with the detailed future Bayesian nonparametric predictions of \ctn{Chatterjee20}.


\subsection{Investigation of past climate stability}
\label{subsec:past}
In Section \ref{subsec:gw} our Bayesian series convergence detection procedure helped us infer that future global warming or cooling is highly unlikely, and also that
stability of the future climate can not be expected. We now investigate if stability, gradual warming or cooling can be expected of climate in the past. If neither is likely,
then this would be consistent with our finding with the future climate dynamics, and would provide insight into general climate dynamics, both past and future.

To this end, we consider the Holocene global mean surface temperature reconstructions $12,000$ years before present by \ctn{Kaufman20}; here ``present" refers to the year 
$1950$. \ctn{Kaufman20} consider $5$ methods of Holocene climate reconstruction, namely, Composite Plus Scale (CPS), Dynamic Calibrated Composite (DCC),
General Additive Model (GAM), Pairwise Comparison (PAI) and Standard Calibrated Composite (SCC). We also consider the average of these $5$ reconstructions, which we
refer to as Average. The reconstructed Holocene temperatures by \ctn{Kaufman20} are available at \url{https://www.ncdc.noaa.gov/paleo-search/study/27330}. The reconstructions
are provided at $100$ years gap since $1950$ to the past $12,000$ years. 
We convert this to a monthly dataset by interpolation provided by the $R$ software function ``approx". Our datasets thus consist
of $144,000$ Holocene temperature reconstruction values. The $5$ reconstructions, along with their average, are displayed in Figure \ref{fig:past}.
\begin{figure}
\centering
\includegraphics[width=10cm,height=8cm]{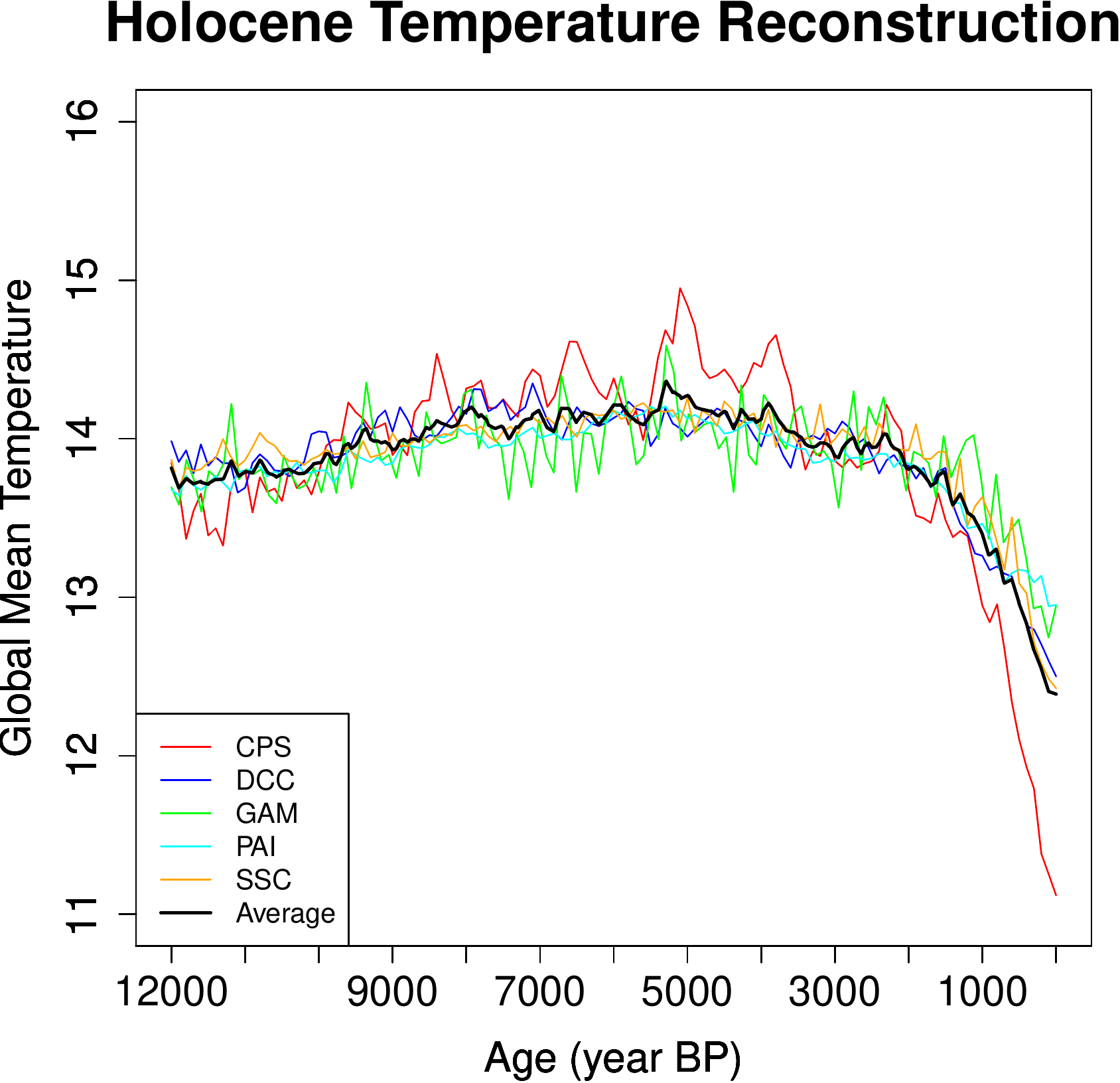}
\caption{Holocene global mean surface temperature reconstructions $12,000$ years before present.}
\label{fig:past}
\end{figure}

To apply our Bayesian method for assessment of convergence in these past climate contexts, we first read the datasets in the reverse order, that is, $\{X_1,X_2,\ldots\}$
now stand for the temperatures during progressively past time points.
Note that the reconstructions around the present (year $1950$) are not quite consistent with the HadCRUT4 temperature around the same year (see Figure \ref{fig:current}).
Hence, such reconstructions are perhaps not unquestionable. However, for investigation of the respective series convergence these are unimportant
since the first finite number of terms in the series do not influence convergence or divergence of the series.

As before, we set $L=11\degree$C and $U=16\degree$C, and consider the transformation $Y_{\theta_0,t}=\log(\log(X_t))-\log(\log(\theta_0))$, where
$\theta_0$ takes values in the grid of points obtained by dividing the interval $[11,16]$ into equidistant points with common gap $0.1$ between any two consecutive points. 
With $n_j=1000$ for $j=1,\ldots,K=144$, and their variations, we obtained $\sum_{t=1}^{\infty}Y_{\theta_0,t}=\infty$, for $\hat C_1\in (0,10)$, with respect to each of the 
$6$ time series shown in Figure \ref{fig:past}. Hence, again we strongly conclude that even Holocene global temperature did not exhibit either of stability, global warming
or global cooling, at least in relatively recent past. This is in keeping with our inference regarding future climate change, and hence allows us to conclude that
climate dynamics is subject to temporary variations, and long-term global warming or cooling is unlikely in the past as well as in the future. 

\section{Summary and discussion}
\label{sec:summary}
Fresh investigation of convergence properties of infinite series is an important undertaking in mathematical analysis, since the existing methods for detecting
convergence and divergence fail for most infinite series. This, along with the seemingly innocuous and informal question of the first author of this article 
regarding ability of the
Bayesian paradigm to address series convergence, stimulated \ctn{Roy17} to develop Bayesian characterization of infinite series that indeed attempts to 
answer such questions of convergence.
Their efforts further led them to valuable insights regarding the celebrated Riemann Hypothesis. 

The key idea of \ctn{Roy17} was to embed the deterministic series within a random, stochastic process framework, and hence their Bayesian characterization
is obviously and directly applicable to random infinite series. Interestingly, their Bayesian procedure is valid irrespective of any dependence
structure among the random elements of the series. In this regard, note that the famous Kolmogorov's three series theorem requires independence among the elements.   

In practice, success of the Bayesian procedure of \ctn{Roy17} depends upon creation of efficient upper bounds for the partial sums. For deterministic
infinite series the authors show how to achieve such bounds by judiciously exploiting the functional forms of the series elements. However, given any random infinite series,
the functional forms of the series elements are of course unknown. For theoretical sake, the marginal distributions of the elements may be assumed known. If the 
series elements are independent, then Kolmogorov's three series theorem is applicable in principle to directly assess convergence, but not in the case of dependence.
Our Bayesian characterization holds in either case, but practical implementation requires bound construction for the partial sums. As we demonstrated
in this article, even for known and simple standard distributions, construction of efficient parametric bounds is a highly non-trivial task. Although 
we could develop mathematically sound parametric upper bounds with non-negative distributional supports of the summands which also
performed very well in our simulation experiments, the method of construction of
valid parametric upper bounds in general setups still eluded us.
The proposed general upper bound (\ref{eq:S2}) 
can not be guaranteed to be a theoretically valid upper bound for arbitrary values of the tuning parameter $a$. Our properly tuned applications of (\ref{eq:S2}) 
to the normal and dependent normal setups indicate
correct results on convergence assessment in most cases, but with enormous sample sizes. Another concern is that in the normal based cases, 
even though the Bayesian algorithm shows 
eventual upward and downward trends for convergence and divergence respectively, it does not tend close enough to $1$ and $0$ even with such large sample sizes and run-times 
to persuasively demonstrate convergence and divergence with (\ref{eq:S2}).
Moreover, for the RDS, wrong convergence results are obtained with the general parametric upper bound in many cases.
A further criticism of the parametric upper bound construction methods is that, 
the forms of $\Psi^{(c)}_i$ and $\Psi^{(d)}_i$ employed 
are too restrictive. 

The aforementioned discussion points towards the requirement for constructing more effective and efficient bounds, reminding that parametric bounds 
can not be constructed in the first place if the underlying distributions are unknown. Indeed, given just the numerical values of the elements of the random series,
formation of parametric bounds for the partial sums seems to be infeasible. Borrowing ideas from \ctn{Roy20} we propose a nonparametric bound structure for 
partial sums of general random series, irrespective of known and unknown distributions. The performance of this nonparametric bound structure depends upon
the choice of the initial value $\hat C_1$ associated with the first iteration of the Bayesian algorithm. Experimentation demonstrates that $\hat C_1=0.71$ and $0.725$
are effective starting values for a wide range of random infinite series. These values are also not much different from those found effective by \ctn{Roy20}
in their wide variety of examples on stochastic processes. It is important to point out that if not much subtlety is required in practice in determination of convergence
properties (such as divergence for $p=1$ but convergence for $p=1+0.001$, many more values of $\hat C_1$ can also be good candidates for our randoms series setup, 
and therefore in practice the Bayesian procedure can exhibit considerable 
robustness with respect to choice of $\hat C_1$. 
To obtain $\hat C_1$ in the RDS context, we have demonstrated how the deterministic Dirichlet series can be exploited for our purpose. 

Our experiments in the random series context with the nonparametric bound structure persuasively 
demonstrate correct detection of convergence
properties with small sample sizes in all the setups, even in quite subtle situations. 
Indeed, our experiments reveal that performance of the nonparametric bound is very much comparable with the valid parametric bounds, whenever the latter are available. 
In the normal and dependent normal setups the nonparametric bound very significantly outperforms the parametric bound in terms of many times smaller sample size,
far greater accuracy and huge computational gains. In the RDS setup, the nonparametric bound gives correct and persuasive results for all the
cases even for small samples, while the parametric bound yields incorrect answers in many cases. Hence, overall the nonparametric bound quite emphatically 
outperforms the parametric bounds.


Although infinite series, both deterministic and random, have been topics of interest since ages, their applications in real data contexts are unheard of. This may be 
due to the reason that real data are always finite while here the topic of discussion is infinite series. However, if assessment of convergence properties is possible
even with finitely many series elements, then there is no reason to stay away from relevant real applications. This is what we attempt in this article. With our
Bayesian procedure, which assesses convergence of the underlying infinite series with only a finite number of series elements, we proceed to address past and future
climate change, a topic of great relevance and importance in the context of the current global warming scenario and climate change debate. 
The key issue that makes random infinite series
applicable to such analysis is that convergence makes the series elements tend to zero and at fast rate. Exploiting this concept and applying our Bayesian procedure
with our nonparametric upper bound for the partial sums on the current global temperature records and Holocene palaeoclimate temperature reconstructions, we obtain results
that help us make interesting inferences regarding general global climate dynamics. Specifically, there does not seem to have been instances of prolonged global warming
or cooling in the past, and nor such adverse climatic conditions are likely to prevail in the future. Indeed, global climate dynamics is subject to temporary variations only, and
the current global warming phenomenon is just an instance of such variation.

%


\normalsize
\bibliographystyle{natbib}
\bibliography{irmcmc}

\end{document}